\numberwithin{equation}{section}
\newbox\xratbelow
\newbox\xratabove
\newtheorem{Theorem}{Theorem}[section]
\newtheorem{Lemma}[Theorem]{Lemma}
\newtheorem{remark}[Theorem]{Remark}
\renewcommand{\epsilon}{\varepsilon}
\newcommand{\R}{\mathbb{R}}
\renewcommand{\le}{\leqslant}
\renewcommand{\ge}{\geqslant}
\title[Mixed order concentration phenomena]{Concentration phenomena \\for a mixed local/nonlocal  Schr\"{o}dinger equation \\with Dirichlet datum}
\author[S. Dipierro]{Serena Dipierro}
\address{Department of Mathematics and Statistics, University of Western Australia, 35 Stirling Highway, WA 6009 Crawley, Australia}
\email{serena.dipierro@uwa.edu.au}
\author[X. Su]{Xifeng Su}
\address{School of Mathematical Sciences, Laboratory of Mathematics and Complex Systems (Ministry of Education)\\
	Beijing Normal University,
	No. 19, XinJieKouWai St., HaiDian District, Beijing 100875, P. R. China}
\email{xfsu@bnu.edu.cn, billy3492@gmail.com}
\author[E. Valdinoci]{Enrico Valdinoci}
\address{Department of Mathematics and Statistics, University of Western Australia, 35 Stirling Highway, WA 6009 Crawley, Australia}
\email{enrico.valdinoci@uwa.edu.au}
\author[J. Zhang]{Jiwen Zhang}
\address{School of Mathematical Sciences, Laboratory of Mathematics and Complex Systems (Ministry of Education)\\
	Beijing Normal University,
	No. 19, XinJieKouWai St., HaiDian District, Beijing 100875, P. R. China}
\email{jwzhang826@mail.bnu.edu.cn, jiwen.zhang@uwa.edu.au}
\keywords{Mixed order operators, concentration phenomena, decay and regularity}
\begin{document}
	\maketitle
	
	\begin{abstract}
	We consider the mixed local/nonlocal semilinear equation 
	\begin{equation*}
			-\epsilon^{2}\Delta u +\epsilon^{2s}(-\Delta)^s u +u=u^p\qquad  \text{in } \Omega
	\end{equation*}
with zero Dirichlet datum, where $\epsilon>0$ is a small parameter, $s\in(0,1)$, $p\in(1,\frac{n+2}{n-2})$ and $\Omega$ is a smooth, bounded domain.  We construct a family of solutions that concentrate,  as $\epsilon\rightarrow 0$,  at an interior point of $\Omega$ having uniform distance to $\partial\Omega$ (this point can also be characterized as a local
minimum of a nonlocal functional).

In spite of the presence of the Laplace operator, the leading order of the relevant reduced energy functional in the Lyapunov-Schmidt procedure is polynomial rather than exponential in the distance to the boundary, in light of the nonlocal effect at infinity.  A delicate analysis is required to establish some uniform estimates with respect to~$\epsilon$, due to the difficulty caused by 
the different scales coming from 
the mixed operator.
	\end{abstract}
	
	\tableofcontents
	
	\section{Introduction}
Given~$n\ge2$, $s\in(0,1)$ and a bounded, smooth domain $\Omega\subset\mathbb{R}^n$, we are concerned with the following mixed local/nonlocal equation 
\begin{equation}\label{vfveffd}
\begin{cases}
	-\epsilon^{2}\Delta u +\epsilon^{2s}(-\Delta)^s u +u={u}^p\qquad  &\text{in } \Omega,\\
	u=0& \text{in } \mathbb{R}^n\setminus\Omega,
\end{cases}
\end{equation}
where $\epsilon>0$ is a small parameter.
Also, here above and in what follows, we assume that~$p\in (1,+\infty)$ if~$n=2$ and~$p\in (1, \frac{n+2}{n-2})$ if $n\geqslant 3$. 

Moreover, for any~$s\in(0,1)$, the fractional Laplacian is defined as
$$ (-\Delta)^s u(x):={\rm{P.V.}}\int_{\R^n}\frac{u(x)-u(y)}{|x-y|^{n+2s}}\,dy,$$
where P.V. means that the integral is taken in the Cauchy principal value sense.

The purpose of this paper is to construct a family of solutions $u_\epsilon$ to~\eqref{vfveffd} that concentrate at a point away from the boundary  $\partial\Omega$ for $\epsilon>0$ sufficiently small, namely,
\begin{equation*}
	u_\epsilon(x)\approx w\left(\frac{x-\xi_\epsilon}{\epsilon}\right).
\end{equation*}
Here, $\xi_\epsilon$ is some interior point of $\Omega$ such that dist$({\xi}_\epsilon,\partial\Omega)\geqslant c$ for some constant $c>0$ independent of $\xi_\epsilon$, and $w$ is the unique radial  ground state (least energy solution)  of \begin{equation}\label{vsdvdsvsd}
	\begin{cases}
		-\Delta w +(-\Delta)^s w +w=w^p \quad \text{in }\mathbb{R}^n, \\ 
	w> 0,\quad 	w\in H^1(\mathbb{R}^n).
	\end{cases}
\end{equation}
See~\cite{DSVZ24} for the existence of such a solution and some basic properties,
which are summarized below:
\begin{itemize}
	\item[(i)] $w\in C^2(\mathbb{R}^n)$, 
	$w$ is radial (hence, with a slight abuse of notations, we write
	$ w(x)=w(|x|)$ for every $ x\in\mathbb{R}^n$), and $w$ is decreasing in $r=|x|$;
	\item[(ii)] there exists a  positive constant $C$ such that, for all~$ x\in\mathbb{R}^n$,  \begin{equation}\label{bvsvs}
		\frac{C}{(1+|x|)^{n+2s}}\leqslant w(x)\leqslant \frac{1}{C(1+|x|)^{n+2s}} .
	\end{equation}
\end{itemize}

A key assumption in our setting, that will be assumed from now on, is that the linearized operator at~$w$ is nondegenerate, i.e., its kernel is exhausted by the derivatives of~$w$ and
their linear combinations, namely
\begin{equation}\label{NOBS-DEFG} \operatorname{Ker} \Big(-\Delta+(-\Delta)^s+1-pw^{p-1}\Big) = \operatorname{span} \big\{
\partial_i w,\; i\in\{1,\dots,n\}\big\}.\end{equation}
  
We now state the main result of this paper.
\begin{Theorem}\label{th main theorem}
	If  $\epsilon>0$ is sufficiently small, there exist a point ${\xi}_\epsilon\in\Omega$ with dist$({\xi}_\epsilon,\partial\Omega)\geqslant c$, 
	 and  a solution ${u}_\epsilon$ of problem~\eqref{vfveffd} such that 
\begin{equation}\label{fvvsf}
		\left|u_\epsilon(x)-w\left(\frac{x-{\xi}_\epsilon}{\epsilon}\right)\right|\leqslant C\epsilon^{\gamma_1},
	\end{equation}
for a suitable constant $\gamma_1>0$ depending only on~$n$, $s$ and $p$.   

Here, $c$ and $C$ are positive constants
depending only on~$n$, $s$, $p$ and~$\Omega$.
%
\end{Theorem}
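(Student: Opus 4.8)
The plan is to carry out a Lyapunov–Schmidt reduction adapted to the mixed operator. First I would rescale the equation by setting $x = \epsilon z$, so that $u(x) = v(z)$ with $v$ solving $-\Delta v + (-\Delta)^s v + v = v^p$ in the dilated domain $\Omega_\epsilon := \epsilon^{-1}\Omega$, with $v = 0$ outside $\Omega_\epsilon$. For a point $\xi \in \Omega$ with $\mathrm{dist}(\xi,\partial\Omega) \ge c$, let $\xi' = \xi/\epsilon \in \Omega_\epsilon$ and use as approximate solution a suitable correction $W_{\xi'}$ of the translate $w(\cdot - \xi')$ that enforces the homogeneous Dirichlet condition outside $\Omega_\epsilon$ (the "projected" ground state, i.e. essentially $w(\cdot-\xi')$ minus the $H^1$-harmonic-type extension of its exterior values, or a cutoff version). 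The key point here, already flagged in the abstract, is that because of the power-law decay \eqref{bvsvs} of $w$ — rather than the exponential decay one would see for $-\Delta + 1$ alone — the error $W_{\xi'} - w(\cdot-\xi')$ is of polynomial size in $\mathrm{dist}(\xi',\partial\Omega_\epsilon) \sim \epsilon^{-1}$, hence polynomially small in $\epsilon$; this is what ultimately produces the estimate \eqref{fvvsf} with a polynomial rate $\gamma_1$.

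Next I would set up the functional-analytic framework: write $v = W_{\xi'} + \phi$, project the equation onto the orthogonal complement of the approximate kernel $\mathrm{span}\{\partial_i W_{\xi'}\}$, and solve the resulting "auxiliary equation" for $\phi = \phi_{\xi'}$ via a fixed-point/contraction argument. This requires (a) invertibility, uniform in $\epsilon$ and in admissible $\xi$, of the linearized operator $L_{\xi'} := -\Delta + (-\Delta)^s + 1 - p W_{\xi'}^{p-1}$ restricted to the orthogonal complement, which follows from the nondegeneracy hypothesis \eqref{NOBS-DEFG} together with a standard Fredholm/limiting argument (if the estimate failed there would be a sequence of quasi-kernel elements converging, after translation, to a nontrivial kernel element of the limit operator, contradicting \eqref{NOBS-DEFG}); and (b) control of the nonlinear remainder and of the error term $E_{\xi'} := -\Delta W_{\xi'} + (-\Delta)^s W_{\xi'} + W_{\xi'} - W_{\xi'}^p$ in appropriate weighted norms. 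One then obtains $\|\phi_{\xi'}\| \le C\epsilon^{\gamma_1}$, uniformly, and $\xi \mapsto \phi_{\xi'}$ of class $C^1$.

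The final, "reduced" step is to choose $\xi_\epsilon$: plug $v = W_{\xi'} + \phi_{\xi'}$ into the energy functional $I_\epsilon$ associated to the problem and expand to get a reduced functional $J_\epsilon(\xi) = I_\epsilon(W_{\xi'} + \phi_{\xi'}) = c_0 + (\text{boundary-interaction term depending on }\xi) + o(\cdot)$, where $c_0$ is the ground-state energy of \eqref{vsdvdsvsd} and the $\xi$-dependent correction is, to leading order, a positive, polynomially decaying function of $\mathrm{dist}(\xi,\partial\Omega)$ coming from the nonlocal tail interacting with the exterior of $\Omega$ — this is exactly the "nonlocal functional" mentioned in the statement, and it is minimized at an interior point at uniform distance from $\partial\Omega$. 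A standard argument (the reduced functional attains an interior local minimum in a compact subset of admissible points, stable under the $o(\cdot)$ perturbation) then yields a critical point $\xi_\epsilon$ of $J_\epsilon$, and critical points of the reduced functional correspond to genuine solutions of \eqref{vfveffd}; undoing the scaling and using $\|\phi_{\xi'}\|_{L^\infty} \le C\epsilon^{\gamma_1}$ gives \eqref{fvvsf}.

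I expect the main obstacle to be step (a)–(b) above: obtaining the linear invertibility and all the error/remainder bounds \emph{uniformly in $\epsilon$}, because the two pieces of the operator, $-\epsilon^2\Delta$ and $\epsilon^{2s}(-\Delta)^s$, live at genuinely different scales, so the natural function spaces and weights for the local part and for the nonlocal part do not match; reconciling them — in particular getting the sharp polynomial (not exponential) decay of $W_{\xi'}-w(\cdot-\xi')$ and of $E_{\xi'}$, and propagating it through the contraction mapping — is the delicate analytic core, and is presumably where the bulk of the paper's technical work lies.
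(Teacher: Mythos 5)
Your proposal follows essentially the same route as the paper: rescaling to $\Omega_\epsilon$, using as approximate solution the ``projected'' ground state $\bar u_\xi$ (which the paper defines as the solution of the auxiliary linear Dirichlet problem with source $w_\xi^p$, precisely the correction-by-exterior-harmonic-extension you describe), carrying out the Lyapunov--Schmidt reduction with a contraction mapping and a uniform-in-$\epsilon$ invertibility argument based on the nondegeneracy hypothesis, and then locating $\xi_\epsilon$ as an interior local minimizer of the reduced energy, whose $\xi$-dependent leading term $\mathcal H_\epsilon(\xi)$ decays polynomially in $\operatorname{dist}(\xi,\partial\Omega_\epsilon)$ because of the nonlocal tail. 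The only small deviation is that the paper takes the approximate kernel $\{Z_i=\partial_i w_\xi\}$ (the derivatives of the unperturbed ground state) rather than $\{\partial_i \bar u_\xi\}$, absorbing the resulting discrepancy into the nonlinear error term; this is a technical bookkeeping choice, not a different method.
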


\begin{remark}{\rm
The constant $\gamma_1$ in Theorem~\ref{th main theorem} can also be
described in further detail (see the forthcoming
Theorem~\ref{th:nonlinear problem }). In particular, one can show that~$\gamma_1>\frac{n}{2}+2s$, that~$\gamma_1\rightarrow \frac{n}{2}+2s$
as~$p\to 1 $, and that~$\gamma_1={n}+2s$ if~$p\geqslant 2 $.}\end{remark}

\begin{remark}{\rm Regarding assumption~\eqref{NOBS-DEFG},
the nondegeneracy of the ground state~$w$ of problem~\eqref{vsdvdsvsd} plays a pivotal role in proving Theorem~\ref{th main theorem}. Presently, in view of  \cite[Theorems~1.3 and~1.4]{SZZ24}, we know that  this  nondegeneracy property (together with
a uniqueness result) holds when~$s$ is close to either~$0$ or~$1$. 

In particular, Theorem~\ref{th main theorem} holds true if, instead of
assumption~\eqref{NOBS-DEFG}, one considers fractional exponents~$s$
sufficiently close to either~$0$ or~$1$.

It remains an interesting  open question whether  the nondegeneracy and uniqueness results for the ground states of problem~\eqref{vsdvdsvsd} hold true for any $s\in(0,1)$. 

Also, for the way we formulated our main result, if one proved that some
fractional exponent~$s_\star$
satisfies the nondegeneracy property~\eqref{NOBS-DEFG},
then the concentration result in Theorem~\ref{th main theorem} would automatically hold true for~$s_\star$.}\end{remark}

Concentration phenomena have been addressed 
in the literature 
under different perspectives,
see for instance \cite{MR3393677,MR1931757,MR1471107,MR2270164,MR3350616}, and the
references therein. The construction of concentrating solutions is essentially carried out by two methods. The first approach is to use critical point theory or topological techniques. A second alternative is to reduce the problem to a finite-dimensional one by means of a Lyapunov-Schmidt technique. In this paper, we follow the spirit of \cite{MR3393677} by adopting the second approach to search for the  solutions to problem~\eqref{vfveffd}.
\medskip

Let us now briefly describe the strategy behind proving the main result stated in Theorem~\ref{th main theorem}. Setting  $\Omega_\epsilon:={\Omega}/{\epsilon}=\left\{{x}/{\epsilon}:\, x\in\Omega\right\}$ and $\tilde{u}(x):=u(\epsilon x)$, problem~\eqref{vfveffd} becomes
\begin{equation}\label{vsdvdscsc}
	\begin{cases}
		-\Delta \tilde{u} +(-\Delta)^s \tilde{u}+ \tilde{u}=\tilde{u}^p\qquad  &\text{in } \Omega_\epsilon,\\
\tilde{u}=0& \text{in } \mathbb{R}^n\setminus\Omega_\epsilon.
	\end{cases}
\end{equation}
Furthermore, for a given $\xi\in\Omega_\epsilon$, we set $w_\xi(x):=w(x-\xi)$
for all~$x\in\mathbb{R}^n$. We will seek the solution of~\eqref{vsdvdscsc} near an approximate solution $\bar{u}_\xi$, for a suitable point $\xi=\xi_\epsilon$, solving the linear problem 
\begin{equation}\label{sdvbsd}
	\begin{cases}
		-\Delta \bar{u}_\xi +(-\Delta)^s \bar{u}_\xi +\bar{u}_\xi=w_\xi^p\qquad  &\text{in } \Omega_\epsilon,\\
		\bar{u}_\xi=0& \text{in } \mathbb{R}^n\setminus\Omega_\epsilon.
	\end{cases}
\end{equation}
It is known that $\bar{u}_\xi\in C^{2,\alpha}_{\rm loc}(\Omega_\epsilon)\cap C(\mathbb{R}^n)$ and $\bar{u}_\xi\geqslant 0$ in $\mathbb{R}^n$ (see for instance \cite{SVWZ23}).

 We observe that problem~\eqref{vsdvdscsc} is variational,  and we consider the following space
\begin{equation*}
H_0^1(\Omega_\epsilon):=\left\{u\in H^1(\mathbb{R}^n)\, :\, u=0 \text{ a.e. in } \mathbb{R}^n\setminus \Omega_\epsilon \right\}
\end{equation*}
equipped with the norm 
\begin{equation*}
	\|u\|^2_{H^1_0}:=\int_{\Omega_\epsilon}\left(|\nabla u(x)|^2+u^2(x)\right)\, dx+\int_{\mathbb{R}^n}\int_{\mathbb{R}^n}\frac{|u(x)-u(y)|^2}{|x-y|^{n+2s}}\, dx\,dy.
\end{equation*}
Moreover, problem~\eqref{vsdvdscsc} corresponds to the Euler-Lagrange equation for the functional
\begin{equation}\label{functional}
	I_\epsilon(u):=\frac{1}{2}\|u\|^2_{H^1_0}-\frac{1}{p+1}\int_{\Omega_\epsilon}u^{p+1}(x)\, dx.
\end{equation}

We will provide an expansion of $I_\epsilon(\bar{u}_\xi)$ in Section~\ref{sec:Energy estimates and functional expansion}: more precisely,
given~$\delta>0$,
when~$\operatorname{dist}(\xi,\partial\Omega_\epsilon)\geqslant \delta/\epsilon$
we have that
\begin{equation}\label{sdv}
	I_\epsilon(\bar{u}_\xi)=I(w)+\frac{1}{2}\mathcal{H}_\epsilon(\xi)+o(\epsilon^{n+4s})
\end{equation}
where $I$ is the energy of $w$ given in~\eqref{dsvsbs},  and $\mathcal{H}_\epsilon(\xi)$ is defined by
\begin{equation}\label{bjbca}
	\mathcal{H}_\epsilon(\xi):=	\int_{\Omega_\epsilon}w^p_\xi(x)\left(w_\xi(x)-\bar{u}_\xi(x)\right)\, dx.
\end{equation}
Additionally, we will show that,  for every $\xi\in\Omega_\epsilon$ with $\operatorname{dist}(\xi,\partial\Omega_\epsilon)\in [2,\rho/\epsilon]$ (the constant $\rho>0$ being
appropriately small),  
 there exist two positive constants  $\bar{c}_1$ and $\bar{c}_2$ such that 
\begin{equation}\label{jkn}
	\frac{\bar{c}_2}{\operatorname{dist}(\xi,\partial\Omega_\epsilon)^{n+4s}}\leqslant \mathcal{H}_\epsilon(\xi)\leqslant \frac{\bar{c}_1}{\operatorname{dist}(\xi,\partial\Omega_\epsilon)^{n+4s}}.
\end{equation}

We remark that, by taking into account the fact that $w$ is the least energy solution,  one deduces from~\eqref{sdv} and~\eqref{jkn} that, for $\epsilon>0$ sufficiently small, there exists a local minimizer $\xi_\epsilon$ for 
 the functional $I_\epsilon(\bar{u}_\xi)$, which is located in a compact set with uniform distance  to $\partial\Omega$. 
 
{F}rom this,  one is able to reduce  the  full problem,  by means of a Lyapunov-Schmidt technique, to that of finding a critical point $\xi_\epsilon$ of a functional which has a similar expansion to~\eqref{sdv} (see Section~\ref{sec:The Lyapunov-Schmidt reduction} for the details).

In the local case of the Laplace operator (see~\cite{MR1737546,MR1639159,MR1342381,MR1404386}) one can
construct solutions of the classical singularly perturbed Dirichlet problem that concentrate, as $\epsilon\rightarrow 0$,  near points that have maximum distance to the boundary
(this  problem with zero
Neumann boundary condition was also studied in  \cite{MR2765513,MR2275329,MR1923818,MR1219814}). In the nonlocal
Dirichlet case, however, the localization of
the concentration point is more delicate and depends on the local minimization
of a nonlocal functional (and this feature persists in the case
treated in this paper).

In the nonlocal case, in~\cite{MR3121716}, the existence of multiple spike solutions
  of a fractional Schr\"{o}dinger equation in~$\mathbb{R}^n$ was considered. Also, in~\cite{MR3393677} a family of solutions  that concentrate
   at an interior point of the domain are constructed in the form of a scaling of a global ground state. See additionally~\cite{MR3350616,MR3059423,MR4018100,MR3522330} and the
   references therein for related problems about 
   concentration phenomena
   and fractional Schr\"{o}dinger equations.

In contrast to  the both situations above, in which classical or nonlocal
operators of a given order are considered,
the study of concentration phenomena in a mixed local/nonlocal framework remained
open, and this paper aims at filling this gap in the literature. In this situation,  the presence of the nonlocal term~$(-\Delta)^s$, though of lower order with respect to the classical Laplacian, causes the leading order of the relevant energy expansion to be polynomial  instead of exponential, making the information about the concentration point less explicit than in the classical case
and the analysis dependent on both the scales of the two elliptic operators involved in the problem.

Also, in the Lyapunov-Schmidt reduction, the regularity requirement for the solutions is higher than that of the fractional case, 
thanks to the effect of the Laplacian operator (see Sections~\ref{sec:Decay of the ground state}-~\ref{sec:The Lyapunov-Schmidt reduction} for the details), therefore the analysis needed
to deal with mixed operators of  different orders
is more involved  than the one addressing separately the cases 
of the Laplacian and of the fractional Laplacian.
 
The rest of this paper is organized as follows. In Section~\ref{sec:Energy estimates and functional expansion}, we first estimate the function~$\mathcal{H}_\epsilon$, thus obtaining an approximation of the energy expansion on $\bar{u}_\xi$. Sections~\ref{sec:Decay of the ground state} and~\ref{sec:Some regularity estimates} are devoted to some decay  and regularity estimates for computing the small error terms in the expansions of the actual solution. 

In
Sections~\ref{sec:The Lyapunov-Schmidt reduction} and ~\ref{sec:Proof of Theorem 1.1} we find a suitable perturbation in the vicinity of the approximate solution via a Lyapunov-Schmidt reduction method, and complete the proof of Theorem~\ref{th main theorem}.

\section{Energy estimates and functional expansion at~$\bar{u}_\xi$}\label{sec:Energy estimates and functional expansion}

In this section, we focus on the asymptotic expansion of $I_\epsilon(\bar{u}_\xi)$.
Let us first consider the  functional associated with problem~\eqref{vsdvdsvsd}, namely
\begin{equation}\label{dsvsbs}
	I(u):=\frac{1}{2}\int_{\mathbb{R}^n}\left(|\nabla u(x)|^2+u^2(x)\right)\,dx+\frac{1}{2}\int_{\mathbb{R}^n}\int_{\mathbb{R}^n}\frac{|u(x)-u(y)|^2}{|x-y|^{n+2s}}\, dxdy-\frac{1}{p+1}\int_{\mathbb{R}^n}u^{p+1}(x)\, dx,
\end{equation}
for every $u\in H^1(\mathbb{R}^n)$.

We will prove that, for dist$(\xi,\partial\Omega_\epsilon)\geqslant \delta/\epsilon$ with $\delta>0$ fixed,
\begin{equation*}
	I_\epsilon(\bar{u}_\xi)=I(w)+\frac{1}{2}\mathcal{H}_\epsilon(\xi)+o(\epsilon^{n+4s})
\end{equation*}
where $\mathcal{H}_\epsilon(\xi)$ is defined by~\eqref{bjbca}.

To this end, we notice that
the expansion of $I_\epsilon(\bar{u}_\xi)$ involves 
the fundamental solution  $\mathcal{K}$ for the operator $-\Delta+(-\Delta)^s+1$ in $\mathbb{R}^n$, namely the solution of
\begin{equation}\label{mathcal{K}}
	\Big(-\Delta+(-\Delta)^s+1\Big)\mathcal{K}=\delta_0\qquad \text{in } \mathbb{R}^n.
\end{equation}
We recall (see for instance \cite{DSVZ24}) that the fundamental solution $\mathcal{K}$ satisfies the following basic properties:
\begin{itemize}
	\item[(i)] 
	$\mathcal{K}$ is radially symmetric, positive, non-increasing in $r=|x|$, and $\mathcal{K}\in C^\infty(\mathbb{R}^n\setminus\left\{0\right\})$;
	\item[(ii)]  there exist two positive constants  $C_1,C_2>0$ such that, for all~$ |x|>1$,
	\begin{equation}\label{decaying}
		\frac{C_1}{|x|^{n+2s}}\leqslant \mathcal{K}(x)\leqslant
		\frac{1}{C_1|x|^{n+2s}},
	\end{equation}
	\begin{equation}\label{dfdsfs}
		|\nabla \mathcal{K}(x)|\leqslant \frac{C_2}{|x|^{n+2s+1}}\quad \text{and} \quad 	|\nabla^2 \mathcal{K}(x)|\leqslant \frac{C_2}{|x|^{n+2s+2}} .
	\end{equation}
\end{itemize}

We next  make use of  the fundamental solution $\mathcal{K}$ to construct an auxiliary function to obtain some estimates on $\mathcal{H}_\epsilon$. 

\subsection{Estimates on $\mathcal{H}_\epsilon(\xi)$}
We recall that~$w_\xi(x):=w(x-\xi)$ and we denote 
\begin{equation}\label{dbsfbs}
	v_\xi(x):=w_\xi(x)-\bar{u}_\xi(x).
\end{equation}
Owing to~\eqref{vsdvdsvsd} and~\eqref{sdvbsd}, we see that $	v_\xi\in C^{2,\alpha}_{\rm loc}(\Omega_\epsilon)\cap C(\mathbb{R}^n) $ solves
\begin{equation}\label{vdsvd}
	\begin{cases}
		-\Delta v_\xi +(-\Delta)^s v_\xi +v_\xi=0 \qquad  &\text{in } \Omega_\epsilon,\\
		v_\xi=w_\xi& \text{in } \mathbb{R}^n\setminus\Omega_\epsilon.
	\end{cases}
\end{equation}

Moreover, given $\xi\in\Omega_\epsilon$ and $x\in\mathbb{R}^n$, we define 
\begin{equation*}
	h_\xi(x):=\int_{\mathbb{R}^n\setminus\Omega_\epsilon}\mathcal{K}(x-z)\mathcal{K}(\xi-z)\, dz.
\end{equation*}
We observe that, for any $x\in \Omega_\epsilon$ and $z\in\mathbb{R}^n\setminus\Omega_\epsilon$,
\begin{equation*}
	\left(-\Delta+(-\Delta)^s+1\right)\mathcal{K}(x-z)=\delta_0(x-z)=0.
\end{equation*}
Thus, one has that
\begin{equation}\label{vsdv ds}
	\left(-\Delta+(-\Delta)^s+1\right)h_\xi=0\quad \text{in } \Omega_\epsilon.
\end{equation}

Our purpose is to employ $h_\xi$  as a barrier, from above and below, for the function $v_\xi$,  using~\eqref{vdsvd},~\eqref{vsdv ds} and the Maximum Principle (see e.g.\cite{SVWZ23}). More precisely, we estimate the behavior of $v_\xi$ in $\mathbb{R}^n$ as follows:

\begin{Lemma}\label{lemma hepsilon}
	There exists a constant $c_1>0$ such that 
	\begin{equation}\label{fdf}
		h_\xi(x)\leqslant c_1(1+|x-\xi|)^{-(n+2s)}
	\end{equation}
for any $x\in\mathbb{R}^n$ and $\xi\in\Omega_\epsilon$ with $d:=\operatorname{dist}(\xi,\partial\Omega_\epsilon)\geqslant 1.$
\end{Lemma}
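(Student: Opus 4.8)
The plan is to estimate the integral defining $h_\xi$ by splitting $\R^n\setminus\Omega_\epsilon$ into regions according to the position of $z$ relative to the two points $x$ and $\xi$, using the polynomial decay~\eqref{decaying} of $\mathcal{K}$ together with the integrability $\mathcal{K}\in L^1(\R^n)$ (which is known, see~\cite{DSVZ24}, and also follows from~\eqref{decaying} for $|z|>1$ and from the local integrability of $\mathcal{K}$ near the origin). Throughout, a crucial elementary observation is that, since $\xi\in\Omega_\epsilon$ and $d=\operatorname{dist}(\xi,\partial\Omega_\epsilon)\geqslant1$, every $z\in\R^n\setminus\Omega_\epsilon$ satisfies $|z-\xi|\geqslant d\geqslant1$, so that the factor $\mathcal{K}(\xi-z)$ in the integrand always lies in the decay regime of~\eqref{decaying}.

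First I would dispose of the case $|x-\xi|\leqslant2$. Here one simply bounds $\mathcal{K}(\xi-z)\leqslant\mathcal{K}(d)\leqslant\mathcal{K}(1)$ (using that $\mathcal{K}$ is non-increasing and $d\geqslant1$), whence
$$h_\xi(x)\leqslant\mathcal{K}(1)\int_{\R^n\setminus\Omega_\epsilon}\mathcal{K}(x-z)\,dz\leqslant\mathcal{K}(1)\,\|\mathcal{K}\|_{L^1(\R^n)},$$
which is the desired estimate since $(1+|x-\xi|)^{-(n+2s)}\geqslant3^{-(n+2s)}$ in this range.

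For the main case $|x-\xi|\geqslant2$, set $R:=|x-\xi|/2\geqslant1$ and split the domain of integration into the three pieces $A:=\{|z-\xi|\leqslant R\}$, $B:=\{|z-x|\leqslant R\}$ and $C:=\{|z-\xi|\geqslant R\}\cap\{|z-x|\geqslant R\}$. On $A$ one has $|x-z|\geqslant|x-\xi|-|z-\xi|\geqslant R\geqslant1$, so $\mathcal{K}(x-z)\leqslant C_1^{-1}R^{-(n+2s)}$ by~\eqref{decaying}, while integrating $\mathcal{K}(\xi-z)$ over $A\cap(\R^n\setminus\Omega_\epsilon)$ costs at most $\|\mathcal{K}\|_{L^1(\R^n)}$; symmetrically on $B$ one has $|\xi-z|\geqslant R\geqslant1$, hence $\mathcal{K}(\xi-z)\leqslant C_1^{-1}R^{-(n+2s)}$ and $\int_B\mathcal{K}(x-z)\,dz\leqslant\|\mathcal{K}\|_{L^1(\R^n)}$. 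On $C$ both factors lie in the decay regime, and since $|x-z|\geqslant R$,
$$\int_C\mathcal{K}(x-z)\mathcal{K}(\xi-z)\,dz\leqslant\frac{C_1^{-2}}{R^{n+2s}}\int_{|z-\xi|\geqslant R}\frac{dz}{|z-\xi|^{n+2s}}\leqslant\frac{C}{R^{n+4s}}\leqslant\frac{C}{R^{n+2s}},$$
where we used $n+2s>n$ and $R\geqslant1$. Adding the three contributions gives $h_\xi(x)\leqslant C\,R^{-(n+2s)}=C\,2^{n+2s}|x-\xi|^{-(n+2s)}$, and since $|x-\xi|\geqslant1$ this is bounded by a constant times $(1+|x-\xi|)^{-(n+2s)}$, completing this case.

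Finally, I would check that all the constants produced above depend only on $n$ and $s$ (through $C_1$, $\|\mathcal{K}\|_{L^1(\R^n)}$, $\mathcal{K}(1)$ and $\int_{|w|\geqslant1}|w|^{-(n+2s)}\,dw$) and in particular are independent of $\epsilon$, $\xi$ and $\Omega$, which yields the uniform constant $c_1$ in the statement. I do not expect any serious obstacle here: the argument is a careful bookkeeping of a convolution-type estimate. The only mildly delicate point is the region $C$, where neither factor alone is integrable over $\R^n$, so one must exploit the polynomial decay of \emph{both} factors simultaneously together with the fact that on $C$ the distances $|z-x|$ and $|z-\xi|$ are both comparable to $|x-\xi|$; a further subtlety to keep in mind is to ensure the singularity of $\mathcal{K}$ at the origin always appears inside an $L^1$ integral rather than multiplied against another singular factor.
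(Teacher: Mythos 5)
Your proof is correct and follows essentially the same strategy as the paper: split the integral defining $h_\xi$ into regions according to the position of $z$ relative to $x$ and $\xi$, exploit the polynomial decay of $\mathcal{K}$ (valid on the relevant factor since $|\xi-z|\geq d\geq1$ forces the decay regime) on one factor while controlling the other by $\|\mathcal{K}\|_{L^1}$, and dispose of the near-diagonal case $|x-\xi|\lesssim1$ by a uniform bound. The only cosmetic difference is that the paper uses a two-region split ($B_{|x-\xi|/2}(x)$ and its complement) where you use a three-region split $A,B,C$; your regions $A\cup C$ are exactly the paper's exterior region, on which $|x-z|\geq R$ already yields $\mathcal{K}(x-z)\leq CR^{-(n+2s)}$, so the extra gain of $R^{-2s}$ you extract on $C$ is unnecessary (though harmless). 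You are in fact slightly more careful than the paper on the threshold: you split at $|x-\xi|=2$ so that $R\geq1$ and~\eqref{decaying} applies cleanly, whereas the paper splits at $|x-\xi|=1$ and implicitly relies on the boundedness of $\mathcal{K}$ away from the origin to cover $1/2<|x-z|\leq1$.
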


\begin{proof}
Since $\mathcal{K}\in L^1(\mathbb{R}^n)$ and~$|\xi-z|>1$ for any $z\in\mathbb{R}^n\setminus\Omega_\epsilon$, owing to~\eqref{decaying},
	we deduce that, if~$|x-\xi|>1$,
	\begin{equation*}
		\begin{split}
			&h_\xi(x)
			\leqslant C\left(
			\int_{B_{ |x-\xi|/2}(x)}\mathcal{K}(x-z)|z-\xi|^{-(n+2s)}\, dz
			+\int_{\R^n\setminus B_{ |x-\xi|/2}(x)}\mathcal{K}(\xi-z)|x-z|^{-(n+2s)}\, dz\right)\\
			&\qquad\qquad \leqslant C |x-\xi|^{-(n+2s)}\int_{\mathbb{R}^n}\mathcal{K}(z)\, dz\leqslant C |x-\xi|^{-(n+2s)}
		\end{split}
	\end{equation*}
	up to renaming $C>0$. 
	
	Furthermore, for every $x\in\mathbb{R}^n$, 
	\begin{equation}\label{fvfdv}
		\begin{split}
				h_\xi(x)\leqslant C\int_{\R^n\setminus B_d(\xi)} \mathcal{K}(x-z)|z-\xi|^{-(n+2s)}\, dz\leqslant C d^{-(n+2s)}\int_{\mathbb{R}^n}\mathcal{K}(z)\, dz\leqslant Cd^{-(n+2s)}
		\end{split}
	\end{equation}
	up to renaming $C>0$, due to the fact that $\mathcal{K}\in L^1(\mathbb{R}^n)$. 
	
	The above two formulas entail the desired result in~\eqref{fdf}.
\end{proof}

\begin{Lemma}\label{lemma comparable for h}
	There exists $\tilde{c}\in(0,1)$ such that, for any $x\in\mathbb{R}^n$ and $\xi\in\Omega_\epsilon$ satisfying
\begin{equation}\label{vdsvsdvs}
	\operatorname{dist}(\xi,\partial\Omega_\epsilon)\geqslant 1,
\end{equation}
we have that
	\begin{equation}\label{gsdfds}
		\tilde{c}h_\xi(x)\leqslant v_\xi(x)\leqslant \tilde{c}^{-1}h_\xi(x).
	\end{equation}
\end{Lemma}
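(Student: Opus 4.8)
The plan is to bound $v_\xi$ from above and below by multiples of $h_\xi$ using the Maximum Principle for the operator $-\Delta+(-\Delta)^s+1$ on $\Omega_\epsilon$, comparing the two functions on the complement $\mathbb{R}^n\setminus\Omega_\epsilon$ where they coincide with explicit functions. First I would record the key facts: by \eqref{vdsvd}, $v_\xi$ is the solution of the homogeneous equation $(-\Delta+(-\Delta)^s+1)v_\xi=0$ in $\Omega_\epsilon$ with exterior datum $v_\xi=w_\xi$ in $\mathbb{R}^n\setminus\Omega_\epsilon$, and by \eqref{vsdv ds} the barrier $h_\xi$ solves the same homogeneous equation in $\Omega_\epsilon$. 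Hence $v_\xi-\tilde c\,h_\xi$ and $\tilde c^{-1}h_\xi-v_\xi$ also solve the homogeneous equation in $\Omega_\epsilon$, so by the Maximum Principle (e.g.\ \cite{SVWZ23}) it suffices to verify the sign of each of these combinations on $\mathbb{R}^n\setminus\Omega_\epsilon$ for a suitable choice of $\tilde c\in(0,1)$.

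Thus the crux is a two-sided comparison of $w_\xi(x)=w(x-\xi)$ with $h_\xi(x)=\int_{\mathbb{R}^n\setminus\Omega_\epsilon}\mathcal{K}(x-z)\mathcal{K}(\xi-z)\,dz$ for $x\in\mathbb{R}^n\setminus\Omega_\epsilon$, under the standing hypothesis $d=\operatorname{dist}(\xi,\partial\Omega_\epsilon)\geqslant 1$. For the upper bound $v_\xi\leqslant\tilde c^{-1}h_\xi$ on the exterior, I would use the polynomial decay $w(x-\xi)\leqslant C|x-\xi|^{-(n+2s)}$ from \eqref{bvsvs} (valid since $|x-\xi|\geqslant d\geqslant 1$ there) together with a matching \emph{lower} bound $h_\xi(x)\geqslant c|x-\xi|^{-(n+2s)}$: this lower bound follows by restricting the integral defining $h_\xi$ to a suitable region — for instance $z$ ranging over $(\mathbb{R}^n\setminus\Omega_\epsilon)\cap B_1(\xi')$ for a fixed exterior point $\xi'$ near $\partial\Omega_\epsilon$, where $\mathcal{K}(\xi-z)$ is bounded below by a positive constant (as $\mathcal{K}$ is positive and $|\xi-z|$ is bounded above), and $\mathcal{K}(x-z)\gtrsim|x-\xi|^{-(n+2s)}$ by \eqref{decaying} when $|x-\xi|$ is large, while for $|x-\xi|$ bounded one uses positivity and local integrability of $\mathcal{K}$ together with a covering argument to keep everything bounded below by a constant. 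For the lower bound $v_\xi\geqslant\tilde c\,h_\xi$ on the exterior, I would instead use the upper estimate $h_\xi(x)\leqslant c_1(1+|x-\xi|)^{-(n+2s)}$ just proved in Lemma~\ref{lemma hepsilon} together with the matching lower bound $w(x-\xi)\geqslant C(1+|x-\xi|)^{-(n+2s)}$ from \eqref{bvsvs}.

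The main obstacle I anticipate is handling the lower bound for $h_\xi(x)$ uniformly for all exterior $x$: near $\partial\Omega_\epsilon$ the kernel singularities of $\mathcal{K}(x-z)$ and $\mathcal{K}(\xi-z)$ interact with the geometry of $\mathbb{R}^n\setminus\Omega_\epsilon$, and one must make sure that the region of integration carrying the lower bound does not degenerate as $x$ approaches the boundary or as $\epsilon\to0$. Here smoothness of $\Omega$ (hence a uniform interior/exterior ball condition on $\Omega_\epsilon$, rescaled) should guarantee that a fixed-size chunk of $\mathbb{R}^n\setminus\Omega_\epsilon$ always lies within bounded distance of both $x$ and $\xi$, on which $\mathcal{K}(\xi-z)\mathcal{K}(x-z)$ is bounded below; combined with the far-field decay rates this yields the two-sided bound $c\,(1+|x-\xi|)^{-(n+2s)}\leqslant h_\xi(x)\leqslant c_1(1+|x-\xi|)^{-(n+2s)}$ on the exterior. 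Choosing $\tilde c$ small enough to absorb all the constants $C$, $c$, $c_1$ appearing in these comparisons on $\mathbb{R}^n\setminus\Omega_\epsilon$, and invoking the Maximum Principle in $\Omega_\epsilon$ for each of the two homogeneous solutions $v_\xi-\tilde c h_\xi$ and $\tilde c^{-1}h_\xi-v_\xi$, completes the proof of \eqref{gsdfds}.
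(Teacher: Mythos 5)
Your overall architecture is the same as the paper's: observe from \eqref{vdsvd} and \eqref{vsdv ds} that both $v_\xi$ and $h_\xi$ solve the homogeneous equation $\left(-\Delta+(-\Delta)^s+1\right)u=0$ in $\Omega_\epsilon$, so it suffices to establish the two-sided comparison $c\,h_\xi\leqslant w_\xi\leqslant c^{-1}h_\xi$ on $\mathbb{R}^n\setminus\Omega_\epsilon$ and then invoke the Maximum Principle. Your derivation of $h_\xi(x)\leqslant C\,w_\xi(x)$ (combining Lemma~\ref{lemma hepsilon} with the lower bound in \eqref{bvsvs}) is also consistent with what the paper does.

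There is, however, a genuine gap in the lower bound $h_\xi(x)\geqslant c\,w_\xi(x)$ for $x\in\mathbb{R}^n\setminus\Omega_\epsilon$. You propose restricting the integral defining $h_\xi$ to a set $(\mathbb{R}^n\setminus\Omega_\epsilon)\cap B_1(\xi')$ with $\xi'$ a \emph{fixed} exterior point near $\partial\Omega_\epsilon$, claiming that there $\mathcal{K}(\xi-z)$ is bounded below by a positive constant ``as $|\xi-z|$ is bounded above.'' But $|\xi-z|$ is of order $d:=\operatorname{dist}(\xi,\partial\Omega_\epsilon)$, which the hypothesis only constrains from below ($d\geqslant1$) and can be as large as $\operatorname{diam}(\Omega)/\epsilon$. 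In that regime $\mathcal{K}(\xi-z)\approx d^{-(n+2s)}$, which is not bounded below uniformly. The resulting estimate $h_\xi(x)\gtrsim d^{-(n+2s)}|x-\xi'|^{-(n+2s)}$ falls short of $w_\xi(x)\approx|x-\xi|^{-(n+2s)}$ by the factor $d^{-(n+2s)}$ (take, e.g., $x$ diametrically opposite $\xi'$ with $d\sim1/\epsilon$), so the constant $\tilde c$ you would extract degenerates as $\epsilon\to0$. For the same reason the claim that ``a fixed-size chunk of $\mathbb{R}^n\setminus\Omega_\epsilon$ always lies within bounded distance of both $x$ and $\xi$'' is false: $\operatorname{dist}(\xi,\mathbb{R}^n\setminus\Omega_\epsilon)=d$ is unbounded.

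The fix, which is what the paper does, is to anchor the restricted integral at $x$ itself rather than at a fixed $\xi'$: integrate over $B_{1/2}(x)\setminus\Omega_\epsilon$. There $\mathcal{K}(x-z)$ is bounded below by $\inf_{B_{1/2}}\mathcal{K}>0$ since $|x-z|<1/2$; meanwhile $|\xi-z|\leqslant|\xi-x|+1/2\leqslant 2|\xi-x|$ (using $|\xi-x|\geqslant d\geqslant1$), so by \eqref{decaying} and \eqref{bvsvs} one has $\mathcal{K}(\xi-z)\gtrsim|\xi-x|^{-(n+2s)}\gtrsim w_\xi(x)$, with constants independent of $d$. The remaining ingredient is the uniform exterior density estimate $|B_{1/2}(x)\setminus\Omega_\epsilon|\geqslant c_\star$ for all exterior $x$, independent of $\epsilon$ (formula (2.5) of~\cite{MR3393677}, a consequence of the smoothness of $\Omega$). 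Putting these together gives $h_\xi(x)\geqslant c\,w_\xi(x)$ on $\mathbb{R}^n\setminus\Omega_\epsilon$ with a constant independent of $x$, $\xi$ and $\epsilon$, after which your Maximum Principle step closes the argument.
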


\begin{proof}
	{F}rom~\cite[formula~(2.5)]{MR3393677}, we know that, for any $x\in\mathbb{R}^n\setminus\Omega_\epsilon $,
	\begin{equation}\label{dvsd}
		\left|B_{1/2}(x)\setminus\Omega_\epsilon\right|\geqslant c_\star,
	\end{equation}
for a suitable $c_\star>0$ independent of $x$ and $\epsilon$.

Now, given $x\in\mathbb{R}^n\setminus\Omega_\epsilon $, recalling~\eqref{bvsvs} and~\eqref{decaying}, by combining~\eqref{vdsvsdvs} and~\eqref{dvsd}, we obtain that
\begin{equation}\label{vdsvsddsv}
	\begin{split}
	&h_\xi(x)\geqslant \int_{B_{1/2}(x)\setminus\Omega_\epsilon}\mathcal{K}(x-z)\mathcal{K}(\xi-z)\, dz\geqslant C \int_{B_{1/2}(x)\setminus\Omega_\epsilon}|\xi-z|^{-(n+2s)}\mathcal{K}(x-z)\, dz\\
	&\quad\geqslant C \int_{B_{1/2}(x)\setminus\Omega_\epsilon}|\xi-x|^{-(n+2s)}\mathcal{K}(x-z)\, dz\geqslant Cw_\xi(x) \int_{B_{1/2}(x)\setminus\Omega_\epsilon}\mathcal{K}(x-z)\, dz\\
	&\quad\geqslant Cw_\xi(x)\inf\limits_{y\in B_{1/2}}\mathcal{K}(y)\left|B_{1/2}(x)\setminus\Omega_\epsilon\right|\geqslant c_\star Cw_\xi(x)\inf\limits_{y\in B_{1/2}}\mathcal{K}(y)
	\end{split}
\end{equation}
up to renaming $C$ from line to line, independently of $x,\epsilon$ and $\xi$.

Additionally, using~\eqref{bvsvs},~\eqref{decaying} and~\eqref{vdsvsdvs}, given $x\in\mathbb{R}^n\setminus\Omega_\epsilon $, one has that
\begin{equation}\label{sdvsd}
	\begin{split}
		h_\xi(x)&\leqslant\int_{B_{|x-\xi|/2}(x)\setminus\Omega_\epsilon}\mathcal{K}(x-z)\mathcal{K}(\xi-z)\, dz+\int_{\mathbb{R}^n\setminus B_{|x-\xi|/2}(x)}\mathcal{K}(x-z)\mathcal{K}(\xi-z)\, dz\\
		&\leqslant  C\left(\int_{B_{|x-\xi|/2}(x)\setminus\Omega_\epsilon}\frac{\mathcal{K}(x-z)}{|\xi-z|^{n+2s}}\, dz+\int_{\mathbb{R}^n\setminus B_{|x-\xi|/2}(x)}\frac{\mathcal{K}(\xi-z)}{|x-z|^{n+2s}}\, dz\right)\\
		&\leqslant C2^{n+2s}\left(\int_{B_{|x-\xi|/2}(x)\setminus\Omega_\epsilon}\frac{\mathcal{K}(x-z)}{|\xi-x|^{n+2s}}\, dz+\int_{\mathbb{R}^n\setminus B_{|x-\xi|/2}(x)}\frac{\mathcal{K}(\xi-z)}{|x-\xi|^{n+2s}}\, dz\right)\\
		&\leqslant C w_\xi(x)\int_{\mathbb{R}^n}\mathcal{K}(x)\,dx,
	\end{split}
\end{equation}
up to renaming $C$ independently of $x,\epsilon$ and $\xi$, from line to line. 

Moreover, by combining~\eqref{vdsvsddsv} with~\eqref{sdvsd}, we obtain that, for any $x\in\mathbb{R}^n\setminus\Omega_\epsilon$,
\begin{equation*}
	ch_\xi(x)\leqslant w_\xi(x)\leqslant c^{-1}h_\xi(x)
\end{equation*} 
for some constant $c$ independent of~$x,\epsilon$ and $\xi$. {F}rom this, in virtue of~\eqref{vdsvd}, \eqref{vsdv ds} and the Maximum Principle (see e.g. \cite{SVWZ23}), we obtain the desired result in~\eqref{gsdfds}.
\end{proof}

To show that the reduced functional $I_\epsilon(\bar{u}_\xi)$ has  a local minimum, we will prove that $\mathcal{H}_\epsilon$
attains, in a compact subset of $\Omega_\epsilon$, values smaller than the ones attained at the boundary.
This is described in the forthcoming Lemma~\ref{lemma compact set }, 
and the necessary upper and lower bounds for~$\mathcal{H}_\epsilon$ will be given in Lemmata~\ref{lemma bound of H1} and~\ref{lemma bound of H2}.

\begin{Lemma}\label{lemma bound of H1}
	Let  $\xi\in\Omega_\epsilon$ with 
	\begin{equation*}
		d:=\operatorname{dist}(\xi,\partial\Omega_\epsilon)>2.
	\end{equation*}
Then,  there exists $\bar{c}_1>0$ such that 
\begin{equation*}
 \mathcal{H}_\epsilon(\xi)\leqslant \frac{\bar{c}_1}{d^{n+4s}},
\end{equation*}
where $\mathcal{H}_\epsilon(\xi)$ is defined in~\eqref{bjbca}.
\end{Lemma}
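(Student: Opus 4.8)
The plan is to bound $\mathcal{H}_\epsilon(\xi) = \int_{\Omega_\epsilon} w_\xi^p(x)\, v_\xi(x)\, dx$ (recalling \eqref{dbsfbs}) by using the barrier comparison from Lemma~\ref{lemma comparable for h} to replace $v_\xi$ by a constant multiple of $h_\xi$, and then estimating $\int_{\Omega_\epsilon} w_\xi^p(x)\, h_\xi(x)\, dx$ directly. By Lemma~\ref{lemma comparable for h}, for $d \ge 1$ we have $v_\xi(x) \le \tilde{c}^{-1} h_\xi(x)$ on all of $\mathbb{R}^n$, so $\mathcal{H}_\epsilon(\xi) \le \tilde{c}^{-1}\int_{\Omega_\epsilon} w_\xi^p(x)\, h_\xi(x)\, dx$. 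The task then reduces to proving $\int_{\Omega_\epsilon} w_\xi^p(x)\, h_\xi(x)\, dx \le C d^{-(n+4s)}$.

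First I would plug in the definition $h_\xi(x) = \int_{\mathbb{R}^n\setminus\Omega_\epsilon}\mathcal{K}(x-z)\mathcal{K}(\xi-z)\, dz$ and use Fubini to write the quantity as $\int_{\mathbb{R}^n\setminus\Omega_\epsilon}\mathcal{K}(\xi-z)\Big(\int_{\Omega_\epsilon} w_\xi^p(x)\,\mathcal{K}(x-z)\, dx\Big)dz$. For the inner integral, since $w$ satisfies \eqref{vsdvdsvsd} we have $w_\xi^p = (-\Delta + (-\Delta)^s + 1)w_\xi$ and $\mathcal{K}$ is the fundamental solution \eqref{mathcal{K}}, so formally $\int_{\mathbb{R}^n} w_\xi^p(x)\,\mathcal{K}(x-z)\, dx = w_\xi(z) = w(z-\xi)$; restricting to $\Omega_\epsilon$ only decreases this (both factors are nonnegative), so the inner integral is $\le w(z-\xi)$. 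This gives
\begin{equation*}
\int_{\Omega_\epsilon} w_\xi^p(x)\, h_\xi(x)\, dx \le \int_{\mathbb{R}^n\setminus\Omega_\epsilon}\mathcal{K}(\xi-z)\, w(z-\xi)\, dz.
\end{equation*}
Now for $z \in \mathbb{R}^n\setminus\Omega_\epsilon$ we have $|\xi - z| \ge d > 2$, so by the decay bounds \eqref{decaying} and \eqref{bvsvs} the integrand is at most $C|\xi-z|^{-2(n+2s)}$. Splitting the region $\mathbb{R}^n\setminus\Omega_\epsilon \subset \mathbb{R}^n\setminus B_d(\xi)$ and integrating $|\xi-z|^{-2(n+2s)}$ over $\{|\xi-z|\ge d\}$ yields $C d^{n - 2(n+2s)} = C d^{-(n+4s)}$, which is exactly the claimed bound. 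Combining with the factor $\tilde{c}^{-1}$ gives $\bar{c}_1$.

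The step I expect to require the most care is the identity $\int_{\mathbb{R}^n} w_\xi^p(x)\,\mathcal{K}(x-z)\, dx = w_\xi(z)$, since it involves convolving with a Green function that has a (mild) singularity at the origin and pairing the distributional equation $(-\Delta+(-\Delta)^s+1)w_\xi = w_\xi^p$ against $\mathcal{K}$; one must check the integrability at infinity (guaranteed by $\mathcal{K}, w \in L^1$ together with the polynomial decay rates) and justify the integration by parts / use of the fundamental solution property, perhaps by mollification or by invoking the representation formula already available for $\bar{u}_\xi$ in \cite{SVWZ23}. Alternatively, one can avoid this identity entirely: estimate the inner integral $\int_{\Omega_\epsilon} w_\xi^p(x)\,\mathcal{K}(x-z)\, dx$ crudely via the decay of $w^p$ (which is $|x-\xi|^{-p(n+2s)}$, integrable against $\mathcal{K}$) to get a bound of the form $C(1+|z-\xi|)^{-(n+2s)}$ using a standard convolution-of-decaying-kernels lemma, which still produces the $d^{-(n+4s)}$ rate after the outer integration. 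Everything else is a routine splitting of integration domains into a ball of radius $|x-\xi|/2$ and its complement, exactly as in the proofs of Lemmata~\ref{lemma hepsilon} and~\ref{lemma comparable for h}.
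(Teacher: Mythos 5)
Your proof is correct, but it takes a genuinely different route from the paper's. The paper works on the $x$-integral directly: it splits $\Omega_\epsilon$ into $B_{d/2}(\xi)$ and its complement, bounds $h_\xi(x)\le Cd^{-(n+4s)}$ on the ball (using $|x-z|>d/2$ for $z\notin\Omega_\epsilon$), and bounds the far part using the crude uniform estimate $h_\xi\le Cd^{-(n+2s)}$ together with the decay of $w^p$, which yields a term of order $d^{-(pn+2s(p+1))}$; this is then absorbed because $p>1$ forces $pn+2s(p+1)>n+4s$. You instead apply Fubini to move the $z$-integral outside and exploit the identity $\mathcal{K}*w_\xi^p=w_\xi$, reducing everything to $\int_{\mathbb{R}^n\setminus\Omega_\epsilon}\mathcal{K}(\xi-z)w(z-\xi)\,dz$, which is then a single integral of $|\xi-z|^{-2(n+2s)}$ over $\{|\xi-z|\ge d\}$. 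Your version is slicker: it avoids the domain splitting and the comparison of exponents entirely, and the identity $w=\mathcal{K}*w^p$ is already in the paper's toolbox (it is formula~\eqref{dsfsfe} with $\mathcal{K}=\mathcal{K}_1+\mathcal{K}_2$, justified there by the fundamental-solution property, so the ``step requiring care'' you flag is routine here). Your alternative fallback, replacing the exact representation by a convolution-of-decaying-kernels bound giving $(1+|z-\xi|)^{-(n+2s)}$, is also sound (this is precisely~\cite[Lemma~5.1]{MR3393677}, used elsewhere in the paper, e.g.\ in the proof of Lemma~\ref{lemma gradient decay}). What the paper's more hands-on decomposition buys, however, is independence from the representation formula: the same splitting scheme is re-used for the companion lower bound (Lemma~\ref{lemma bound of H2}) and in the functional expansion (Theorem~\ref{th energy estimates}), where a one-sided convolution identity would not be applicable, so the paper keeps a uniform technique across the section.
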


\begin{proof}
{F}rom~\eqref{bjbca} and~\eqref{dbsfbs},  it follows that 
\begin{equation*}
	\mathcal{H}_\epsilon(\xi)=	\int_{\Omega_\epsilon}w^p_\xi(x)\left(w_\xi(x)-\bar{u}_\xi(x)\right)\, dx=\int_{\Omega_\epsilon}w^p_\xi(x)v_\xi(x)\, dx.
\end{equation*}
In view of
Lemma~\ref{lemma comparable for h}, one has that 
\begin{equation}\label{cghv}
	\mathcal{H}_\epsilon(\xi)\leqslant\tilde{c}^{-1}\int_{\Omega_\epsilon}w^p_\xi(x)h_\xi(x)\, dx.
\end{equation}

Moreover, we observe that, for any $x\in B_{d/2}(\xi)$ and $z\in \mathbb{R}^n\setminus\Omega_\epsilon$, \begin{equation*}
	|x-z|\geqslant |z-\xi|-|x-\xi|\geqslant \frac{|z-\xi|}2>
\frac{d}{2}.
\end{equation*} {F}rom this and~\eqref{decaying}, it follows that, for all~$x\in B_{d/2}(\xi)$,
\begin{equation*}
	\begin{split}
		&h_\xi(x)= \int_{\mathbb{R}^n\setminus\Omega_\epsilon} \mathcal{K}(x-z)\mathcal{K}(\xi-z)\, dz\leqslant \int_{\mathbb{R}^n\setminus B_d(\xi)} \mathcal{K}(x-z)\mathcal{K}(\xi-z)\, dz\\
		&\qquad\leqslant C\int_{\mathbb{R}^n\setminus B_d(\xi)} |z-\xi|^{-(2n+4s)}\, dz\leqslant \frac{C}{d^{n+4s}}, 
	\end{split}
\end{equation*} up to renaming~$C>0$.

As a consequence of this and~\eqref{bvsvs}, using~\eqref{decaying} again, we have that
\begin{equation}\label{chjhvh}
	\begin{split}
		&\int_{\Omega_\epsilon}w^p_\xi(x)h_\xi(x)\, dx\leqslant \int_{B_{d/2}(\xi)}w^p_\xi(x)h_\xi(x)\, dx+ \int_{\mathbb{R}^n\setminus B_{d/2}(\xi)}w^p_\xi(x)h_\xi(x)\, dx\\
		&\qquad\leqslant \frac{C}{d^{n+4s}}\int_{\mathbb{R}^n} w^p(x)\, dx+C\int_{\mathbb{R}^n\setminus B_{d/2}(\xi)}|x-\xi|^{-p(n+2s)}\left(d^{-(n+2s)}\int_{\mathbb{R}^n}\mathcal{K}(z)\, dz\right)\, dx\\
		&\qquad\leqslant C\left(\frac{1}{d^{n+4s}}+\frac{1}{d^{pn+2s(p+1)}}\right)\leqslant \frac{C}{d^{n+4s}},
	\end{split}
\end{equation}
where $C>0$ is a constant independent of~$\epsilon$ and~$\xi$,
possibly changing from line to line.

By combining~\eqref{cghv} with~\eqref{chjhvh}, we complete the proof of Lemma~\ref{lemma bound of H1}.
\end{proof}

\begin{Lemma}\label{lemma bound of H2}
	Let $\rho\in(0,1)$ and $\xi\in\Omega_\epsilon$ with 
	\begin{equation*}
		d:=\operatorname{dist}(\xi,\partial\Omega_\epsilon)\in \left[2,\frac{\rho}{\epsilon}\right].
	\end{equation*}
	
	Then, if $\rho>0$ is sufficiently small, only in dependence of~$\Omega$,
	there exists~$\bar{c}_2>0$ such that 
	\begin{equation}\label{sdcsdvsd}
	 \mathcal{H}_\epsilon(\xi)\geqslant 	\frac{\bar{c}_2}{d^{n+4s}},
	\end{equation}
	where $\mathcal{H}_\epsilon(\xi)$ is defined in~\eqref{bjbca}.
\end{Lemma}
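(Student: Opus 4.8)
The strategy mirrors the one used for the upper bound in Lemma~\ref{lemma bound of H1}, but now we need a lower bound, so we invoke the lower inequality in Lemma~\ref{lemma comparable for h}, namely $v_\xi(x)\geqslant\tilde c\, h_\xi(x)$, which reduces the task to bounding $\int_{\Omega_\epsilon}w_\xi^p(x)h_\xi(x)\,dx$ from below. Since every term in the integrand is nonnegative, it suffices to integrate over a conveniently small region where we can get good lower bounds on both $w_\xi^p$ and $h_\xi$. The natural choice is a fixed ball $B_{r_0}(\xi)$ with $r_0$ a small absolute constant (say $r_0=1$): on such a ball $w_\xi^p(x)\geqslant c>0$ by continuity and positivity of $w$, so the problem collapses to showing $h_\xi(x)\geqslant \bar c_2' d^{-(n+4s)}$ for $x$ in, say, $B_{1}(\xi)$.

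For the lower bound on $h_\xi(x)=\int_{\mathbb{R}^n\setminus\Omega_\epsilon}\mathcal{K}(x-z)\mathcal{K}(\xi-z)\,dz$, the plan is to restrict the integral to a region of $z$'s that lie outside $\Omega_\epsilon$ but are comparable in distance to both $x$ and $\xi$, of size $\sim d$. Concretely, pick a boundary point $\zeta_0\in\partial\Omega_\epsilon$ realizing $\operatorname{dist}(\xi,\partial\Omega_\epsilon)=d$, and consider $z$ in a ball $B_{cd}(\zeta_0)\setminus\Omega_\epsilon$ (or a suitable truncated cone/spherical cap at $\zeta_0$ pointing outward). Here is where the smallness of $\rho$ enters: because $d\leqslant\rho/\epsilon$, i.e. $\epsilon d\leqslant\rho$, the rescaled domain $\Omega_\epsilon$ near $\zeta_0$ looks, at scale $d$, like a half-space up to curvature corrections of order $\epsilon d\leqslant\rho$; choosing $\rho$ small (depending only on $\Omega$, via a uniform interior/exterior ball or Lipschitz condition for $\partial\Omega$) guarantees that $\big|B_{cd}(\zeta_0)\setminus\Omega_\epsilon\big|\geqslant c' d^{n}$ for absolute constants $c,c'>0$. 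For every such $z$ we have $|x-z|\leqslant |x-\xi|+|\xi-\zeta_0|+|\zeta_0-z|\leqslant (1+1+c)d\leqslant C d$ and similarly $|\xi-z|\leqslant Cd$; both are also $\geqslant$ a constant times $d$ (since $z\notin\Omega_\epsilon$ forces $|\xi-z|\geqslant d$, and $|x-z|\geqslant|\xi-z|-|x-\xi|\geqslant d-1\geqslant d/2$). Feeding these two-sided bounds into the lower decay estimate $\mathcal{K}(y)\geqslant C_1|y|^{-(n+2s)}$ from~\eqref{decaying} (valid since $|y|\geqslant d/2>1$) yields $\mathcal{K}(x-z)\mathcal{K}(\xi-z)\geqslant c\, d^{-2(n+2s)}$, and integrating over the set of measure $\geqslant c'd^n$ gives $h_\xi(x)\geqslant c\, d^{n}\cdot d^{-2(n+2s)}=c\, d^{-(n+4s)}$, as required.

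Combining the three ingredients — $v_\xi\geqslant\tilde c\, h_\xi$, $w_\xi^p\geqslant c$ on $B_1(\xi)$, and $h_\xi\geqslant c\,d^{-(n+4s)}$ on $B_1(\xi)$ — and integrating over $B_1(\xi)\subset B_d(\xi)\subset\Omega_\epsilon$ (note $d>2$ guarantees $B_1(\xi)\subset\Omega_\epsilon$) produces $\mathcal{H}_\epsilon(\xi)\geqslant\int_{B_1(\xi)}w_\xi^p v_\xi\geqslant c\, d^{-(n+4s)}$, which is~\eqref{sdcsdvsd}. The main obstacle is the geometric measure estimate $\big|B_{cd}(\zeta_0)\setminus\Omega_\epsilon\big|\gtrsim d^n$ uniformly in $\epsilon$ and $\xi$: one must make precise, using only the smoothness of $\partial\Omega$ and the constraint $\epsilon d\leqslant\rho$, that at the mesoscopic scale $d$ the complement of $\Omega_\epsilon$ occupies a fixed fraction of a ball centered at the nearest boundary point — this is exactly the point where the hypothesis $d\leqslant\rho/\epsilon$ with $\rho$ small (depending on $\Omega$) is indispensable, and it is the analogue, at scale $d$ rather than at scale $1$, of the density estimate~\eqref{dvsd} borrowed from \cite[formula~(2.5)]{MR3393677}. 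Everything else is a routine splitting-and-estimating exercise with the kernel bounds already recorded in~\eqref{decaying}.
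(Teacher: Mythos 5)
Your proposal follows essentially the same route as the paper: invoke the lower comparison $v_\xi\geqslant\tilde c\,h_\xi$ from Lemma~\ref{lemma comparable for h}, reduce to bounding $h_\xi$ from below near $\xi$, find a set of $z$ outside $\Omega_\epsilon$ of measure comparable to $d^n$ at distance comparable to $d$ from both $x$ and $\xi$, and feed this into the kernel lower bound~\eqref{decaying}. The one step you flag as ``the main obstacle'' --- the measure estimate $\big|B_{cd}(\zeta_0)\setminus\Omega_\epsilon\big|\geqslant c' d^n$ --- is exactly where the paper sidesteps a density argument: since $\partial\Omega$ is smooth it satisfies a uniform exterior ball condition with some radius $\rho_0$, which after scaling by $1/\epsilon$ becomes an exterior ball of radius $\rho_0/\epsilon\geqslant d$ (using $d\leqslant\rho/\epsilon$ with $\rho\leqslant\rho_0$), so one simply takes the full exterior tangent ball $\mathcal{B}$ of radius $d$ at the nearest boundary point $\eta$, which lies entirely in $\mathbb{R}^n\setminus\Omega_\epsilon$ and already has measure of order $d^n$ --- no curvature-correction estimate is needed.
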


\begin{proof}
	{F}rom
	Lemma~\ref{lemma comparable for h}, one has that 
	\begin{equation}\label{vdsfds}
	\mathcal{H}_\epsilon(\xi)
	=\int_{\Omega_\epsilon}w^p_\xi(x)v_\xi(x)\, dx
	\geqslant 	\tilde{c}\int_{\Omega_\epsilon}w^p_\xi(x)h_\xi(x)\, dx.
	\end{equation}

	We now claim that, for any $x\in B_{d/2}(\xi)$,
	\begin{equation}\label{sddsfsd}
		 h_\xi(x)\geqslant\frac{C}{d^{n+4s}},
	\end{equation}
	for some constant $C>0$ independent of~$x$, $\epsilon$ and~$\xi$.
	
	Indeed, since $\Omega$ has a smooth boundary, there exists $\rho_0>0$ such that every point of $\partial\Omega$ can be touched from outside by a ball of radius~$\rho_0$. Then,
	by scaling, we can touch~$\Omega_\epsilon$ from the outside with balls of radius~$\rho_0\epsilon^{-1}$, and so of radius~$d$ (indeed, if~$\rho$ is small enough, then~$d\leqslant \rho\epsilon^{-1}\leqslant \rho_0\epsilon^{-1}$).  
	
	Let $\eta\in\partial\Omega_\epsilon$ be
	such that $|\xi-\eta|=d$. We take a ball $\mathcal{B}$ of radius $d$ which touches $\Omega_\epsilon$ from outside at $\eta$ and we use~\eqref{decaying} to find that
	\begin{equation*}
		\begin{split}&
			h_\xi(x)= \int_{\mathbb{R}^n\setminus\Omega_\epsilon} \mathcal{K}(x-z)\mathcal{K}(\xi-z)\, dz\geqslant C \int_{\mathbb{R}^n\setminus \Omega_\epsilon} |z-\xi|^{-(2n+4s)}\, dz\\
			&\qquad\qquad \geqslant C\int_{\mathcal{B}} d^{-(2n+4s)}\, dz\geqslant \frac{C}{d^{n+4s}},
		\end{split}
	\end{equation*} up to renaming~$C>0$.
	This shows the validity of the claim in~\eqref{sddsfsd}.
	
	Owing to~\eqref{bvsvs} and~\eqref{sddsfsd}, by taking into account the fact that $w(x)=w(|x|)$ is non-increasing in $r=|x| $, 
 one has that
	\begin{equation*}
		\int_{\Omega_\epsilon}w^p_\xi(x)h_\xi(x)\, dx \geqslant \int_{B_{d/2}(\xi)}w^p_\xi(x)h_\xi(x)\, dx\geqslant \frac{C}{d^{n+4s}}\int_{B_{1}(\xi)} w^p(|x-\xi|)\, dx\geqslant \frac{C}{d^{n+4s}},
	\end{equation*}
	where $C>0$ is a constant independent of $\epsilon$ and $\xi$.
	As a consequence of this and~\eqref{vdsfds},  we obtain the desired result in~\eqref{sdcsdvsd}.
\end{proof}

We remark that the constant  $\rho$ given in Lemma~\ref{lemma bound of H2} only depends on~$\Omega$.

Now, we summarize the consequences of Lemmata~\ref{lemma bound of H1} and~\ref{lemma bound of H2} as follows:

\begin{Lemma}\label{lemma compact set }
Let $\delta\in(0,1)$ and 
\begin{equation}\label{ngdjs}
	\Omega_{\epsilon,\delta}:=\big\{x\in\Omega_\epsilon\,:\, \operatorname{dist}(x,\partial\Omega_\epsilon)>\delta/\epsilon\big\}.
\end{equation}

Then, if~$\delta$ is sufficiently small, only in dependence of~$\Omega$,
there exist~$c_1$, $c_2>0$ such that 
\begin{equation*}
	\min\limits_{\Omega_{\epsilon,\delta}}\mathcal{H}_\epsilon\leqslant c_1\epsilon^{n+4s}< c_2\left(\frac{\epsilon}{\delta}\right)^{n+4s}\leqslant \min\limits_{\partial\Omega_{\epsilon,\delta}}\mathcal{H}_\epsilon
.\end{equation*}

In particular,
$\mathcal{H}_\epsilon$ attains an interior minimum in $\Omega_{\epsilon,\delta}$.
\end{Lemma}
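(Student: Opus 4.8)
The plan is to deduce Lemma~\ref{lemma compact set } by combining the upper bound of Lemma~\ref{lemma bound of H1} with the lower bound of Lemma~\ref{lemma bound of H2}, after a careful bookkeeping of the relevant scales in terms of $\epsilon$, $\delta$ and the geometry of $\Omega$.

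\medskip

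\textbf{Step 1: the value on the boundary $\partial\Omega_{\epsilon,\delta}$.}
By the definition~\eqref{ngdjs}, every $x\in\partial\Omega_{\epsilon,\delta}$ satisfies $\operatorname{dist}(x,\partial\Omega_\epsilon)=\delta/\epsilon$. I would first check that, if $\delta$ is small enough (depending only on $\Omega$, to make room for the constant $\rho$ of Lemma~\ref{lemma bound of H2} and to ensure $\delta/\epsilon\geqslant 2$ for $\epsilon$ small), then $d:=\operatorname{dist}(x,\partial\Omega_\epsilon)=\delta/\epsilon\in[2,\rho/\epsilon]$, so Lemma~\ref{lemma bound of H2} applies and gives
\begin{equation*}
\mathcal{H}_\epsilon(x)\geqslant \frac{\bar{c}_2}{(\delta/\epsilon)^{n+4s}}=\bar{c}_2\left(\frac{\epsilon}{\delta}\right)^{n+4s}
\end{equation*}
for all such $x$. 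Taking the minimum over $\partial\Omega_{\epsilon,\delta}$ yields $\min_{\partial\Omega_{\epsilon,\delta}}\mathcal{H}_\epsilon\geqslant \bar{c}_2(\epsilon/\delta)^{n+4s}$, which is the rightmost inequality with $c_2:=\bar{c}_2$ (strictness is attained by shrinking $c_2$ slightly, or noting the middle inequality below is strict).

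\medskip

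\textbf{Step 2: a small value in the interior.}
To bound $\min_{\Omega_{\epsilon,\delta}}\mathcal{H}_\epsilon$ from above, I would exhibit a single test point $\xi^\star\in\Omega_{\epsilon,\delta}$ at which $\mathcal{H}_\epsilon$ is comparable to $\epsilon^{n+4s}$. The natural choice is a point whose distance to $\partial\Omega_\epsilon$ is of order $1/\epsilon$, i.e. $\xi^\star:=\xi_0/\epsilon$ where $\xi_0$ is an interior point of $\Omega$ with $\operatorname{dist}(\xi_0,\partial\Omega)=:d_0>\delta$ fixed (for instance a point realizing the inradius of $\Omega$, so $d_0$ depends only on $\Omega$). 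Then $\operatorname{dist}(\xi^\star,\partial\Omega_\epsilon)=d_0/\epsilon>\delta/\epsilon$, hence $\xi^\star\in\Omega_{\epsilon,\delta}$, and for $\epsilon$ small we have $d_0/\epsilon>2$, so Lemma~\ref{lemma bound of H1} applies:
\begin{equation*}
\min_{\Omega_{\epsilon,\delta}}\mathcal{H}_\epsilon\leqslant \mathcal{H}_\epsilon(\xi^\star)\leqslant \frac{\bar{c}_1}{(d_0/\epsilon)^{n+4s}}=\frac{\bar{c}_1}{d_0^{n+4s}}\,\epsilon^{n+4s}=:c_1\epsilon^{n+4s},
\end{equation*}
with $c_1:=\bar{c}_1/d_0^{n+4s}$ depending only on $n,s,p,\Omega$.

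\medskip

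\textbf{Step 3: chaining the inequalities and the interior minimum.}
Combining Steps 1 and 2, and using $\delta<1$ so that $\epsilon^{n+4s}<(\epsilon/\delta)^{n+4s}$, I need to arrange the constants so that $c_1\epsilon^{n+4s}<c_2(\epsilon/\delta)^{n+4s}$. This is where the ``$\delta$ sufficiently small'' hypothesis is spent: since $c_1$ and $\bar c_2$ are fixed once $\Omega$ (and $n,s,p$) is fixed, choosing $\delta$ small enough that $c_1<\bar c_2\,\delta^{-(n+4s)}$, i.e. $\delta^{n+4s}<\bar c_2/c_1$, forces the strict middle inequality with $c_2:=\bar c_2$. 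This gives the full chain
\begin{equation*}
\min_{\Omega_{\epsilon,\delta}}\mathcal{H}_\epsilon\leqslant c_1\epsilon^{n+4s}<c_2\left(\frac{\epsilon}{\delta}\right)^{n+4s}\leqslant \min_{\partial\Omega_{\epsilon,\delta}}\mathcal{H}_\epsilon.
\end{equation*}
Finally, $\mathcal{H}_\epsilon$ is continuous on the closure $\overline{\Omega_{\epsilon,\delta}}$ (from the properties of $w$ and of $v_\xi$, cf.~\eqref{dbsfbs}, which depend continuously on $\xi$), so it attains its minimum over the compact set $\overline{\Omega_{\epsilon,\delta}}$; since the minimum value is strictly below the minimum of the boundary values, the minimizer lies in the open set $\Omega_{\epsilon,\delta}$, i.e. $\mathcal{H}_\epsilon$ has an interior minimum there.

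\medskip

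\textbf{Main obstacle.} The only genuinely delicate point is the order of quantifiers between $\delta$, $\rho$, and $\epsilon$: one must first fix $\delta$ small depending only on $\Omega$ (small enough to use Lemma~\ref{lemma bound of H2} with $\rho$, to have $d_0>\delta$ for the interior test point, and to win the constant comparison $\delta^{n+4s}<\bar c_2/c_1$), and only then send $\epsilon\to 0$ so that the various lower bounds $d>2$ and $d\leqslant\rho/\epsilon$ hold. Everything else is a direct substitution into Lemmata~\ref{lemma bound of H1} and~\ref{lemma bound of H2}.
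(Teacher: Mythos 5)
Your proposal is correct and follows essentially the same route as the paper: pick an interior point realizing (a fixed fraction of) the inradius to invoke Lemma~\ref{lemma bound of H1} for the upper bound, invoke Lemma~\ref{lemma bound of H2} at distance $\delta/\epsilon$ for the lower bound on $\partial\Omega_{\epsilon,\delta}$, and then choose $\delta$ small to separate the two scales. The one point you spell out more explicitly than the paper --- the order of quantifiers between fixing $\delta$ and sending $\epsilon\to 0$, and the requirement $d_0>\delta$ so the test point actually lies in $\Omega_{\epsilon,\delta}$ --- is a genuine clarification, but the argument is the same.
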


\begin{proof}
	Let $ P_0\in\Omega$ be   such that \begin{equation*}
		d_0:=\operatorname{dist}(P_0,\partial\Omega)=\max\limits_{P\in\Omega}\operatorname{dist}(P,\partial\Omega). 
	\end{equation*}
By scaling, the maximal distance that a point of $\Omega_\epsilon$ may attain from the boundary of $\Omega_\epsilon$ is $d_0/\epsilon$. Let $P$ be such that 
\begin{equation*}
	\operatorname{dist}(P,\partial\Omega_\epsilon)=\frac{d_0}{\epsilon}.
\end{equation*}
For $\delta>0$ small enough, we have that $P\in\Omega_{\epsilon,\delta}$. {F}rom Lemma~\ref{lemma bound of H1}, it follows that 
\begin{equation}\label{bdfgsdg}
	\min\limits_{\Omega_{\epsilon,\delta}}\mathcal{H}_\epsilon\leqslant \mathcal{H}_\epsilon(P)\leqslant\frac{\bar{c}_1\epsilon^{n+4s}}{d_0^{n+4s}}.
\end{equation}

In addition, owing to Lemma~\ref{lemma bound of H2}, if~$\delta$
is sufficiently small, we see that 
\begin{equation}\label{dghdf}
	\min\limits_{\partial\Omega_{\epsilon,\delta}}\mathcal{H}_\epsilon\geqslant \frac{\bar{c}_2\epsilon^{n+4s}}{\delta^{n+4s}}.
\end{equation}
The desired result follows from~\eqref{bdfgsdg} and~\eqref{dghdf} for $\delta$ appropriately small.
\end{proof}

\subsection{Functional expansion}

Based on the above analysis, this section is devoted to obtaining an
asymptotic expansion of~$I_\epsilon(\bar{u}_\xi)$.

\begin{Theorem}\label{th energy estimates}
	Let $\delta\in(0,1)$ and $\xi\in\Omega_\epsilon$  with $d:=$ dist$(\xi,\partial\Omega_\epsilon)\geqslant \delta/\epsilon>2 $.
	
	Then, 
	\begin{equation}\label{vdsvsd}
		I_\epsilon(\bar{u}_\xi)=I(w)+\frac{1}{2}\mathcal{H}_\epsilon(\xi)+o(\epsilon^{n+4s})
	\end{equation}
	as $\epsilon\rightarrow 0$. Here $I$ is given in~\eqref{dsvsbs},
	$w$ is the solution of~\eqref{vsdvdsvsd} and $\mathcal{H}_\epsilon(\xi)$ is as defined in~\eqref{bjbca}.
\end{Theorem}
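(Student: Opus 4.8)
The plan is to expand $I_\epsilon(\bar u_\xi)$ by plugging the defining equation~\eqref{sdvbsd} for $\bar u_\xi$ into the functional~\eqref{functional} and carefully tracking the error terms. First, using that $\bar u_\xi$ solves $-\Delta\bar u_\xi+(-\Delta)^s\bar u_\xi+\bar u_\xi=w_\xi^p$ in $\Omega_\epsilon$ with zero Dirichlet datum, I would test this equation against $\bar u_\xi$ itself to rewrite the quadratic part: $\|\bar u_\xi\|_{H^1_0}^2=\int_{\Omega_\epsilon}w_\xi^p\bar u_\xi\,dx$. This gives
\begin{equation*}
I_\epsilon(\bar u_\xi)=\frac12\int_{\Omega_\epsilon}w_\xi^p\bar u_\xi\,dx-\frac1{p+1}\int_{\Omega_\epsilon}\bar u_\xi^{p+1}\,dx.
\end{equation*}
Then I would substitute $\bar u_\xi=w_\xi-v_\xi$ (recall~\eqref{dbsfbs}) and compare with the analogous computation for $I(w)$, where $w$ solves~\eqref{vsdvdsvsd} on all of $\R^n$: testing that equation against $w$ gives $\int_{\R^n}(|\nabla w|^2+w^2)\,dx+\iint\frac{|w(x)-w(y)|^2}{|x-y|^{n+2s}}=\int_{\R^n}w^{p+1}$, so $I(w)=\big(\frac12-\frac1{p+1}\big)\int_{\R^n}w^{p+1}\,dx$.

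The core of the argument is then a bookkeeping of the difference $I_\epsilon(\bar u_\xi)-I(w)$. The main terms are: (a) the ``tail'' contribution $\int_{\R^n\setminus\Omega_\epsilon}w_\xi^{p+1}\,dx$ coming from restricting integrals from $\R^n$ to $\Omega_\epsilon$, which by~\eqref{bvsvs} is $O(d^{-(p+1)(n+2s)+n})$; since $d\ge\delta/\epsilon$ and $(p+1)(n+2s)-n>n+4s$ (using $p>1$, hence $(p+1)(n+2s)-n-(n+4s)=(p-1)(n+2s)+ (n-2n) \cdot 0\ldots$ — more precisely $(p+1)(n+2s)-2n-4s=(p-1)n+2s(p-1)=(p-1)(n+2s)>0$), this is $o(\epsilon^{n+4s})$; (b) the genuinely leading term $\frac12\int_{\Omega_\epsilon}w_\xi^p v_\xi\,dx=\frac12\mathcal H_\epsilon(\xi)$; (c) the quadratic-in-$v_\xi$ remainder and the Taylor remainder from expanding $(w_\xi-v_\xi)^{p+1}$ around $w_\xi^{p+1}$, which should be controlled by $\int_{\Omega_\epsilon}w_\xi^{p-1}v_\xi^2\,dx$ (when $p\ge2$) or a fractional analogue (when $p<2$), plus higher powers of $v_\xi$. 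Using Lemma~\ref{lemma comparable for h} and Lemma~\ref{lemma hepsilon}, $v_\xi\le\tilde c^{-1}h_\xi\le C(1+|x-\xi|)^{-(n+2s)}$, and combining with the upper bound $h_\xi\le Cd^{-(n+2s)}$ on the bulk $B_{d/2}(\xi)$ gives $v_\xi^2\le C d^{-(n+2s)}h_\xi$ there, so the quadratic remainder is bounded by $Cd^{-(n+2s)}\mathcal H_\epsilon(\xi)$, which by Lemma~\ref{lemma bound of H1} is $O(d^{-(2n+6s)})=o(\epsilon^{n+4s})$ since $2n+6s>n+4s$.

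The step I expect to be the main obstacle is the careful treatment of the nonlinear term $\int_{\Omega_\epsilon}\big(w_\xi^{p+1}-(w_\xi-v_\xi)^{p+1}\big)\,dx$ and, relatedly, showing that \emph{all} error terms beyond $\frac12\mathcal H_\epsilon(\xi)$ are genuinely $o(\epsilon^{n+4s})$ uniformly in $\xi$ in the admissible range. For $p\ge2$ the pointwise inequality $|a^{p+1}-(a-b)^{p+1}-(p+1)a^p b|\le C(a^{p-1}b^2+b^{p+1})$ for $0\le b\le a$ does the job cleanly; for $1<p<2$ one must instead use $|a^{p+1}-(a-b)^{p+1}-(p+1)a^pb|\le C b^{p+1}$ (Hölder-type estimate), and then verify that $\int_{\Omega_\epsilon}v_\xi^{p+1}\,dx\le C\int_{\Omega_\epsilon}h_\xi^{p+1}\,dx$ is $o(\epsilon^{n+4s})$ — this requires splitting into $B_{d/2}(\xi)$, where $h_\xi^{p+1}\le C d^{-(p+1)(n+2s)}$ so the contribution is $O(d^{-(p+1)(n+2s)+n})=o(\epsilon^{n+4s})$ by the exponent inequality from (a), and the complementary region, where the rapid decay of $w_\xi^p$ (not present here) is replaced by the decay of $h_\xi$ itself via Lemma~\ref{lemma hepsilon}, giving $\int_{\R^n\setminus B_{d/2}(\xi)}(1+|x-\xi|)^{-(p+1)(n+2s)}\,dx\le Cd^{-(p-1)(n+2s)-4s}\cdot d^{-(n+2s)}$, again negligible. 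Once these estimates are assembled, the linear term $(p+1)\int_{\Omega_\epsilon}w_\xi^pv_\xi$ combines with the quadratic part to yield precisely $\frac12\mathcal H_\epsilon(\xi)$ after the cancellations, and~\eqref{vdsvsd} follows; I would also note in passing that by Lemma~\ref{lemma bound of H1}, $\mathcal H_\epsilon(\xi)=O(\epsilon^{n+4s})$, so the leading term itself is of the announced order and the expansion is meaningful.
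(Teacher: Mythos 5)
Your overall strategy and decomposition match the paper's almost exactly: you test~\eqref{sdvbsd} against $\bar u_\xi$ to write $\|\bar u_\xi\|_{H^1_0}^2=\int_{\Omega_\epsilon}w_\xi^p\bar u_\xi$, use $I(w)=(\tfrac12-\tfrac1{p+1})\int_{\R^n}w^{p+1}$, and then track the tail term $A_1=\int_{\R^n\setminus\Omega_\epsilon}w_\xi^{p+1}$, the leading term $\tfrac12\mathcal H_\epsilon(\xi)$, and a Taylor remainder. The paper isolates the same three pieces in~\eqref{dsvsdvds} and bounds them using Lemmata~\ref{lemma hepsilon} and~\ref{lemma comparable for h}.

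There is, however, a genuine error in your treatment of the Taylor remainder when $1<p<2$. You invoke the pointwise bound
\begin{equation*}
\big|a^{p+1}-(a-b)^{p+1}-(p+1)a^p b\big|\leqslant C\,b^{p+1}
\qquad\text{for }0\leqslant b\leqslant a,\ 1<p<2,
\end{equation*}
but this is false: by Taylor's theorem with Lagrange remainder, the left-hand side equals $\tfrac{(p+1)p}{2}\zeta^{p-1}b^2$ for some $\zeta\in[a-b,a]$, which for fixed $a$ behaves like $b^2$ as $b\to0$, whereas $b^{p+1}=o(b^2)$ since $p+1>2$. (Concretely, with $p=3/2$, $a=1$, $b\to0$, the left-hand side is $\sim\tfrac{15}{8}b^2$ while the right-hand side is $\sim b^{5/2}$.) Consequently, bounding the remainder by $\int_{\Omega_\epsilon}v_\xi^{p+1}$ is not justified.

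The correct estimate, and the one the paper uses, is $\zeta^{p-1}b^2\leqslant a^{p-1}b^2$, which is valid for all $p>1$ since $\zeta\leqslant a$ and $t\mapsto t^{p-1}$ is nondecreasing. For $1<p\leqslant2$ one then further bounds $a^{p-1}b^2=a^{p-1}b^{2-p}b^p\leqslant a^{p-1}a^{2-p}b^p=a\,b^p$ (using $b\leqslant a$ and $2-p\geqslant0$), which reduces the remainder to $\int_{\Omega_\epsilon}w_\xi\,v_\xi^p$. This integral is indeed $o(\epsilon^{n+4s})$ by the same splitting into $B_{d/2}(\xi)$ and its complement that you already carry out: on the ball one uses $h_\xi\leqslant Cd^{-(n+4s)}$, and off the ball one uses $h_\xi,w_\xi\leqslant C(1+|x-\xi|)^{-(n+2s)}$, giving $Cd^{-p(n+4s)}+Cd^{n-(p+1)(n+2s)}$, both negligible for $p>1$. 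Once you replace the false pointwise bound by $a^{p-1}b^2\leqslant ab^p$, your argument goes through and matches the paper's. The rest of your bookkeeping (the exponent $(p+1)(n+2s)-n>n+4s\Leftrightarrow(p-1)(n+2s)>0$ for $A_1$, and the $p\geqslant2$ remainder via $\int w_\xi^{p-1}v_\xi^2\leqslant Cd^{-2(n+2s)}$) is sound.
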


\begin{proof}
	{F}rom~\eqref{sdvbsd} and~\eqref{functional}, we see that 
	\begin{equation*}
		\begin{split}
			I_\epsilon(\bar{u}_\xi)&=\frac{1}{2}\|\bar{u}_\xi\|_{H^1_0}^2-\frac{1}{p+1}\int_{\Omega_\epsilon}\bar{u}_\xi^{p+1}(x)\, dx\\
			&=\frac{1}{2}\int_{\Omega_\epsilon}w^p_\xi(x)\bar{u}_\xi(x)\, dx-\frac{1}{p+1}\int_{\Omega_\epsilon}\bar{u}_\xi^{p+1}(x)\, dx.
		\end{split}
	\end{equation*}
	Since $w$ is a solution of~\eqref{vsdvdsvsd}, one  has that 
	\begin{equation*}
		I(w)=\left(\frac{1}{2}-\frac{1}{p+1}\right)\int_{\mathbb{R}^n}w_\xi^{p+1}(x)\, dx.
	\end{equation*}
This gives that
	\begin{equation}\label{dsvsdvds}
		\begin{split}
		I_\epsilon(\bar{u}_\xi)
		&=I(w)-\left(\frac{1}{2}-\frac{1}{p+1}\right)\int_{\mathbb{R}^n\setminus\Omega_\epsilon}w_\xi^{p+1}(x)\, dx-\frac{1}{2}\int_{\Omega_\epsilon}w^p_\xi(x)\left(w_\xi(x)-\bar{u}_\xi(x)\right)\, dx\\
		&\qquad \qquad\qquad +\frac{1}{p+1}\int_{\Omega_\epsilon}\left(w^{p+1}_\xi(x)-\bar{u}_\xi^{p+1}(x)\right)\, dx\\
		&=: I(w)-\left(\frac{1}{2}-\frac{1}{p+1}\right)A_1-\frac{1}{2}\mathcal{H}_\epsilon(\xi)+\frac{1}{p+1}A_2,
		\end{split}
	\end{equation}
where $\mathcal{H}_\epsilon(\xi)$ is given by~\eqref{bjbca}, 
\begin{equation*}
	A_1:=\int_{\mathbb{R}^n\setminus\Omega_\epsilon}w_\xi^{p+1}(x)\, dx\quad \text{and} \quad A_2:=\int_{\Omega_\epsilon}\left(w^{p+1}_\xi(x)-\bar{u}_\xi^{p+1}(x)\right)\, dx.
\end{equation*}

Let us first estimate $A_1$. Owing to~\eqref{bvsvs}, one deduces that
\begin{equation}\label{sdvsdv}
		A_1\leqslant \int_{\R^n\setminus B_d(\xi) }w_\xi^{p+1}(x)\, dx
		\leqslant \int_{\R^n\setminus B_d(\xi)}\frac{C}{|x-\xi|^{(n+2s)(p+1)}}\, dx\leqslant \frac{C}{d^{np+2s(p+1)}}\leqslant\frac{C\epsilon^{np+2s(p+1)}}{\delta^{np+2s(p+1)}}.
\end{equation} 

As for $A_2$, in the light of Lemma~\ref{lemma comparable for h},
we see that $ \bar{u}_\xi(x)< w_\xi(x)$ in $\mathbb{R}^n$. Thus, we expand~$w_\xi^{p+1}(x)$ in the following way
\begin{equation*}
	w_\xi^{p+1}(x)=\bar{u}^{p+1}_\xi(x)+(p+1)w_\xi^{p}(x)(w_\xi(x)-\bar{u}_\xi(x))+c_p\alpha_\xi^{p-1}(x)(w_\xi(x)-\bar{u}_\xi(x))^2
,\end{equation*}
where $0\leqslant\bar{u}_\xi\leqslant \alpha_\xi\leqslant w_\xi$ and $c_p$ is a positive constant depending on $p$. 

Therefore, we have that
\begin{equation}\label{bsdfds}
	A_2=(p+1)\mathcal{H}_\epsilon(\xi)+c_p\int_{\Omega_\epsilon}\alpha_\xi^{p-1}(x) (w_\xi(x)-\bar{u}_\xi(x))^2\, dx.
\end{equation}

Moreover, since $\alpha_\xi\leqslant w_\xi$ and $\bar{u}_\xi<w_\xi $, employing Lemma~\ref{lemma comparable for h}
and the decay estimates in~\eqref{bvsvs} and~\eqref{decaying}, one has that, when $1<p\leqslant 2$,
\begin{equation*}\begin{split}&
c_p\int_{\Omega_\epsilon}\alpha_\xi^{p-1}(x) (w_\xi(x)-\bar{u}_\xi(x))^2\, dx
		\leqslant  c_p\int_{\Omega_\epsilon}w_\xi(x) v^p_\xi(x)\, dx\\&\qquad \qquad
		\leqslant c_p \tilde{c}^{-p}
\int_{\Omega_\epsilon}w_\xi(x) h^p_\xi(x)\, dx
\leqslant Cd^{-p(n+4s)}\leqslant C\frac{\epsilon^{p(n+4s)}}{\delta^{p(n+4s)}}
\end{split}
\end{equation*}
and, when $p\geqslant 2$,
\begin{equation*}
	\begin{split}
		&c_p\int_{\Omega_\epsilon}\alpha_\xi^{p-1}(x) (w_\xi(x)-\bar{u}_\xi(x))^2\, dx
		\leqslant  c_p\int_{\Omega_\epsilon}w^{p-1}_\xi(x) v^2_\xi(x)\, dx\\
		&\qquad \qquad\leqslant c_p\tilde{c}^{-2}\int_{\Omega_\epsilon}w^{p-1}_\xi(x) h^2_\xi(x)\, dx\leqslant Cd^{-2(n+2s)}\leqslant C\frac{\epsilon^{2(n+2s)}}{\delta^{2(n+2s)}},
	\end{split}
\end{equation*}
for some constant $C>0$ independent of $\epsilon$ and $ \xi$.

{F}rom the above two formulas, combining~\eqref{dsvsdvds},~\eqref{sdvsdv}  and~\eqref{bsdfds}, we obtain the desired result in~\eqref{vdsvsd}.
\end{proof}

\section{Decay estimates for the ground state $w$}\label{sec:Decay of the ground state}

In this section, our goal is to give some basic decay properties of the ground state and of its derivatives.
For this, fix $\xi\in\Omega_\epsilon$ 
and recall that~$w_\xi$ is the ground state solution centered at $\xi$. 
Define, for any~$i=1,\cdots, n$,
\begin{equation}\label{dvsdvsd}
	Z_i:=\frac{\partial w_\xi}{\partial x_i} .
\end{equation}

We first establish the following decay estimate for~$Z_i$.

\begin{Lemma}\label{lemma decay of w1}
	There exists a positive constant $C$ such that, for any $i=1,\cdots, n$,
	\begin{equation*}
		|Z_i(x)|\leqslant \frac{C}{|x-\xi|^{n+2s}} \qquad \text{for any } 
		x\in\R^n\setminus B_1(\xi).
	\end{equation*}
\end{Lemma}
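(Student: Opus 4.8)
The plan is to bound the derivative $Z_i = \partial_{x_i} w_\xi$ by reusing the decay of $w$ already recorded in~\eqref{bvsvs} together with the decay of the kernel~$\mathcal{K}$ in~\eqref{decaying}--\eqref{dfdsfs}, via the representation of $w_\xi$ as a convolution against $\mathcal{K}$. Since $w$ solves~\eqref{vsdvdsvsd}, it satisfies
\begin{equation*}
\Big(-\Delta + (-\Delta)^s + 1\Big) w = w^p \qquad \text{in } \mathbb{R}^n,
\end{equation*}
so that $w = \mathcal{K} * w^p$, and consequently $w_\xi = \mathcal{K} * w_\xi^p$. Differentiating, $Z_i = (\partial_i \mathcal{K}) * w_\xi^p$, and the gradient bound~\eqref{dfdsfs} tells us $|\nabla \mathcal{K}(y)| \leqslant C_2 |y|^{-(n+2s+1)}$ for $|y| > 1$, while $\mathcal{K} \in C^\infty(\mathbb{R}^n\setminus\{0\})$ handles the region $|y|\leqslant 1$ (where $\nabla\mathcal{K}$ is at worst integrable, since near the origin $\mathcal{K}$ behaves like the Newtonian-type kernel of a second-order operator).

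\textbf{Key steps.} First I would write, for $x$ with $|x-\xi|\geqslant 1$,
\begin{equation*}
Z_i(x) = \int_{\mathbb{R}^n} \partial_i\mathcal{K}(x-y)\, w_\xi^p(y)\, dy,
\end{equation*}
and split the integral into the three regions $\{|y-x| \leqslant 1\}$, $\{1 < |y-x| \leqslant |x-\xi|/2\}$, and $\{|y-x| > |x-\xi|/2\}$. On the first region, $w_\xi^p(y)$ is comparable to $w_\xi^p(x) \lesssim |x-\xi|^{-p(n+2s)}$ (using monotonicity of $w$ and $|y-\xi|\geqslant |x-\xi|-1 \geqslant |x-\xi|/2$ once $|x-\xi|\geqslant 2$; the case $|x-\xi|\in[1,2]$ is a compact region where everything is bounded), and $\nabla\mathcal{K}$ is integrable on the unit ball, so this piece is $\lesssim |x-\xi|^{-p(n+2s)}$, which is stronger than $|x-\xi|^{-(n+2s)}$ since $p>1$. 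On the second region, $|y-\xi|\geqslant |x-\xi|/2$ again, so $w_\xi^p(y)\lesssim |x-\xi|^{-p(n+2s)}$ can be pulled out and $\int |\nabla\mathcal{K}(x-y)|\,dy$ over that annulus is finite (indeed bounded uniformly), again giving a faster-than-required decay. On the third region, $|x-y|\geqslant |x-\xi|/2$ lets us use $|\partial_i\mathcal{K}(x-y)|\lesssim |x-y|^{-(n+2s+1)}\lesssim |x-\xi|^{-(n+2s+1)}$, pulled out of the integral, leaving $\int_{\mathbb{R}^n} w_\xi^p(y)\,dy = \|w^p\|_{L^1}<\infty$ (finite by~\eqref{bvsvs} since $p(n+2s)>n$), so this piece is $\lesssim |x-\xi|^{-(n+2s+1)}$. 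Collecting the three contributions yields $|Z_i(x)|\lesssim |x-\xi|^{-(n+2s)}$ for $|x-\xi|\geqslant 1$, as claimed.

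\textbf{Main obstacle.} The delicate point is the behaviour of $\nabla\mathcal{K}$ near the origin, which is not covered by~\eqref{dfdsfs}; one must check that $\nabla\mathcal{K}\in L^1_{\rm loc}(\mathbb{R}^n)$ so that the convolution $(\partial_i\mathcal{K})*w_\xi^p$ makes sense and the near-diagonal piece is controlled. For the mixed operator $-\Delta+(-\Delta)^s+1$ the Laplacian dominates at short scales, so $\mathcal{K}$ has the same local singularity as the Bessel/Newtonian kernel of $-\Delta+1$, namely $\mathcal{K}(y)\sim c_n|y|^{2-n}$ (or $\log$ for $n=2$) as $y\to0$, whence $|\nabla\mathcal{K}(y)|\lesssim |y|^{1-n}$, which is indeed locally integrable; this can be extracted from~\cite{DSVZ24} or argued directly by comparing with the classical case. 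An alternative, avoiding the convolution formula entirely, is to differentiate the equation: $Z_i$ solves the linear equation $(-\Delta+(-\Delta)^s+1)Z_i = p w_\xi^{p-1} Z_i$ with a right-hand side that decays like $|x-\xi|^{-(p-1)(n+2s)}|Z_i|$, and then run a barrier/comparison argument against $C|x-\xi|^{-(n+2s)}$ using the Maximum Principle (as in~\cite{SVWZ23}); I would present whichever is cleaner, but the convolution route seems most transparent given the kernel estimates already in hand.
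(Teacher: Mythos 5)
Your primary route---bounding $Z_i=(\partial_i\mathcal{K})*w_\xi^p$ by a three-region split---is a genuinely different strategy from the paper's, and the outer-region estimates you give are correct. However, the obstacle you flag at the end is a real gap relative to what the paper makes available: the kernel estimates~\eqref{decaying}--\eqref{dfdsfs} are stated only for $|x|>1$, and no near-origin bound for $\nabla\mathcal{K}$ is quoted anywhere. It is precisely to sidestep this that, when the paper itself later needs a convolution argument (in the proof of Lemma~\ref{lemma decay of w}), it introduces the cutoff decomposition $\mathcal{K}=\mathcal{K}_1+\mathcal{K}_2$ of~\eqref{defkappauno}--\eqref{defkappadue}, differentiates only the smooth piece $\mathcal{K}_1$, and transfers the derivative onto $w_\xi^p$ under the compactly supported singular piece $\mathcal{K}_2$. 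If you carry out that same decomposition here you get $Z_i=(\partial_i\mathcal{K}_1)*w_\xi^p+\mathcal{K}_2*(pw_\xi^{p-1}Z_i)$: the second term then bounds $|Z_i|$ in terms of $|Z_i|$ itself, and closing that loop requires precisely the a priori decay you are trying to establish---which is why the paper needs the present lemma as the seed for the bootstrap in Lemma~\ref{lemma decay of w}. In short, the convolution route cannot serve as a self-contained proof of Lemma~\ref{lemma decay of w1} unless you first prove, or locate in~\cite{DSVZ24}, a near-origin integrability/asymptotic for $\nabla\mathcal{K}$; your heuristic (Laplacian dominating at high frequency) is plausible but not in the paper's toolbox.

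Your second, alternative route---differentiate the equation and run a barrier argument---is essentially what the paper actually does, with one refinement worth noting. You propose comparing $|Z_i|$ against the raw power $C|x-\xi|^{-(n+2s)}$, but it is not immediate that this is a supersolution for the mixed operator. The paper instead uses the Kato-type inequality~\eqref{vhj} together with the decay of $w$ to show that, outside a large ball $B_R(\xi)$, $|Z_i|$ satisfies $-\Delta |Z_i|+(-\Delta)^s|Z_i|+\frac12|Z_i|\leqslant 0$, and then compares with $C_0\,\mathcal{K}_{1/2}(\cdot-\xi)$, where $\mathcal{K}_{1/2}$ is the fundamental solution of $-\Delta+(-\Delta)^s+\frac12$. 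This is automatically a supersolution away from $\xi$; taking $C_0$ large enough so that $C_0\mathcal{K}_{1/2}\geqslant |Z_i|$ on $B_R(\xi)$, an integral maximum-principle argument (testing with the negative part of $C_0\mathcal{K}_{1/2}(\cdot-\xi)-|Z_i|$) gives $|Z_i|\leqslant C_0\mathcal{K}_{1/2}(\cdot-\xi)$ everywhere, and the known decay of $\mathcal{K}_{1/2}$ from~\cite[Lemma~4.4]{DSVZ24} delivers the lemma. If you write up the barrier proof, replacing the power barrier by this fundamental-solution barrier removes the need to verify the supersolution property by hand.
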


\begin{proof}
	We observe that, for a given $\xi\in\Omega_\epsilon$, the function~$w_\xi$ solves
	\begin{equation*}
		-\Delta	w_\xi+(-\Delta)^s w_\xi +w_\xi= w_\xi^p\qquad \text{in }\mathbb{R}^n.
	\end{equation*}
	In view of \cite[Theorem~1.1]{SZZ24}, we know that $w_\xi\in H^2(\mathbb{R}^n)\cap C^2(\mathbb{R}^n)$. In particular, 
	\begin{equation}\label{hih}
		\|w_\xi\|_{L^\infty(\mathbb{R}^n)}+	\|w_\xi\|_{H^2(\mathbb{R}^n)}\leqslant C(\|w_\xi\|_{H^1(\mathbb{R}^n)},n,s,p)\leqslant C(\|w\|_{H^1(\mathbb{R}^n)},n,s,p).
	\end{equation}
	Also, its derivatives $Z_i=\frac{\partial w_\xi}{\partial x_i}\in H^1(\mathbb{R}^n)\cap C^1(\mathbb{R}^n)$ are solutions of the linearized
	equation
	\begin{equation}\label{fdbd}
		-\Delta Z_i+(-\Delta)^s Z_i+Z_i=pw_\xi^{p-1}Z_i\qquad \text{in } \mathbb{R}^n.
	\end{equation}
	Moreover, owing to~\eqref{hih}, by closely following the proof of Lemmata~A.1 and~C.1 in \cite{SZZ24}, we infer that $Z_i\in L^\infty(\mathbb{R}^n)\cap H^2(\mathbb{R}^n)$ and 
	\begin{equation*}
		\|Z_i\|_{H^2(\mathbb{R}^n)}+	\|Z_i\|_{L^\infty(\mathbb{R}^n)}\leqslant C(\|Z_i\|_{H^1(\mathbb{R}^n)},\|w_\xi\|_{L^\infty(\mathbb{R}^n)},n,s,p)\leqslant C(\|w\|_{H^1(\mathbb{R}^n)},n,s,p).
	\end{equation*}
	
	Furthermore, for any $f\in H^{2}(\mathbb{R}^n)$, we have the general Kato-type inequality 
	\begin{equation}\label{vhj}
		(-\Delta)|f|+(-\Delta)^s|f|\leqslant ({\rm sgn }\, f)\left(-\Delta f+(-\Delta)^s f\right)\quad \text{a.e. on } \mathbb{R}^n 
	\end{equation} 
	where 
	\begin{equation*}
		{\rm sgn }\, f=\begin{cases}
			1\qquad &{\mbox{if }}f> 0,\\
			-1 &{\mbox{if }}f<0,\\
			0&{\mbox{if }}f=0.
		\end{cases}
	\end{equation*}
	
Additionally, recalling the decay behaviour of $w_\xi$ in~\eqref{bvsvs}, one finds that there exists~$R>0$ independent of $\xi$ such that 
	\begin{equation*}
		pw_\xi^{p-1}(x)\leqslant \frac{1}{2}\qquad \text{for all }x\in \mathbb{R}^n\setminus B_R(\xi).
	\end{equation*}
Hence, by using~\eqref{vhj}, we see that, a.e. in~$\mathbb{R}^n\setminus B_R(\xi)$,
	\begin{equation}\label{gege}\begin{split}
		&(-\Delta)|Z_i|+(-\Delta)^s|Z_i|+\frac{1}{2}|Z_i|
		\leqslant ({\rm sgn }\, Z_i)\left(-\Delta Z_i+(-\Delta)^s Z_i\right)
		\\&\qquad=-({\rm sgn }\, Z_i)Z_i \left(1-p w_\xi^{p-1}\right)
		\leqslant 0 .
\end{split}	\end{equation}
	
	Now we claim that 
	\begin{equation}\label{bhjkj}
		|Z_i|(x)\leqslant  \frac{C(n,s,R,\|Z_i\|_{L^\infty(\mathbb{R}^n)})}{|x-\xi|^{n+2s}} \quad  \text{on } \mathbb{R}^n\setminus B_1(\xi),
	\end{equation}
	which will give the desired result of Lemma~\ref{lemma decay of w1}.
	
The estimates in~\eqref{bhjkj} follows from a comparison argument,
whose details we now present.  
	We denote by~$\mathcal{K}_{1/2}$ the fundamental function for the operator~$-\Delta +(-\Delta)^s+{1}/{2}$, namely
\begin{equation}\label{poiuytre09876543}
\big(-\Delta +(-\Delta)^s+{1}/{2} \big)\mathcal{K}_{1/2}(x-\xi)=\delta_\xi(x) \qquad {\mbox{for all }}x\in \mathbb{R}^n.\end{equation}
{F}rom \cite[Lemma~4.4]{DSVZ24}, we know that~$\mathcal{K}_{1/2}(x-\xi)\geqslant c>0$ for any~$x\in B_R(\xi)$, for some suitable constant $c=c(n,s,R)>0$.
	 
	 Let us take $C_0:=\|Z_i\|_{L^\infty(\mathbb{R}^n)}c^{-1}>0$ and observe that
	 \begin{equation}\label{2.6NIS}C_0 \mathcal{K}_{1/2}(x-\xi)\geqslant |Z_i|(x) \qquad{\mbox{for all }}x\in B_R(\xi). 
\end{equation}	 
	 We define the function 
	\begin{equation*}
		\omega(x):=C_0 \mathcal{K}_{1/2}(x-\xi) -|Z_i|(x)
	\end{equation*}
	and we notice that, by~\eqref{gege} and~\eqref{poiuytre09876543}, 
	\begin{equation}\label{vsdd}
		(-\Delta)\omega+(-\Delta)^s\omega+\frac{1}{2}\omega\geqslant 0 \qquad \text{a.e. in } \mathbb{R}^n\setminus B_R(\xi).
	\end{equation}
	
	We now prove that $\omega\geqslant 0$ in $ \mathbb{R}^n$. For this, we write~$\omega=\omega_+-\omega_-$ with~$\omega_-:=-\min\left\{0,\omega\right\}$ and~$\omega_+:=\max\left\{0,\omega\right\}$. Since~$\omega\geqslant 0$ in $B_R(\xi)$,
	thanks to~\eqref{2.6NIS}, one has that~$\omega_-=0$ in~$B_R(\xi)$.  Hence, we multiply equation~\eqref{vsdd} by $\omega_-$ and integrate over~$\mathbb{R}^n\setminus B_R(\xi)$, obtaining that
	\begin{equation*}
		\begin{split}
			0&\leqslant -\int_{\mathbb{R}^n \setminus B_R(\xi)}|\nabla \omega_-|^2\, dx+\int_{\mathbb{R}^n \setminus B_R(\xi)}\left(\int_{\mathbb{R}^n}\omega_-(x)\frac{\omega(x)-\omega(y)}{|x-y|^{n+2s}}\, dy\right)\, dx
			-\frac{1}{2}\int_{\mathbb{R}^n \setminus B_R(\xi)}\omega_-^2\, dx\\
			&=-\int_{\mathbb{R}^n  }|\nabla \omega_-|^2\, dx+\frac{1}{2}\int_{\mathbb{R}^n}\int_{\mathbb{R}^n}\frac{(\omega_-(x)-\omega_-(y))(\omega(x)-\omega(y))}{|x-y|^{n+2s}}\, dy\, dx
			-\frac{1}{2}\int_{\mathbb{R}^n }\omega_-^2\, dx\\
			&\leqslant -\frac{1}{2}\int_{\mathbb{R}^n}\int_{\mathbb{R}^n}\frac{(\omega_-(x)-\omega_-(y))^2}{|x-y|^{n+2s}}\, dy\, dx-\frac{1}{2}\int_{\mathbb{R}^n}\int_{\mathbb{R}^n}\frac{\omega_-(x)\omega_+(y)+\omega_+(x)\omega_-(y)}{|x-y|^{n+2s}}\, dy\, dx\\&\leqslant 0.
		\end{split}
	\end{equation*}
	This yields that $\omega_-=0$ a.e. in $\mathbb{R}^n$. 
	
	As a result, taking into account the fact that $\omega$ is continuous away from the origin, we obtain that~$\omega\geqslant 0$ in~$\mathbb{R}^n$, which gives that  
	\begin{equation*}
		|Z_i|(x)\leqslant C_0\mathcal{K}_{1/2}(x-\xi) \qquad \text{in } \mathbb{R}^n. 
	\end{equation*}
	Therefore, the desired result in~\eqref{bhjkj} follows from~\cite[Lemma~4.4]{DSVZ24}.
\end{proof}

One can improve the decay estimate in Lemma~\ref{lemma decay of w1}, according
to the following statement.

\begin{Lemma}\label{lemma decay of w}
	There exists a positive constant $C$ such that, for any $i=1,\cdots, n$,
	\begin{equation*}
		|Z_i(x)|\leqslant \frac{C}{|x-\xi|^{n+2s+1}} \qquad \text{for any } x\in \R^n\setminus B_1(\xi).
	\end{equation*}
\end{Lemma}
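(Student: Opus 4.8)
The goal is to upgrade the decay $|Z_i(x)|\lesssim |x-\xi|^{-(n+2s)}$ from Lemma~\ref{lemma decay of w1} to $|Z_i(x)|\lesssim |x-\xi|^{-(n+2s+1)}$. The natural strategy is to represent $Z_i$ by means of the fundamental solution $\mathcal{K}_{1/2}$ (or $\mathcal K$) and exploit the extra decay encoded in the right-hand side of the linearized equation~\eqref{fdbd}. Writing~\eqref{fdbd} in the form
\begin{equation*}
-\Delta Z_i+(-\Delta)^s Z_i+Z_i=pw_\xi^{p-1}Z_i=:g_i\qquad\text{in }\mathbb{R}^n,
\end{equation*}
we have the convolution representation $Z_i=\mathcal{K}*g_i$, at least away from $\xi$. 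Now by~\eqref{bvsvs} we have $w_\xi^{p-1}(x)\lesssim (1+|x-\xi|)^{-(n+2s)(p-1)}$, and combining this with the bound from Lemma~\ref{lemma decay of w1} gives $|g_i(x)|\lesssim (1+|x-\xi|)^{-(n+2s)p}$. Since $p>1$, this is a substantial improvement over the decay of $Z_i$ itself; the expected main step is a convolution estimate showing that $\mathcal{K}*g_i$ inherits the decay $|x-\xi|^{-(n+2s+1)}$ from the interplay of the $|y|^{-(n+2s)}$ tail of $\mathcal{K}$ (see~\eqref{decaying}) and the faster decay of $g_i$.

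More concretely, I would fix $x$ with $|x-\xi|=:R$ large, assume $\xi=0$ without loss of generality, and split $\mathbb{R}^n=B_{R/2}(0)\cup\big(B_{R/2}(x)\big)\cup\big(\mathbb{R}^n\setminus(B_{R/2}(0)\cup B_{R/2}(x))\big)$. On $B_{R/2}(0)$ one has $|x-y|\gtrsim R$, so $|\mathcal K(x-y)|\lesssim R^{-(n+2s)}$, and $\int_{B_{R/2}(0)}|g_i(y)|\,dy$ is finite (even $O(1)$); this contributes only $O(R^{-(n+2s)})$, which is \emph{not} good enough, so one must be more careful here and instead use that outside a fixed ball $\int_{B_{R/2}\setminus B_1}|g_i(y)|\,dy\lesssim R^{n-(n+2s)p}$ together with the gradient bound $|\nabla\mathcal K(x-y)|\lesssim R^{-(n+2s+1)}$ — i.e., one should really differentiate. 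On $B_{R/2}(x)$ one has $|y|\gtrsim R$, hence $|g_i(y)|\lesssim R^{-(n+2s)p}$, and $\int_{B_{R/2}(x)}|\mathcal K(x-y)|\,dy\le\|\mathcal K\|_{L^1}$, contributing $O(R^{-(n+2s)p})\le O(R^{-(n+2s+1)})$ since $(n+2s)p\ge n+2s+1$ for $p$ close to $1$ — one should double-check this inequality and, if it fails for small $p$, iterate the argument (bootstrap) a finite number of times, each step gaining because $g_i$ gets an extra factor of $w_\xi^{p-1}$ decay. On the intermediate region, both $|y|\gtrsim R$ and $|x-y|\gtrsim R$ hold, and the integral converges, giving a bound of the same order.

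An alternative, likely cleaner, route — and the one I would actually follow to match the style of Lemma~\ref{lemma decay of w1} — is a pure barrier/comparison argument. Since $|g_i(x)|\le C_\star(1+|x|)^{-(n+2s)p}$ with $(n+2s)p>n+2s+1$ in many regimes, and more importantly since we want the improved power $n+2s+1$, I would look for a supersolution of $(-\Delta+(-\Delta)^s+\tfrac12)\varphi\ge |g_i|$ of the form $\varphi(x)=A(1+|x|)^{-(n+2s+1)}$ outside $B_R(\xi)$, using that $(-\Delta)^s$ applied to a power $|x|^{-\beta}$ with $n<\beta<n+2s$... here $\beta=n+2s+1>n+2s$, so the fractional Laplacian of this barrier behaves like the \emph{local} leading term, i.e.\ like $|x|^{-(n+2s+1)}$ up to constants, with the same sign structure used in~\eqref{gege}. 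Then the Kato inequality~\eqref{vhj} and the weak maximum principle argument (multiply the negative part by a test function and integrate, exactly as in the proof of Lemma~\ref{lemma decay of w1}) yield $|Z_i|\le\varphi$ on $\mathbb{R}^n\setminus B_R(\xi)$, and on $B_R(\xi)\setminus B_1(\xi)$ the bound is trivial from $Z_i\in L^\infty$. The main obstacle I anticipate is precisely the computation (or citation) of the asymptotics of $(-\Delta)^s\big((1+|x|)^{-(n+2s+1)}\big)$ at infinity: one needs that it does not dominate over $-\Delta$ of the same barrier and that the total has the correct sign outside a large ball; once this Laplacian-type estimate on the candidate barrier is in hand, the rest is a repetition of the comparison scheme already carried out above, so I would present it compactly by reference to the proof of Lemma~\ref{lemma decay of w1}.
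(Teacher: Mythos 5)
Neither of your two proposed routes closes. The first route (directly estimating $|\mathcal{K}*g_i|$ with $g_i=pw_\xi^{p-1}Z_i$) is the right starting point but gets stuck exactly where you flag it: on the ball near $\xi$, $g_i$ is $O(1)$ and integrable, while $\mathcal{K}(x-\cdot)$ is only $O(|x-\xi|^{-(n+2s)})$ there, so you cannot beat $|x-\xi|^{-(n+2s)}$ by size alone. Your suggestion ``one should really differentiate'' is a hint in the right direction, but you never say what to differentiate or why the resulting integral makes sense at the kernel's singularity.

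The second route, which you declare you would actually follow, has a gap you cannot fix. Your claim that $(-\Delta)^s\big((1+|x|)^{-(n+2s+1)}\big)$ decays like $|x|^{-(n+2s+1)}$ ``with the same sign structure used in~\eqref{gege}'' is false. Since $(1+|x|)^{-(n+2s+1)}\in L^1(\R^n)$, for $|x|$ large the nonlocal tail dominates:
\begin{equation*}
(-\Delta)^s\varphi(x)\;=\;\mathrm{P.V.}\int_{\R^n}\frac{\varphi(x)-\varphi(y)}{|x-y|^{n+2s}}\,dy
\;\sim\;-\,\frac{c}{|x|^{n+2s}}\int_{\R^n}\varphi(y)\,dy \;<\;0,
\end{equation*}
i.e.\ it is \emph{negative} and decays only like $|x|^{-(n+2s)}$, strictly slower than $\varphi$, $-\Delta\varphi$ and $\tfrac12\varphi$. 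Hence $\big(-\Delta+(-\Delta)^s+\tfrac12\big)\varphi\sim -c|x|^{-(n+2s)}<0$: your candidate is a strict \emph{sub}solution, not a supersolution, and the comparison argument collapses. This is structural, not technical: the Kato inequality~\eqref{vhj} discards the sign of $Z_i$, and once you work with $|Z_i|$ the best decay a positive supersolution barrier can deliver is exactly $|x|^{-(n+2s)}$ (the rate of $\mathcal{K}_{1/2}$), which is precisely Lemma~\ref{lemma decay of w1}. The extra factor $|x|^{-1}$ must come from cancellation, which absolute-value barriers cannot see.

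The paper's proof captures that cancellation by a different device. It represents $w$ itself (not $Z_i$) as $w=\mathcal{K}_1*w^p+\mathcal{K}_2*w^p$, where $\mathcal{K}_1$ is $\mathcal{K}$ regularized at the origin and $\mathcal{K}_2=\mathcal{K}-\mathcal{K}_1$ is supported in $B_{1/4}$. Differentiating puts $\partial_1$ on the \emph{kernel} in the first term, $\partial_1 w=(\partial_1\mathcal{K}_1)*w^p+\mathcal{K}_2*(pw^{p-1}\partial_1 w)$, and $\nabla\mathcal{K}_1$ decays like $|x|^{-(n+2s+1)}$ by~\eqref{dfdsfs}, which is the source of the extra power. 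Using the radial structure, $|\nabla w_\xi(x)|=|\partial_1 w(x_\xi)|$ with $x_\xi$ on the positive $x_1$-axis, and the odd symmetry of $\partial_1\mathcal{K}_1$ in $y_1$ is used (fold $y\mapsto(-y_1,y')$) to turn the intermediate region into a difference $w^p(x_\xi-y)-w^p(x_\xi-\bar y)$, which brings in one more $\nabla w$ and hence one more power of decay; the $\mathcal{K}_2$-piece is compactly supported so it keeps the convolution localized. A finite bootstrap then handles the case $p(n+2s)<n+2s+1$. Your write-up contains none of these steps, and the barrier route you prefer is blocked for the reason above.
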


\begin{proof}
	We recall that 
\begin{equation*}
	Z_i(x)=\frac{\partial w(x-\xi)}{\partial x_i}=w'\left(|x-\xi|\right)\frac{x_i-\xi_i}{|x-\xi|}
\end{equation*} and,
as a result,	\begin{equation*}
	|\nabla w_\xi(x)|=	\left(\sum_{i=1}^{n}|Z_i(x)|^2\right)^{1/2}=|w'\left(|x-\xi|\right)|.
	\end{equation*}
	
	For any $x\in\mathbb{R}^n$,
we denote by~$x_\xi:=(|x-\xi|,0,\cdots,0)\in\mathbb{R}^n$. Then, 
for every $i\in\left\{2,\cdots,n\right\}$, we obtain that
$\frac{\partial w}{\partial x_i}(x_\xi)=0$  and 
	\begin{equation*}
		|\nabla w_\xi(x)|=\left|\frac{\partial w}{\partial x_1}(x_\xi)\right|.
	\end{equation*}

We also recall that $\mathcal{K}$ is the fundamental function for the operator $-\Delta+(-\Delta)^s+1$ in $\mathbb{R}^n$,	and,
since $\mathcal{K}$  has a singularity at the origin
that we wish to remove, we choose a radial function 
\begin{equation}\label{defkappauno}
{\mbox{$ \mathcal{K}_{1}\in C^\infty(\mathbb{R}^n)$ with $0\leqslant \mathcal{K}_{1}\leqslant \mathcal{K}$ in $\mathbb{R}^n$,
$\mathcal{K}_{1}=\mathcal{K}$ outside $ B_{1/4}$, and $\mathcal{K}_{1}=0$ in  $ B_{1/6}$.}}\end{equation} 
Also, we define 
\begin{equation}\label{defkappadue}\mathcal{K}_{2}:=\mathcal{K}-\mathcal{K}_{1}.\end{equation} {F}rom~\eqref{mathcal{K}}, it follows that 
	\begin{equation}\label{dsfsfe}
		w=\mathcal{K}_{1}\ast w^p+\mathcal{K}_{2}\ast w^p.
	\end{equation}
 We differentiate~\eqref{dsfsfe} with respect to~$x_1$ and obtain that 
\begin{equation*}
	\frac{\partial w}{\partial x_1}=\frac{\partial \mathcal{K}_{1}}{\partial x_1}\ast w^p+\mathcal{K}_{2}\ast \left(pw^{p-1}\frac{\partial w}{\partial x_1}\right).
\end{equation*}

	Note that, in virtue of the properties of~$\mathcal{K}_1$
	and~\eqref{dfdsfs}, one has that  
\begin{equation*}
	\begin{split}
		&\left|\left(\frac{\partial \mathcal{K}_{1}}{\partial x_1}\ast w^p\right)(x_\xi)\right|\\&= \left|\int_{\R^n\setminus B_{1/8} }  \frac{\partial \mathcal{K}_{1}}{\partial x_1}(y)\, w^p(x_\xi-y)\, dy\right|\\
		&\leqslant \left|\int_{\{1/8\leqslant|y|\leqslant |x-\xi|/2\}}  \frac{\partial \mathcal{K}_{1}}{\partial x_1}(y)\, w^p(x_\xi-y)\, dy\right|
		+\left|\int_{\{|y|\geqslant |x-\xi|/2\}}  \frac{\partial \mathcal{K}_{1}}{\partial x_1}(y)\, w^p(x_\xi-y)\, dy\right|\\
			&\leqslant \left|\int_{\{1/8\leqslant|y|\leqslant |x-\xi|/2\}}  
			\frac{\partial \mathcal{K}_{1}}{\partial x_1}(y)\, w^p(x_\xi-y)\, dy\right|+\frac{c}{|x-\xi|^{n+2s+1}}\int_{\mathbb{R}^n}   w^p(y)\, dy.
	\end{split}
\end{equation*}

Let now  ${y}:=(y_1, y_2,\cdots, y_n)$ and $\bar{y}:=(-y_1, y_2,\cdots, y_n)$. Since $\mathcal{K}_1$ is radial, we have that
\begin{equation*}
	\begin{split}
	&\left|\int_{\{1/8\leqslant|y|\leqslant |x-\xi|/2\}}  
	\frac{\partial \mathcal{K}_{1}}{\partial x_1}(y)\, w^p(x_\xi-y)\, dy\right|= \left|\int_{\{1/8\leqslant|y|\leqslant |x-\xi|/2\}} \mathcal{K}_{1}'(|y|) \frac{y_1 }{|y|}\, w^p(x_\xi-y)\, dy\right|\\
	& =\left|\int_{\{1/8\leqslant|y|\leqslant |x-\xi|/2\}\cap \{y_1>0\} } \mathcal{K}_{1}'(|y|) \frac{y_1 }{|y|}\, \left(w^p(x_\xi-y)-w^p(x_\xi-\bar{y})\right)\, dy\right|\\
	&=\left|\int_{\{1/8\leqslant|y|\leqslant |x-\xi|/2\}\cap \{y_1>0 \}}
	\left[\int_{0}^{1} \mathcal{K}_{1}'(|y|) \frac{y_1 }{|y|}\, (pw^{p-1}\nabla w)\left(t(x_\xi-y)+(1-t)(x_\xi-\bar{y})\right)\cdot (y-\bar{y})\, dt\right]\, dy\right|
	.\end{split}
\end{equation*}
We observe that
\begin{eqnarray*}&& |t(x_\xi-y)+(1-t)(x_\xi-\bar{y})
\ge |x-\xi|-| ty+(1-t)\bar{y}|\ge 
|x-\xi|-| y|\geqslant |x-\xi|/2>1/2.
\end{eqnarray*}
Thus, owing to the decay of~$w$ in~\eqref{bvsvs} and the decay of~$\nabla w$ in Lemma~\ref{lemma decay of w1}, 
we see that 
$$\left|\int_{\{1/8\leqslant|y|\leqslant |x-\xi|/2\}}  
	\frac{\partial \mathcal{K}_{1}}{\partial x_1}(y)\, w^p(x_\xi-y)\, dy\right|
 \leqslant  \frac{c}{|x-\xi|^{p(n+2s)}} \left|\int_{\R^n\setminus B_{1/8}} \mathcal{K}_{1}'(|y|)|y| \, dy\right|\leqslant  \frac{c}{|x-\xi|^{p(n+2s)}},$$
for some constant $c>0$ independent of $\xi$ and $x$. Here, the fact that~$\mathcal{K}_1=\mathcal{K}$ outside~$B_{1/4}$
and the decay of~$\nabla\mathcal{K}$ in~\eqref{dfdsfs} have been employed.

Moreover, we observe that, for any~$y\in B_{1/4} $,
$$|x_\xi-y|\geqslant |x-\xi|-|y|\ge |x-\xi|-\frac14\ge \frac{|x-\xi|}2>\frac12.$$
Therefore, using again the  decay of~$w$ in~\eqref{bvsvs} and the decay of~$\nabla w$ in Lemma~\ref{dsdcds1}, we find that 
\begin{equation*}
	\begin{split}
		\left|\left(\mathcal{K}_{2}\ast pw^{p-1}\frac{\partial w}{\partial x_1}\right)(x_\xi)\right|&=\left|\int_{B_{1/4}}\mathcal{K}_2(y)\left(pw^{p-1}\frac{\partial w}{\partial x_1}\right)(x_\xi-y)\, dy\right|\\
		&\leqslant \frac{c}{|x-\xi|^{p(n+2s)}}\int_{\mathbb{R}^n}\mathcal{K}(y)\, dy.
	\end{split}
\end{equation*}

Gathering the above facts, we conclude that, for every~$x\in \R^n\setminus B_1(\xi)$,
\begin{equation}\label{9876543hdsulkeds}
	|\nabla w_\xi(x)|=\left|\frac{\partial w}{\partial x_1}(x_\xi)\right|\leqslant C\left(\frac{1}{|x-\xi|^{p(n+2s)}}+\frac{1}{|x-\xi|^{n+2s+1}}\right).
\end{equation}
If $p(n+2s)\ge n+2s+1$, this gives the desired estimate and completes the proof of Lemma~\ref{lemma decay of w}. 

If instead~$p(n+2s)<n+2s+1$, we deduce from~\eqref{9876543hdsulkeds} that
\begin{equation*}
	|\nabla w_\xi(x)|\leqslant \frac{C}{|x-\xi|^{p(n+2s)}}.
\end{equation*}
Using this improved estimate and repeating the computations above~$m$ times, we find that
\begin{equation*}
	|\nabla w_\xi(x)|\leqslant C\left(\frac{1}{|x-\xi|^{(p+m(p-1))(n+2s)}}+\frac{1}{|x-\xi|^{n+2s+1}}\right).
\end{equation*}
Since there exists $m\in\mathbb{N}$ such that $n+2s+1<(p+m(p-1))(n+2s)$, this entails the desired estimate in Lemma~\ref{lemma decay of w}.
\end{proof}

We now provide estimates for the derivatives of~$Z_1$.

\begin{Lemma}\label{lemma gradient decay}
	There exists a positive constant $C$ such that, for any $i=1,\cdots, n$,
	\begin{equation*}
		|\nabla Z_i(x)|\leqslant \frac{C}{|x-\xi|^{n+2s}} \qquad \text{for any } x\in\R^n\setminus B_1(\xi) .
	\end{equation*}
\end{Lemma}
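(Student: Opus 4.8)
The plan is to exploit the fact that $Z_i$ solves the linearized equation~\eqref{fdbd}, namely
$$-\Delta Z_i+(-\Delta)^s Z_i+Z_i=pw_\xi^{p-1}Z_i\qquad\text{in }\mathbb{R}^n,$$
and to convert this into an integral representation against the fundamental solution~$\mathcal{K}$, splitting off the singularity exactly as in the proof of Lemma~\ref{lemma decay of w}. Writing $\mathcal{K}=\mathcal{K}_1+\mathcal{K}_2$ as in~\eqref{defkappauno}--\eqref{defkappadue}, from~\eqref{fdbd} we get the convolution identity
$$Z_i=\mathcal{K}_1\ast\big(pw_\xi^{p-1}Z_i\big)+\mathcal{K}_2\ast\big(pw_\xi^{p-1}Z_i\big),$$
since $\big(-\Delta+(-\Delta)^s+1\big)Z_i=pw_\xi^{p-1}Z_i$ and $Z_i\in L^\infty\cap H^2$ decays. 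Differentiating once in $x_j$ and using the smoothness of $\mathcal{K}_1$ plus the compact support of $\mathcal{K}_2$ near the origin gives
$$\partial_j Z_i=\Big(\partial_j\mathcal{K}_1\Big)\ast\big(pw_\xi^{p-1}Z_i\big)+\mathcal{K}_2\ast\Big(\partial_j\big(pw_\xi^{p-1}Z_i\big)\Big).$$

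\textbf{Key steps.} First I would evaluate $\partial_j Z_i$ at a generic point $x\in\R^n\setminus B_1(\xi)$ and split the first convolution into the region $\{|y|\le|x-\xi|/2\}$ and its complement. On the far region one uses $|\nabla\mathcal{K}(y)|\le C|y|^{-(n+2s+1)}\in L^1$ (from~\eqref{dfdsfs}) together with the bound $pw_\xi^{p-1}|Z_i|\le C|x-\xi-y|^{-?}$ — here the available inputs are $w_\xi\le C(1+|\cdot-\xi|)^{-(n+2s)}$ from~\eqref{bvsvs} and $|Z_i|\le C|\cdot-\xi|^{-(n+2s)}$ from Lemma~\ref{lemma decay of w1}, so $w_\xi^{p-1}Z_i$ decays like $|\cdot-\xi|^{-(p-1)(n+2s)-(n+2s)}=|\cdot-\xi|^{-p(n+2s)}$, which on the far region is $\le C|x-\xi|^{-p(n+2s)}$, yielding a contribution $\le C|x-\xi|^{-p(n+2s)}$. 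On the near region $\{|y|\le|x-\xi|/2\}$ the factor $w_\xi^{p-1}Z_i$ evaluated at $x-\xi-y$ is bounded by $C|x-\xi|^{-p(n+2s)}$ (since $|x-\xi-y|\ge|x-\xi|/2$), and $\partial_j\mathcal{K}_1\in L^1$, so this too contributes $\le C|x-\xi|^{-p(n+2s)}$. For the second convolution, $\mathcal{K}_2$ is supported in $\overline{B_{1/4}}$, so for $y$ in that support $|x-\xi-y|\ge|x-\xi|/2>1/2$; then $\partial_j(pw_\xi^{p-1}Z_i)=p(p-1)w_\xi^{p-2}(\partial_jw_\xi)Z_i+pw_\xi^{p-1}\partial_jZ_i$, and bounding $|\partial_jw_\xi|$ and $|Z_i|$ by Lemma~\ref{lemma decay of w} and $|\partial_jZ_i|$ by... — this is the circular point, see below — one obtains a tail decay. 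Assembling, $|\nabla Z_i(x)|\le C|x-\xi|^{-p(n+2s)}+C|x-\xi|^{-(n+2s)}$, and since $p>1$ the first term is negligible, giving the claimed $|x-\xi|^{-(n+2s)}$.

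\textbf{Main obstacle.} The delicate point is the appearance of $\partial_j Z_i$ inside the $\mathcal{K}_2$-convolution term: naively this makes the estimate circular. The clean fix is to note that $\mathcal{K}_2\ast(pw_\xi^{p-1}\partial_jZ_i)$ only sees $\partial_jZ_i$ on $\R^n\setminus B_{1/2}(\xi)$ where $pw_\xi^{p-1}\le 1/2$ by the choice of $R$ in the proof of Lemma~\ref{lemma decay of w1}; combined with $\|\mathcal{K}_2\|_{L^1}$ small or, more robustly, with $\|\nabla Z_i\|_{L^\infty(\R^n)}<\infty$ (which follows from $Z_i\in H^2\hookrightarrow$ local bounds on $\nabla Z_i$ as already used), one can absorb the $\mathcal{K}_2$ term: write it as $C|x-\xi|^{-p(n+2s)}\|\nabla Z_i\|_{L^\infty(B_{1/4}(x))}$ times a bounded quantity using that $\nabla Z_i$ is already globally bounded, so it contributes a harmless $C|x-\xi|^{-p(n+2s)}$ without self-reference to the tail profile. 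Thus only the first convolution carries the genuine tail, and its worst term is exactly $|x-\xi|^{-(n+2s)}$, coming from $|\partial_j\mathcal{K}_1|\le C|\cdot|^{-(n+2s+1)}$ convolved against a function that is bounded (near the origin the integrand $|\partial_j\mathcal{K}_1(y)|\cdot|Z_i(x-\xi-y)|$ is not summable against an unbounded weight, so one keeps $|Z_i|\le\|Z_i\|_{L^\infty}$ there and $\int_{B_1}|\partial_j\mathcal{K}_1|<\infty$, producing the $|x-\xi|^{-(n+2s)}$ scale). As in Lemma~\ref{lemma decay of w}, if $p$ is so small that $p(n+2s)<n+2s$ — which cannot happen since $p>1$ — one would iterate; here no bootstrap is needed and the proof closes in one pass.
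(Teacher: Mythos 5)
The decomposition you set up is equivalent to the paper's: differentiating $w_\xi=\mathcal{K}_1\ast w_\xi^p+\mathcal{K}_2\ast w_\xi^p$ twice is the same as differentiating $Z_i=\mathcal{K}_1\ast(pw_\xi^{p-1}Z_i)+\mathcal{K}_2\ast(pw_\xi^{p-1}Z_i)$ once, and you correctly spot that the term $\mathcal{K}_2\ast(pw_\xi^{p-1}\partial_j Z_i)$ is where the unknown reappears. The problem is your ``absorption'' of that term. Bounding $|\partial_j Z_i|$ by $\|\nabla Z_i\|_{L^\infty(\mathbb{R}^n)}$ and $w_\xi^{p-1}(x-y)$ by $C|x-\xi|^{-(p-1)(n+2s)}$ (for $y\in\mathrm{supp}\,\mathcal{K}_2\subset B_{1/4}$ and $|x-\xi|\geqslant 1$) gives
\begin{equation*}
\left|\mathcal{K}_2\ast\big(pw_\xi^{p-1}\partial_j Z_i\big)(x)\right|\leqslant \frac{C\,\|\nabla Z_i\|_{L^\infty}}{|x-\xi|^{(p-1)(n+2s)}},
\end{equation*}
not $C|x-\xi|^{-p(n+2s)}$ as you wrote: the extra factor of $Z_i$ that would push the exponent to $p(n+2s)$ simply is not present in this term (it is present in $w_\xi^{p-2}(\partial_j w_\xi)Z_i$, which you handle correctly). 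When $1<p<2$ we have $(p-1)(n+2s)<n+2s$, so a single pass does not yield the claimed rate, and the estimate does not close. Your parenthetical remark about the regime $p(n+2s)<n+2s$ being impossible misidentifies the obstruction: the correct threshold is $(p-1)(n+2s)<n+2s$, i.e.\ $p<2$, which does occur.

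The paper handles exactly this by a bootstrap. It proves, by induction on $k$, the bound $|\partial^2_{ij}w_\xi|\leqslant C_k(1+|x-\xi|)^{-\nu(k)}$ with $\nu(k)=\min\{n+2s+2,\,p(n+2s),\,k(p-1)(n+2s)\}$: the base case $k=0$ is the global $C^2$ bound (i.e.\ your $\|\nabla Z_i\|_{L^\infty}<\infty$), and each pass feeds the improved decay of $\partial^2_{ij}w_\xi$ back into the $\mathcal{K}_2\ast(pw_\xi^{p-1}\partial^2_{ij}w_\xi)$ term, gaining a factor $(p-1)(n+2s)$ per iteration. After finitely many steps (depending on how close $p$ is to $1$) one reaches $\nu(k)\geqslant n+2s$. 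Your argument would work verbatim for $p\geqslant 2$, but for $p$ close to $1$ the iterative step is essential, and omitting it leaves a genuine gap.
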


\begin{proof}
	Let $\mathcal{K}_{1} $ and $\mathcal{K}_{2}$ be as defined in~\eqref{defkappauno} and~\eqref{defkappadue}. Then,
	\begin{equation}\label{dsfsfes}
		w_\xi=\mathcal{K}_{1}\ast w_\xi^p+\mathcal{K}_{2}\ast w_\xi^p.
	\end{equation}
	
	We will show by an inductive argument that, for any $k\in\mathbb{N}$, 
	\begin{equation}\label{sdfsdc}
		\left|	\frac{\partial^2 w_\xi}{\partial x_i \partial x_j}(x)\right|\leqslant C_k(1+|x-\xi|)^{-\nu(k)}\qquad {\mbox{for all }}x\in\mathbb{R}^n
	\end{equation}
	where $C_k>0$ and $$\nu(k):=\min\big\{(n+2s+2),p(n+2s),k(p-1)(n+2s)\big\}.$$
	Indeed, Lemma~\ref{lemma gradient decay} follows from~\eqref{sdfsdc} by taking the smallest $k$ for which $k(p-1)>p$.
	
	We first notice that~\eqref{sdfsdc} holds true
	when $k=0$, thanks to the fact that $\|w_\xi\|_{C^2(\mathbb{R}^n)}\leqslant C\left(\|w_\xi\|_{L^\infty(\mathbb{R}^n)},n,s,p\right) $ (see for instance~\cite[Theorem~1.4]{DSVZ24}).
	
	Now we suppose that~\eqref{sdfsdc} holds true for  some $k$, and we prove it for~$k+1$.
	For this,  we differentiate~\eqref{dsfsfes} and obtain that, for any~$i$, $j\in\left\{1,\cdots, n\right\}$,
		\begin{equation}\label{hdsjbvd}
		\partial^2_{ij}w_\xi:=\frac{\partial^2 w_\xi}{\partial x_i \partial x_j}=\frac{\partial^2 \mathcal{K}_{1}}{\partial x_i\partial x_j}\ast w_\xi^p+\mathcal{K}_{2}\ast \left(p(p-1)w_\xi^{p-2}\frac{\partial w_\xi}{\partial x_i}\frac{\partial w_\xi}{\partial x_j}+pw_\xi^{p-1}\frac{\partial^2 w_\xi}{\partial x_i\partial x_j}\right).
	\end{equation}
	
	Note that, since $\mathcal{K}_{1}\in C^\infty(\mathbb{R}^n)$ and $\mathcal{K}_{1}=\mathcal{K} $ outside $B_{1/4}$,
	in virtue of~\eqref{dfdsfs} one has  that  
	\begin{equation*}
		\left|\frac{\partial^2 \mathcal{K}_{1}}{\partial x_i\partial x_j}(x)\right|\leqslant \frac{C}{(1+|x|)^{n+2s+2}} .
	\end{equation*}
	Hence, using~\eqref{bvsvs} and~\cite[Lemma~5.1]{MR3393677}, we see that, for every $x\in\mathbb{R}^n$, 
	\begin{equation}\label{fwev}
		\left|\frac{\partial^2 \mathcal{K}_{1}}{\partial x_i\partial x_j}\ast w_\xi^p(x)\right|
		\leqslant \frac{C}{(1+|x-\xi|)^{\nu_1}},
	\end{equation}
	up to renaming $C>0$, where $\nu_1:=\min\left\{n+2s+2,p(n+2s)\right\}$.
	
	Moreover, from Lemma~\ref{lemma decay of w} and~\eqref{bvsvs}, \begin{equation}\label{dsvdv}
	{w_\xi(x)^{-1}}|{\partial_i w_\xi}(x)|\leqslant c \qquad {\mbox{for all }}x\in\mathbb{R}^n
	\end{equation}
	for some constant~$c>0$. 
	
	Therefore, for every $x\in\mathbb{R}^n$,
	\begin{equation*}
		\begin{split}
			&\left|p(p-1)\left(w_\xi^{p-2}\frac{\partial w_\xi}{\partial x_i}\frac{\partial w_\xi}{\partial x_j}\right)(x)+p\left(w_\xi^{p-1}\frac{\partial^2 w_\xi}{\partial x_i\partial x_j}\right)(x)\right|\\
			&\qquad\leqslant C\left(\frac{1}{(1+|x-\xi|)^{p(n+2s)}}+ \frac{1}{(1+|x-\xi|)^{(p-1)(n+2s)+\nu(k)}}\right) 
			\leqslant  \frac{C}{(1+|x-\xi|)^{\nu(k+1)}},
		\end{split}
	\end{equation*}
	up to renaming $C>0$. 
	
	Furthermore, we notice that if $|y|<1/4$ and $|x-\xi|<1$, then $1+|x-\xi|<2(1+|x-\xi-y|)$. If instead $|y|<1/4$ and $|x-\xi|\geqslant 1$, then $|x-\xi-y|\geqslant |x-\xi|/2$.
	{F}rom these facts, since $\mathcal{K}_{2}$ vanishes outside $B_{1/4}$, we have that
	\begin{equation*}
		\begin{split}
			&\left|\mathcal{K}_{2}\ast \left(p(p-1)w_\xi^{p-2}\frac{\partial w_\xi}{\partial x_i}\frac{\partial w_\xi}{\partial x_j}+pw_\xi^{p-1}\frac{\partial^2 w_\xi}{\partial x_i\partial x_j}\right)(x)\right|\\
			&\qquad\leqslant C\int_{B_{1/4}} \frac{\mathcal{K}_{2}(y)}{(1+|x-\xi-y|)^{\nu(k+1)}}\, dy\leqslant \frac{C}{(1+|x-\xi|)^{\nu(k+1)}}\int_{\mathbb{R}^n} \mathcal{K}(y)\,dy.
		\end{split}
	\end{equation*}
	This and~\eqref{fwev} establish~\eqref{sdfsdc} for $k+1$, thus completing the inductive argument.
\end{proof}

\begin{Lemma}\label{lemma second gradient decay}
	There exists a positive constant $C$ such that, for any $i=1,\cdots, n$,
	\begin{equation*}
		|\nabla^2 Z_i|\leqslant \frac{C}{|x-\xi|^{n+2s}} \qquad{\mbox{for all }}x\in\R^n\setminus B_1(\xi).
	\end{equation*}
\end{Lemma}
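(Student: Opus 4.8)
The plan is to proceed exactly as in the proofs of Lemmata~\ref{lemma decay of w} and~\ref{lemma gradient decay}, namely to differentiate the convolution identity $w_\xi=\mathcal{K}_{1}\ast w_\xi^p+\mathcal{K}_{2}\ast w_\xi^p$ of~\eqref{dsfsfes} one further time and to run a bootstrap on the decay exponent. Since the entries of $\nabla^2 Z_i$ are the third derivatives $\partial^3_{ijk}w_\xi$, I would first record that $w_\xi\in C^3(\mathbb{R}^n)$ (indeed $w_\xi\in C^\infty(\mathbb{R}^n)$), which follows from the regularity theory for~\eqref{vsdvdsvsd} in \cite{DSVZ24,SZZ24} together with a routine Schauder bootstrap starting from $w_\xi\in C^2(\mathbb{R}^n)$, so that the objects below are classical. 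I would then prove, by induction on $m\in\mathbb{N}$, the estimate
\begin{equation*}
	\big|\partial^3_{ijk}w_\xi(x)\big|\leqslant C_m\,(1+|x-\xi|)^{-\mu(m)}\qquad\text{for every }x\in\mathbb{R}^n,
\end{equation*}
where $\mu(m):=\min\big\{n+2s+2,\;p(n+2s),\;m(p-1)(n+2s)\big\}$. The base case $m=0$ is simply $\|w_\xi\|_{C^3(\mathbb{R}^n)}\leqslant C$, and once the estimate is available one concludes by taking $m$ so large that $m(p-1)\geqslant 1$: then $\mu(m)\geqslant n+2s$, which gives $|\nabla^2 Z_i(x)|\leqslant C|x-\xi|^{-(n+2s)}$ on $\mathbb{R}^n\setminus B_1(\xi)$, as claimed.

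For the inductive step, I would differentiate the identity three times, placing two of the derivatives on the smooth factor $\mathcal{K}_1$ and all three on $w_\xi^p$ in the singular term, so that
\begin{equation*}
	\partial^3_{ijk}w_\xi=\partial^2_{ij}\mathcal{K}_{1}\ast\partial_k(w_\xi^p)+\mathcal{K}_{2}\ast\partial^3_{ijk}(w_\xi^p).
\end{equation*}
The first summand is controlled without using the induction: one has $|\partial^2_{ij}\mathcal{K}_{1}(x)|\leqslant C(1+|x|)^{-(n+2s+2)}$ by the definition~\eqref{defkappauno} of $\mathcal{K}_1$ and the decay~\eqref{dfdsfs}, while $\partial_k(w_\xi^p)=p\,w_\xi^{p-1}\partial_k w_\xi$ decays like $(1+|x-\xi|)^{-(p(n+2s)+1)}$ by~\eqref{bvsvs} and Lemma~\ref{lemma decay of w}, so that \cite[Lemma~5.1]{MR3393677} yields for this summand a decay rate $\min\{n+2s+2,\,p(n+2s)+1\}\geqslant\mu(m+1)$. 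For the second summand, the Leibniz rule expresses $\partial^3_{ijk}(w_\xi^p)$ as a finite combination of terms of the types $w_\xi^{p-3}\partial w_\xi\,\partial w_\xi\,\partial w_\xi$, $w_\xi^{p-2}\partial^2 w_\xi\,\partial w_\xi$ and $w_\xi^{p-1}\partial^3 w_\xi$; bounding the first two types with the pointwise ratio bounds $|\nabla w_\xi|\leqslant c\,w_\xi$ (from~\eqref{dsvdv}) and $|\nabla^2 w_\xi|\leqslant c\,w_\xi$ (from Lemma~\ref{lemma gradient decay} together with~\eqref{bvsvs}), and the last type with the inductive hypothesis, gives
\begin{equation*}
	\big|\partial^3_{ijk}(w_\xi^p)(x)\big|\leqslant C\Big((1+|x-\xi|)^{-p(n+2s)}+(1+|x-\xi|)^{-((p-1)(n+2s)+\mu(m))}\Big)\leqslant C\,(1+|x-\xi|)^{-\mu(m+1)}.
\end{equation*}
Since $\mathcal{K}_{2}\in L^1(\mathbb{R}^n)$ is supported in $B_{1/4}$ and $1+|x-\xi|$ is comparable to $1+|x-\xi-y|$ when $|y|<1/4$ (just as in the proofs of Lemmata~\ref{lemma decay of w} and~\ref{lemma gradient decay}), the convolution $\mathcal{K}_{2}\ast\partial^3_{ijk}(w_\xi^p)$ inherits the decay $(1+|x-\xi|)^{-\mu(m+1)}$; adding the two summands closes the induction.

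The argument is structurally the same as those of Lemmata~\ref{lemma decay of w} and~\ref{lemma gradient decay}, so the main obstacle is purely a matter of bookkeeping: when $1<p<3$ the powers $w_\xi^{p-2}$ and $w_\xi^{p-3}$ occurring in $\partial^3(w_\xi^p)$ grow as $|x-\xi|\to\infty$, and the point is that one must \emph{not} estimate $|\nabla w_\xi|$ and $|\nabla^2 w_\xi|$ by their faster polynomial decay but rather by the proportionality bounds $|\nabla w_\xi|\leqslant c\,w_\xi$ and $|\nabla^2 w_\xi|\leqslant c\,w_\xi$, which reabsorb those powers into $w_\xi^p$; checking that every term of $\partial^3(w_\xi^p)$ is handled in this way is where care is needed. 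A secondary technical point is simply to have the regularity $w_\xi\in C^3(\mathbb{R}^n)$ at hand, so that $\nabla^2 Z_i$ and $\partial^3(w_\xi^p)$ are classical. As an alternative route, in the spirit of Lemma~\ref{lemma decay of w1}, one could observe that $\partial^2_{jk}Z_i$ solves $(-\Delta+(-\Delta)^s+1-p\,w_\xi^{p-1})\partial^2_{jk}Z_i=g$, where $g$ is a finite combination of $w_\xi^{p-3}\partial w_\xi\,\partial w_\xi\,Z_i$, $w_\xi^{p-2}\partial^2 w_\xi\,Z_i$ and $w_\xi^{p-2}\partial w_\xi\,\partial Z_i$, hence decays like $(1+|x-\xi|)^{-p(n+2s)}$ by Lemmata~\ref{lemma decay of w1}, \ref{lemma decay of w} and~\ref{lemma gradient decay}, i.e.\ strictly faster than $|x-\xi|^{-(n+2s)}$; comparing $|\partial^2_{jk}Z_i|$ from above with a large multiple of $\mathcal{K}_{1/2}(\cdot-\xi)$ plus the potential $\mathcal{K}_{1/2}\ast|g|$, by means of the Kato-type inequality~\eqref{vhj} and the maximum principle exactly as in Lemma~\ref{lemma decay of w1}, produces the same decay rate $n+2s$.
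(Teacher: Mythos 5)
Your main proof is correct and follows essentially the same route as the paper: establish $C^{3}$ regularity for $w_\xi$ (the paper does this via \eqref{fsfds}), differentiate the convolution splitting~\eqref{dsfsfes} so that two derivatives land on $\mathcal{K}_1$ and the singular piece becomes $\mathcal{K}_2\ast\partial^3(w_\xi^p)$, control $w_\xi^{p-2}$ and $w_\xi^{p-3}$ factors via the pointwise ratio bounds $|\nabla w_\xi|,|\nabla^2 w_\xi|\lesssim w_\xi$, and bootstrap the decay exponent $\nu(k)$ inductively until it dominates $n+2s$. The alternative route you sketch at the end (writing $\partial^2_{jk}Z_i$ as a solution of the linearized equation with a fast-decaying right-hand side and invoking the Kato-inequality/comparison scheme of Lemma~\ref{lemma decay of w1}) is a genuinely different and equally viable argument, closer in spirit to Lemma~\ref{lemma decay of w1} than to the paper's bootstrap.
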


\begin{proof}
Since $Z_i\in H^1(\mathbb{R}^n)\cap L^\infty(\mathbb{R}^n)$ solves problem~\eqref{fdbd}, recalling~\eqref{dsvdv} and taking into account the fact that $w_\xi\in C^2(\mathbb{R}^n)$ (see e.g.\cite[Theorem~1.4]{DSVZ24}), one deduces that
\[ \|(Z_i-pw_\xi^{p-1}Z_i)\|_{C^1(\mathbb{R}^n)} \leqslant C\left(\|w_\xi\|_{C^2(\mathbb{R}^n)},n,s,p\right)\leqslant C\left(\|w_\xi\|_{L^\infty(\mathbb{R}^n)},n,s,p\right). \]
Hence, by closely following the proof of~\cite[Theorem~1.4]{DSVZ24}, we infer that $Z_i\in C^2(\mathbb{R}^n)$ and  
\begin{equation}\label{fsfds}
	\|Z_i\|_{C^2(\mathbb{R}^n)} \leqslant C\left(\|Z_i\|_{L^\infty(\mathbb{R}^n)},\|w_\xi\|_{C^2(\mathbb{R}^n)}, n,s,p\right)
	\leqslant C\left(\|w_\xi\|_{L^\infty(\mathbb{R}^n)}, n,s,p\right).
\end{equation}

We now claim that, for any $k\in\mathbb{N}$, $i,j,l \in\left\{1,\cdots, n\right\}$
and~$x\in\mathbb{R}^n$,
\begin{equation}\label{fdgfdf}
	\left|	\frac{\partial^3 w_\xi}{\partial x_i \partial x_j \partial x_l}(x) \right|\leqslant C_k(1+|x-\xi|)^{-\nu(k)},
\end{equation}
where $C_k>0$ and $$\nu(k):=\min\big\{(n+2s+2),p(n+2s),k(p-1)(n+2s)\big\}.$$

Indeed, \eqref{fsfds} states that~\eqref{fdgfdf} holds when $k=0$.
 Now we suppose that~\eqref{fdgfdf} holds true for some~$k$, and we prove it for~$k+1$.
 For this,  we differentiate~\eqref{hdsjbvd}, obtaining that, for any~$i$, $j$, $l \in\left\{1,\cdots, n\right\}$,
	\begin{equation*}
	\begin{split}
	&\partial^3_{ijl}w_\xi:=\frac{\partial^3 w_\xi}{\partial x_i \partial x_j \partial x_l}=\frac{\partial^2 \mathcal{K}_{1}}{\partial x_i\partial x_j}\ast \left(pw_\xi^{p-1}Z_l\right)\\
	& \qquad+\mathcal{K}_{2}\ast \left(p(p-1)(p-2)w_\xi^{p-3}Z_iZ_jZ_l+p(p-1)w_\xi^{p-2}\left(\partial_l(Z_iZ_j)+Z_l\partial_j(Z_i)\right)+pw_\xi^{p-1}\partial^2_{jl}Z_i\right).
	\end{split}
\end{equation*}
Hence,  by combining Lemmata~\ref{lemma decay of w} and~\ref{lemma gradient decay}, in virtue of~\eqref{dsvdv}, one infers that
\begin{equation*}
	|\partial^3_{ijk}w_\xi|\leqslant \frac{C}{(1+|x-\xi|)^{\nu(k+1)}},
\end{equation*}
for some constant $C>0$, which establishes the desired result~\eqref{fdgfdf}. 

Thus, Lemma~\ref{lemma second gradient decay} follows from~\eqref{fdgfdf} by taking the smallest $k$ for which $k(p-1)>p$.
\end{proof}

In view of Lemmata~\ref{lemma gradient decay} and~\ref{lemma second gradient decay}, using a similar argument to the one in the proof of Lemma~\ref{lemma decay of w}, it is immediate to check the following result.

\begin{Lemma}
	There exists a positive constant $C$ such that, for any $i=1,\cdots,n$,
		\begin{equation*}
		|\nabla Z_i|\leqslant \frac{C}{|x-\xi|^{n+2s+2}} \quad \text{and} \quad |\nabla^2 Z_i|\leqslant \frac{C}{|x-\xi|^{n+2s+3}}
		\qquad{\mbox{for all }}x\in\R^n\setminus B_1(\xi).
	\end{equation*}
\end{Lemma}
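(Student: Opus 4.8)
The plan is to bootstrap the decay of $\nabla Z_i$ and $\nabla^2 Z_i$ from the polynomial rate $|x-\xi|^{-(n+2s)}$ already obtained in Lemmata~\ref{lemma gradient decay} and~\ref{lemma second gradient decay} to the sharper rates $|x-\xi|^{-(n+2s+2)}$ and $|x-\xi|^{-(n+2s+3)}$, exactly mimicking the convolution splitting used in the proof of Lemma~\ref{lemma decay of w}. First I would start from the representation formulas obtained by differentiating $w_\xi=\mathcal{K}_1\ast w_\xi^p+\mathcal{K}_2\ast w_\xi^p$ once and twice, namely~\eqref{hdsjbvd} and its third-order analogue from the proof of Lemma~\ref{lemma second gradient decay}. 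The idea is that the slowly-decaying pieces of $w_\xi$ and of its lower-order derivatives are already ``integrated against'' the fast-decaying, smooth kernels $\partial^2\mathcal{K}_1$, $\partial^3\mathcal{K}_1$, which decay like $|x|^{-(n+2s+2)}$ and $|x|^{-(n+2s+3)}$ by~\eqref{dfdsfs}; combining this with~\cite[Lemma~5.1]{MR3393677} for the convolution of two polynomially decaying functions yields the improved rate on that term, while the term with $\mathcal{K}_2$ (supported in $B_{1/4}$) is harmless since it only shifts the argument of the already-fast-decaying factor by a bounded amount.

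More precisely, for the gradient estimate I would write, from~\eqref{hdsjbvd},
\begin{equation*}
	\nabla_l\big(\partial^2_{ij}w_\xi\big)=\frac{\partial^3 \mathcal{K}_{1}}{\partial x_i\partial x_j\partial x_l}\ast w_\xi^p
	+\mathcal{K}_{2}\ast \nabla_l\!\left(p(p-1)w_\xi^{p-2}\partial_i w_\xi\,\partial_j w_\xi+pw_\xi^{p-1}\partial^2_{ij}w_\xi\right),
\end{equation*}
and estimate the first term by $C(1+|x-\xi|)^{-\nu_1}$ with $\nu_1=\min\{n+2s+3,\,p(n+2s)\}\geqslant n+2s+2$ via~\eqref{dfdsfs},~\eqref{bvsvs} and~\cite[Lemma~5.1]{MR3393677}; using Lemmata~\ref{lemma decay of w},~\ref{lemma gradient decay},~\ref{lemma second gradient decay} together with~\eqref{dsvdv} the integrand in the $\mathcal{K}_2$ term is bounded by $C(1+|x-\xi-y|)^{-(n+2s+2)}$ for $|y|<1/4$, and the usual dichotomy ($|x-\xi|<1$ versus $|x-\xi|\geqslant1$, so $|x-\xi-y|\geqslant|x-\xi|/2$) gives $C(1+|x-\xi|)^{-(n+2s+2)}\int_{\mathbb{R}^n}\mathcal{K}$. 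This yields $|\nabla Z_i(x)|\leqslant C|x-\xi|^{-(n+2s+2)}$ on $\mathbb{R}^n\setminus B_1(\xi)$. For the Hessian of $Z_i$ one repeats the scheme one derivative higher, differentiating the third-order representation formula from the proof of Lemma~\ref{lemma second gradient decay}: the $\mathcal{K}_1$-term now carries $\partial^4\mathcal{K}_1$ which decays like $|x|^{-(n+2s+4)}$, and the $\mathcal{K}_2$-term involves $w_\xi$, $\nabla w_\xi$, $\nabla^2 w_\xi$, $\nabla^3 w_\xi$ and $\nabla^2 Z_i=\nabla^3 w_\xi$, all of which are now known to decay at least like $|x-\xi|^{-(n+2s)}$ pointwise (and faster for the gradient factors), so that after the same bounded-shift argument the integrand is controlled by $C(1+|x-\xi-y|)^{-(n+2s+3)}$, giving $|\nabla^2 Z_i(x)|\leqslant C|x-\xi|^{-(n+2s+3)}$.

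The one genuine subtlety — ``the hard part'' — is the same loss-of-power phenomenon already encountered in Lemma~\ref{lemma decay of w}: when $p<1+\tfrac{2}{n+2s}$ (resp. a slightly different threshold for the Hessian), the exponent $p(n+2s)$ coming from the $\mathcal{K}_1$-convolution term is strictly smaller than the target exponent $n+2s+2$ (resp. $n+2s+3$), so a single pass of the argument does not suffice. As in Lemma~\ref{lemma decay of w}, I would resolve this by a finite iteration: each pass replaces the decay exponent $\mu$ of the relevant derivative of $w_\xi$ by $\min\{n+2s+k,\,(\mu - (n+2s)) + p(n+2s)\}$-type quantities through the product structure $w_\xi^{p-2}\partial w_\xi\,\partial w_\xi$, etc., so the exponent strictly increases by at least $(p-1)(n+2s)>0$ at each step until it saturates at $n+2s+2$ (resp. $n+2s+3$); since there is an $m\in\mathbb{N}$ with $n+2s+3<(p+m(p-1))(n+2s)$, finitely many iterations close the estimate. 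No new ideas beyond those in the proofs of Lemmata~\ref{lemma decay of w}, \ref{lemma gradient decay} and~\ref{lemma second gradient decay} are needed — only the bookkeeping of which derivatives of $\mathcal{K}_1$, $w_\xi$ and $Z_i$ enter — which is why the statement can be dispatched ``immediately''.
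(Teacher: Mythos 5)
The key ingredient that makes the proof of Lemma~\ref{lemma decay of w} succeed is missing from your plan, and without it the argument cannot close when $p$ is small. Your estimate of the $\mathcal{K}_1$-term $\partial^k\mathcal{K}_1 * w_\xi^p$ ``via~\eqref{dfdsfs},~\eqref{bvsvs} and~\cite[Lemma~5.1]{MR3393677}'' only gives the rate $\min\{n+2s+k,\,p(n+2s)\}$, and your assertion that $\nu_1 = \min\{n+2s+3,\,p(n+2s)\} \geq n+2s+2$ is simply false whenever $p < 1 + \tfrac{2}{n+2s}$, a permissible range (e.g.\ $p$ near $1$). The iteration you then invoke does not repair this: the product structure $w_\xi^{p-2}\partial w_\xi\,\partial w_\xi$, etc.\ that you bootstrap over lives entirely in the $\mathcal{K}_2$-term, whose decay indeed improves pass by pass; but the $\mathcal{K}_1$-term carries only the factor $w_\xi^p$, whose decay rate $p(n+2s)$ is \emph{fixed} and never changes under the iteration. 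So your scheme saturates at $\min\{n+2s+k,\,p(n+2s)\}$ — which is precisely the outcome already built into Lemmata~\ref{lemma gradient decay} and~\ref{lemma second gradient decay}, whereas the present lemma is asserting a strict improvement past that $p(n+2s)$ floor.

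What the proof of Lemma~\ref{lemma decay of w} actually does, and what you have dropped, is the cancellation in the near region $\{1/8 \leq |y| \leq |x-\xi|/2\}$: the oddness of $\partial_1\mathcal{K}_1$ in $y_1$ is used to replace $w^p(x_\xi-y)$ by $w^p(x_\xi-y)-w^p(x_\xi-\bar y)$, hence by $pw^{p-1}\nabla w$ evaluated on a segment. Since $\nabla w$, unlike $w$ itself, \emph{does} gain decay under the bootstrap, this is what lets the $\mathcal{K}_1$-near-region exponent climb with each pass. Transporting this to $\partial^2\mathcal{K}_1*w^p$ and $\partial^3\mathcal{K}_1*w^p$ is not mere bookkeeping: $\partial^2_{11}\mathcal{K}_1$ is \emph{even} in every variable, so a single reflection produces no cancellation, and one must instead exploit the vanishing of the zeroth and first moments of $\partial^2_{ij}\mathcal{K}_1$ (resp.\ zeroth, first and second moments of $\partial^3_{ijl}\mathcal{K}_1$), or, equivalently, integrate by parts in the near region (the boundary terms at $|y|=1/8$ vanish because $\mathcal{K}_1\equiv 0$ on $B_{1/6}$) and then symmetrize, so as to bring $D^2(w^p)$ (resp.\ $D^3(w^p)$), and hence $D^2w$, $D^3w$, into the near-region estimate where they can be improved iteratively. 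That mechanism is the genuine content of the lemma and it is absent from your proposal. (A smaller, separate point: the representation formula you label ``for the gradient estimate'' is for $\nabla_l\partial^2_{ij}w_\xi=\partial^3_{ijl}w_\xi$, which is a component of $\nabla^2 Z_i$, not of $\nabla Z_i=(\partial^2_{ij}w_\xi)_j$; the orders of differentiation and the target exponents need to be matched consistently.)
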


Since the ground state $w$ is radial, in virtue of Lemma~\ref{lemma decay of w} and \cite[Lemma~5.5]{MR3393677}, we have:  

\begin{Lemma}\label{lemma Zi}
	Let
	$$\alpha:=\int_{\mathbb{R}^n} Z_1^2\, dx.$$ Then:
	\begin{itemize}
		\item[(i)] For any  $i=1,\cdots,n$,
		$$	\int_{\mathbb{R}^n} Z_i^2\, dx=\alpha  .$$
		\item[(ii)] For any  $i,j=1,\cdots,n$,
		$$\int_{\mathbb{R}^n} Z_i Z_j\, dx=\alpha\delta_{ij} .$$
		\item[(iii)] For any  $i,j=1,\cdots,n$,
		$$\int_{\Omega_\epsilon} Z_i Z_j\, dx=\alpha\delta_{ij}+O(\epsilon^{n+4s}).$$
		\item [(iv)] If $\tau_0\in L^\infty([0,+\infty))$, $\tau(x):=\tau_0(|x-\xi|)$ for any $x\in\mathbb{R}^n$ and $\tilde{Z_i}:=\tau Z_i$, we have that $$\int_{\mathbb{R}^n} \tilde{Z}_i Z_j\, dx=\tilde{\alpha}\delta_{ij} ,$$ with
		$$\tilde{\alpha}:=\int_{\mathbb{R}^n} \tilde{Z}_1Z_1\, dx ,$$
		and~$\tilde{\alpha}$ is independent of $\xi$.
	\end{itemize}
\end{Lemma}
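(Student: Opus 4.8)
The plan is to reduce everything to the radial structure of $w$ and to the decay estimates already established. Recall from Lemma~\ref{lemma decay of w} that
\begin{equation*}
	Z_i(x)=w'(|x-\xi|)\,\frac{x_i-\xi_i}{|x-\xi|},
\end{equation*}
so that $Z_iZ_j$ is, up to the scalar factor $w'(|x-\xi|)^2$, the product of two spherical-harmonic--type factors $\frac{(x_i-\xi_i)(x_j-\xi_j)}{|x-\xi|^2}$. For part~(i), a rotation of $\mathbb{R}^n$ interchanging the $i$-th and $1$-st coordinates fixes $w(\cdot-\xi)$ (since $w$ is radial) and hence maps $Z_i$ to $Z_1$ up to sign, which proves $\int_{\mathbb{R}^n}Z_i^2\,dx=\int_{\mathbb{R}^n}Z_1^2\,dx=\alpha$; alternatively, sum over $i$ and use $\sum_i Z_i^2=|w'(|x-\xi|)|^2$ together with symmetry. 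For part~(ii), if $i\neq j$ then the integrand $w'(|x-\xi|)^2\frac{(x_i-\xi_i)(x_j-\xi_j)}{|x-\xi|^2}$ is odd under the reflection $x_i-\xi_i\mapsto -(x_i-\xi_i)$, so the integral over $\mathbb{R}^n$ vanishes; combined with~(i) this gives $\int_{\mathbb{R}^n}Z_iZ_j\,dx=\alpha\delta_{ij}$. The integrability needed to run these symmetry arguments is guaranteed by the decay $|Z_i(x)|\le C|x-\xi|^{-(n+2s)}$ of Lemma~\ref{lemma decay of w1} (indeed even the improved $-(n+2s+1)$ decay of Lemma~\ref{lemma decay of w}), since $2(n+2s)>n$, so $Z_i\in L^2(\mathbb{R}^n)$.

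For part~(iii), write $\int_{\Omega_\epsilon}Z_iZ_j\,dx=\int_{\mathbb{R}^n}Z_iZ_j\,dx-\int_{\mathbb{R}^n\setminus\Omega_\epsilon}Z_iZ_j\,dx$ and use~(ii) for the first term. For the error term, note that $d:=\operatorname{dist}(\xi,\partial\Omega_\epsilon)\ge\delta/\epsilon$ in the regime of interest, so $\mathbb{R}^n\setminus\Omega_\epsilon\subset\mathbb{R}^n\setminus B_d(\xi)$, and by Lemma~\ref{lemma decay of w1},
\begin{equation*}
	\left|\int_{\mathbb{R}^n\setminus\Omega_\epsilon}Z_iZ_j\,dx\right|
	\leqslant \int_{\mathbb{R}^n\setminus B_d(\xi)}\frac{C}{|x-\xi|^{2(n+2s)}}\,dx
	\leqslant \frac{C}{d^{n+4s}}\leqslant C\left(\frac{\epsilon}{\delta}\right)^{n+4s}=O(\epsilon^{n+4s}).
\end{equation*}
For part~(iv), the function $\tilde Z_i=\tau_0(|x-\xi|)\,w'(|x-\xi|)\frac{x_i-\xi_i}{|x-\xi|}$ has exactly the same angular dependence as $Z_i$ (the extra factor $\tau_0(|x-\xi|)$ is radial and bounded, so $\tilde Z_iZ_j\in L^1(\mathbb{R}^n)$ by the same decay bound), hence the identical reflection/rotation symmetry arguments give $\int_{\mathbb{R}^n}\tilde Z_iZ_j\,dx=\tilde\alpha\delta_{ij}$ with $\tilde\alpha=\int_{\mathbb{R}^n}\tilde Z_1Z_1\,dx$; and since after the change of variables $y=x-\xi$ neither $\tilde Z_1Z_1$ nor the domain $\mathbb{R}^n$ depends on $\xi$, the constant $\tilde\alpha$ is independent of $\xi$ (this uses that $\tau$ is defined through the fixed profile $\tau_0$ recentred at $\xi$).

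There is no real obstacle here; the only point requiring a little care is the bookkeeping in~(iii), namely checking that the tail contribution is genuinely $O(\epsilon^{n+4s})$ uniformly in $\xi$, which is exactly what the uniform decay estimate of Lemma~\ref{lemma decay of w1} (with constant independent of $\xi$) delivers once one observes $\mathbb{R}^n\setminus\Omega_\epsilon\subset\mathbb{R}^n\setminus B_d(\xi)$ with $d\ge\delta/\epsilon$. The appeal to \cite[Lemma~5.5]{MR3393677} can be invoked to package the radial-symmetry computations in~(i), (ii) and~(iv), so that the proof amounts to verifying integrability via the decay estimates and handling the domain-truncation error in~(iii).
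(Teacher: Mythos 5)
Your proof is correct and follows essentially the approach the paper intends: parts (i), (ii) and (iv) by the rotation/reflection symmetry coming from the radiality of $w$ (and of $\tau_0$), part (iii) by combining (ii) with the tail bound $\int_{\mathbb{R}^n\setminus B_d(\xi)}|Z_iZ_j|\,dx\leqslant Cd^{-(n+4s)}$ from the decay of Lemma~\ref{lemma decay of w1}, using that $\mathbb{R}^n\setminus\Omega_\epsilon\subset\mathbb{R}^n\setminus B_d(\xi)$ with $d\geqslant\delta/\epsilon$. The paper gives no proof, deferring to \cite[Lemma~5.5]{MR3393677}, which is the fractional analogue argued exactly as you do; you have simply written out the details the paper outsources.
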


\section{Some regularity estimates}\label{sec:Some regularity estimates}

We devote this section to establish several  uniform estimates on the solutions of our general linear equations. For this, 
let us first introduce some notations. Let $\xi\in\Omega_\epsilon$ with
\begin{equation}\label{fdgd}
	\operatorname{dist}(\xi,\partial \Omega_\epsilon)\geqslant \frac{c}{\epsilon}\qquad \text{for some } c\in(0,1).
\end{equation}
We also define a $L^\infty$ weighted norm as follows: 
\begin{equation}\label{vdsvdsv}
	\|\psi\|_{\infty,\xi}:=\|\rho_\xi\psi\|_{L^\infty(\mathbb{R}^n)},
\end{equation}
where $\rho_\xi(x):=(1+|x-\xi|)^\mu$  for any $x\in\mathbb{R}^n$, with 
\begin{equation}\label{fiuewfjke768578599}
\frac{n}{2}<\mu<\min\left\{p(n+2s)-\frac{n}2-2s,n+2s\right\}.\end{equation}
We remark that the upper bound on $\mu$ is indeed associated with~$n$, $s$ and~$p$ (see Theorem~\ref{th expansion} below).

\begin{Lemma}\label{lemma regularity}
Let $g\in L^2(\mathbb{R}^n)\cap L^\infty(\mathbb{R}^n)$ and  $\psi\in H^1(\mathbb{R}^n)$ be a solution of
\begin{equation}\label{vsdjvb}
	\begin{cases}
		-\Delta \psi+ (-\Delta )^s\psi+\psi =g\qquad &\text{in } \Omega_\epsilon,\\
		\psi=0 &\text{in } \mathbb{R}^n\setminus\Omega_\epsilon.
	\end{cases}
\end{equation}

Then, for every $\alpha\in(0,\min\left\{1,2-2s\right\})$
there exists a positive constant $C$, depending only
on $n$, $s$, $\alpha$ and~$\Omega$, such that 
\begin{equation*}
	\|\psi\|_{C^{1,\alpha}(\overline{\Omega_\epsilon})}\leqslant C\left(\|g\|_{L^\infty(\mathbb{R}^n)}+\|g\|_{L^2(\mathbb{R}^n)}\right).
\end{equation*}
\end{Lemma}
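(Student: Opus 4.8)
The plan is to combine the interior regularity theory for the mixed local/nonlocal operator with a boundary regularity estimate, using $L^2$--$L^\infty$ bootstrap as an intermediate step. First, I would derive an a priori $L^\infty$ bound for $\psi$. Testing the equation with $\psi$ itself gives $\|\psi\|_{H^1_0(\Omega_\epsilon)}^2 \le \|g\|_{L^2}\|\psi\|_{L^2}$, hence $\|\psi\|_{H^1(\mathbb{R}^n)}\le \|g\|_{L^2(\mathbb{R}^n)}$; then a De Giorgi--Nash--Moser / Stampacchia truncation argument (or the arguments already used for the ground state, e.g.\ those cited from \cite{DSVZ24,SZZ24}), applied to the equation $-\Delta\psi = g - (-\Delta)^s\psi - \psi =: \tilde g$ where $(-\Delta)^s\psi \in L^2$ because $\psi \in H^{2s}$ away from the boundary, yields $\|\psi\|_{L^\infty(\mathbb{R}^n)} \le C(\|g\|_{L^\infty}+\|g\|_{L^2})$. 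Here one uses crucially that $\psi \equiv 0$ outside $\Omega_\epsilon$, so the nonlocal tail contribution $\int_{\mathbb{R}^n}|\psi(y)|/|x-y|^{n+2s}\,dy$ is controlled by $\|\psi\|_{L^\infty}$ on a bounded set and by $\|\psi\|_{L^2}$ on the complement, with constants depending only on the (scaling-invariant, via the uniform interior/exterior ball condition) geometry of $\Omega$, not on $\epsilon$.

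Next, with $\psi \in L^\infty(\mathbb{R}^n)$ in hand, I would view $(-\Delta)^s\psi$ as a bounded lower-order perturbation: since $\psi \in L^\infty(\mathbb{R}^n) \cap C^\beta_{\mathrm{loc}}$ for some small $\beta$ (interior Schauder for the fractional part, plus $\psi=0$ outside), one has $(-\Delta)^s\psi \in L^\infty(\Omega_\epsilon')$ on a slightly smaller set, but for the boundary estimate one only needs $(-\Delta)^s\psi \in L^q$ for $q$ large, which follows from $\psi \in L^\infty \cap H^1$. Then $\psi$ solves $-\Delta\psi + \psi = g - (-\Delta)^s\psi =: h$ in $\Omega_\epsilon$ with $\psi = 0$ on $\partial\Omega_\epsilon$, where $\|h\|_{L^q(\Omega_\epsilon)}$ is bounded by $C(\|g\|_{L^\infty}+\|g\|_{L^2})$ uniformly in $\epsilon$ (the uniform exterior/interior ball radius of $\Omega_\epsilon$ is $\rho_0/\epsilon \to \infty$, so boundary regularity constants for the \emph{local} Dirichlet problem only improve as $\epsilon \to 0$). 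By global $W^{2,q}$ estimates for the Laplacian on a domain with uniform $C^{1,1}$ boundary and $q > n$, followed by Sobolev embedding $W^{2,q} \hookrightarrow C^{1,\alpha}$, we get $\|\psi\|_{C^{1,\alpha}(\overline{\Omega_\epsilon})} \le C(\|g\|_{L^\infty}+\|g\|_{L^2})$, with the restriction $\alpha < \min\{1, 2-2s\}$ entering precisely because $(-\Delta)^s\psi$ is only $C^{\min\{1,2-2s\}-}$-regular up to the boundary (its Hölder modulus near $\partial\Omega_\epsilon$ degenerates, coming from the $s$-harmonic extension behaviour), which caps the Schauder gain.

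The main obstacle I anticipate is making \emph{uniformity in $\epsilon$} genuinely rigorous: one must check that every constant — in the $L^\infty$ bound, in the interior fractional Schauder estimate, and in the boundary $W^{2,q}$/Schauder estimate — depends on $\Omega_\epsilon$ only through quantities (diameter lower bound, uniform two-sided ball condition, $C^{1,1}$ norm of the boundary after rescaling) that are controlled by $\Omega$ alone and that do \emph{not} deteriorate as $\epsilon \to 0$. Since $\Omega_\epsilon = \Omega/\epsilon$ is a dilation, its boundary curvatures scale like $\epsilon$ and hence tend to $0$, so the local boundary estimates are in fact uniformly good; the only genuinely $\epsilon$-independent input needed is the lower bound $\operatorname{diam}(\Omega_\epsilon) \ge \operatorname{diam}(\Omega)$ and the fact, recorded in \eqref{dvsd}, that $\mathbb{R}^n\setminus\Omega_\epsilon$ occupies a fixed proportion of every unit ball centered outside — this last fact is what keeps the nonlocal tail and the barrier constants under control. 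A secondary point to handle carefully is the interplay between the two operators at scale $1$: the equation is not scale-invariant, so one cannot simply quote homogeneous estimates, but since the zeroth-order term $+\psi$ has a favourable sign, it only helps, and one may absorb it. I would organize the write-up as: (1) energy estimate $\Rightarrow$ $H^1$ bound; (2) truncation $\Rightarrow$ $L^\infty$ bound; (3) treat $(-\Delta)^s\psi$ as a datum and invoke uniform local boundary $W^{2,q}$ theory; (4) Sobolev embed and track the Hölder exponent constraint.
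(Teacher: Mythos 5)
Your first step (energy estimate yielding $\|\psi\|_{L^2}\leqslant\|g\|_{L^2}$, then a De Giorgi-type truncation giving $\|\psi\|_{L^\infty}\leqslant C(\|g\|_{L^\infty}+\|\psi\|_{L^2})$) is exactly what the paper does, with the truncation carried out in Appendix~\ref{sec:vfdvdfv}. The genuine gap lies in your second step, where you propose to \emph{decouple} the operator: move $(-\Delta)^s\psi$ to the right-hand side, view $\psi$ as solving the classical Dirichlet problem $-\Delta\psi+\psi=g-(-\Delta)^s\psi=:h$ in $\Omega_\epsilon$, and invoke Laplacian $W^{2,q}$ theory with $q>n$ plus Sobolev embedding. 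The critical claim underpinning this route, namely that ``$(-\Delta)^s\psi\in L^q$ for $q$ large, which follows from $\psi\in L^\infty\cap H^1$,'' is false, and the decoupling is structurally insufficient for $s>\frac12$. Since $\psi$ is extended by zero across $\partial\Omega_\epsilon$, even in the best case (i.e.\ assuming the very $C^{1,\alpha}(\overline{\Omega_\epsilon})$ regularity you are trying to establish, so that $\psi(x)\sim d(x):=\operatorname{dist}(x,\partial\Omega_\epsilon)$ near the boundary) the exterior contribution $\psi(x)\int_{\R^n\setminus\Omega_\epsilon}|x-y|^{-n-2s}\,dy\sim d(x)^{1-2s}$ diverges as $d\to0$ when $s>\frac12$, and $d^{1-2s}\in L^q(\Omega_\epsilon)$ only when $q(2s-1)<1$, i.e.\ $q<\frac{1}{2s-1}$. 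When $n\geqslant2$ and $s>\frac12\bigl(1+\frac1n\bigr)$ this forces $q<n$, so the embedding $W^{2,q}\hookrightarrow C^{1,\alpha}$ is unavailable altogether; and even in the residual window $q\in\bigl(n,\frac{1}{2s-1}\bigr)$ one only reaches $\alpha<1-n(2s-1)$, which is strictly smaller than the target $2-2s$ for every $n\geqslant 2$. If instead one starts literally from $\psi\in L^\infty\cap H^1$ as you write, $(-\Delta)^s\psi$ lands in $H^{1-2s}(\R^n)$, a negative-order space for $s>\frac12$, and no useful $L^q$ information follows without first controlling the boundary decay of $\psi$ -- which is precisely what is at stake. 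The bootstrap you gesture at does work for $s<\frac12$ (each pass gains $1-2s$ orders via $(-\Delta)^s:W^{1,q}\to W^{1-2s,q}$), but it stalls at $s=\frac12$ and regresses beyond.

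The paper avoids this obstruction by \emph{not} decoupling: it keeps the full operator and applies the boundary $W^{2,q}$ theory for the mixed Dirichlet problem $-\Delta+(-\Delta)^s$ from \cite[Theorem~4.1]{SVWZ23}, valid for the wider range $n<q<\frac{n}{2s-1}$ (when $s>\frac12$) precisely because that theory treats the fractional boundary singularity as part of the operator rather than as a datum that must live in $L^q$. Two further devices, which your write-up only touches on, are needed to make the constants $\epsilon$-independent. First, the paper does not apply boundary estimates directly on $\Omega_\epsilon$ (whose diameter grows like $\epsilon^{-1}$): it localizes with a cutoff $\phi_p$ near each $p\in\partial\Omega_\epsilon$ and works on an auxiliary $C^{1,1}$ domain $\mho_p\subset\Omega_\epsilon\cap B_4(p)$ whose geometry is uniformly bounded because of the scaling \eqref{fbdfdfbd}. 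Second, the cutoff introduces a $\|\psi\|_{C^1(\overline{\Omega_\epsilon})}$ term on the right (from $\nabla\phi_p\cdot\nabla\psi$) which must then be reabsorbed into $\|\psi\|_{C^{1,\alpha}(\overline{\Omega_\epsilon})}$ by the global interpolation inequality of \cite{GTbook}. In short, to repair your route you would essentially have to re-derive the boundary $W^{2,q}$ theory for $-\Delta+(-\Delta)^s$, which is exactly the content the paper imports from \cite{SVWZ23}.
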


\begin{proof}
  First,  employing a De Giorgi-type
  argument (see e.g. the proof of \cite[Theorem~8.2]{MR3259559}), we see that
  \begin{equation}\label{vdfsd}
  	\|\psi\|_{L^\infty(\mathbb{R}^n)}\leqslant C(n,s)\left(\|g\|_{L^\infty(\mathbb{R}^n)}+\|\psi\|_{L^2(\mathbb{R}^n)}\right).
  \end{equation}
    The full details of the proof of~\eqref{vdfsd} are provided in Appendix~\ref{sec:vfdvdfv} for the reader's convenience.

 We now show that there exists a constant $C>0$ independent of $\epsilon$ such that,  for every $\alpha\in(0,\min\left\{1,2-2s\right\})$,
\begin{equation}\label{bkm }
	\|\psi\|_{C^{1,\alpha}(\overline{\Omega_\epsilon})}\leqslant C\left(\|g\|_{L^\infty(\mathbb{R}^n)}+\|\psi\|_{L^\infty(\mathbb{R}^n)}\right).
\end{equation}
Indeed,  for a given $\epsilon>0$,  since both $\psi$ and $g$ are bounded,  from the regularity results in \cite{SVWZ23},
we know that $\psi\in C_{\rm loc}^{2,\alpha}(\Omega_\epsilon)\cap C^{1,\alpha}(\overline{\Omega_\epsilon})$ for any $\alpha\in(0,\min\left\{1,2-2s\right\})$. 

Let us denote by
$$\mathcal{O}:=\left\{x\in\Omega_\epsilon:\, \operatorname{dist}(x,\partial\Omega_\epsilon)>\frac{1}{2}\right\}.$$
In view of \cite[Theorem~1.4]{SVWZ23}, we have that, for every $\alpha\in(0,1)$,
\begin{equation}\label{bkjbj}
		\|\psi\|_{C^{1,\alpha}(\overline{\mathcal{O}})}\leqslant \Lambda\left(\|g\|_{L^\infty(\mathbb{R}^n)}+\|\psi\|_{L^\infty(\mathbb{R}^n)}\right),
\end{equation} where the constant  $\Lambda>0$ depends only on $n$, $s$ and~$\alpha$.

It remains to prove that $\psi$ has uniform $C^{1,\alpha}$-norm near the boundary of $\Omega$. For this, we notice that, since $\Omega$ is a bounded domain with smooth boundary, there exist positive constants~$\rho$ and~$M$ such that, for every $x_0\in\partial\Omega$, we find
a $C^2$ function $\gamma_{x_0}: \mathbb{R}^{n-1}\rightarrow\mathbb{R}$ such that, up to relabeling and reorienting the coordinate axes if necessary, we have that
\begin{eqnarray*}
&&	\Omega\cap B_{\rho}(x_0)=\left\{y\in B_{\rho}(x_0):\, y_n>\gamma_{x_0}(y_1,\cdots,y_{n-1}) \right\},
\\&&	\partial\Omega\cap B_{\rho}(x_0)=\left\{y\in B_{\rho}(x_0):\, y_n=\gamma_{x_0}(y_1,\cdots,y_{n-1}) \right\} ,
\\{\mbox{and }}&&
	\|\nabla \gamma_{x_0}\|_{L^\infty(\mathbb{R}^{n-1})}+	\|D^2\gamma_{x_0}\|_{L^\infty(\mathbb{R}^{n-1})}\leqslant M.
\end{eqnarray*}

We now assume that  $\epsilon$ sufficiently small such that $\epsilon<\min\left\{1,\rho/4\right\}$. We observe that  $\epsilon P\in\partial\Omega$ for every~$P\in\partial\Omega_\epsilon$, and  that $\epsilon x\in B_\rho(\epsilon P)$ for every~$x\in B_4(p)$. Thus, 
we set~$\gamma_P^\epsilon(y_1,
\cdots,y_{n-1}) =\epsilon^{-1}\gamma_{\epsilon P}(\epsilon y_1,
\cdots,\epsilon y_{n-1})$ and we have that~$\gamma_P^\epsilon$ is a~$C^2$ function such that 
\begin{eqnarray*}&&
	\Omega_\epsilon\cap B_{4}(P)=\left\{y\in B_{4}(P):\, y_n>\gamma_{P}^\epsilon(y_1,\cdots,y_{n-1}) \right\},
\\&&
\partial\Omega_\epsilon\cap B_{4}(P)=\left\{y\in B_{4}(P):\, y_n=\gamma_{P}^\epsilon(y_1,\cdots,y_{n-1}) \right\},\end{eqnarray*}
and
\begin{equation}\nabla_{y'} \gamma_{P}^\epsilon(y')= \nabla_{\epsilon y'} \gamma_{\epsilon P} (\epsilon y') \quad \text{and} \quad	D_{y'}^2\gamma_{P}^\epsilon(y')=\epsilon D^2_{\epsilon y'}\gamma_{\epsilon P}(\epsilon y') \label{fbdfdfbd}
\end{equation}
where $y'=(y_1,\cdots,y_{n-1})$.

Therefore,   regularizing the graph of $\partial(B_3(P)\cap\Omega_\epsilon) $  near the intersection of   $\partial\Omega_\epsilon$ and $ B_3(P)$, we can find  a $C^{2}$ domain $\mho_P$ such that  $(B_{2}(P)\cap\Omega_\epsilon )\subset \mho_P\subset (B_{4}(P)\cap\Omega_\epsilon)$ (see Figure 1), and  for every~$\epsilon<\min\left\{1,\rho/{4}\right\}$ one has that
the~$C^{1,1}$-norm of $\partial\mho_p$ is bounded uniformly, only in dependence of~$n$ and~$\Omega$.

\begin{figure}
	\centering
\tikzset{every picture/.style={line width=0.75pt}} 
\begin{tikzpicture}[x=0.4pt,y=0.4pt,yscale=-1,xscale=1]

	
	\draw   (65,134.5) .. controls (65,71.26) and (116.26,20) .. (179.5,20) .. controls (242.74,20) and (294,71.26) .. (294,134.5) .. controls (294,197.74) and (242.74,249) .. (179.5,249) .. controls (116.26,249) and (65,197.74) .. (65,134.5) -- cycle ;
	\draw   (95.5,134.5) .. controls (95.5,88.11) and (133.11,50.5) .. (179.5,50.5) .. controls (225.89,50.5) and (263.5,88.11) .. (263.5,134.5) .. controls (263.5,180.89) and (225.89,218.5) .. (179.5,218.5) .. controls (133.11,218.5) and (95.5,180.89) .. (95.5,134.5) -- cycle ;
	\draw   (128.5,134.5) .. controls (128.5,106.33) and (151.33,83.5) .. (179.5,83.5) .. controls (207.67,83.5) and (230.5,106.33) .. (230.5,134.5) .. controls (230.5,162.67) and (207.67,185.5) .. (179.5,185.5) .. controls (151.33,185.5) and (128.5,162.67) .. (128.5,134.5) -- cycle ;
	\draw  [fill={rgb, 255:red, 0; green, 0; blue, 0 }  ,fill opacity=1 ] (177,134.5) .. controls (177,133.12) and (178.12,132) .. (179.5,132) .. controls (180.88,132) and (182,133.12) .. (182,134.5) .. controls (182,135.88) and (180.88,137) .. (179.5,137) .. controls (178.12,137) and (177,135.88) .. (177,134.5) -- cycle ;
	\draw    (169,83.81) -- (179.5,134.5) ;
	\draw    (109,178.81) -- (179.5,134.5) ;
	\draw    (286,91.81) -- (179.5,134.5) ;
	\draw    (21,139.81) .. controls (52,55.81) and (344,227.81) .. (382,121.81) ;
	\draw [color={rgb, 255:red, 208; green, 2; blue, 27 }  ,draw opacity=1 ][line width=1.5]    (118,190.81) .. controls (94,164.81) and (92.88,132.98) .. (102.44,124.4) .. controls (112,115.81) and (135,125.19) .. (179.5,134.5) .. controls (224,143.81) and (251,149.81) .. (255,158.81) .. controls (259,167.81) and (246,186.81) .. (234,198.81) .. controls (222,210.81) and (194,221.81) .. (171,217.81) .. controls (148,213.81) and (134,207.81) .. (118,190.81) -- cycle ;
	
	\draw (181.5,135.4) node [anchor=north west][inner sep=0.5pt]    {$p$};
	\draw (384,113.4) node [anchor=north west][inner sep=0.5pt]  [font=\small]  {$\partial \Omega _{\varepsilon }$};
	\draw (351,185.4) node [anchor=north west][inner sep=0.5pt]  [font=\small]  {$\Omega _{\varepsilon }$};
	\draw (236,202.21) node [anchor=north west][inner sep=0.5pt]  [font=\small,color={rgb, 255:red, 208; green, 2; blue, 27 }  ,opacity=1 ]  {$\mho_{p}$};
	\draw (270,40.4) node [anchor=north west][inner sep=0.5pt]  [font=\small]  {\tiny $B_{4}( p)$};
	\draw (80,182.4) node [anchor=north west][inner sep=0.5pt]  [font=\small]  {\tiny $B_{3}( p)$};
	\draw (157,61.4) node [anchor=north west][inner sep=0.5pt]  [font=\small]  {\tiny $B_{2}( p)$};
	\end{tikzpicture}
\caption{The set $\mho_p$ in the proof of Lemma \ref{lemma regularity}}
\end{figure}

 We now consider  a cutoff function $\phi\in C_0^\infty(\mathbb{R}^n)$
 satisfying
 \begin{equation*}
 	\phi\equiv1 \quad \text{on } B_{1}, \qquad \text{supp}(\phi)\subset B_2, \qquad 
 	0\leqslant \phi\leqslant 1 \quad  \text{on } \mathbb{R}^n.
 \end{equation*}
Let us denote by~$\phi_p(x):=\phi(x-p)$ and $\varphi:=\psi\phi_p$. Thus, one has that 
\begin{equation}\label{fbdg}
	\varphi\equiv\psi \quad \text{on } B_{1}(p)\cap\Omega_\epsilon \quad \text{and} \quad \varphi\equiv 0\quad   \text{on } \mathbb{R}^n\setminus (B_{2}(p)\cap\Omega_\epsilon).
\end{equation}

Since $\psi\in C_{\rm loc}^{2,\alpha}(\Omega_\epsilon)\cap C^{1,\alpha}(\overline{\Omega_\epsilon})$ for any $\alpha\in(0,\min\left\{1,2-2s\right\})$, one has that $\varphi\in C_{\rm loc}^{2,\alpha}(\mho_p)\cap C^{1,\alpha}(\overline{\mho_p})$. Thus, by combining~\eqref{vsdjvb} with~\eqref{fbdg}, one finds that 
\begin{equation*}
	\begin{cases}
		-\Delta \varphi+ (-\Delta )^s\varphi =\Psi
		\quad &\text{in } \mho_p,\\
		\varphi=0 &\text{in } \mathbb{R}^n\setminus\mho_p,
		\end{cases}
\end{equation*}
where $$\Psi:=\phi_p(g-\psi)-2\nabla \psi\cdot\nabla \phi_p+\psi(-\Delta \phi_p+ (-\Delta )^s\phi_p)+\int_{\mathbb{R}^n}\frac{\psi(y)(\phi_p(x)-\phi_p(y))}{|x-y|^{n+2s}}\, dy.$$

In view of \cite[Theorem 4.1]{SVWZ23}, we know that 
\begin{equation}\label{fbdfb}
\begin{split}
	\|\varphi\|_{W^{2,q}(\mho_p)}&\leqslant C(n,s,q,\mho_p)\left(\|\varphi\|_{L^{q}(\mho_p)}+\|\Psi\|_{L^{q}(\mho_p)}\right)\\
	&\leqslant C(n,s,q,\mho_p)\left(\|g\|_{L^{q}(\mho_p)}+\|\psi\|_{L^{q}(\mho_p)}+\|\nabla\psi\|_{L^\infty(\mho_p)}+\|\psi\|_{L^\infty(\mathbb{R}^n)}\right)\\
	&\leqslant C(n,s,q,\mho_p)\left(\|g\|_{L^{\infty}(\mathbb{R}^n)}+\|\nabla\psi\|_{L^\infty(\Omega_\epsilon)}+\|\psi\|_{L^\infty(\mathbb{R}^n)}\right),
\end{split}
\end{equation}
up to renaming $C(n,s,q,\mho_p)$ from line to line. Here, $1<q<+\infty$ if~$s\in(0,\frac{1}{2}]$ and~$n<q<\frac{n}{2s-1} $ if~$s\in(\frac{1}{2},1)$.

Additionally, we recall that the upper bound on the volume of $\mho_p$ and that on the  $C^{1,1}$-norm of $\partial\mho_p$ only depend on $n$ and $\Omega$.  Thus, by~\eqref{fbdfb},  
\begin{equation}\label{tendf}
		\|\varphi\|_{W^{2,q}(\mho_p)}\leqslant C\left(\|g\|_{L^{\infty}(\mathbb{R}^n)}+\|\psi\|_{C^1(\overline{\Omega_\epsilon})}\right),
\end{equation}
for some constant $C$ depending only
on $n$, $s$, $q$ and~$\Omega$.

Moreover, recalling the definition of $\varphi$ and the fact that the data of $\mho_p$ depends on $\Omega$,  by combining Sobolev inequalities with~\eqref{tendf},  for every  $p\in\partial\Omega_\epsilon$ and $\alpha\in(0,\min\left\{1,2-2s\right\})$, taking $q=\frac{n}{1-\alpha}$, one has that 
\begin{equation}\label{bfgbsf}\begin{split}&
	\|\psi\|_{C^{1,\alpha}(\overline{B_1(p)\cap\Omega_\epsilon})}\leqslant \|\varphi\|_{C^{1,\alpha}(\overline{\mho_p})}\leqslant C(n,\alpha,\mho_p) \|\varphi\|_{W^{2,q}(\mho_p)}\\&\qquad\qquad
	\leqslant C(n,s,\alpha,\Omega)\left(\|g\|_{L^{\infty}(\mathbb{R}^n)}+\|\psi\|_{C^1(\overline{\Omega_\epsilon})}\right).\end{split}
\end{equation}

We next claim that, for every $\alpha\in(0,\min\left\{1,2-2s\right\})$, \begin{equation}\label{cvsfvs}
	\|\psi\|_{C^{1,\alpha}(\overline{\Omega_\epsilon})}\leqslant \bar{C}\left(\|g\|_{L^{\infty}(\mathbb{R}^n)}+\|\psi\|_{C^1(\overline{\Omega_\epsilon})}\right),
\end{equation}
where $\bar{C}$ is a positive constant depending only on $n$, $s$, $\alpha$ and~$\Omega$.

Indeed,  for every $x\in \Omega_\epsilon\setminus\mathcal{O}$  there exists $\tilde{x}
\in\partial\Omega_\epsilon$ such that $|x-\tilde{x}|=$dist$(x,\partial\Omega_\epsilon)$. Then for every $y\in\Omega_\epsilon$ with  $|x-y|<\frac{1}{2},$ from~\eqref{bfgbsf}, one has that $x$, $y\in B_1(\tilde{x})\cap\Omega_\epsilon$ and that
\begin{equation*}
\sup_{\begin{subarray}{l}
		x\in\Omega_\epsilon\setminus\mathcal{O}\\
	|x-y|<\frac{1}{2}
	\end{subarray}
	}\frac{|\nabla \psi(x)-\nabla\psi(y)|}{|x-y|^\alpha}\leqslant \sup\limits_{x\in\Omega_\epsilon\setminus\mathcal{O}}	\|\psi\|_{C^{1,\alpha}(\overline{B_1(\tilde{x})\cap\Omega_\epsilon})}\leqslant C(n,\alpha,s,\Omega)\left(\|g\|_{L^{\infty}(\mathbb{R}^n)}+\|\psi\|_{C^1(\overline{\Omega_\epsilon})}\right).
\end{equation*}
In addition, for every $y\in\Omega_\epsilon$ with  $|x-y|\geqslant\frac{1}{2},$ by combining~\eqref{bkjbj} and~\eqref{bfgbsf}, we see that 
\begin{equation*}
	\begin{split}
\sup_{\begin{subarray}{l}
		x\in\Omega_\epsilon\setminus\mathcal{O}\\
			|x-y|\geqslant \frac{1}{2}
		\end{subarray}
	}\frac{|\nabla \psi(x)-\nabla\psi(y)|}{|x-y|^\alpha}
&\leqslant 	2\left(\|\psi\|_{C^{1,\alpha}(\overline{B_1(\tilde{x})\cap\Omega_\epsilon})}+\|\psi\|_{C^{1,\alpha}(\overline{B_1(\tilde{y})\cap\Omega_\epsilon})}+\|\psi\|_{C^{1,\alpha}(\overline{\mathcal{O}})}\right)\\
&\leqslant C(n,s,\alpha,\Omega)\left(\|g\|_{L^{\infty}(\mathbb{R}^n)}+\|\psi\|_{C^1(\overline{\Omega_\epsilon})}\right).
\end{split}
\end{equation*}
Gathering  these facts, we obtain the desired result in~\eqref{cvsfvs}.

Moreover, for every $\epsilon<\min\left\{1,{\rho}/{4}\right\}$ and $p\in\partial\Omega_\epsilon$, from the proof of  Lemma~6.35 (global interpolation inequalities) in \cite{GTbook} and~\eqref{fbdfdfbd},  we know that, for any $\delta>0$, there exists a constant~$C(\delta, \Omega,\alpha)>0 $ such that
\begin{equation*}
	\|\psi\|_{C^{1}(\overline{B_1(p)\cap\Omega_\epsilon})}\leqslant \delta \|\psi\|_{C^{1,\alpha}(\overline{\Omega_\epsilon})}+C(\delta, \Omega,\alpha)\|\psi\|_{L^\infty(\mathbb{R}^n)} .
\end{equation*}
As a consequence of this and~\eqref{bkjbj}, we infer that 
\begin{equation*}
	\|\psi\|_{C^{1}(\overline{\Omega_\epsilon})}\leqslant \delta \|\psi\|_{C^{1,\alpha}(\overline{\Omega_\epsilon})}+C(\delta, \Omega,\alpha,n,s)\left(\|\psi\|_{L^\infty(\mathbb{R}^n)} + \|g\|_{L^\infty(\mathbb{R}^n)}\right).
\end{equation*}
Recalling~\eqref{cvsfvs} and taking $\delta:={1}/(2\bar{C})$, one has that, for every~$\epsilon<\min\left\{1, {\rho}/{4}\right\}$ and~$\alpha\in(0,\min\left\{1,2-2s\right\})$,
\begin{equation*}
	 \|\psi\|_{C^{1,\alpha}(\overline{\Omega_\epsilon})}\leqslant C( \Omega,\alpha,n,s)\left(\|\psi\|_{L^\infty(\mathbb{R}^n)} + \|g\|_{L^\infty(\mathbb{R}^n)}\right).
\end{equation*}
{F}rom this, we obtain~\eqref{bkm }, as desired.

Finally, we complete the proof of the Lemma~\ref{lemma regularity}.  In virtue of~\eqref{vdfsd} and~\eqref{bkm }, it suffices to show that
\begin{equation}\label{gevsvdf}
	\|\psi\|_{L^2(\mathbb{R}^n)}\leqslant \|g\|_{L^2(\mathbb{R}^n)}.
\end{equation}
Indeed, we multiply~\eqref{vsdjvb} by $\psi$ and we integrate over $\Omega_\epsilon$,  obtaining that
\begin{equation}\label{vdvsd}
	\int_{\Omega_\epsilon}|\nabla\psi|^2+\int_{\Omega_\epsilon}\psi(-\Delta)^s\psi+\int_{\Omega_\epsilon}\psi^2=\int_{\Omega_\epsilon}g\psi.
\end{equation}
Thanks to formula~(1.5) in \cite{MR3211861}, one has that 
\begin{equation*}
	\int_{\Omega_\epsilon}\psi(-\Delta)^s\psi=\int_{\Omega_\epsilon}|(-\Delta)^{\frac{s}{2}}\psi|^2\geqslant 0.
\end{equation*}
Thus, by combining~\eqref{vdvsd} with the
H\"{o}lder inequality, we obtain~\eqref{gevsvdf}. This finishes the proof of  Lemma~\ref{lemma regularity}.
\end{proof}

\begin{Lemma}\label{lemma bdjshfsd}
	Let $g\in L^2(\mathbb{R}^n)$ with $\|g\|_{\infty,\xi}<+\infty$, and let $\psi\in H^1(\mathbb{R}^n)$ be a solution of \begin{equation*}
		\begin{cases}
			-\Delta \psi+ (-\Delta )^s\psi+\psi-pw_\xi^{p-1}\psi=g\qquad &\text{in } \Omega_\epsilon,\\
			\psi=0 &\text{in } \mathbb{R}^n\setminus\Omega_\epsilon.
		\end{cases}
	\end{equation*}
	
	Then, there exist positive constants $ C$ and $\tilde{R}$ such that 
	\begin{equation}\label{hguhi}
		\|\psi\|_{\infty,\xi}\leqslant C\left(\|\psi\|_{L^\infty(B_{\tilde{R}}(\xi))}+\|g\|_{\infty,\xi}\right).
	\end{equation}
\end{Lemma}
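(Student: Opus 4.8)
The plan is to prove \eqref{hguhi} by a comparison argument, dominating $|\psi|$ far from $\xi$ by a suitable multiple of the polynomially decaying weight $\rho_\xi^{-1}(x)=(1+|x-\xi|)^{-\mu}$, in close analogy with the comparison scheme used in the proof of Lemma~\ref{lemma decay of w1}. First I would use the decay estimate \eqref{bvsvs} for $w_\xi$ (which gives $p\,w_\xi^{p-1}(x)\le C(1+|x-\xi|)^{-(p-1)(n+2s)}$) to pick $R>0$, depending only on $n$, $s$, $p$, so large that $p\,w_\xi^{p-1}\le \tfrac12$ on $\mathbb{R}^n\setminus B_R(\xi)$. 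Since the interior regularity results in \cite{SVWZ23} give $\psi\in C^{2,\alpha}_{\rm loc}(\Omega_\epsilon)\cap C^{1,\alpha}(\overline{\Omega_\epsilon})$ (and, in particular, $\psi\in L^\infty$), arguing exactly as in the derivation of \eqref{gege}, i.e. combining the equation satisfied by $\psi$ with the Kato-type inequality \eqref{vhj}, one finds that, a.e.\ in $(\mathbb{R}^n\setminus B_R(\xi))\cap\Omega_\epsilon$,
\[
(-\Delta)|\psi|+(-\Delta)^s|\psi|+\tfrac12|\psi|\le(\operatorname{sgn}\psi)\,g+\Big(p\,w_\xi^{p-1}-\tfrac12\Big)|\psi|\le|g|\le\|g\|_{\infty,\xi}\,\rho_\xi^{-1}.
\]
Thus $|\psi|$ is a weak subsolution of $-\Delta+(-\Delta)^s+\tfrac12$ with right-hand side $\|g\|_{\infty,\xi}\,\rho_\xi^{-1}$ in the exterior region.

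The core step — and the one I expect to be the main obstacle — is to show that $\rho_\xi^{-1}$ is, up to a constant, a \emph{supersolution} of the same operator far out: namely, that there is $\tilde R\ge R$, depending only on $n$, $s$ and $\mu$, with
\[
\Big(-\Delta+(-\Delta)^s+\tfrac12\Big)\rho_\xi^{-1}(x)\ge\tfrac14\,\rho_\xi^{-1}(x)\qquad\text{for all }x\in\mathbb{R}^n\setminus B_{\tilde R}(\xi).
\]
Here $\rho_\xi^{-1}$ is smooth and radial about $\xi$, so a direct computation gives $|(-\Delta)\rho_\xi^{-1}(x)|\le C(1+|x-\xi|)^{-\mu-2}$, while splitting the principal-value integral defining $(-\Delta)^s\rho_\xi^{-1}(x)$ over $\{|y-x|<|x-\xi|/2\}$ (a second-order Taylor estimate, after cancelling the odd part) and over its complement (where one uses $\mu<n+2s$ to control the nonlocal tail $\int_{|y|\gtrsim|x-\xi|}(1+|y|)^{-\mu}|x-y|^{-n-2s}\,dy$) shows that $|(-\Delta)^s\rho_\xi^{-1}(x)|=o\big((1+|x-\xi|)^{-\mu}\big)$ as $|x-\xi|\to\infty$. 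Both error terms being $o(\rho_\xi^{-1})$, the claim follows for $\tilde R$ large. It is precisely here that the admissible range $\tfrac n2<\mu<n+2s$ in \eqref{fiuewfjke768578599} enters: $\mu<n+2s$ kills the nonlocal tail of $(-\Delta)^s\rho_\xi^{-1}$, while $\mu>\tfrac n2$ guarantees $\rho_\xi^{-1}\in H^1(\mathbb{R}^n)$, which is needed in the energy argument below.

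With the barrier in hand, I would set
\[
A:=4\,\|g\|_{\infty,\xi}+(1+\tilde R)^{\mu}\,\|\psi\|_{L^\infty(B_{\tilde R}(\xi))},\qquad \Phi(x):=A\,\rho_\xi^{-1}(x),
\]
and check that $\Phi\ge|\psi|$ on $\overline{B_{\tilde R}(\xi)}\cup(\mathbb{R}^n\setminus\Omega_\epsilon)$ (on the ball because $\rho_\xi^{-1}\ge(1+\tilde R)^{-\mu}$ there, outside $\Omega_\epsilon$ because $\psi=0$), whereas on $D:=(\mathbb{R}^n\setminus B_{\tilde R}(\xi))\cap\Omega_\epsilon$ the previous two steps give $(-\Delta+(-\Delta)^s+\tfrac12)\Phi\ge\tfrac A4\,\rho_\xi^{-1}\ge\|g\|_{\infty,\xi}\,\rho_\xi^{-1}\ge(-\Delta+(-\Delta)^s+\tfrac12)|\psi|$. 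Hence $\omega:=\Phi-|\psi|$ satisfies $(-\Delta+(-\Delta)^s+\tfrac12)\omega\ge0$ in $D$ and $\omega\ge0$ on $\mathbb{R}^n\setminus D$; writing $\omega=\omega_+-\omega_-$ with $\omega_-:=-\min\{0,\omega\}$, which is supported in $D$ and lies in $H^1(\mathbb{R}^n)$ (note $\Phi\in H^1$ since $\mu>\tfrac n2$, and $\psi\in H^1$ by hypothesis, while $\omega_-=0$ on $B_{\tilde R}(\xi)\cup(\mathbb{R}^n\setminus\Omega_\epsilon)$), testing the inequality against $\omega_-$ and integrating over $\mathbb{R}^n$ exactly as in the computation following \eqref{vsdd} in the proof of Lemma~\ref{lemma decay of w1} forces $\omega_-\equiv0$, i.e.\ $|\psi|\le\Phi$ in $\mathbb{R}^n$. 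This yields $\rho_\xi|\psi|\le A$ everywhere, which is \eqref{hguhi} with the stated $\tilde R$ and with $C$ depending only on $n$, $s$, $\mu$ (hence on $n$, $s$, $p$) and $\Omega$. The only genuinely delicate point is the barrier estimate of the second paragraph; the rest is a routine adaptation of the comparison method already developed in Section~\ref{sec:Decay of the ground state}.
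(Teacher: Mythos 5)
Your proposal is correct, but it follows a genuinely different construction of the comparison function than the paper does, and this is worth recording. The paper does not verify directly that the power weight is a supersolution; instead it sets $\phi:=\mathcal{K}_{1/2}\ast(1+|\cdot|)^{-\mu}$, so that $(-\Delta+(-\Delta)^s+\tfrac12)\phi=(1+|x|)^{-\mu}$ \emph{exactly} in all of $\mathbb{R}^n$, and then proves $\phi(x)\le C_0(1+|x|)^{-\mu}$ by the convolution estimate \eqref{lkjhgfd09876543mnbvc}. It also absorbs the cutoff at $\tilde R$ into the coefficient and the data, replacing $1-pw_\xi^{p-1}$ by a function $W\geq\tfrac12$ and $g$ by a modified $G$ with $\|G\|_{\infty,\xi}\le C\|\psi\|_{L^\infty(B_{\tilde R}(\xi))}+\|g\|_{\infty,\xi}$, so that the maximum-principle comparison $\omega=\|G\|_{\infty,\xi}\phi_\xi\pm\psi$ can be run over all of $\Omega_\epsilon$ at once (both signs separately, avoiding any use of $|\psi|$). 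You instead work only on the exterior region $D=(\mathbb{R}^n\setminus B_{\tilde R}(\xi))\cap\Omega_\epsilon$, apply the Kato-type inequality to pass to $|\psi|$, and then use $\rho_\xi^{-1}$ itself as a barrier after checking $(-\Delta+(-\Delta)^s+\tfrac12)\rho_\xi^{-1}\ge\tfrac14\rho_\xi^{-1}$ far out. Your pointwise barrier verification is correct: the Laplacian contributes $O((1+|x-\xi|)^{-\mu-2})$, and after splitting the principal value the fractional contribution is $O((1+|x-\xi|)^{-\min\{\mu,n\}-2s})$, both of which are $o(\rho_\xi^{-1})$ precisely because $\mu<n+2s$; and $\mu>n/2$ gives $\rho_\xi^{-1}\in H^1$, as you observe. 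What each approach buys: the paper's Green-function construction sidesteps any pointwise computation of $(-\Delta)^s$ acting on a power weight and yields a supersolution whose good behaviour near $\xi$ is automatic, while your approach is more elementary in that it does not invoke $\mathcal{K}_{1/2}$, at the cost of a direct estimate for the fractional Laplacian of $\rho_\xi^{-1}$.

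Two small remarks on your write-up. First, the Kato-type inequality \eqref{vhj} is stated in the paper for $f\in H^2(\mathbb{R}^n)$, whereas the hypothesis here only gives $\psi\in H^1(\mathbb{R}^n)$, and the boundary regularity $\psi\in C^{1,\alpha}(\overline{\Omega_\epsilon})$ falls short of global $H^2$; this is exactly why the paper runs the comparison with $\pm\psi$ rather than $|\psi|$. Your argument survives if you do the same (replace $|\psi|$ by $\pm\psi$ throughout and drop the Kato step), so this is cosmetic rather than fatal, but worth fixing. Second, the final constant in \eqref{hguhi} does not need to depend on $\Omega$: $\tilde R$ depends only on the ground state (hence on $n,s,p$), and the barrier constants depend only on $n,s,\mu$.
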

\begin{proof}
	{F}rom the decay of $w$ at infinity, there exists $\tilde{R}>0$ such that~$1-pw_\xi^{p-1}(x)\geqslant {1}/{2}$  in~$\mathbb{R}^n\setminus B_{\tilde{R}}(\xi)$. Thus, we define the function
	\begin{equation*}
	W(x):=\begin{dcases} \frac12  &{\mbox{ if }} x\in B_{\tilde{R}}(\xi),\\
	1-pw_\xi^{p-1}(x) &{\mbox{ if }}x\in \mathbb{R}^n\setminus B_{\tilde{R}}(\xi).\end{dcases}
	\end{equation*}
	 Then, we have that
	\begin{equation}\label{chjb}
		-\Delta \psi +(-\Delta)^s\psi+W\psi=G 
	\end{equation} where
		\begin{equation*}
	G:=\begin{dcases} g-\left(\frac12-pw_\xi^{p-1}\right)\psi &{\mbox{ in }}  B_{\tilde{R}}(\xi),\\
	g&{\mbox{ in }} \Omega_\varepsilon\setminus B_{\tilde{R}}(\xi).\end{dcases}
	\end{equation*}
	{F}rom this, we infer that \begin{equation}\label{dsvds}
		\|G\|_{\infty,\xi}\leqslant C\|\psi\|_{L^\infty(B_{\tilde{R}}(\xi))}+\|g\|_{\infty,\xi}
	\end{equation}
	for some constant $C>0$ depending on $\tilde{R}$.
	
	Moreover, let us define $ \phi\in H^1(\mathbb{R}^n)$ by the convolution
	\begin{equation*}
		\phi(x):=\int_{\mathbb{R}^n}\frac{\mathcal{K}_{1/2}(y) }{(1+|x-y|)^{\mu}}\, dy
	\end{equation*} 
	where  $\mathcal{K}_{1/2}$ is the fundamental solution of the operator~$-\Delta+(-\Delta)^s)+1/2$. 
	Hence, we have that~$\phi\geqslant 0$ and
	\begin{equation}\label{dsfds}
		-\Delta\phi +(-\Delta)^s\phi+\frac{1}{2}\phi={(1+|x|)^{-\mu}} \qquad \text{in }\mathbb{R}^n.
	\end{equation}

	We now claim that there exists $C_0>0$ such that
	\begin{equation}\label{dsfdsdfs}
		\sup\limits_{x\in\mathbb{R}^n}(1+|x|)^\mu\phi(x)\leqslant C_0.
	\end{equation}
	 Indeed, we observe that, owing to the decay of $\mathcal{K}_{1/2}$  at infinity (see e.g.\cite[Lemma~4.4]{DSVZ24}), for any $|x|>2$,
	 \begin{equation}\label{lkjhgfd09876543mnbvc}
	 \begin{split}
	 \phi(x)&= \int_{\{|x-y|<|x|/2\}} \frac{\mathcal{K}_{1/2}(y) }{(1+|x-y|)^{\mu}}\, dy+ \int_{\{|x-y|\geqslant |x|/2\}} \frac{\mathcal{K}_{1/2}(y) }{(1+|x-y|)^{\mu}}\, dy\\&=
	 \int_{\{|z|<|x|/2\}} \frac{\mathcal{K}_{1/2}(x+z) }{(1+|z|)^{\mu}}\, dz+ \int_{\{|x-y|\geqslant |x|/2\}} \frac{\mathcal{K}_{1/2}(y) }{(1+|x-y|)^{\mu}}\, dy\\
	 &\le C \int_{\{|z|<|x|/2\}} \frac{dz }{(1+|z|)^{\mu}|x+z|^{n+2s}}\, dz+C (1+|x|)^{-\mu}\int_{\mathbb{R}^n}\mathcal{K}_{1/2}(y)\, dy\\
	 &\le \frac{C}{|x|^{n+2s}} \int_{\{|z|<|x|/2\}} \frac{dz }{(1+|z|)^{\mu} }\, dz+C (1+|x|)^{-\mu}\int_{\mathbb{R}^n}\mathcal{K}_{1/2}(y)\, dy\\
	  &\le \frac{C}{|x|^{n+2s}}\left(|B_1| +
	    \int_{\{1\le|z|<|x|/2\}} \frac{dz }{ |z|^{\mu} }\right)+C (1+|x|)^{-\mu} \\
	 &\leqslant \frac{C}{|x|^{n+2s}}\left(1+|x|^{n-\mu} \right)+C (1+|x|)^{-\mu}\\
	 	&\leqslant C|x|^{-\mu} \left( \frac1{|x|^{n+2s-\mu}} +\frac1{|x|^{2s}}\right)
	+C(1+|x|)^{-\mu}\\
	 	&\leqslant C(1+|x|)^{-\mu},
	 \end{split}
	 \end{equation}
	up to renaming $C>0$. 

This and the fact that~$\|\phi\|_{L^\infty(\mathbb{R}^n)}\leqslant \|\mathcal{K}_{1/2}\|_{L^1(\mathbb{R}^n)}$
entail the desired claim in~\eqref{dsfdsdfs}.

We now define $\phi_\xi(x):=\phi(x-\xi)$ and $\omega:=\|G\|_{\infty,\xi}\phi_\xi\pm \psi$. Thus, from~\eqref{chjb} and~\eqref{dsfds}, we deduce that
\begin{equation*}
		-\Delta\omega +(-\Delta)^s\omega+W\omega=\frac{\|G\|_{\infty,\xi}}{(1+|x-\xi|)^\mu}+\|G\|_{\infty,\xi}\left(W-\frac{1}{2}\right)\phi_\xi\pm G\geqslant 0\quad \text{in } \Omega_\epsilon.
\end{equation*}
Furthermore, we observe that $\omega\geqslant 0$ in $\mathbb{R}^n\setminus \Omega_\epsilon$. As a result of this and the Maximum Principle, we obtain that~$\omega\geqslant 0$ in~$\mathbb{R}^n$. 

Hence, in virtue of~\eqref{dsfdsdfs}, we have that, for any $x\in\mathbb{R}^n$,
\begin{equation*}
	|(1+|x-\xi|)^\mu \psi(x)|\leqslant (1+|x-\xi|)^\mu\|G\|_{\infty,\xi}\phi_\xi(x)\leqslant C_0\|G\|_{\infty,\xi}.
\end{equation*}
This and~\eqref{dsvds} show the desired result in~\eqref{hguhi}.
\end{proof}

In a similar way, one can prove the following estimate:

\begin{Lemma}\label{lemma dgfusdhi}
	Let $g\in L^2(\mathbb{R}^n)$ with $\|g\|_{\infty,\xi}<+\infty$, and let $\psi\in H^1(\mathbb{R}^n)$ be a solution of \begin{equation*}
		\begin{cases}
			-\Delta \psi+ (-\Delta )^s\psi+\psi=g\qquad &\text{in } \Omega_\epsilon,\\
			\psi=0 &\text{in } \mathbb{R}^n\setminus\Omega_\epsilon.
		\end{cases}
	\end{equation*}
	
	Then, there exists a positive constant $ C$ such that 
	\begin{equation*}
		\|\psi\|_{\infty,\xi}\leqslant C\|g\|_{\infty,\xi}.
	\end{equation*}
\end{Lemma}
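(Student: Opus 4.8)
The plan is to adapt the barrier argument used in the proof of Lemma~\ref{lemma bdjshfsd}, which simplifies considerably in the present setting. The reason is that the zeroth-order coefficient of the operator $-\Delta+(-\Delta)^s+1$ is the constant $1>0$ (rather than $1-pw_\xi^{p-1}$, which is negative in a neighbourhood of $\xi$), so no splitting of $\mathbb{R}^n$ into $B_{\tilde R}(\xi)$ and its complement is needed and the barrier can be employed globally.

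First I would collect the elementary bounds on $\psi$. Since $\|g\|_{\infty,\xi}<+\infty$ and $\mu>0$, one has the pointwise estimate $|g(x)|\leqslant\|g\|_{\infty,\xi}(1+|x-\xi|)^{-\mu}$ for every $x\in\mathbb{R}^n$; in particular $g\in L^\infty(\mathbb{R}^n)$. Together with $g\in L^2(\mathbb{R}^n)$, the energy identity~\eqref{gevsvdf} gives $\|\psi\|_{L^2(\mathbb{R}^n)}\leqslant\|g\|_{L^2(\mathbb{R}^n)}$, and then the De Giorgi-type estimate~\eqref{vdfsd} yields $\psi\in L^\infty(\mathbb{R}^n)$; moreover, by the regularity results of~\cite{SVWZ23}, $\psi$ is continuous on $\mathbb{R}^n$.

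Next I would reuse the barrier $\phi$ built in the proof of Lemma~\ref{lemma bdjshfsd}, namely $\phi(x):=\int_{\mathbb{R}^n}\mathcal{K}_{1/2}(y)\,(1+|x-y|)^{-\mu}\,dy$, which satisfies $\phi\geqslant0$, solves $-\Delta\phi+(-\Delta)^s\phi+\tfrac12\phi=(1+|x|)^{-\mu}$ in $\mathbb{R}^n$, and obeys $\sup_{x\in\mathbb{R}^n}(1+|x|)^\mu\phi(x)\leqslant C_0$ by~\eqref{dsfdsdfs}. Setting $\phi_\xi(x):=\phi(x-\xi)$ and $\omega:=\|g\|_{\infty,\xi}\,\phi_\xi\pm\psi$, one computes, in $\Omega_\epsilon$,
\begin{equation*}
\begin{split}
-\Delta\omega+(-\Delta)^s\omega+\omega
&=\|g\|_{\infty,\xi}\Big((1+|x-\xi|)^{-\mu}+\tfrac12\phi_\xi\Big)\pm g\\
&\geqslant\|g\|_{\infty,\xi}(1+|x-\xi|)^{-\mu}\pm g\geqslant0,
\end{split}
\end{equation*}
where the first inequality uses $\phi_\xi\geqslant0$ and the second the pointwise bound on $g$. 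Furthermore $\omega=\|g\|_{\infty,\xi}\,\phi_\xi\geqslant0$ in $\mathbb{R}^n\setminus\Omega_\epsilon$, because $\psi$ vanishes there. Since $\Omega_\epsilon$ is a bounded domain and $\omega$ is a bounded, continuous supersolution, the Maximum Principle (see e.g.~\cite{SVWZ23}) forces $\omega\geqslant0$ in $\mathbb{R}^n$, i.e. $|\psi(x)|\leqslant\|g\|_{\infty,\xi}\,\phi_\xi(x)$ for all $x\in\mathbb{R}^n$. Multiplying by $(1+|x-\xi|)^\mu$ and invoking~\eqref{dsfdsdfs} then gives $\|\psi\|_{\infty,\xi}\leqslant C_0\|g\|_{\infty,\xi}$, which is the assertion.

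I do not anticipate a genuine obstacle here: the term $pw_\xi^{p-1}\psi$ that forced the local modification $W$ in Lemma~\ref{lemma bdjshfsd} is now absent, so the barrier $\phi_\xi$ works on all of $\mathbb{R}^n$ at once and the De Giorgi bound already takes care of boundedness. The only point deserving mild care is to verify that the comparison principle of~\cite{SVWZ23} applies to the competitor $\omega$ on the bounded set $\Omega_\epsilon$ with the prescribed exterior datum, which is legitimate since $\omega$ is bounded and continuous.
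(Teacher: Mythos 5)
Your proof is correct and is essentially the ``similar way'' that the paper alludes to: you simply rerun the barrier argument from Lemma~\ref{lemma bdjshfsd} with the observation that since the zeroth-order coefficient is already the constant~$1$, no splitting into~$B_{\tilde R}(\xi)$ and its complement (and hence no modified coefficient~$W$ or right-hand side~$G$) is needed, and the supersolution computation for~$\omega=\|g\|_{\infty,\xi}\phi_\xi\pm\psi$ together with~\eqref{dsfdsdfs} and the Maximum Principle gives the bound directly. Reusing the barrier~$\phi$ built from~$\mathcal{K}_{1/2}$ (rather than rebuilding one from~$\mathcal{K}$) is a sensible economy, at the cost only of the harmless extra nonnegative term~$\tfrac12\phi_\xi$ on the right-hand side, which you correctly discard.
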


\section{The Lyapunov-Schmidt reduction}\label{sec:The Lyapunov-Schmidt reduction}

In this section, we focus on the linear theory associated with problem~\eqref{vsdvdscsc}. 
For this, we  introduce the function space 
\begin{equation*}
	\Psi:=\left\{\psi\in H^1(\mathbb{R}^n)\,:\, \psi=0 \text{ in }\mathbb{R}^n\setminus\Omega_\epsilon\text{ and } \int_{\Omega_\epsilon} \psi Z_i\, dx=0 \text{ for any } i=1,\cdots,n  \right\}
\end{equation*}
where $Z_i$ are defined by~\eqref{dvsdvsd}.

Our aim is to find a solution for problem~\eqref{vsdvdscsc} of the form 
\begin{equation}\label{vdsfdsfew}
	u=u_\xi:=\bar{u}_\xi+\psi
\end{equation}
where $\bar{u}_\xi$ is a solution of problem~\eqref{sdvbsd} and $\psi\in\Psi$ is a ``small'' perturbation for~$\epsilon$ sufficiently small. 
Inserting $u$ into problem~\eqref{vsdvdscsc}, since $\bar{u}_\xi$ solves~\eqref{sdvbsd}, we see that~$\psi$ satisfies \begin{equation}\label{vsdfse}
	-\Delta \psi+(-\Delta)^s\psi+\psi-p{w}_\xi^{p-1}\psi=E(\psi)\qquad \text{in }\Omega_\epsilon
\end{equation}
where \begin{equation}\label{sdvds}
	E(\psi):=(\bar{u}_\xi+\psi)^p-w_\xi^p-pw_\xi^{p-1}\psi.
\end{equation}

 We deal with a projected version of this equation, instead of solving problem~\eqref{vsdfse} directly, that is, we consider the problem of finding $\psi\in \Psi$ such that, for certain constants $c_{i}\in\mathbb{R}$,
 \begin{equation}\label{vdsdsvs}
 	-\Delta \psi+(-\Delta)^s\psi+\psi-pw_\xi^{p-1}\psi=E(\psi)+\sum_{i=1}^{n}c_iZ_i \qquad \text{in }\Omega_\epsilon.
 \end{equation}
 
 We will show that problem~\eqref{vdsdsvs} has a unique solution in $\Psi$, which is ``small'' as~$\epsilon$ is sufficiently small, and then we will find a suitable $\xi$ such that $c_i=0$ for every $i\in\left\{1,\cdots,n\right\}$. This  provides us with a solution~$\psi\in\Psi$ for~\eqref{vsdfse}, and thus a solution~$u$ for problem~\eqref{vsdvdscsc}.

\subsection{Linear theory} This section establishes an existence result for the linear problem~\eqref{vdsdsvs}.
In what follows, the notation in~\eqref{vdsvdsv} for the norm~$\|\cdot\|_{\infty,\xi}$ is used.

\begin{Theorem}\label{th unique about g}
	Let $g\in L^2(\mathbb{R}^n)$ with $\|g\|_{\infty,\xi}<+\infty$. If $\epsilon>0$ is sufficiently small, there exist a unique~$\psi\in \Psi$ and numbers~$ c_i\in\mathbb{R}$ for any $i\in\left\{1,\cdots, n\right\}$ such that 
	\begin{equation}\label{sdfsde}
			\begin{cases}
			-\Delta \psi+(-\Delta)^s\psi+\psi-pw_\xi^{p-1}\psi=g+\displaystyle \sum_{i=1}^{n}c_iZ_i \qquad &\text{in }\Omega_\epsilon,\\
			\psi=0 &\text{in }\mathbb{R}^n\setminus\Omega_\epsilon.
			\end{cases}
	\end{equation}
	Moreover, there exists a constant $C>0$ such that 
	\begin{equation}\label{sdfsdv}
		\|\psi\|_{\infty,\xi}\leqslant C\|g\|_{\infty,\xi}.
	\end{equation}
\end{Theorem}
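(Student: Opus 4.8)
The plan is to establish Theorem~\ref{th unique about g} by a standard Lyapunov--Schmidt scheme, combining a Fredholm-type argument in an auxiliary Hilbert space with the weighted $L^\infty$ estimates proved in Section~\ref{sec:Some regularity estimates}. First I would reformulate~\eqref{sdfsde} variationally. Consider the Hilbert space~$\Psi$ with the inner product inherited from~$H^1_0(\Omega_\epsilon)$, and define the bilinear form
\begin{equation*}
B(\psi,\varphi):=\int_{\Omega_\epsilon}\nabla\psi\cdot\nabla\varphi\,dx+\frac12\iint_{\mathbb{R}^n\times\mathbb{R}^n}\frac{(\psi(x)-\psi(y))(\varphi(x)-\varphi(y))}{|x-y|^{n+2s}}\,dx\,dy+\int_{\Omega_\epsilon}\psi\varphi\,dx-\int_{\Omega_\epsilon}pw_\xi^{p-1}\psi\varphi\,dx.
\end{equation*}
Solving~\eqref{sdfsde} in~$\Psi$ with the orthogonality constraints $\int_{\Omega_\epsilon}\psi Z_i=0$ (and with the $\sum c_iZ_i$ accounting for the Lagrange multipliers) amounts to finding $\psi\in\Psi$ with $B(\psi,\varphi)=\int_{\Omega_\epsilon}g\varphi$ for all $\varphi\in\Psi$. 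Since $pw_\xi^{p-1}\in L^\infty$ and decays, the operator associated with $B$ is a compact perturbation of the identity on~$\Psi$, so by the Fredholm alternative existence and uniqueness follow once we show $B$ is injective on~$\Psi$; this is where the nondegeneracy hypothesis~\eqref{NOBS-DEFG} enters.

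The invertibility (coercivity up to compact terms, plus triviality of the kernel) I would prove by contradiction following the classical argument: suppose there were sequences $\epsilon_k\to0$, $\xi_k$ with $\operatorname{dist}(\xi_k,\partial\Omega_{\epsilon_k})\geq c/\epsilon_k$, data $g_k$ with $\|g_k\|_{\infty,\xi_k}\to0$, solutions $\psi_k\in\Psi$ with $\|\psi_k\|_{\infty,\xi_k}=1$, and multipliers $c_i^k$. Testing~\eqref{sdfsde} against $Z_j$ and using Lemma~\ref{lemma Zi}(iii) together with the decay of $Z_j$ and the equation~\eqref{fdbd} it satisfies, one finds $c_i^k\to0$ (the boundary error terms coming from the fact that $Z_i$ is not exactly zero outside $\Omega_\epsilon$ are $O(\epsilon_k^{\,n+4s})$, hence negligible). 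Then translate: set $\tilde\psi_k(x):=\psi_k(x+\xi_k)$. By Lemma~\ref{lemma bdjshfsd} (with $g$ replaced by $g_k+\sum c_i^kZ_i$, whose weighted norm tends to~$0$), we get $\|\psi_k\|_{L^\infty(B_{\tilde R}(\xi_k))}\geq c_0>0$ for large $k$; combined with uniform $C^{1,\alpha}_{\rm loc}$ bounds from elliptic regularity, $\tilde\psi_k$ converges locally uniformly (and weakly in $H^1$) to some $\tilde\psi\not\equiv0$ solving $-\Delta\tilde\psi+(-\Delta)^s\tilde\psi+\tilde\psi-pw^{p-1}\tilde\psi=0$ in all of $\mathbb{R}^n$, with $\tilde\psi\in L^\infty$ decaying like $(1+|x|)^{-\mu}$. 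The orthogonality $\int_{\Omega_{\epsilon_k}}\psi_kZ_i=0$ passes to the limit (the decay $\mu>n/2$ guarantees the needed integrability for $Z_i\tilde\psi$) to give $\int_{\mathbb{R}^n}\tilde\psi\,\partial_i w=0$ for all $i$. By~\eqref{NOBS-DEFG} this forces $\tilde\psi\equiv0$, a contradiction.

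Once invertibility is in hand, the estimate~\eqref{sdfsdv} follows directly: apply Lemma~\ref{lemma bdjshfsd} to obtain $\|\psi\|_{\infty,\xi}\leq C(\|\psi\|_{L^\infty(B_{\tilde R}(\xi))}+\|g\|_{\infty,\xi}+\sum_i|c_i|\,\|Z_i\|_{\infty,\xi})$, bound $\sum|c_i|\leq C\|g\|_{\infty,\xi}$ as above, and bound $\|\psi\|_{L^\infty(B_{\tilde R}(\xi))}$ by $C\|g\|_{\infty,\xi}$ using Lemma~\ref{lemma regularity} together with the just-proved uniform invertibility (the $L^2$ norm of $\psi$ being controlled through the coercivity-up-to-compactness of $B$ on the orthogonal complement of the $Z_i$). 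I expect the main obstacle to be the contradiction argument for invertibility: one must carefully track that the Lagrange multipliers $c_i^k$ vanish in the limit — which requires the refined decay estimates of Section~\ref{sec:Decay of the ground state} for $Z_i$ and its derivatives so that the boundary discrepancies are genuinely lower order — and one must verify that the limit profile $\tilde\psi$ really inherits enough decay (from Lemma~\ref{lemma bdjshfsd}, uniformly in $k$) to be a legitimate element on which to apply the nondegeneracy~\eqref{NOBS-DEFG}. The mixed scaling (the Laplacian at scale $\epsilon^2$ versus the fractional term at scale $\epsilon^{2s}$) is what makes the uniformity in $\epsilon$ delicate here, and the $C^{1,\alpha}$ uniform bound of Lemma~\ref{lemma regularity} is precisely the tool that absorbs this difficulty.
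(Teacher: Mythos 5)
Your proposal is mathematically sound and shares the central ingredients with the paper's proof — in particular the blow‑up contradiction argument based on the nondegeneracy assumption~\eqref{NOBS-DEFG}, the decay estimates of Section~\ref{sec:Decay of the ground state}, and the weighted barrier Lemma~\ref{lemma bdjshfsd} — but it routes the Fredholm argument through a genuinely different functional framework. You set up the Fredholm alternative on the Hilbert space $\Psi\subset H^1_0(\Omega_\epsilon)$, where compactness of the perturbation $\psi\mapsto pw_\xi^{p-1}\psi$ is elementary (Rellich, since $\Omega_\epsilon$ is bounded for each fixed $\epsilon$), and then bootstrap the weighted $L^\infty$ control afterwards. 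The paper instead works in the Banach space $Y_\infty$ of functions with finite $\|\cdot\|_{\infty,\xi}$-norm: it first solves the auxiliary problem without the potential $pw_\xi^{p-1}$ (Lemma~\ref{lemma auxiliary result}, via Riesz representation), defines the solution operator $\mathcal T$, and then proves directly that $\mathcal A[\psi]:=\mathcal T[pw_\xi^{p-1}\psi]$ is a compact operator on $Y_\infty$ — a nontrivial step that requires equicontinuity from Lemma~\ref{lemma regularity}, Arzel\`a–Ascoli, and a supersolution barrier to squeeze the tails. What the paper's framework buys is that the Fredholm solution automatically lies in $Y_\infty$ with the right bound, so the estimate~\eqref{sdfsdv} falls out immediately once the a priori estimate (Lemma~\ref{lemma cdscsd}) is in hand; what your framework buys is that the compactness step is trivial, at the modest cost of a separate $H^1\to Y_\infty$ bootstrap.

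One organizational point worth tidying in your write-up. The contradiction you set up — sequences $\epsilon_k\to 0$, $\|\psi_k\|_{\infty,\xi_k}=1$, $\|g_k\|_{\infty,\xi_k}\to 0$ — is exactly the proof of the full a priori estimate $\|\psi\|_{\infty,\xi}\leqslant C\|g\|_{\infty,\xi}$ uniformly in $\epsilon$ (this is precisely Lemma~\ref{lemma cdscsd} in the paper), which is strictly stronger than the kernel triviality you claim to be proving at that stage. Your final paragraph, which proposes to re-derive \eqref{sdfsdv} from ``uniform invertibility'' and an $L^2$ bound via coercivity-up-to-compactness, is therefore redundant — and, read literally, circular: the uniformity in $\epsilon$ of the inverse-operator norm is not supplied by the Fredholm alternative for fixed $\epsilon$, it is supplied by the very contradiction argument you have already carried out. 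The cleanest version of your proof is: (i) Fredholm in $\Psi$ gives existence once the kernel is trivial for the given $\epsilon$; (ii) the contradiction argument gives, for all small $\epsilon$, the uniform a priori estimate in the weighted norm, which in particular forces the kernel to be trivial; (iii) a one-line regularity bootstrap (Lemma~\ref{lemma regularity} plus Lemma~\ref{lemma bdjshfsd}) shows the $H^1$ solution lies in $Y_\infty$, so the estimate from (ii) applies to it. This removes the dangling last step and makes explicit that the nondegeneracy-driven blow-up is where the uniformity in $\epsilon$ really comes from.
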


\begin{remark}{\rm
We point out that, by closely following the proof of \cite[Lemma~7.2]{MR3393677},
 the coefficients~$c_i$ in~\eqref{sdfsde} can actually be uniquely determined, in terms of~$\psi$ and~$g$.}
\end{remark}

We now provide an  a priori estimate for the solutions of problem~\eqref{sdfsde}.

\begin{Lemma}\label{lemma cdscsd}
Let $g\in L^2(\mathbb{R}^n)$ with $\|g\|_{\infty,\xi}<+\infty$.  Assume that $\psi\in \Psi$ is a solution of~\eqref{sdfsde} for some coefficients $ c_i\in\mathbb{R}$ with $i\in\left\{1,\cdots, n\right\}$. 

Then, for $ \epsilon>0$ sufficiently small,
\begin{equation}\label{dfgdfs}
	\|\psi\|_{\infty,\xi}\leqslant C\|g\|_{\infty,\xi}.
\end{equation}
\end{Lemma}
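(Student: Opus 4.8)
The plan is to argue by contradiction, exploiting the nondegeneracy hypothesis~\eqref{NOBS-DEFG} together with the weighted-norm machinery of Lemmata~\ref{lemma bdjshfsd} and~\ref{lemma dgfusdhi}. Suppose that~\eqref{dfgdfs} fails; then there exist sequences~$\epsilon_k\to 0$, points~$\xi_k\in\Omega_{\epsilon_k}$ satisfying~\eqref{fdgd}, data~$g_k$ and solutions~$\psi_k\in\Psi$ with coefficients~$c_{i,k}$ of~\eqref{sdfsde} such that, after normalization, $\|\psi_k\|_{\infty,\xi_k}=1$ while~$\|g_k\|_{\infty,\xi_k}\to 0$. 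The first step is to absorb the term~$\sum_i c_{i,k}Z_i$ into the right-hand side: using the equation tested against~$Z_j$, the orthogonality~$\int_{\Omega_{\epsilon_k}}\psi_k Z_j=0$, Lemma~\ref{lemma Zi}(iii), the decay estimates of Section~\ref{sec:Decay of the ground state} (so that~$\|Z_i\|_{\infty,\xi_k}<+\infty$ by the choice of~$\mu$ in~\eqref{fiuewfjke768578599}), and the nondegeneracy~\eqref{NOBS-DEFG}, one obtains a bound of the form~$|c_{i,k}|\leqslant C(\|g_k\|_{\infty,\xi_k}+o(1))$. Feeding this back and applying Lemma~\ref{lemma bdjshfsd}, we reduce to controlling~$\|\psi_k\|_{L^\infty(B_{\tilde R}(\xi_k))}$: if this quantity tends to zero we are done, so we may assume it stays bounded below by a positive constant.

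The second step is the blow-up / compactness analysis around~$\xi_k$. Set~$\tilde\psi_k(x):=\psi_k(x+\xi_k)$, which by the weighted normalization satisfies~$|\tilde\psi_k(x)|\leqslant (1+|x|)^{-\mu}$ uniformly, so~$\tilde\psi_k$ is bounded in~$L^\infty(\R^n)\cap L^2(\R^n)$ (here~$\mu>n/2$ is used for the~$L^2$ bound). Since~$\operatorname{dist}(\xi_k,\partial\Omega_{\epsilon_k})\geqslant c/\epsilon_k\to+\infty$, the translated domains~$\Omega_{\epsilon_k}-\xi_k$ exhaust~$\R^n$, and the translated equation reads~$-\Delta\tilde\psi_k+(-\Delta)^s\tilde\psi_k+\tilde\psi_k-pw^{p-1}\tilde\psi_k=\tilde g_k+\sum_i c_{i,k}\partial_i w$ on an expanding domain, with zero exterior data. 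Using the uniform~$C^{1,\alpha}$-type interior estimates (cf.\ the regularity results of Section~\ref{sec:Some regularity estimates} and~\cite{SVWZ23}) one extracts a subsequence converging in~$C^1_{\rm loc}(\R^n)$, and also weakly in~$H^1(\R^n)$, to a limit~$\tilde\psi_\infty\in H^1(\R^n)$ solving the homogeneous linearized equation~$-\Delta\tilde\psi_\infty+(-\Delta)^s\tilde\psi_\infty+\tilde\psi_\infty-pw^{p-1}\tilde\psi_\infty=0$ in all of~$\R^n$, since~$\tilde g_k\to0$ and~$c_{i,k}\to0$.

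The third step closes the contradiction. By the nondegeneracy assumption~\eqref{NOBS-DEFG}, $\tilde\psi_\infty=\sum_{i=1}^n a_i\,\partial_i w$ for some constants~$a_i$. On the other hand, passing to the limit in the orthogonality conditions~$\int_{\Omega_{\epsilon_k}}\psi_k Z_i\,dx=0$ — which becomes~$\int_{\R^n}\tilde\psi_k\,\partial_i w\,dx=0$ up to an~$O(\epsilon_k^{n+4s})$ error from the truncation of the domain, controlled via the weighted bound and~\eqref{bvsvs} — we obtain~$\int_{\R^n}\tilde\psi_\infty\,\partial_i w\,dx=0$ for every~$i$, and then Lemma~\ref{lemma Zi}(ii) forces all~$a_i=0$, i.e.\ $\tilde\psi_\infty\equiv0$. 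But from the~$C^1_{\rm loc}$ convergence and the assumption~$\|\psi_k\|_{L^\infty(B_{\tilde R}(\xi_k))}\geqslant\kappa>0$ we would get~$\|\tilde\psi_\infty\|_{L^\infty(B_{\tilde R})}\geqslant\kappa$, a contradiction. Therefore~\eqref{dfgdfs} holds for~$\epsilon$ small.

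The main obstacle I anticipate is the rigorous justification of the compactness extraction on expanding domains with the nonlocal term present: one must ensure the~$(-\Delta)^s$ contribution passes to the limit correctly despite the moving exterior condition~$\psi_k=0$ outside~$\Omega_{\epsilon_k}$, and that the uniform (in~$\epsilon$ and~$\xi$) local regularity estimates needed to upgrade weak~$H^1$ convergence to~$C^1_{\rm loc}$ convergence really are available — this is exactly where the delicate mixed-order estimates of Lemma~\ref{lemma regularity} and Section~\ref{sec:Decay of the ground state} are used, and where the discrepancy between the two scales must be handled with care.
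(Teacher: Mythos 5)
Your proposal is correct and follows essentially the same strategy as the paper's proof: a contradiction argument with the normalization $\|\psi_k\|_{\infty,\xi_k}=1$, $\|g_k\|_{\infty,\xi_k}\to0$, a bound on the coefficients $c_{i,k}$, reduction to the local quantity $\|\psi_k\|_{L^\infty(B_{\tilde R}(\xi_k))}$ via Lemma~\ref{lemma bdjshfsd}, a blow-up/compactness step on the translated functions exploiting the uniform regularity of Lemma~\ref{lemma regularity}, and finally nondegeneracy plus the limiting orthogonality conditions to force the limit to vanish. Two small inaccuracies that do not affect the argument: the error in passing to the limit of the orthogonality condition is actually of order $O(\epsilon_k^{2s})$ (from integrating the tail of $|Z_i|\lesssim|x|^{-(n+2s)}$ outside $B_{c/\epsilon_k}$), not $O(\epsilon_k^{n+4s})$; and the nondegeneracy hypothesis~\eqref{NOBS-DEFG} is used only in the final step to classify the limit $\tilde\psi_\infty$, not to derive the bound on $|c_{i,k}|$, which follows instead from testing against $\tilde Z_j$ and the decay estimates of Section~\ref{sec:Decay of the ground state} (cf.~\cite[Lemma~7.2]{MR3393677}).
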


\begin{proof}
	We suppose by contradiction that there exist an infinitesimal sequence~$\epsilon_k$ as~$k\rightarrow +\infty$ such that, for any $k\in\mathbb{N}$, $\psi_k$ satisfies
	\begin{equation}\label{dsvds4}
			\begin{cases}
			-\Delta \psi_k+(-\Delta)^s\psi_k+\psi_k-pw_{\xi_k}^{p-1}\psi_k=g_k+\displaystyle
			\sum_{i=1}^{n}c_i^kZ_i^k \qquad &\text{in }\Omega_{\epsilon_k},\\
			\psi_k=0 &\text{in }\mathbb{R}^n\setminus\Omega_{\epsilon_k},\\ \displaystyle
			\int_{\Omega_{\epsilon_k}} \psi_k Z_i^k\, dx=0 &\text{for any  } i=1,\cdots, n,
		\end{cases}
	\end{equation}
for some $g_k\in L^2(\mathbb{R}^n)$ with $\|g_k\|_{\infty,\xi}<+\infty$. 
Here,  $\xi_k\in\Omega_{\epsilon_k}$ satisfies dist$(\xi_k,\partial\Omega_{\epsilon_k})\geqslant c/\epsilon_k $ for some~$c\in(0,1)$ and $$	Z_i^k:=\frac{\partial w_{\xi_k}}{\partial x_i}.$$ 
Additionally, suppose that 
\begin{equation}\label{	dsfsdf}
\|\psi_k\|_{\infty,\xi_k}=1 \quad \text{for any } k\in\mathbb{N} \qquad \text{and }\qquad \lim_{k\to+\infty} \|g_k\|_{\infty,\xi_k}= 0.
\end{equation}  

We claim that, for any given $r>0$, 
\begin{equation}\label{dsvdsddf}
	\lim_{k\to+\infty}\|\psi_k\|_{L^\infty(B_r(\xi_k))}= 0.
\end{equation}
We stress that once the desired estimate~\eqref{dsvdsddf} is established, then, for $r$ sufficiently large, from Lemmata~\ref{lemma decay of w} and~\ref{lemma bdjshfsd}, it follows that 
\begin{equation}\label{sdsdf}
	\begin{split}
	\|\psi_k\|_{\infty,\xi_k}&\leqslant C\left(\|\psi_k\|_{L^\infty(B_r(\xi_k))}+\|g_k\|_{\infty,\xi_k}+\sum_{i=1}^{n}|c_i^k|\, \|Z_i^k\|_{\infty,\xi_k}\right)\\
	&=C\left(\|\psi_k\|_{L^\infty(B_r(\xi_k))}+\|g_k\|_{\infty,\xi_k}+\sum_{i=1}^{n}|c_i^k|\, \|(1+|\cdot-\xi_k|)^\mu Z_i^k\|_{L^\infty(\mathbb{R}^n)}\right)\\
	&\leqslant C\left(\|\psi_k\|_{L^\infty(B_r(\xi_k))}+\|g_k\|_{\infty,\xi_k}+\sum_{i=1}^{n}|c_i^k|\right),
	\end{split}
\end{equation} 
up to renaming $C>0$, thanks to the fact that~$\mu<n+2s$ (recall~\eqref{fiuewfjke768578599}). 

Moreover,  Lemmata~\ref{lemma decay of w},~\ref{lemma gradient decay} and~\ref{lemma second gradient decay} allow us to use~\cite[Lemma~7.2]{MR3393677}. We thus infer that 
\begin{equation*}
	c_i^k:=\alpha^{-1} \int_{\mathbb{R}^n} g_k Z_i^k\, dx+f_i^k,
\end{equation*}
where $\alpha$ is as defined in Lemma~\ref{lemma Zi}.

Here above, we have considered a suitable $f_i^k\in\mathbb{R}$ with 
\begin{equation*}
	|f_i^k|\leqslant C_0\epsilon_k^{n/2} \left(\|\psi_k\|_{L^2(\mathbb{R}^n)}+\|g_k\|_{L^2(\mathbb{R}^n)}\right),
\end{equation*} 
for some positive constant $C_0$ independent of $k$.  

Furthermore, we point out that, for any measurable function~$\phi:\R^n\to\R$,   
\begin{equation}\label{dgerf}
	\|\phi\|_{L^2(\mathbb{R}^n)}\leqslant \|\phi\|_{\infty,\xi}\left(\int_{\mathbb{R}^n}(1+|x-\xi_k|)^{-2\mu}\, dx\right)^{1/2}\leqslant C_{n,\mu} \|\phi\|_{\infty,\xi},
\end{equation}
thanks to the fact that~$\mu>n/2$ (recall~\eqref{fiuewfjke768578599}).
Hence, using the decay of $Z_i^k$  in Lemma~\ref{lemma decay of w}, one has that
\begin{equation}\label{sddsf}
		|c_i^k|\leqslant C_1\alpha^{-1} \|g_k\|_{\infty,\xi_k}+C_2\epsilon_k^{n/2},
\end{equation}
for some constants $C_1$, $C_2>0$.

As a consequence of this,  by combining~\eqref{	dsfsdf}, \eqref{dsvdsddf} and~\eqref{sdsdf}, we obtain that
\begin{equation*}
\lim_{k\to+\infty}	\|\psi_k\|_{\infty,\xi_k}= 0,
\end{equation*}
which contradicts the fact that $ \|\psi_k\|_{\infty,\xi_k}=1$ for any $k\in\mathbb{N}$. This establishes the desired result~\eqref{dfgdfs}.


Accordingly, it only remains to check the claim in~\eqref{dsvdsddf}. We argue by contradiction and we suppose that 
there exist~$r$, $\gamma>0$ and a subsequence (still denoted by $\psi_k$)  such that  
\begin{equation}\label{sdfsddsf}
	\|\psi_{k}\|_{L^\infty(B_r(\xi_{k}))}\geqslant \gamma.
\end{equation}
We now split the proof into four steps.

\noindent{\bf Step 1.} 
We first prove that  
\begin{equation}\label{giueuiegtujst5948}
{\mbox{$\psi_k$ and $\nabla\psi_k$ are equicontinous.}}\end{equation}
Indeed, 
recall that $\psi_k=0$ outside $\Omega_{\epsilon_k}$.
We can thereby employ Lemma~\ref{lemma regularity},  owing to~\eqref{dgerf}, obtaining that
\begin{equation*}
	\begin{split}
&\sup\limits_{x\neq y}\frac{|\psi_k(x)-\psi_k(y)|}{|x-y|}+	\sup\limits_{x\neq y\, x,y\in\Omega_{\epsilon_k}}\frac{|\nabla\psi_k(x)-\nabla\psi_k(y)|}{|x-y|^\alpha}\\
&\leqslant C\left(\left|\left|g_k+pw_{\xi_k}^{p-1}\psi_k+\sum_{i=1}^{n}c_i^kZ_i^k\right|\right|_{L^\infty(\mathbb{R}^n)}+\left|\left|g_k+pw_{\xi_k}^{p-1}\psi_k+\sum_{i=1}^{n}c_i^kZ_i^k\right|\right|_{L^2(\mathbb{R}^n)}\right)\\
&\leqslant C\left(\|g_k\|_{\infty,\xi_k}+\|\psi_k\|_{\infty,\xi_k}+\sum_{i=1}^{n}|c_i^k|\right)
\end{split}
\end{equation*}
for some constant $C>0$ independent of $k$, thanks to the decay of $Z_i^k$ in Lemma~\ref{lemma decay of w} and the fact that $w_{\xi_k}^{p-1}$ is bounded.
{F}rom this, by combining~\eqref{	dsfsdf} with~\eqref{sddsf}, we deduce~\eqref{giueuiegtujst5948}.

We now define $\bar{\psi}_k(x):=\psi_k(x+\xi_k)$ and $\bar{\Omega}_{\epsilon_k}:=\left\{x\in\mathbb{R}^n\, :\,x\in\Omega_{\epsilon_k}-\xi_k \right\}$.
Due to~\eqref{dsvds4}, we see that 
\begin{equation}\label{gergr}
	\begin{cases}
		-\Delta \bar{\psi}_k+(-\Delta)^s\bar{\psi}_k+\bar{\psi}_k-pw^{p-1}\bar{\psi}_k=\bar{g}_k+\displaystyle\sum_{i=1}^{n}c_i^k\bar{Z}_i \qquad &\text{in }\bar{\Omega}_{\epsilon_k},\\
		\bar{\psi}_k=0 &\text{in }\mathbb{R}^n\setminus\bar{\Omega}_{\epsilon_k},\\
		\displaystyle
		\int_{\bar{\Omega}_{\epsilon_k}} \bar{\psi}_k \bar{Z}_i\, dx=0 &\text{for any  } i=1,\cdots, n,
	\end{cases}
\end{equation}
where $ \bar{g}_k(x):=g_k(x+\xi_k)$ and $\bar{Z}_i:=\partial w/\partial x_i $. 

Moreover, since dist$(\xi_k,\partial\Omega_{\epsilon_k})> c/\epsilon_k $, we have that~$B_{c/\epsilon_k}\subset \bar{\Omega}_{\epsilon_k}$. 
This entails that~$ \bar{\Omega}_{\epsilon_k}$ converges to~$\mathbb{R}^n$ as~$k\rightarrow\infty$.

Recalling~\eqref{	dsfsdf} and~\eqref{sdfsddsf}, we see that 
\begin{equation}\label{fvsddfs}
\|\bar{\psi}_k\|_{\infty,0}=\|\psi_k\|_{\infty,\xi_k}=1 \qquad \text{and}\qquad \|\bar{\psi}_k\|_{L^\infty(B_r)}=\|\psi_{k}\|_{L^\infty(B_r(\xi_{k}))}\geqslant \gamma.
\end{equation}

Furthermore, in light of~\eqref{giueuiegtujst5948}, one has that $\bar{\psi}_k$ and $\nabla\bar{\psi}_k$ are also equicontinuous. 
Thus, there exists a function~$ \bar{\psi}$ such that, up to subsequence,  $\bar{\psi}_k$  and $\nabla\bar{\psi}_k$ uniformly converge to $ \bar{\psi}$ and $\nabla\bar{\psi}$   on  compact sets, respectively.

As a consequence of this and~\eqref{fvsddfs}, \begin{equation}\label{dfgge}
	 \|\bar{\psi}\|_{\infty,0}\leqslant 1 \qquad \text{and}\qquad \|\bar{\psi}\|_{L^\infty(B_r)}\geqslant \gamma.
\end{equation}

\noindent{\bf Step 2.}
We now show that $\bar{\psi}\in H^1(\mathbb{R}^n)$.

Indeed, we notice that,  multiplying~\eqref{gergr} by $\bar{\psi}_k$ and  integrating over~$\bar{\Omega}_{\epsilon_k}$, since~$\bar{\psi}_k=0 $ outside~$ \bar{\Omega}_{\epsilon_k}$, one has that
\begin{equation*}
	\int_{\bar{\Omega}_{\epsilon_k}}|\nabla\bar{\psi}_k|^2+\int_{\bar{\Omega}_{\epsilon_k}}\bar{\psi}_k(-\Delta)^s\bar{\psi}_k+\int_{\bar{\Omega}_{\epsilon_k}}\bar{\psi}_k^2=\int_{\bar{\Omega}_{\epsilon_k}}pw^{p-1}\bar{\psi}_k^2+\int_{\bar{\Omega}_{\epsilon_k}}\bar{g}_k\bar{\psi}_k.
\end{equation*}
Owing to formula~(1.5) in \cite{MR3211861}, one infers that 
\begin{equation*}
	\int_{\bar{\Omega}_{\epsilon_k}}|\nabla\bar{\psi}_k|^2\leqslant \left(p\|w\|^{p-1}_{L^\infty(\mathbb{R}^n)}+\|\bar{g}_k\|_{L^2(\mathbb{R}^n)}\right)\| \bar{\psi}_k\|_{L^2(\mathbb{R}^n)}.
\end{equation*}
{F}rom this, utilizing~\eqref{dgerf} and  the decay of $Z_i$,
\begin{equation}\label{vsdvdsv}
	\int_{\bar{\Omega}_{\epsilon_k}}|\nabla\bar{\psi}_k|^2	\leqslant C\|\psi_k\|_{\infty,\xi_k}\left(1+\|g_k\|_{\infty,\xi_k}\right)\leqslant C
\end{equation}
up to renaming  $C>0$ independently of $k$, thanks to~\eqref{	dsfsdf}.

Moreover,  for any given $R>0$, there exists $K\in\mathbb{N}$ such that   $ B_R\subset \bar{\Omega}_{\epsilon_k}$ for every $k>K$, and therefore,
by Fatou Lemma and~\eqref{fvsddfs}, we see that
\begin{equation*}
	\begin{split}&
		\int_{B_R} (\bar{\psi}^2+|\nabla \bar{\psi}|^2) \, dx\leqslant \liminf\limits_{k\rightarrow\infty}\int_{\bar{\Omega}_{\epsilon_k}} (\bar{\psi}_k^2+|\nabla\bar{\psi}_k|^2)\, dx\\
		&\qquad\leqslant \liminf\limits_{k\rightarrow\infty} \left(\|\bar{\psi}_k\|_{\infty,0}\int_{\mathbb{R}^n}(1+|x|)^{-2\mu}\, dx+\int_{\bar{\Omega}_{\epsilon_k}}|\nabla\bar{\psi}_k|^2\, dx\right) \leqslant C,
	\end{split}
\end{equation*}
for some constant $C>0$ independent of $k$, thanks to~\eqref{vsdvdsv} and the fact that~$2\mu>n$. Consequently, sending~$R\to+\infty$, we conclude that~$ \bar{\psi}\in H^1(\mathbb{R}^n) $, as desired.

\noindent{\bf Step 3.}
We now claim that $\bar{\psi}$ solves, in the weak sense,
\begin{equation}\label{vsd}
	-\Delta \bar{\psi}+(-\Delta)^s\bar{\psi}+\bar{\psi}-pw^{p-1}\bar{\psi}=0 \qquad \text{in }\mathbb{R}^n.
\end{equation}
Indeed, it suffices to show that 
\begin{equation}\label{reg}
	\int_{\mathbb{R}^n} \left(-\Delta \varphi+(-\Delta)^s\varphi+\varphi-pw^{p-1}\varphi\right)\bar{\psi}=0 \qquad \text{in }\mathbb{R}^n,
\end{equation}
for every $\varphi\in C_0^\infty(\mathbb{R}^n)$. To this end, we observe that for each $\varphi\in C_0^\infty(\mathbb{R}^n) $, there exists $k_0\in\mathbb{N}$ such that $\varphi\in C_0^\infty(\bar{\Omega}_{\epsilon_k})$ for any $k>k_0$.

Therefore, we multiply the equation in~\eqref{gergr} by $\varphi$, and integrate over $\mathbb{R}^n$. In this way, we have that
\begin{equation}\label{vhjmi}
	\int_{\mathbb{R}^n}\left(-\Delta\varphi +(-\Delta)^s\varphi+\varphi-pw^{p-1}\varphi\right) \bar{\psi}_k=\int_{\mathbb{R}^n}\bar{g}_k\varphi +\sum_{i=1}^{n}c_i^k\int_{\mathbb{R}^n}\bar{Z}_i\varphi. 
\end{equation}
Furthermore, owing to~\eqref{	dsfsdf},  one has that 
\begin{equation}\label{fbdfvdf}\begin{split}&
	\lim_{k\to+\infty}\left|\int_{\mathbb{R}^n}\bar{g}_k\varphi\right|\leqslant
	\lim_{k\to+\infty} \|\bar{g}_k\|_{\infty,0}\int_{\mathbb{R}^n}(1+|x|)^{-2\mu}|\varphi|\\&\qquad
	\leqslant\lim_{k\to+\infty} \|{g}_k\|_{\infty,\xi_k}\int_{\mathbb{R}^n}|\varphi|\leqslant C\lim_{k\to+\infty}\|{g}_k\|_{\infty,\xi_k}= 0.\end{split}
\end{equation}
Additionally, in virtue of the decay of $\bar{Z}_i$ and~\eqref{sddsf}, we get 
\begin{equation}\label{fwfwe}\lim_{k\to+\infty}
\left|	\sum_{i=1}^{n}c_i^k\int_{\mathbb{R}^n}\bar{Z}_i\varphi\right|\leqslant C\lim_{k\to+\infty}\sum_{i=1}^{n}\left|c_i^k\right|=0.
\end{equation}

Moreover, given $r>0$, we estimate that 
\begin{equation*}
	\begin{split}
	&\left|	\int_{\mathbb{R}^n}\left(-\Delta\varphi +(-\Delta)^s\varphi+\varphi-pw^{p-1}\varphi\right) \bar{\psi}_k\, dx-\int_{\mathbb{R}^n}\left(-\Delta\varphi +(-\Delta)^s\varphi+\varphi-pw^{p-1}\varphi\right)\bar{\psi}\, dx\right|\\
	\leqslant &\int_{B_{r}}\left|-\Delta\varphi +(-\Delta)^s\varphi+\varphi-pw^{p-1}\varphi\right||\bar{\psi}_k-\bar{\psi}|\, dx\\
	&\qquad +\int_{\mathbb{R}\setminus B_{r}}\left|-\Delta\varphi
	 +(-\Delta)^s\varphi+\varphi-pw^{p-1}\varphi\right||\bar{\psi}_k-\bar{\psi}|\, dx\\
	 &=:A_1+A_2.
	\end{split}
\end{equation*}
Since $\varphi\in C_0^\infty(\bar{\Omega}_{\epsilon_{k_0}})$, there exists a constant $C>0$ depending on $\varphi$ such that,
for any~$x\in\mathbb{R}^n$,
\begin{equation*}
	\left|-\Delta\varphi(x)
	+(-\Delta)^s\varphi(x)+\varphi(x)-pw^{p-1}(x)\varphi(x)\right|\leqslant C(1+|x|)^{-n-2s}.
\end{equation*} 
Hence, we have that \begin{equation}\label{vsdfdsfds}
	\lim_{k\to+\infty}A_1\leqslant C\|\bar{\psi}_k-\bar{\psi}\|_{L^\infty(B_{r})} \int_{B_{r}}(1+|x|)^{-n-2s}\, dx\leqslant C\lim_{k\to+\infty} \|\bar{\psi}_k-\bar{\psi}\|_{L^\infty(B_{r})}=0, 
\end{equation}
thanks to the uniform convergence of $ \bar{\psi}_k$ to $\bar{\psi}$ on compact sets.  

Besides, by combining~\eqref{fvsddfs} with~\eqref{dfgge}, we see that 
\begin{equation}\label{fsdfds}
		A_2\leqslant C\int_{\mathbb{R}\setminus B_{r}}(1+|x|)^{-\mu-n-2s}\, dx\leqslant C r^{-2s}, 
\end{equation} up to renaming~$C$.

Let now~$\delta>0$. Thanks to~\eqref{fsdfds}, 
we then pick~$r$ sufficiently large such that~$A_2<\delta/4$. The choice of~$r$ is now fixed once and for all.

Moreover, in light of~\eqref{vsdfdsfds}, we can find $k_1>0$, depending only
on~$\delta$, $ n$, $s$ and~$\varphi$, such that, for any $k>k_1$, 
\begin{eqnarray*}&&
	\left|	\int_{\mathbb{R}^n}\left(-\Delta\varphi +(-\Delta)^s\varphi+\varphi-pw^{p-1}\varphi\right)\bar{\psi}\, dx\right|\\&&\qquad
	<\left|\int_{\mathbb{R}^n}\left(-\Delta\varphi +(-\Delta)^s\varphi+\varphi-pw^{p-1}\varphi\right) \bar{\psi}_k\, dx\right|+\delta/2.
\end{eqnarray*}
Hence,  utilizing~\eqref{vhjmi},~\eqref{fbdfvdf} and~\eqref{fwfwe}, there exists~$ k_2>k_1$ such that, for any $k>k_2$, 
\begin{equation*}
	\left|	\int_{\mathbb{R}^n}\left(-\Delta\varphi +(-\Delta)^s\varphi+\varphi-pw^{p-1}\varphi\right)\bar{\psi}\, dx\right|<\frac{\delta}{2}+\frac{\delta}{2}=\delta.
\end{equation*}
Sending~$\delta\to0$, we obtain the desired result~\eqref{reg},
and thus~$\bar{\psi}\in H^1(\mathbb{R}^n)$ is a weak solution to~\eqref{vsd}.

\noindent{\bf Step 4. }
Now we prove that $\bar{\psi}\equiv0$.

Indeed, given $r>0$, we have that 
\begin{equation}\label{fsdf}
		\int_{\mathbb{R}^n}(\bar{\psi}-\bar{\psi}_k) \bar{Z}_i\, dx= \int_{B_r}(\bar{\psi}-\bar{\psi}_k) \bar{Z}_i\, dx+\int_{\mathbb{R}^n\setminus B_r}(\bar{\psi}-\bar{\psi}_k) \bar{Z}_i\, dx.
\end{equation}
Since $\bar{\psi}_k$   uniformly converges to $ \bar{\psi}$   on  compact sets and $\bar{Z}_i$ is bounded, we obtain that 
\begin{equation}\label{fffd}
	\lim_{k\to+\infty}\int_{B_r}(\bar{\psi}-\bar{\psi}_k) \bar{Z}_i\, dx=0.
\end{equation}
Also, by combining~\eqref{fvsddfs} with~\eqref{dfgge},
and using the decay of $\bar{Z}_i$,
  \begin{equation}\label{dsfsdf}
	\int_{\mathbb{R}^n\setminus B_r}(\bar{\psi}-\bar{\psi}_k) \bar{Z}_i\, dx\leqslant C\int_{\mathbb{R}^n\setminus B_r} \frac{dx}{|x|^{n+2s}}\leqslant Cr^{-2s}
\end{equation}
which tends to $0$ as $r \rightarrow +\infty$.  

Moreover, we observe that,  \begin{equation*}
	\lim_{k\to+\infty}\int_{\mathbb{R}^n\setminus \bar{\Omega}_{\epsilon_k}}  \bar{\psi}_k \bar{Z}_i\, dx\leqslant C\lim_{k\to+\infty}\int_{\mathbb{R}^n\setminus B_{c/\epsilon_k}}\frac{dx}{|x|^{n+2s}}
	\leqslant C\lim_{k\to+\infty}\epsilon_k^{2s}= 0.
\end{equation*}
As a consequence of this and the orthogonality condition in~\eqref{gergr}, using~\eqref{fffd} and~\eqref{dsfsdf},  we obtain from \eqref{fsdf} that
\begin{equation}\label{uirewofvhjer7845trfgaj}
	\int_{\mathbb{R}^n}\bar{\psi} \bar{Z}_i\, dx=0.
\end{equation}

In addition, since $\bar{\psi}\in H^1(\mathbb{R}^n)$ is a weak solution to~\eqref{vsd}, employing the nondegeneracy assumption in~\eqref{NOBS-DEFG}, we have that
\begin{equation*}
	\bar{\psi}=\sum_{j=1}^{n}\beta_j\frac{\partial w}{\partial x_j},
\end{equation*}
for some coefficients $\beta_i\in\mathbb{R}$. 

Thus, from~\eqref{uirewofvhjer7845trfgaj} and Lemma~\ref{lemma Zi}, we conclude that, for all~$i\in\{1,\cdots,n\}$,
\begin{equation*}
	0=\sum_{j=1}^{n}\beta_j \int_{\mathbb{R}^n}\bar{Z}_i\bar{Z}_j\, dx=\alpha\beta_i,
\end{equation*}
which entails that $\bar{\psi}\equiv 0$,
as desired.

Notice now that the fact that~$\bar{\psi}\equiv 0$
contradicts~\eqref{dfgge}, yielding the desired result in~\eqref{dsvdsddf}. Therefore, the proof of Lemma~\ref{lemma cdscsd} is completed.
\end{proof}

Our aim is now to use the Fredholm's alternative to prove Theorem~\ref{th unique about g}.
For this, we now prove an auxiliary result, as follows.

\begin{Lemma}\label{lemma auxiliary result}
	Let $g\in L^2(\mathbb{R}^n)$ with $\|g\|_{\infty,\xi}<+\infty$.  Then, the problem   
	\begin{equation}\label{gfgu}
			-\Delta \psi+(-\Delta)^s\psi+\psi=g+\sum_{i=1}^{n}c_iZ_i \qquad \text{in }\Omega_\epsilon
	\end{equation}
has a unique solution $\psi\in \Psi$, for suitable
coefficients $c_i\in\mathbb{R}$ for any $ i\in\left\{1,\cdots,n\right\}$.

	Moreover, there exists a constant $C>0$ such that 
	\begin{equation}\label{sdfsdvjo}
		\|\psi\|_{\infty,\xi}\leqslant C\|g\|_{\infty,\xi}.
	\end{equation}
\end{Lemma}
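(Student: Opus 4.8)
The plan is to build the pair $(\psi,c_1,\dots,c_n)$ variationally in the constrained Hilbert space $\Psi$, recovering the $c_i$ as the coordinates of the ``defect functional'' in the $n$-dimensional annihilator of $\Psi$, and then to upgrade the energy bound so produced into the weighted estimate~\eqref{sdfsdvjo} by means of Lemma~\ref{lemma dgfusdhi}. Note at the outset that $\|g\|_{\infty,\xi}<+\infty$ forces $g\in L^2(\mathbb{R}^n)$ with $\|g\|_{L^2(\mathbb{R}^n)}\le C\|g\|_{\infty,\xi}$, by~\eqref{dgerf}.

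\emph{Existence and uniqueness.} On $H^1_0(\Omega_\epsilon)$ I introduce the bilinear form
\begin{equation*}
B(\psi,\varphi):=\int_{\Omega_\epsilon}\nabla\psi\cdot\nabla\varphi\,dx+\int_{\mathbb{R}^n}\int_{\mathbb{R}^n}\frac{(\psi(x)-\psi(y))(\varphi(x)-\varphi(y))}{|x-y|^{n+2s}}\,dx\,dy+\int_{\Omega_\epsilon}\psi\varphi\,dx,
\end{equation*}
which is bounded and satisfies $B(\varphi,\varphi)=\|\varphi\|_{H^1_0}^2$, hence is coercive. Moreover $\Psi=\bigcap_{i=1}^n\ker\ell_i$ with $\ell_i(\varphi):=\int_{\Omega_\epsilon}\varphi Z_i$, and for $\epsilon$ small these functionals are linearly independent on $H^1_0(\Omega_\epsilon)$, since by Lemma~\ref{lemma Zi}(iii) the matrix $\big(\int_{\Omega_\epsilon}Z_iZ_j\big)_{ij}=\alpha\,\mathrm{Id}+O(\epsilon^{n+4s})$ is invertible; thus $\Psi$ is a closed subspace of codimension $n$. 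By Lax--Milgram there is a unique $\psi\in\Psi$ with $B(\psi,\varphi)=\int_{\Omega_\epsilon}g\varphi$ for every $\varphi\in\Psi$. The bounded functional $\varphi\mapsto B(\psi,\varphi)-\int_{\Omega_\epsilon}g\varphi$ on $H^1_0(\Omega_\epsilon)$ then annihilates $\Psi$, so it equals $\sum_i c_i\ell_i$ for uniquely determined reals $c_i$; this is precisely the weak formulation of~\eqref{gfgu}. If $(\psi',c_i')$ is another solution, then $\psi-\psi'\in\Psi$ and $B(\psi-\psi',\varphi)=\sum_i(c_i-c_i')\ell_i(\varphi)$; choosing $\varphi=\psi-\psi'$ annihilates the right-hand side, so $\psi=\psi'$ by coercivity, and then $c_i=c_i'$ by linear independence of the $\ell_i$.

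\emph{The estimate.} Testing with $\varphi=\psi$ gives $\|\psi\|_{H^1_0}^2=\int_{\Omega_\epsilon}g\psi\le\|g\|_{L^2}\|\psi\|_{L^2}\le\|g\|_{L^2}\|\psi\|_{H^1_0}$, hence $\|\psi\|_{H^1_0}\le\|g\|_{L^2}\le C\|g\|_{\infty,\xi}$. To bound the $c_i$, I would test~\eqref{gfgu} against a truncation $\widetilde Z_j:=\tau Z_j$, where $\tau(x)=\tau_0(|x-\xi|)$ is a smooth radial cutoff equal to $1$ on a large ball around $\xi$ and supported in a ball contained in $\Omega_\epsilon$ (possible since $\operatorname{dist}(\xi,\partial\Omega_\epsilon)\ge c/\epsilon$ by~\eqref{fdgd}). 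Using the polynomial decay of $Z_j$ and $\nabla Z_j$ from Section~\ref{sec:Decay of the ground state} one checks that $\|\widetilde Z_j\|_{H^1_0}\le C$ uniformly in $\epsilon$, while by Lemma~\ref{lemma Zi}(iv) one has $\int_{\Omega_\epsilon}Z_i\widetilde Z_j=\widetilde\alpha\,\delta_{ij}$ with $\widetilde\alpha\ge c_0>0$ for $\epsilon$ small. Therefore
\begin{equation*}
\widetilde\alpha\,c_j=B(\psi,\widetilde Z_j)-\int_{\Omega_\epsilon}g\,\widetilde Z_j,
\qquad\text{so}\qquad
|c_j|\le C\big(\|\psi\|_{H^1_0}\|\widetilde Z_j\|_{H^1_0}+\|g\|_{L^2}\|\widetilde Z_j\|_{L^2}\big)\le C\|g\|_{\infty,\xi}.
\end{equation*}
Since $\|Z_i\|_{\infty,\xi}=\|(1+|\cdot-\xi|)^\mu Z_i\|_{L^\infty(\mathbb{R}^n)}\le C$ by Lemma~\ref{lemma decay of w} together with $\mu<n+2s$ (recall~\eqref{fiuewfjke768578599}), the datum of~\eqref{gfgu} satisfies $\big\|g+\sum_i c_iZ_i\big\|_{\infty,\xi}\le C\|g\|_{\infty,\xi}$, and Lemma~\ref{lemma dgfusdhi} applied to~\eqref{gfgu} yields~\eqref{sdfsdvjo}.

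\emph{Main obstacle.} For this lemma the delicate inputs are already isolated in the previously proved results: existence is routine Lax--Milgram, and the step from the energy bound to the weighted $L^\infty$ bound is a direct appeal to Lemma~\ref{lemma dgfusdhi}. The one point that still demands attention is the \emph{$\epsilon$-uniform} control of the truncation $\widetilde Z_j$ — namely $\|\widetilde Z_j\|_{H^1_0}\le C$ and $\widetilde\alpha\ge c_0>0$ with constants independent of $\epsilon$ — which relies on the polynomial decay of $Z_i$ and $\nabla Z_i$ from Section~\ref{sec:Decay of the ground state} and on the near-orthonormality relations of Lemma~\ref{lemma Zi}. Alternatively, one may recover $c_j$ by viewing~\eqref{gfgu} as a perturbation of the linearized equation and invoking \cite[Lemma~7.2]{MR3393677}, as is done later in the proof of Lemma~\ref{lemma cdscsd}, but the crude Cauchy--Schwarz bound above already suffices.
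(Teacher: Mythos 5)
Your proposal is correct and follows essentially the same route as the paper: Lax--Milgram (the paper phrases it as Riesz's theorem for the same symmetric coercive form) on the closed subspace $\Psi$, recovery of the $c_i$ from the defect functional vanishing on $\Psi$, the $\widetilde Z_j$-testing trick combined with $\|\psi\|_{L^2}\le\|g\|_{L^2}$ and~\eqref{dgerf} to bound $\sum_i|c_i|$ by $C\|g\|_{\infty,\xi}$, and finally Lemma~\ref{lemma dgfusdhi}. The only cosmetic differences are that you recover the $c_i$ abstractly via the annihilator lemma rather than by the explicit decomposition $\phi=\bar\phi+\sum\lambda_j\widetilde Z_j$, and you bound $B(\psi,\widetilde Z_j)$ by $\|\psi\|_{H^1_0}\|\widetilde Z_j\|_{H^1_0}$ instead of integrating by parts onto $\widetilde Z_j$ and using $\|\widetilde Z_j\|_{H^2}$.
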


\begin{proof}
	\noindent{\bf Step 1.} We first prove the uniqueness of the solution to~\eqref{gfgu}. 
	
	Assume that $\psi_1$, $\psi_2\in\Psi$ solve problem~\eqref{gfgu}
	and set~$\psi_\star:=\psi_1-\psi_2$. Then, 
	\begin{equation}\label{rgerg}
		-\Delta \psi_\star+(-\Delta)^s\psi_\star+\psi_\star=\sum_{i=1}^{n}b_iZ_i \qquad \text{in }\Omega_\epsilon
	\end{equation}
	for some constants $b_i\in\mathbb{R}$. 
	
	Moreover, we multiply~\eqref{rgerg} by $\psi_\star$ and integrate over $\Omega_\epsilon$. In this way, we see that
	\begin{equation*}
		\int_{\Omega_\epsilon} 	|\nabla \psi_\star|^2\, dx+\int_{\mathbb{R}^n} |(-\Delta)^{s/2}\psi_\star|^2\, dx+\int_{\Omega_\epsilon} |\psi_\star|^2\, dx=0
	\end{equation*}
	thanks to the orthogonality condition.  This yields that~$\psi_\star=0$
	a.e., and therefore~$\psi_1\equiv\psi_2$ in $\mathbb{R}^n$, since~$\psi_1$, $\psi_2\in C(\mathbb{R}^n)$ (see e.g. \cite[Theorem~1.3]{SVWZ23}).
	
	\noindent{\bf Step 2.}  We now show the existence of a solution to~\eqref{gfgu}.
	
For this, formula (1.5) in \cite{MR3211861} yields that, for every $ \psi$, $\varphi\in\Psi$,
\begin{equation*}
	\int_{\Omega_\epsilon} (-\Delta)^s\psi\, \varphi \, dx=\int_{\mathbb{R}^n} (-\Delta)^{s/2}\psi\, (-\Delta)^{s/2}\varphi\, dx.
\end{equation*}
Thus, for a given $g\in L^2(\mathbb{R}^n)$, we  look for a function $\psi\in \Psi$ such that, for every $\varphi\in\Psi$,
\begin{equation}\label{dfbjdvn}
	\int_{\Omega_\epsilon} 	\nabla \psi\cdot\nabla \varphi\, dx+\int_{\mathbb{R}^n} (-\Delta)^{s/2}\psi\, (-\Delta)^{s/2}\varphi\, dx+\int_{\Omega_\epsilon} \psi\, \varphi\, dx=\int_{\Omega_\epsilon} g\,\varphi\, dx.
\end{equation}
	
	We notice that 
	\begin{equation*}
		\left\langle\varphi,\psi \right\rangle:= \int_{\Omega_\epsilon} 	\nabla \psi\cdot\nabla \varphi\, dx+\int_{\mathbb{R}^n} (-\Delta)^{s/2}\psi\, (-\Delta)^{s/2}\varphi\, dx+\int_{\Omega_\epsilon} \psi\, \varphi\, dx
	\end{equation*}
	defines an inner product in $\Psi$, and that
	\begin{equation*}
		\mathcal{F}(\varphi):= \int_{\Omega_\epsilon} g\, \varphi\, dx
	\end{equation*}
	is a linear and continuous functional in $\Psi$. Therefore, in virtue of Riesz's Theorem, there exists a unique $\psi\in\Psi$ satisfying~\eqref{dfbjdvn}.
	
	We next prove that $\psi $ is a weak solution to~\eqref{gfgu}, that is, for every $\phi\in H^1(\mathbb{R}^n)$ with $\phi=0$ outside $\Omega_\epsilon$,  
	\begin{equation}\label{dfbjdvnfg}
		\int_{\Omega_\epsilon} 	\nabla \psi\cdot\nabla \phi\, dx+\int_{\mathbb{R}^n} (-\Delta)^{s/2}\psi\, (-\Delta)^{s/2}\phi\, dx+\int_{\Omega_\epsilon} \psi\, \phi\, dx=\int_{\Omega_\epsilon} g\,\phi\, dx+\sum_{i=1}^{n}c_i\int_{\Omega_\epsilon}Z_i\, \phi\, dx.
	\end{equation}
	Indeed,  we take a radial cutoff function $\tau\in C_0^\infty(\Omega_\epsilon)$ of the form~$\tau(x):=\tau_0(|x-\xi|)$, where~$\tau_0\in C_0^\infty([-1,1])$. Furthermore, 
	for every $\phi\in H^1(\mathbb{R}^n)$ with $\phi=0$ outside $\Omega_\epsilon$, 
	we define 
	\begin{equation}\label{fyghdwjqs478365439fgkfdghcsbxakdkjwqlkjhgf}
		\bar{\phi}:=\phi- \sum_{j=1}^{n}\left(\tilde{\alpha}^{-1}\int_{\mathbb{R}^n} {\phi}Z_j\, dx\right)\tilde{Z_j}:=\phi- \sum_{j=1}^{n}\lambda_j\tilde{Z_j}
	\end{equation}
	where $\tilde{Z_i}$ and $\tilde{\alpha}$ are as in Lemma~\ref{lemma Zi}. 
	
	Since $ \tilde{Z_i}\equiv 0$ outside $\Omega_\epsilon$, so does $\bar{\phi}$ and we thus deduce that, for any~$i=1,\cdots,n$,
	\begin{equation*}\begin{split}
			&\int_{\Omega_\epsilon}\bar{\phi}Z_i\, dx=\int_{\mathbb{R}^n} \bar{\phi}Z_i\, dx=\int_{\mathbb{R}^n} {\phi}Z_i\, dx-\sum_{j=1}^{n}\left(\tilde{\alpha}^{-1}\int_{\mathbb{R}^n} {\phi}Z_j\, dx\right)\int_{\mathbb{R}^n} Z_i\tilde{Z_j}\, dx\\
			&\qquad= \int_{\mathbb{R}^n} {\phi}Z_i\, dx-\left(\tilde{\alpha}^{-1}\int_{\mathbb{R}^n} {\phi}Z_i\, dx\right)\tilde{\alpha}=0.
	\end{split}
	\end{equation*}
	This proves that $\bar{\phi}\in\Psi$. 
	
Accordingly, we can use $\bar{\phi}$ as a test function in~\eqref{dfbjdvn} and,
recalling~\eqref{fyghdwjqs478365439fgkfdghcsbxakdkjwqlkjhgf}, we obtain that 
	\begin{equation*}
		\begin{split}
		\int_{\Omega_\epsilon} 	\nabla \psi\cdot\nabla {\phi}\, dx+\int_{\mathbb{R}^n} (-\Delta)^{s/2}\psi\, (-\Delta)^{s/2}{\phi}\, dx+\int_{\Omega_\epsilon} \psi\,{\phi}\, dx
		= \int_{\Omega_\epsilon} g\,{\phi}\, dx+	\sum_{j=1}^{n}c_j\int_{\mathbb{R}^n} {\phi}Z_j\, dx,
	\end{split}
	\end{equation*}
	where  \begin{equation*}
	c_j:=\tilde{\alpha}^{-1}\left(\int_{\Omega_\epsilon} 	\nabla \psi\cdot\nabla \tilde{Z_j}\, dx+\int_{\mathbb{R}^n} (-\Delta)^{s/2}\psi\, (-\Delta)^{s/2}\tilde{Z_j}\, dx+\int_{\Omega_\epsilon} (\psi-g)\tilde{Z_j}\, dx\right),
	\end{equation*}
which proves~\eqref{dfbjdvnfg}, as desired.


\noindent{\bf Step 3.} We now prove~\eqref{sdfsdvjo}.

{F}rom Lemma~\ref{lemma dgfusdhi}, it follows that
\begin{equation}\label{dfbdfg}
	\|\psi\|_{\infty,\xi}\leqslant C\left(\|g\|_{\infty,\xi}+\sum_{i=1}^{n}|c_i|\, \|Z_i\|_{\infty,\xi}\right)\leqslant C\left(\|g\|_{\infty,\xi}+\sum_{i=1}^{n}|c_i|\right)
\end{equation}
up to renaming $C>0$, thanks to the decay of $Z_i$ in Lemma~\ref{lemma decay of w} and the fact that~$\mu<n+2s$.

We claim that there exists a constant $C>0$ such that 
\begin{equation}\label{ndfgfd}
	\sum_{i=1}^{n}|c_i|\leqslant C\left(\|\psi\|_{L^2(\mathbb{R}^n)}+\|g\|_{\infty,\xi}\right).
\end{equation} 
Indeed, 
let $\tilde{Z_j}$ and~$\tilde{\alpha}$ be as in Lemma~\ref{lemma Zi}.
We multiply~\eqref{gfgu} by~$\tilde{Z_j}$
and integrate over $\Omega_\epsilon$ to infer that 
\begin{equation*}
\tilde{\alpha} c_j=	\int_{\mathbb{R}^n} \left(-\Delta \tilde{Z_j}+(-\Delta)^s\tilde{Z_j}+\tilde{Z_j}\right)\psi\, dx-\int_{\mathbb{R}^n} g\tilde{Z_j}\, dx.
\end{equation*} 
As a consequence, in virtue of Lemmata~\ref{lemma decay of w}, \ref{lemma gradient decay} and~\ref{lemma second gradient decay}, and recalling also~\eqref{dgerf}, one sees that, for any~$j=1,\cdots,n$,
\begin{equation*}
 \begin{split}
 	|c_j|&=\tilde{\alpha}^{-1}\left(\|\psi\|_{L^2(\mathbb{R}^n)}\|-\Delta \tilde{Z_j}+(-\Delta)^s\tilde{Z_j}+\tilde{Z_j}\|_{L^2(\mathbb{R}^n)}+\|g\|_{L^2(\mathbb{R}^n)}\|\tilde{Z_j}\|_{L^2(\mathbb{R}^n)}\right)	\\
 	&\leqslant C\left(\|\psi\|_{L^2(\mathbb{R}^n)}\|\tilde{Z_j}\|_{H^2(\mathbb{R}^n)}+\|g\|_{L^2(\mathbb{R}^n)}\|\tilde{Z_j}\|_{L^2(\mathbb{R}^n)}\right)\\
 	&\leqslant C\left(\|\psi\|_{L^2(\mathbb{R}^n)}+\|g\|_{\infty,\xi}\right),
 \end{split}
\end{equation*}
up to renaming  $C>0$, which shows the desired result~\eqref{ndfgfd}. 

Moreover, recalling that $\psi\in\Psi$, we multiply ~\eqref{gfgu} by $\psi$ and integrate over $\Omega_\epsilon$, concluding that 
$$
	\|\psi\|_{L^2(\mathbb{R}^n)}\leqslant \|g\|_{L^2(\mathbb{R}^n)}
.$$
Inserting this and~\eqref{ndfgfd} into~\eqref{dfbdfg}, owing to~\eqref{dgerf}, we obtain~\eqref{sdfsdvjo} and finish the proof of Lemma~\ref{lemma auxiliary result}.
\end{proof}

In light of Lemma~\ref{lemma auxiliary result}, for any given $g\in L^2(\mathbb{R}^n)$ with $\|g\|_{\infty,\xi}<\infty$,
we denote by $\mathcal{T}[g]\in \Psi$  the unique solution to
problem~\eqref{gfgu}. In particular, we have that $\mathcal{T}$ is a linear operator satisfying \begin{equation}\label{vsdvsdf}
 	\|\mathcal{T}[g]\|_{\infty,\xi}\leqslant C\|g\|_{\infty,\xi}. 
 \end{equation}
 
We define the Banach space 
 \begin{equation*}
 	Y_\infty:=\left\{\psi:\mathbb{R}^n\rightarrow \mathbb{R}\,:\, \|\psi\|_{\infty,\xi}<+\infty\right\}
 \end{equation*}
equipped with the norm $\|\cdot\|_{\infty,\xi}$.

With this preparatory work, we can now check the existence result for  the linear problem, as stated in Theorem~\ref{th unique about g}.

\begin{proof}[Proof of Theorem~\ref{th unique about g}]
	We observe that, to find a solution of~\eqref{sdfsde}, it suffices to look for a function $\psi\in\Psi$ such  that 
	\begin{equation}\label{fvs}
		\psi-\mathcal{T}[pw_\xi^{p-1}\psi]=\mathcal{T}[g].
	\end{equation}
For this, we set \begin{equation*}
	\mathcal{A}[\psi]:=\mathcal{T}[pw_\xi^{p-1}\psi].
\end{equation*}
Hence, by~\eqref{vsdvsdf}, and recalling also that $w_\xi$ is bounded,
one infers that, for every $\psi\in \Psi$,
\begin{equation*}
	\|\mathcal{A}[\psi]\|_{\infty,\xi}
	=\|	\mathcal{T}[pw_\xi^{p-1}\psi]\|_{\infty,\xi}\le C\|w_\xi^{p-1}\psi
	\|_{\infty,\xi}
	\leqslant C	\|\psi\|_{\infty,\xi},
\end{equation*}
up to relabelling~$C>0$.

We now claim that
\begin{equation}\label{fdbvdf}
	\mathcal{A}:Y_\infty\rightarrow Y_\infty \text{ is a compact operator with respect to the norm } \|\cdot\|_{\infty,\xi}.
\end{equation}

We point out that, once this claim is established, 
the conclusions of Theorem~\ref{th unique about g} will follow. Indeed,
since~$\mathcal{T}[pw_\xi^{p-1}\psi]$, $\mathcal{T}[g]\in\Psi$ (see Lemma~\ref{lemma auxiliary result}), one has that~$\psi\in\Psi$.

Moreover, since  $\psi=\mathcal{A}[\psi]+\mathcal{T}[g] $ solves~\eqref{sdfsde}, in virtue of Lemma~\ref{lemma cdscsd},  one has that if $g=0$ then $\psi=0$ is the unique solution of~\eqref{sdfsde}. Therefore, by the Fredholm's alternative, we deduce that,
for any~$g\in Y_\infty$, there exists a unique~$\psi\in Y _\infty$ that solves~\eqref{fvs}.

Also, the estimate~\eqref{sdfsdv} follows from Lemma~\ref{lemma cdscsd} and this would complete the proof of Theorem~\ref{th unique about g}.

Hence, we will now focus on the proof of~\eqref{fdbvdf}. For this, we assume that $\psi_k$ is a bounded sequence in $Y_\infty$ with respect to $\|\cdot\|_{\infty,\xi}$. Our goal is to show that there exists a subsequence (still denoted by $\psi_k$) and a function $\phi$ such that 
\begin{equation}\label{0987654mvcnbktryRTRE}
\lim_{k\to+\infty}\|\mathcal{A}[\psi_k]-\phi\|_{\infty,\xi}=0.
\end{equation}

To check this, we observe that, in view of Lemma~\ref{lemma regularity}, by combining the decay of~$Z_i$ in Lemma~\ref{lemma decay of w} with the boundedness of~$w_\xi$, recalling also~\eqref{dgerf} and~\eqref{ndfgfd},
one deduces that
\begin{equation*}
	\begin{split}
		\sup\limits_{x\neq y}\frac{|\mathcal{A}[\psi_k](x)-\mathcal{A}[\psi_k](y)|}{|x-y|}&\leqslant C\left(\left|\left|pw_{\xi}^{p-1}\psi_k+\sum_{i=1}^{n}c_i^kZ_i\right|\right|_{L^\infty(\mathbb{R}^n)}+\left|\left|pw_{\xi}^{p-1}\psi_k+\sum_{i=1}^{n}c_i^kZ_i\right|\right|_{L^2(\mathbb{R}^n)}\right)\\
		&\leqslant C\left(\|\psi_k\|_{L^2(\mathbb{R}^n)}+\|\psi_k\|_{L^\infty(\mathbb{R}^n)}+\sum_{i=1}^{n}|c_i^k|\right)\\
		&\leqslant C\left(\|\psi_k\|_{\infty,\xi}+\sum_{i=1}^{n}|c_i^k|\right)\\&\leqslant C,
	\end{split}
\end{equation*}
up to renaming $C>0$. 

This shows that the sequence $\mathcal{A}[\psi_k]$ is equicontinuous, thus there exists a function $\phi$ such that, for any $R>0$, 
\begin{equation}\label{dscdsvds}
	\lim_{k\to+\infty}\|\mathcal{A}[\psi_k]-\phi\|_{L^\infty(B_R(\xi))}= 0.
\end{equation}

Furthermore, we observe that 
\begin{equation}\label{fhewjhfweuoftuowe6784376473hfkj}
	\begin{cases}
	\left(	-\Delta +(-\Delta)^s+1\right)\mathcal{A}[\psi_k]=pw_\xi^{p-1}\psi_k+\displaystyle
	\sum_{i=1}^{n}c_i^kZ_i \qquad& \text{in }\Omega_\epsilon,\\
	\mathcal{A}[\psi_k]=0 &\text{in }\mathbb{R}^n\setminus\Omega_\epsilon.
	\end{cases}
\end{equation}

Since $\psi_k$ is a uniformly bounded sequence, owing to~\eqref{bvsvs},  \eqref{ndfgfd} and the decay of $Z_i$ in Lemma~\ref{lemma decay of w}, it follows that 
\begin{equation}\label{fwfjkasasdfgh0985}
	\begin{split}
	  \left|pw_\xi^{p-1}\psi_k+\sum_{i=1}^{n}c_i^kZ_i\right|  &\leqslant C\left(\|\psi_k\|_{\infty,\xi}(1+|x-\xi|)^{-(n+2s)(p-1)-\mu}+\sum_{i=1}^{n}|c_i^k|(1+|x-\xi|)^{-(n+2s)}\right)\\
	  &\leqslant \Lambda_0 (1+|x-\xi|)^{-\mu-\theta},
	\end{split}
\end{equation}
where the constant $\Lambda_0>0$ is independent of $k$,  and
\begin{equation}\label{choicethetar4383y}
\theta:=\min\left\{\frac{(n+2s)(p-1)}{\mu},\frac{n+2s-\mu}{\mu}\right\}>0.
\end{equation}

	Moreover, we define
\begin{equation*}
	\eta(x):=\int_{\mathbb{R}^n}\frac{\mathcal{K}(y) }{(1+|x-y|)^{\mu+\theta}}\, dy
\end{equation*} 
where $\mathcal{K}$ is given in~\eqref{mathcal{K}}. 
Hence, we have that
\begin{equation}\label{dsfds1}
	-\Delta\eta +(-\Delta)^s\eta+\eta={(1+|x|)^{-\mu-\theta}} \qquad \text{in }\mathbb{R}^n.
\end{equation}

Now, one can repeat the computation in~\eqref{lkjhgfd09876543mnbvc} with~$\mu$ replaced by~$\mu+\theta$. Notice indeed that the only assumption needed to carry over that
computation is that~$\mu+\theta\le n+2s$, which is guaranteed by~\eqref{choicethetar4383y}.
In this way, one obtains that there exists $C_0>0$ such that
\begin{equation*}
	\eta(x)\leqslant C_0(1+|x|)^{-\mu-\theta}\qquad \text{for any } x\in\mathbb{R}^n.
\end{equation*}

We denote $\eta_\xi:=\eta(x-\xi)$ and $\tilde{\eta}:=\Lambda_0\eta_\xi-\mathcal{A}[\psi_k]$. By combining~\eqref{fhewjhfweuoftuowe6784376473hfkj}
and~\eqref{dsfds1}, one has that, in~$\Omega_\epsilon$,
\begin{eqnarray*}
&&-\Delta\tilde{\eta} +(-\Delta)^s\tilde{\eta}+\tilde{\eta}
=\Lambda_0{(1+|x|)^{-\mu-\theta}} -
pw_\xi^{p-1}\psi_k-\displaystyle \sum_{i=1}^{n}c_i^kZ_i \ge0,
\end{eqnarray*} thanks to~\eqref{fwfjkasasdfgh0985}.
Also, $\tilde{\eta}=\Lambda_0\eta_\xi>0$ in~$\mathbb{R}^n\setminus\Omega_\epsilon$.

Thus, the Maximum Principle gives that $\tilde\eta\geqslant 0$ in $\mathbb{R}^n$. This yields that, for every $x\in\mathbb{R}^n$,
\begin{equation*}
	\mathcal{A}[\psi_k](x)\leqslant \Lambda_0\eta_\xi(x)\leqslant \Lambda_0C_0(1+|x-\xi|)^{-\mu-\theta}.
\end{equation*}
 As a consequence of this, 
we have that
\begin{equation*}
	\begin{split}
		\|\mathcal{A}[\psi_k]-\phi\|_{\infty,\xi}&=\sup\limits_{x\in\mathbb{R}^n} (1+|x-\xi|)^\mu\left(\mathcal{A}[\psi_k]-\phi\right)\\
		&\leqslant \sup\limits_{x\in B_R(\xi)} (1+|x-\xi|)^\mu\left(\mathcal{A}[\psi_k]-\phi\right)+\sup\limits_{x\in\mathbb{R}^n\setminus B_R(\xi)} (1+|x-\xi|)^\mu\left(\mathcal{A}[\psi_k]-\phi\right)\\
		&\leqslant (1+R)^\mu \|\mathcal{A}[\psi_k]-\phi\|_{L^\infty(B_R(\xi))}+ 2\Lambda_0C_0\sup\limits_{x\in\mathbb{R}^n\setminus B_R(\xi)} (1+|x-\xi|)^{-\theta}.
	\end{split}
\end{equation*}
Now, for any $\delta>0$, there exists $R_0>0$ (which is now fixed once and for all) such that 
$$2\Lambda_0C_0\sup\limits_{x\in\mathbb{R}^n\setminus B_{R_0}(\xi)} (1+|x-\xi|)^{-\theta}<\frac{\delta}2.$$
Also, using~\eqref{dscdsvds}, taking $k$ sufficiently large, one has that
\begin{equation*}
	(1+R_0)^\mu \|\mathcal{A}[\psi_k]-\phi\|_{L^\infty(B_{R_0}(\xi))}\leqslant \frac{\delta}2.
\end{equation*} 
Gathering these pieces of information, one concludes that
\begin{equation*}
	\|\mathcal{A}[\psi_k]-\phi\|_{\infty,\xi}\le\delta.
\end{equation*}
Sending~$\delta\to0$, we obtain~\eqref{0987654mvcnbktryRTRE}.
This establishes the claim in~\eqref{fdbvdf}.
\end{proof}

Now, thanks to Theorem~\ref{th unique about g},
we can consider a linear and continuous operator from~$Y_\infty$ into itself with respect to the norm $\|\cdot\|_{\infty,\xi}$. Namely, 
for any $g\in L^2(\mathbb{R}^n)$ with $\|g\|_{\infty,\xi}<+\infty$,  
we denote that
\begin{equation}\label{dfdsvsd}
\psi:=\mathcal{I}_\xi[g] \text{ is the unique solution of~\eqref{sdfsde} in } Y_\infty.
\end{equation}
 
\subsection{The nonlinear project problem}
In this subsection, we aim to solve the nonlinear projected problem
\begin{equation}\label{nonlinear projected problem}
	\begin{cases}
			-\Delta \psi+(-\Delta)^s\psi+\psi-pw_\xi^{p-1}\psi=E(\psi)+\displaystyle
			\sum_{i=1}^{n}c_iZ_i \qquad &\text{in }\Omega_\epsilon,\\
		\psi=0 &\text{in }\mathbb{R}^n\setminus\Omega_\epsilon,\\
		\displaystyle
		\int_{\Omega_\epsilon} \psi Z_i\, dx=0&\text{for any } i=1,\cdots,n,
	\end{cases}
\end{equation} 
where $E(\psi)$ is given by~\eqref{sdvds}.

We will establish the following existence result for
the nonlinear problem~\eqref{nonlinear projected problem} by making use of
the Contraction Mapping Theorem.

\begin{Theorem}\label{th:nonlinear problem }
	If $\epsilon>0$ sufficiently small, there exists a unique solution $\psi\in \Psi$ of~\eqref{nonlinear projected problem} for some coefficients $c_i\in\mathbb{R}$, with~$i=1,\cdots,n$, such that 
	\begin{equation}\label{dsf}
		\|\psi\|_{\infty,\xi}\leqslant C\epsilon^{\gamma_1}
	\end{equation} 
for a suitable constant $C>0$, where 
\begin{equation}\label{hewgamma068jkagamma}
\gamma_1:=\min\left\{n+2s,p(n+2s)-\mu\right\}>0.\end{equation}
\end{Theorem}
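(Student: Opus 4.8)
The plan is to reformulate~\eqref{nonlinear projected problem} as a fixed point problem for the map
\begin{equation*}
\mathcal{N}(\psi):=\mathcal{I}_\xi\big[E(\psi)\big],
\end{equation*}
where $\mathcal{I}_\xi$ is the linear solution operator of~\eqref{dfdsvsd} and $E$ is as in~\eqref{sdvds}, and to apply the Contraction Mapping Theorem on the closed ball
\begin{equation*}
\mathcal{B}:=\big\{\psi\in\Psi\,:\,\|\psi\|_{\infty,\xi}\le C_\star\,\epsilon^{\gamma_1}\big\}
\end{equation*}
of $Y_\infty$, for a constant $C_\star$ to be chosen large and $\epsilon$ small, with all bounds uniform in $\xi$ satisfying~\eqref{fdgd}. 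By Theorem~\ref{th unique about g} one has $\mathcal{I}_\xi[g]\in\Psi$ and $\|\mathcal{I}_\xi[g]\|_{\infty,\xi}\le C\|g\|_{\infty,\xi}$ whenever $\|g\|_{\infty,\xi}<+\infty$; since $\|E(\psi)\|_{\infty,\xi}<+\infty$ forces $E(\psi)\in L^2(\mathbb{R}^n)$ by~\eqref{dgerf} (recall $\mu>n/2$), the map $\mathcal{N}$ is well defined on $\mathcal{B}$, and a fixed point of $\mathcal{N}$ in $\mathcal{B}$ solves~\eqref{nonlinear projected problem} with the coefficients $c_i$ produced by $\mathcal{I}_\xi$ and obeys~\eqref{dsf}.

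First I would bound $\|\mathcal{N}(0)\|_{\infty,\xi}$, that is, $\|E(0)\|_{\infty,\xi}$ with $E(0)=\bar{u}_\xi^{p}-w_\xi^{p}$. Since $0\le\bar{u}_\xi\le w_\xi$ by Lemma~\ref{lemma comparable for h}, one has $|E(0)|\le p\,w_\xi^{p-1}v_\xi$; moreover, combining Lemma~\ref{lemma comparable for h}, the assumption~\eqref{fdgd}, and the bound $h_\xi\le Cd^{-(n+2s)}$ obtained inside the proof of Lemma~\ref{lemma hepsilon}, one gets $v_\xi(x)\le C\min\{(1+|x-\xi|)^{-(n+2s)},\,\epsilon^{n+2s}\}$, hence $v_\xi(x)\le C(1+|x-\xi|)^{-a}\,\epsilon^{n+2s-a}$ for every $a\in[0,n+2s]$. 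Inserting this together with the decay~\eqref{bvsvs} into the weighted norm gives
\begin{equation*}
\|E(0)\|_{\infty,\xi}\le C\,\sup_{x\in\mathbb{R}^n}(1+|x-\xi|)^{\mu-(n+2s)(p-1)-a}\,\epsilon^{n+2s-a},
\end{equation*}
and the choice $a:=\max\{0,\mu-(n+2s)(p-1)\}$, which lies in $[0,n+2s]$ thanks to~\eqref{fiuewfjke768578599}, makes the supremum finite and yields $\|E(0)\|_{\infty,\xi}\le C\epsilon^{n+2s-a}=C\epsilon^{\gamma_1}$, with $\gamma_1$ as in~\eqref{hewgamma068jkagamma}. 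In particular $\|\mathcal{N}(0)\|_{\infty,\xi}\le C_0\,\epsilon^{\gamma_1}$.

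Next I would establish the contraction property on $\mathcal{B}$. For $\psi,\phi\in\mathcal{B}$ I would write
\begin{equation*}
E(\psi)-E(\phi)=\Big[(\bar{u}_\xi+\psi)^p-(\bar{u}_\xi+\phi)^p-p\,\bar{u}_\xi^{p-1}(\psi-\phi)\Big]+p\big(\bar{u}_\xi^{p-1}-w_\xi^{p-1}\big)(\psi-\phi),
\end{equation*}
bound the first bracket by standard convexity-type inequalities (distinguishing $1<p<2$, where one uses H\"older continuity of $t\mapsto t^{p-1}$, from $p\ge2$) exploiting the boundedness of $\bar{u}_\xi,\psi,\phi$, and bound $\bar{u}_\xi^{p-1}-w_\xi^{p-1}=-(w_\xi^{p-1}-\bar{u}_\xi^{p-1})$ by $Cw_\xi^{p-2}v_\xi$ if $p\ge2$ and by $Cv_\xi^{p-1}$ if $1<p<2$; then I would pass to the weighted norm via the elementary inequalities $\|\chi\psi\|_{\infty,\xi}\le\|\chi\|_{L^\infty(\mathbb{R}^n)}\|\psi\|_{\infty,\xi}$ and $\|\psi^k\|_{\infty,\xi}\le\|\psi\|_{\infty,\xi}^{k}$ for $k\ge1$ (consequences of $\rho_\xi\ge1$) together with the decay of $v_\xi$ from the previous step. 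This should give
\begin{equation*}
\|E(\psi)-E(\phi)\|_{\infty,\xi}\le C\Big(\epsilon^{\beta}+\|\psi\|_{\infty,\xi}^{\min\{1,p-1\}}+\|\phi\|_{\infty,\xi}^{\min\{1,p-1\}}\Big)\|\psi-\phi\|_{\infty,\xi}
\end{equation*}
for some $\beta>0$ depending only on $n,s,p,\mu$; on $\mathcal{B}$ the prefactor is $o(1)$ as $\epsilon\to0$, so for $\epsilon$ small $\mathcal{N}$ is a $\tfrac12$-contraction, and, taking $\phi=0\in\mathcal{B}$ and using the previous step, $\|\mathcal{N}(\psi)\|_{\infty,\xi}\le\|\mathcal{N}(0)\|_{\infty,\xi}+\tfrac12\|\psi\|_{\infty,\xi}\le(C_0+\tfrac12C_\star)\epsilon^{\gamma_1}\le C_\star\epsilon^{\gamma_1}$ once $C_\star\ge2C_0$. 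Hence $\mathcal{N}$ maps $\mathcal{B}$ into itself and the Contraction Mapping Theorem concludes, giving the unique $\psi\in\mathcal{B}$ with the stated bound.

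The hard part is to run these estimates in the weighted norm $\|\cdot\|_{\infty,\xi}$ uniformly in $\epsilon$ and $\xi$: far from $\xi$ the profiles $\bar{u}_\xi,w_\xi$ decay like $(1+|x-\xi|)^{-(n+2s)}$ while $\psi\in\mathcal{B}$ is only controlled by $(1+|x-\xi|)^{-\mu}$ with $\mu<n+2s$, so in the tail $\psi$ may dominate the background and the linearization is not a pointwise small perturbation of the nonlinearity; this is precisely why the weight exponent must satisfy~\eqref{fiuewfjke768578599}, and why the final rate comes out as the balanced exponent $\gamma_1=\min\{n+2s,\,p(n+2s)-\mu\}$, arising from the competition between the $\epsilon^{n+2s}$ smallness of $v_\xi$ and the loss $\epsilon^{-\mu}$ from the weight near infinity.
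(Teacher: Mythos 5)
Your proposal is correct and follows essentially the same route as the paper: same fixed-point map $\mathcal{I}_\xi[E(\cdot)]$, same ball of radius $C_\star\epsilon^{\gamma_1}$, the interpolation $v_\xi\le C\min\{(1+|x-\xi|)^{-(n+2s)},\epsilon^{n+2s}\}$ driving the rate, and the Contraction Mapping Theorem. The only difference is cosmetic — you bound $\|\mathcal{N}(0)\|_{\infty,\xi}$ and the Lipschitz constant separately (after adding and subtracting $p\bar u_\xi^{p-1}(\psi-\phi)$), whereas the paper's Lemma~\ref{lemma:  E(psi)} bounds $\|E(\psi)\|_{\infty,\xi}$ for general $\psi$ directly (writing $\bar u_\xi=w_\xi-v_\xi$), but the two organizations yield identical estimates and the identical $\gamma_1$.
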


We first show the following:

\begin{Lemma}\label{lemma:  E(psi)}
	Let $\psi\in\Psi$. Then, there exists a positive constant $C_0$ such that 
	\begin{equation*}
		\|E(\psi)\|_{\infty,\xi}\leqslant C_0\left(\|\psi\|^2_{\infty,\xi}+\|\psi\|^p_{\infty,\xi}+\epsilon^{\gamma_1}\right),
	\end{equation*}
where $\gamma_1$ is as in~\eqref{hewgamma068jkagamma}.
\end{Lemma}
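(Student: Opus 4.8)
The plan is to bound the weighted $L^\infty$ norm of
$$E(\psi)=(\bar u_\xi+\psi)^p-w_\xi^p-pw_\xi^{p-1}\psi$$
by splitting it as $E(\psi)=E_1+E_2$, where $E_1:=(\bar u_\xi+\psi)^p-(\bar u_\xi)^p-p(\bar u_\xi)^{p-1}\psi$ collects the purely nonlinear dependence on the perturbation $\psi$, and $E_2:=(\bar u_\xi)^p-w_\xi^p+p\big((\bar u_\xi)^{p-1}-w_\xi^{p-1}\big)\psi$ collects the error coming from replacing $w_\xi$ by the approximate solution $\bar u_\xi$. For $E_1$, I would use the elementary inequality $|(a+t)^p-a^p-pa^{p-1}t|\le C_p(|t|^2+|t|^p)$ valid for $a\ge0$, $a+t\ge0$ (splitting into the regimes $|t|\lesssim a$ and $|t|\gtrsim a$, or equivalently $1<p\le 2$ versus $p\ge2$). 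Multiplying by the weight $\rho_\xi=(1+|x-\xi|)^\mu$ and using $\rho_\xi|\psi|^2\le \rho_\xi^{-1}\|\psi\|_{\infty,\xi}^2\le \|\psi\|_{\infty,\xi}^2$ (since $\rho_\xi\ge1$), and similarly $\rho_\xi|\psi|^p\le \rho_\xi^{1-p}\|\psi\|_{\infty,\xi}^p\le\|\psi\|_{\infty,\xi}^p$, gives $\|E_1\|_{\infty,\xi}\le C(\|\psi\|_{\infty,\xi}^2+\|\psi\|_{\infty,\xi}^p)$.

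**Controlling the $\bar u_\xi$-versus-$w_\xi$ error.** For $E_2$, the key input is Lemma~\ref{lemma comparable for h} together with the decay of $h_\xi$: we have $0\le v_\xi:=w_\xi-\bar u_\xi\le \tilde c^{-1}h_\xi$, and $h_\xi(x)\le c_1(1+|x-\xi|)^{-(n+2s)}$ by Lemma~\ref{lemma hepsilon}, while $h_\xi(x)\le Cd^{-(n+2s)}$ with $d=\operatorname{dist}(\xi,\partial\Omega_\epsilon)\ge c/\epsilon$, so in fact $v_\xi(x)\le C\epsilon^{n+2s}$ uniformly and also $v_\xi(x)\le C(1+|x-\xi|)^{-(n+2s)}$. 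Writing $w_\xi^p-(\bar u_\xi)^p=pw_\xi^{p-1}v_\xi+O\big(w_\xi^{p-2}v_\xi^2\big)$ when $p\ge2$ (and $O(v_\xi^p)$ when $1<p\le2$), and using $w_\xi(x)\le C(1+|x-\xi|)^{-(n+2s)}$ from~\eqref{bvsvs}, one estimates $\rho_\xi\big|w_\xi^p-(\bar u_\xi)^p\big|\le C(1+|x-\xi|)^{\mu}(1+|x-\xi|)^{-(p-1)(n+2s)}v_\xi(x)$. Balancing the two available decay rates for $v_\xi$ — its uniform bound $\epsilon^{n+2s}$ (good near $\xi$) against its spatial decay $(1+|x-\xi|)^{-(n+2s)}$ (good far from $\xi$), and noting $d\ge c/\epsilon$ so that for $|x-\xi|\le d$ one has $(1+|x-\xi|)^{-(n+2s)}\gtrsim \epsilon^{n+2s}$ is false — one interpolates: $v_\xi(x)\le C\epsilon^{\sigma}(1+|x-\xi|)^{-(n+2s-\sigma/?)}$-type bounds, and the cleanest route is to note $\rho_\xi w_\xi^{p-1}v_\xi\le C(1+|x-\xi|)^{\mu-(p-1)(n+2s)}\min\{\epsilon^{n+2s},(1+|x-\xi|)^{-(n+2s)}\}$; the supremum of this is $\lesssim\epsilon^{\min\{n+2s,\,p(n+2s)-\mu\}}=\epsilon^{\gamma_1}$, by checking the cases $|x-\xi|\lessgtr \epsilon^{-1}$ and using $\mu<p(n+2s)-\tfrac n2-2s<p(n+2s)$ and $\mu<n+2s$ from~\eqref{fiuewfjke768578599}. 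The term $p\big((\bar u_\xi)^{p-1}-w_\xi^{p-1}\big)\psi$ is similar but carries an extra factor $\|\psi\|_{\infty,\xi}$ (hence absorbed into $\|\psi\|_{\infty,\xi}^2$, up to using that $\|\psi\|_{\infty,\xi}$ will be small), or one simply bounds $|\bar u_\xi^{p-1}-w_\xi^{p-1}|\le Cw_\xi^{p-1}$ so that this contributes to the $\|\psi\|_{\infty,\xi}$-linear part which, combined with the smallness of $w_\xi^{p-1}$ at infinity, still fits under $C_0\|\psi\|_{\infty,\xi}^p$ after relabeling, or more honestly under a term $o(1)\|\psi\|_{\infty,\xi}$ that the statement tolerates by absorbing into $\|\psi\|_{\infty,\xi}^2$ for $\|\psi\|$ small.

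**Main obstacle.** The routine part is the pointwise nonlinear inequality; the delicate part is the $\epsilon^{\gamma_1}$ term, i.e. showing that the weighted norm of $w_\xi^p-\bar u_\xi^p$ (essentially $\rho_\xi w_\xi^{p-1}v_\xi$, possibly with $v_\xi^2$ corrections) is controlled by exactly $\epsilon^{\gamma_1}$ with $\gamma_1=\min\{n+2s,\,p(n+2s)-\mu\}$. This requires carefully exploiting \emph{both} the uniform smallness $v_\xi\le C\epsilon^{n+2s}$ and the spatial decay $v_\xi\le Ch_\xi\le C(1+|x-\xi|)^{-(n+2s)}$, splitting $\R^n$ into $B_{1/\epsilon}(\xi)$ and its complement: on the ball the uniform bound on $v_\xi$ together with $\rho_\xi w_\xi^{p-1}\le C(1+|x-\xi|)^{\mu-(p-1)(n+2s)}$ integrated (or rather, taken as a sup) against $|x-\xi|\le 1/\epsilon$ yields $\epsilon^{n+2s}\cdot\epsilon^{-(\mu-(p-1)(n+2s))_+}$-type terms, while outside the ball the decay of both $w_\xi$ and $v_\xi$ gives $(1+|x-\xi|)^{\mu-p(n+2s)}\le\epsilon^{p(n+2s)-\mu}$; the exponent conditions~\eqref{fiuewfjke768578599} on $\mu$ are precisely what make these two contributions combine to $\epsilon^{\gamma_1}$. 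I would carry out this case analysis explicitly, treating $1<p\le2$ and $p\ge2$ separately exactly as in the proof of Theorem~\ref{th energy estimates}, since the structure of the remainder ($v_\xi^p$ versus $w_\xi^{p-2}v_\xi^2$) differs and one must verify that the worse of the two still gives an exponent $\ge\gamma_1$.
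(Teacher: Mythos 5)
Your decomposition $E(\psi)=E_1+E_2$ is a valid algebraic split, and the treatment of $E_1$ and of the $v_\xi$-balancing needed for the main part of $E_2$ (the piece $\bar u_\xi^p-w_\xi^p$, controlled via $\rho_\xi w_\xi^{p-1}v_\xi$) is sound and in the same spirit as the paper. But this split necessarily creates the cross term $p(\bar u_\xi^{p-1}-w_\xi^{p-1})\psi$, whose weighted norm is bounded by $\|\psi\|_{\infty,\xi}\cdot\sup_x|\bar u_\xi^{p-1}(x)-w_\xi^{p-1}(x)|$, and none of the three resolutions you offer for this term survives scrutiny. ``Absorbing $\|\psi\|_{\infty,\xi}$ into $\|\psi\|^2_{\infty,\xi}$ for $\|\psi\|$ small'' is the wrong direction: for small $\|\psi\|_{\infty,\xi}$ one has $\|\psi\|^2_{\infty,\xi}\leqslant\|\psi\|_{\infty,\xi}$, not conversely, so a linear term cannot be hidden inside a quadratic one. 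Bounding $|\bar u_\xi^{p-1}-w_\xi^{p-1}|\leqslant Cw_\xi^{p-1}$ produces a term $C\|\psi\|_{\infty,\xi}$ with a $\xi$-uniform constant, which is not $\leqslant C_0\|\psi\|^p_{\infty,\xi}$ (since $p>1$) nor $\leqslant C_0\|\psi\|^2_{\infty,\xi}$. And a hypothetical $o(1)\|\psi\|_{\infty,\xi}$ term is simply not in the claimed estimate, and would require revisiting the contraction argument downstream.

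The cross term can in fact be salvaged, but not the way you suggest: with $0\leqslant\bar u_\xi\leqslant w_\xi$ one has $|\bar u_\xi^{p-1}-w_\xi^{p-1}|\leqslant Cv_\xi^{\min\{1,p-1\}}\leqslant C\epsilon^{\min\{1,p-1\}(n+2s)}$, and one must then use Young's inequality with conjugate exponents matched to $p$ (namely $(p',p)$ when $1<p<2$, giving $\epsilon^{(p-1)(n+2s)}\|\psi\|_{\infty,\xi}\leqslant C\epsilon^{p(n+2s)}+C\|\psi\|^p_{\infty,\xi}$, and $p(n+2s)\geqslant p(n+2s)-\mu\geqslant\gamma_1$). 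The naive AM--GM with exponents $(2,2)$ would require $2(p-1)(n+2s)\geqslant\gamma_1=p(n+2s)-\mu$, i.e.\ $\mu\geqslant(2-p)(n+2s)$, which fails for $p$ close to $1$ given~\eqref{fiuewfjke768578599}; so your proposal is also off by a qualitative gap in the exponent bookkeeping for small $p$. The paper sidesteps the whole issue by centering the Taylor expansion at $w_\xi$ rather than at $\bar u_\xi$, writing $E(\psi)=\bigl(w_\xi+(\psi-v_\xi)\bigr)^p-w_\xi^p-pw_\xi^{p-1}(\psi-v_\xi)-pw_\xi^{p-1}v_\xi$: the Taylor remainder then involves only $|\psi-v_\xi|^2+|\psi-v_\xi|^p$ and there is no $\psi$-linear term mixed with $v_\xi$; the sole $v_\xi$-contribution to estimate is the explicit $pw_\xi^{p-1}v_\xi$, which you do handle correctly.
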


\begin{proof} We
recall that $ v_\xi(x)=w_\xi(x)-\bar{u}_\xi(x)>0$ in $\mathbb{R}^n$.
By combining Lemmata~\ref{lemma hepsilon} and ~\ref{lemma comparable for h}, 
and recalling also~\eqref{fvfdv} and~\eqref{fdgd},
one finds that
there exists a constant $C_1$ such that,
for any  $x\in\mathbb{R}^n$,  
\begin{equation}\label{nbfdv}
v_\xi(x)\leqslant C_1(1+|x-\xi|)^{-(n+2s)}\quad \text{and}\quad 	v_\xi(x)\leqslant C_1\epsilon^{n+2s}.
\end{equation}

Furthermore, 
	by the definition of $E(\psi)$, we see that
	\begin{equation*}
	E(\psi)=(w_\xi+\psi-v_\xi)^p-w_\xi^p-pw_\xi^{p-1}(\psi-v_\xi)-pw_\xi^{p-1}v_\xi.
	\end{equation*}
Referring to page 122 in \cite{MR2186962} and recalling~\eqref{bvsvs},  we deduce that, for every $x\in\mathbb{R}^n$,
\begin{equation*}
		|E(\psi)(x)|\leqslant C_2\left(|\psi(x)-v_\xi(x)|^p+|\psi(x)-v_\xi(x)|^2+p(1+|x-\xi|)^{-(p-1)(n+2s)}v_\xi(x)\right)
\end{equation*}
 where the positive constant $C_2$ depends on $p$ and the ground state $w$. 

{F}rom this and~\eqref{nbfdv}, using that $n+2s>\mu$, one finds that
\begin{equation*}
		\begin{split}
		\|E(\psi)\|_{\infty,\xi}&\leqslant C_0\left(\|\psi\|_{\infty,\xi}^p+\|\psi\|_{\infty,\xi}^2+(1+|x-\xi|)^{\mu-(p-1)(n+2s)}v_\xi(x)\right)\\
		&\leqslant C_0\left(\|\psi\|_{\infty,\xi}^p+\|\psi\|_{\infty,\xi}^2+\epsilon^{\gamma_1}\right)
		\end{split}
\end{equation*}
for some positive constant $C_0>0$, where~$\gamma_1=n+2s$ if~$\mu\leqslant(p-1)(n+2s)$, and~$\gamma_1=p(n+2s)-\mu$ if~$\mu>(p-1)(n+2s)$.
\end{proof}

\begin{proof}[Proof of Theorem~\ref{th:nonlinear problem }]
	Recalling the definition of $\mathcal{I}_\xi$ in~\eqref{dfdsvsd}, we will show by the Contraction Mapping Theorem that  there exists a unique $\psi$ satisfying~\eqref{dsf} such that 
	\begin{equation*}
		\psi=\mathcal{I}_\xi[E(\psi)].
	\end{equation*}
To this end, we set
\begin{equation*}
	\mathcal{M}_\xi[\psi]:=\mathcal{I}_\xi[E(\psi)].
\end{equation*}
Also, we pick a constant $C_\star>0$ and $\epsilon>0$ sufficiently small (to be specified later on, see formula~\eqref{svsfd} below), and we set
\begin{equation*}
	\mathcal{S}:=\left\{\psi\in Y_\infty\, |\,\, \|\psi\|_{\infty,\xi}<C_\star\epsilon^{\gamma_1}\right\}.
\end{equation*} 

We claim that 
\begin{equation}\label{vdfvdf}
	\mathcal{M}_\xi \text{ is a contraction from } \mathcal{S} \text{ into itself with the norm } \|\cdot\|_{\infty,\xi}.
\end{equation}
We point out that, this claim and the Contraction Mapping Theorem yield the existence of a unique solution $\psi\in \mathcal{S}$ to~\eqref{nonlinear projected problem}, which concludes the proof. 

Thus, it remains to show that~\eqref{vdfvdf} holds true. For this, we first prove that \begin{equation}\label{dvsvds}
\text{if~$\psi\in\mathcal{S}$ then~$\mathcal{M}_\xi[\psi]\in \mathcal{S}$}.
\end{equation}
Indeed, if $\psi\in\mathcal{S},$ from Theorem~\ref{th unique about g} and Lemma~\ref{lemma:  E(psi)} it follows that  
\begin{equation*}
\begin{split}
		&\|\mathcal{M}_\xi[\psi]\|_{\infty,\xi}=\|\mathcal{I}_\xi[E(\psi)]\|_{\infty,\xi}\leqslant C\|E(\psi)\|_{\infty,\xi}\\
		&\qquad\leqslant CC_0\left(C_\star^p\epsilon^{p\gamma_1}+C_\star^2\epsilon^{2\gamma_1}+\epsilon^{\gamma_1}\right)\\
		&\qquad=C_\star\epsilon^{\gamma_1}\left(\frac{CC_0}{C_\star}+C_\star^{p-1}C_0C\epsilon^{(p-1)\gamma_1}+C_\star C_0C\epsilon^{\gamma_1}\right).
\end{split}
\end{equation*}
We take $C_\star>2CC_0$, and 
\begin{equation}\label{epsilon1}
\epsilon<\epsilon_1:=	\begin{cases}
	 \left(2C_0C(C_\star^{p-1}+C_\star)\right)^{-1/\gamma_1}\quad &\text{ if } p\geqslant 2,\\
	 \left(2C_0C(C_\star^{p-1}+C_\star)\right)^{-1/(p-1)\gamma_1} &\text{ if } 1<p<2.
	\end{cases}
\end{equation}
With this choice, we obtain that~$\|\mathcal{M}_\xi[\psi]\|_{\infty,\xi}\le C_\star\epsilon^{\gamma_1}$, which 
entails~\eqref{dvsvds}.

Now, we take $\psi_1$, $\psi_2\in\mathcal{S}$. Recalling that $\bar{u}_\xi\leqslant w_\xi$ in $\mathbb{R}^n$,  and referring again to page 122 in \cite{MR2186962},  one has that 
\begin{equation*}
	\begin{split}
		|E(\psi_1)-E(\psi_2)|&=|(w_\xi+\psi_1-v_\xi)^p-(w_\xi+\psi_2-v_\xi)^p-pw_\xi^{p-1}(\psi_1-\psi_2)|\\
		&\leqslant C_1\left(|\psi_1-v_\xi|^{p-1}+|\psi_2-v_\xi|^{p-1}+|\psi_1-v_\xi|+|\psi_2-v_\xi|\right)|\psi_1-\psi_2|\\
		&\leqslant C_1\left(|\psi_1|^{p-1}+|\psi_2|^{p-1}+|\psi_1|+|\psi_2|+2|v_\xi|^{p-1}+2|v_\xi|\right)|\psi_1-\psi_2|\\
		&\leqslant C_1\left(|\psi_1|^{p-1}+|\psi_2|^{p-1}+|\psi_1|+|\psi_2|+\epsilon^{q(n+2s)}\right)|\psi_1-\psi_2|
	\end{split}
\end{equation*}
for some constant $C_1>0$, where $q=\min\left\{p-1,1\right\}$, thanks to~\eqref{nbfdv}.

{F}rom this and the fact that $\psi_1$, $\psi_2\in\mathcal{S}$,  it follows that
\begin{equation}\label{sdsd}
	\begin{split}&
	\|E(\psi_1)-E(\psi_2)\|_{\infty,\xi}\\&\leqslant C_1\left(\|\psi_1\|^{p-1}_{\infty,\xi}+\|\psi_2\|^{p-1}_{\infty,\xi}+\|\psi_1\|_{\infty,\xi}+\|\psi_2\|_{\infty,\xi}+\epsilon^{q(n+2s)}\right)\|\psi_1-\psi_2\|_{\infty,\xi}\\
	&\leqslant C_1\left(2C_\star^{p-1}\epsilon^{(p-1)\gamma_1}+2C_\star\epsilon^{\gamma_1}+\epsilon^{q(n+2s)}\right)\|\psi_1-\psi_2\|_{\infty,\xi}\\
	&\leqslant C_1\left(2\left(C_\star^{p-1}+C_\star\right)\epsilon^{(p-1)\gamma_1}+\epsilon^{q(n+2s)}\right)\|\psi_1-\psi_2\|_{\infty,\xi}.
	\end{split}
\end{equation}

Now, we set
\begin{equation*}
\epsilon_2:=	\begin{cases}
		\left(2C_1(C_\star^{p-1}+C_\star)+C_1\right)^{-1/q(n+2s)}\quad &\text{ if } p\geqslant 2,\\
		\left(2C_1(C_\star^{p-1}+C_\star)+C_1\right)^{-1/(p-1)\gamma_1} &\text{ if } 1<p<2.
	\end{cases}
\end{equation*}
{F}rom this and~\eqref{epsilon1}, we conclude that if  
\begin{equation}\label{svsfd}
	C_\star>2CC_0\quad \text{and}\quad \epsilon<\min\left\{\epsilon_1,\epsilon_2\right\},
\end{equation}
 then, by~\eqref{sdsd}, 
 \begin{equation*}
 	\|E(\psi_1)-E(\psi_2)\|_{\infty,\xi}<\|\psi_1-\psi_2\|_{\infty,\xi}.
 \end{equation*}
This establishes the desired result in~\eqref{vdfvdf}, thus completing the proof of Theorem~\ref{th:nonlinear problem }.
\end{proof}

Now, in light of Theorem~\ref{th:nonlinear problem },
for any~$\xi\in\Omega_\epsilon$,
we denote that
\begin{equation}\label{dsvsdfx}
\text{$\Psi(\xi)$ is the unique solution to~\eqref{nonlinear projected problem}.}
\end{equation}

\subsection{Reduction to finite dimensions }
This subsection is devoted to finding the solutions to
problem~\eqref{vsdvdscsc} of the form~\eqref{vdsfdsfew}, namely, 
\begin{equation*}
	u=u_\xi=\bar{u}_\xi+\Psi(\xi)
\end{equation*}
where the notation in~\eqref{dsvsdfx} is in use.
 
 We notice that, by combining~\eqref{sdvbsd} with~\eqref{nonlinear projected problem}, the above function $u_\xi$ solves the equation
 \begin{equation}\label{dsfdsdv}
 	-\Delta u_\xi+(-\Delta)^s u_\xi+u_\xi=u_\xi^{p}+\sum_{i=1}^{n}c_iZ_i\qquad \text{in } \Omega_\epsilon.
 \end{equation}
We next aim to find a suitable point $\xi\in\Omega_\epsilon$ such that all the coefficients $c_i$ in~\eqref{dsfdsdv} vanish, which yields a solution to~\eqref{vsdvdscsc} associated with such a point $\xi$.

To this end, we define the functional $J_\epsilon:\Omega_\epsilon\rightarrow \mathbb{R}$ as
\begin{equation}\label{vsdf}
	J_\epsilon(\xi):=I_\epsilon(\bar{u}_\xi+\Psi(\xi))=I_\epsilon(u_\xi)\qquad \text{ for any } \xi\in\Omega_\epsilon,
\end{equation}
where $I_\epsilon$ is given in~\eqref{functional}.  

We will show that the problem of finding a solution to~\eqref{vsdvdscsc} reduces to the one of finding a critical point of the function defined in~\eqref{vsdf}. This is described in the following result.

\begin{Theorem}\label{th: critical thoery}
If $\epsilon>0$ sufficiently small, the coefficients $c_i$ in~\eqref{dsfdsdv} are equal to zero if and only if $\xi$ satisfies 
\begin{equation*}
	\frac{\partial J_\epsilon}{\partial\xi}(\xi)=0.
\end{equation*}
\end{Theorem}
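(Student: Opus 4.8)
The plan is to show that the projected equation~\eqref{dsfdsdv} together with the orthogonality constraint forces a precise identification between the Lagrange multipliers $c_i$ and the partial derivatives $\partial_{\xi_j} J_\epsilon(\xi)$, via a nondegenerate linear system whose matrix is close to $\alpha\,\mathrm{Id}$ for $\epsilon$ small. Concretely, I would differentiate the identity $J_\epsilon(\xi)=I_\epsilon(u_\xi)$ in $\xi$ and use that $u_\xi=\bar u_\xi+\Psi(\xi)$ solves~\eqref{dsfdsdv}.

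First I would compute $\partial_{\xi_j}J_\epsilon(\xi)$ by the chain rule. Since $I_\epsilon'(u_\xi)[\varphi]=\langle u_\xi,\varphi\rangle_{H^1_0}-\int_{\Omega_\epsilon}u_\xi^p\varphi$ and, by~\eqref{dsfdsdv}, $I_\epsilon'(u_\xi)[\varphi]=\sum_{i=1}^n c_i\int_{\Omega_\epsilon}Z_i\varphi$ for every admissible test function $\varphi$, applying this with $\varphi=\partial_{\xi_j}u_\xi$ gives
\begin{equation*}
\frac{\partial J_\epsilon}{\partial\xi_j}(\xi)=\sum_{i=1}^n c_i\int_{\Omega_\epsilon}Z_i\,\frac{\partial u_\xi}{\partial\xi_j}\,dx.
\end{equation*}
(One has to check $\partial_{\xi_j}u_\xi\in H^1(\mathbb{R}^n)$ vanishing outside $\Omega_\epsilon$; this is where the $C^{1,\alpha}$ regularity and decay estimates of Sections~\ref{sec:Decay of the ground state}--\ref{sec:Some regularity estimates}, together with the differentiability of $\xi\mapsto\Psi(\xi)$ from the implicit-function/contraction argument, enter.) Now write $\partial_{\xi_j}u_\xi=\partial_{\xi_j}\bar u_\xi+\partial_{\xi_j}\Psi(\xi)$. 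The first piece, differentiating~\eqref{sdvbsd}, behaves like $-Z_j$ plus a remainder controlled by the barrier $h_\xi$ (so $O(\epsilon^{n+2s})$ in the weighted norm), exactly as in the estimates for $v_\xi$; the second piece has size $O(\epsilon^{\gamma_1})$ with $\gamma_1>\tfrac n2+2s$ by Theorem~\ref{th:nonlinear problem }. Using Lemma~\ref{lemma Zi}(iii), namely $\int_{\Omega_\epsilon}Z_iZ_j\,dx=\alpha\delta_{ij}+O(\epsilon^{n+4s})$, I obtain
\begin{equation*}
\frac{\partial J_\epsilon}{\partial\xi_j}(\xi)=-\sum_{i=1}^n c_i\big(\alpha\delta_{ij}+o(1)\big)=-\alpha\,c_j+\sum_{i=1}^n c_i\,o_\epsilon(1),
\end{equation*}
where the $o_\epsilon(1)$ matrix has operator norm tending to $0$ as $\epsilon\to0$, uniformly in the admissible $\xi$.

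From here the conclusion is linear algebra: the map $(c_1,\dots,c_n)\mapsto(\partial_{\xi_1}J_\epsilon,\dots,\partial_{\xi_n}J_\epsilon)$ is given by a matrix $-\alpha\,\mathrm{Id}+R_\epsilon$ with $\|R_\epsilon\|\to0$, hence invertible for $\epsilon$ small since $\alpha>0$. Therefore $\nabla_\xi J_\epsilon(\xi)=0$ if and only if $c_i=0$ for all $i$, which is exactly the assertion of Theorem~\ref{th: critical thoery}.

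**The main obstacle** I expect is the justification that $\xi\mapsto\Psi(\xi)$ (equivalently $\xi\mapsto u_\xi$) is $C^1$ with derivatives controlled in the weighted norm $\|\cdot\|_{\infty,\xi}$, so that differentiating under the integral sign and testing~\eqref{dsfdsdv} with $\partial_{\xi_j}u_\xi$ is legitimate; this requires revisiting the contraction scheme of Theorem~\ref{th:nonlinear problem } to differentiate the fixed-point identity $\psi=\mathcal{I}_\xi[E(\psi)]$ in $\xi$, using that $\mathcal{I}_\xi$ and $E$ depend smoothly on $\xi$ through $w_\xi$ and $\bar u_\xi$, and invoking the uniform invertibility from Theorem~\ref{th unique about g}. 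A secondary technical point is keeping all remainder estimates uniform in $\xi$ over the relevant compact range of distances to $\partial\Omega_\epsilon$, but this is already the spirit of all the preceding lemmata and should go through by the same weighted-norm bookkeeping.
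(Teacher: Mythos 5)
Your proposal follows essentially the same route as the paper: compute $\partial_{\xi_j}J_\epsilon$ by the chain rule, insert the projected equation~\eqref{dsfdsdv} to obtain $\partial_{\xi_j}J_\epsilon=\sum_i c_i\int_{\Omega_\epsilon} Z_i\,\partial_{\xi_j}u_\xi\,dx$, show $\partial_{\xi_j}u_\xi=-Z_j+o_\epsilon(1)$ so that the matrix $M_{ji}=\int_{\Omega_\epsilon}Z_i\,\partial_{\xi_j}u_\xi\,dx$ is a small perturbation of $-\alpha\,\mathrm{Id}$, and conclude by invertibility. One imprecision worth flagging: the bound on $\partial_{\xi_j}\Psi(\xi)$ does not come from Theorem~\ref{th:nonlinear problem } (which controls $\Psi(\xi)$ itself, not its $\xi$-derivative) and is not of size $O(\epsilon^{\gamma_1})$; it is Lemma~\ref{lemma hvisdvs} that yields $\|\partial_\xi\Psi(\xi)\|_{\infty,\xi}\leqslant C\epsilon^{\gamma}$ with $\gamma=\min\{n+2s-\mu(2-p)_+,\,q\gamma_1\}$, which can be strictly smaller than $\gamma_1$ when $p<2$ — but since all that is needed is $\gamma>0$, the argument is unaffected.
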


Before proving Theorem~\ref{th: critical thoery}, we need some preliminary lemmata.

\begin{Lemma}\label{lemma vsodv}
		Let $v_\xi$ be as in~\eqref{dbsfbs}. Then, there exists~$C>0$ such that 
		\begin{equation*}
			\left|\frac{\partial v_\xi}{\partial \xi}\right|\leqslant C\epsilon^{n+2s}.
		\end{equation*}
\end{Lemma}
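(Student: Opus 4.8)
The plan is to differentiate the exterior-value problem~\eqref{vdsvd} with respect to the center $\xi$ and then to invoke the Maximum Principle for the operator $-\Delta+(-\Delta)^s+1$. The key point is that $\Omega_\epsilon$ does not depend on $\xi$, so the only $\xi$-dependence of~\eqref{vdsvd} enters through the Dirichlet datum $w_\xi$ prescribed on $\mathbb{R}^n\setminus\Omega_\epsilon$; hence, for each $j\in\{1,\dots,n\}$, the function $\phi_j:=\partial_{\xi_j}v_\xi$ solves
\[
-\Delta\phi_j+(-\Delta)^s\phi_j+\phi_j=0\ \text{ in }\Omega_\epsilon,\qquad \phi_j=\partial_{\xi_j}w_\xi=-Z_j\ \text{ in }\mathbb{R}^n\setminus\Omega_\epsilon,
\]
where $Z_j=\partial w_\xi/\partial x_j$ as in~\eqref{dvsdvsd} (the identity $\partial_{\xi_j}w_\xi=-Z_j$ being immediate from $w_\xi(x)=w(x-\xi)$). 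By the Maximum Principle (see e.g.~\cite{SVWZ23}), this yields
\[
\|\phi_j\|_{L^\infty(\mathbb{R}^n)}\leqslant \|Z_j\|_{L^\infty(\mathbb{R}^n\setminus\Omega_\epsilon)}.
\]

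Next I would estimate the right-hand side using the decay of $\nabla w$. For $x\in\mathbb{R}^n\setminus\Omega_\epsilon$ we have, by~\eqref{fdgd}, that $|x-\xi|\geqslant d:=\operatorname{dist}(\xi,\partial\Omega_\epsilon)\geqslant c/\epsilon\geqslant1$ for $\epsilon$ small; hence, by Lemma~\ref{lemma decay of w1} (or, for a sharper exponent, Lemma~\ref{lemma decay of w}),
\[
|Z_j(x)|\leqslant \frac{C}{|x-\xi|^{n+2s}}\leqslant C\Big(\frac{\epsilon}{c}\Big)^{n+2s}\leqslant C\epsilon^{n+2s}\qquad\text{for all }x\in\mathbb{R}^n\setminus\Omega_\epsilon.
\]
Combining the two displays gives $\|\partial_{\xi_j}v_\xi\|_{L^\infty(\mathbb{R}^n)}\leqslant C\epsilon^{n+2s}$ for every $j$, and summing over $j$ proves the assertion.

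The only point that needs care is the legitimacy of the $\xi$-differentiation above, which I would establish by a standard difference-quotient argument. For $|h|\leqslant1$, the function $v_{\xi+he_j}-v_\xi$ solves the homogeneous equation in $\Omega_\epsilon$ and equals $w_{\xi+he_j}-w_\xi$ in $\mathbb{R}^n\setminus\Omega_\epsilon$, so the Maximum Principle gives $\|v_{\xi+he_j}-v_\xi\|_{L^\infty(\mathbb{R}^n)}\leqslant|h|\,\|\nabla w\|_{L^\infty(\mathbb{R}^n)}$; more precisely, letting $\phi_j$ denote the unique bounded solution of the limit problem of the first paragraph, the quotient $h^{-1}(v_{\xi+he_j}-v_\xi)-\phi_j$ solves the homogeneous equation in $\Omega_\epsilon$ with exterior datum $h^{-1}(w_{\xi+he_j}-w_\xi)+Z_j$, whose $L^\infty(\mathbb{R}^n)$-norm is at most $|h|\,\|\nabla^2w\|_{L^\infty(\mathbb{R}^n)}\to0$ as $h\to0$ (recall $w\in C^2(\mathbb{R}^n)$); the Maximum Principle then forces $h^{-1}(v_{\xi+he_j}-v_\xi)\to\phi_j$ uniformly, so $v_\xi$ is differentiable in $\xi_j$ with $\partial_{\xi_j}v_\xi=\phi_j$. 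Everything else is a routine use of the decay estimates of Section~\ref{sec:Decay of the ground state}. As an alternative to the bound $\|\phi_j\|_{L^\infty}\leqslant\|Z_j\|_{L^\infty(\mathbb{R}^n\setminus\Omega_\epsilon)}$, one could compare $\phi_j$ against $h_\xi$ from Lemma~\ref{lemma hepsilon} to obtain the sharper pointwise estimate $|\partial_{\xi_j}v_\xi(x)|\leqslant C(1+|x-\xi|)^{-(n+2s)}$, which also implies the claim.
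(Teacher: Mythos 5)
Your proposal is correct and follows essentially the same route as the paper: differentiate the exterior-value problem~\eqref{vdsvd} in $\xi$, note that $\partial_\xi v_\xi$ solves the homogeneous equation in $\Omega_\epsilon$ with exterior datum $\partial_\xi w_\xi$, bound that datum by $C\epsilon^{n+2s}$ via the decay of $\nabla w_\xi$ and~\eqref{fdgd}, and conclude by the Maximum Principle. The paper invokes the sharper decay of Lemma~\ref{lemma decay of w} and omits the difference-quotient justification of differentiability, which you spell out; both differences are cosmetic.
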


	\begin{proof}
{F}rom~\eqref{vdsvd}, we have that $v_\xi=w_\xi$ outside $\Omega_\epsilon$ and  
\begin{equation*}
		-\Delta v_\xi +(-\Delta)^s v_\xi +v_\xi=0 \qquad  \text{in } \Omega_\epsilon.
\end{equation*}
Therefore, the derivative of $v_\xi$ with respect to $\xi$ satisfies
\begin{equation*}
	-\Delta \frac{\partial v_\xi}{\partial \xi} +(-\Delta)^s \frac{\partial v_\xi}{\partial \xi} +\frac{\partial v_\xi}{\partial \xi}=0 \qquad  \text{in } \Omega_\epsilon\end{equation*}
and $$ \frac{\partial v_\xi}{\partial \xi}=\frac{\partial w_\xi}{\partial \xi} \qquad \text{in } \mathbb{R}^n\setminus \Omega_\epsilon.$$
By Lemma~\ref{lemma decay of w} and~\eqref{fdgd}, we have that, in~$\R^n\setminus\Omega_\epsilon$,
$$
\left|\frac{\partial v_\xi}{\partial \xi}\right|\le C\epsilon^{n+2s} .$$
{F}rom this and the Maximum Principle, we obtain the
desired estimate.
	\end{proof}

\begin{Lemma}\label{equvilant norm}
	Let $\xi\in\mathbb{R}^n$ and $|t|<1$.   Let $\xi_j^t:=\xi+te_j$ for  $j\in\left\{1,\cdots,n\right\}$. 
	
	Then, 
	\begin{equation*}
		2^{-\mu}\|g\|_{\infty,\xi}\leqslant 	\|g\|_{\infty,\xi_j^t}\leqslant 	2^\mu\|g\|_{\infty,\xi}
	,\end{equation*}
	where $\mu $ is as in~\eqref{fiuewfjke768578599}. 
\end{Lemma}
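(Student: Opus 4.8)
The plan is to exploit the fact that the two weights $\rho_\xi(x)=(1+|x-\xi|)^\mu$ and $\rho_{\xi_j^t}(x)=(1+|x-\xi_j^t|)^\mu$ are comparable, uniformly in $x$, as soon as $|t|=|\xi-\xi_j^t|<1$. First I would record the elementary inequality
\begin{equation*}
\tfrac12\,(1+|x-\xi|)\le 1+|x-\xi_j^t|\le 2\,(1+|x-\xi|)\qquad\text{for all }x\in\mathbb{R}^n,
\end{equation*}
which follows from the triangle inequality $\big||x-\xi_j^t|-|x-\xi|\big|\le |t|<1$ together with the crude bounds $1+|x-\xi|\ge 1$ (so subtracting $1$ keeps the left side at least half of $1+|x-\xi|$) and $1+|x-\xi|+1\le 2(1+|x-\xi|)$. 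Raising to the power $\mu>0$ (recall $\mu$ is positive by~\eqref{fiuewfjke768578599}) gives
\begin{equation*}
2^{-\mu}\,\rho_\xi(x)\le \rho_{\xi_j^t}(x)\le 2^{\mu}\,\rho_\xi(x)\qquad\text{for all }x\in\mathbb{R}^n.
\end{equation*}

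Next I would simply multiply this pointwise inequality by $|g(x)|$, take the supremum over $x\in\mathbb{R}^n$, and recall the definition~\eqref{vdsvdsv} of $\|\cdot\|_{\infty,\xi}$, namely $\|g\|_{\infty,\xi}=\|\rho_\xi g\|_{L^\infty(\mathbb{R}^n)}$. This yields at once
\begin{equation*}
2^{-\mu}\|g\|_{\infty,\xi}\le \|g\|_{\infty,\xi_j^t}\le 2^{\mu}\|g\|_{\infty,\xi},
\end{equation*}
which is the claimed estimate. Since $e_j$ is a unit vector and $|t|<1$, the displacement $|\xi-\xi_j^t|=|t|<1$ is indeed all that is used, so the constants $2^{\pm\mu}$ are uniform in $\xi$, $t$ and $j$.

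There is no real obstacle here: the statement is a soft, purely algebraic comparison of weighted sup-norms under a bounded translation of the weight's center, and the only quantitative input is the positivity of $\mu$. The one point to be slightly careful about is the lower bound $1+|x-\xi_j^t|\ge\frac12(1+|x-\xi|)$, which uses $1+|x-\xi|\ge 1$ in an essential way (it would fail for the unshifted norm $|x-\xi|$), but this is automatic from the ``$1+$'' in the definition of $\rho_\xi$.
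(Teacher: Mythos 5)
Your proposal is correct and takes essentially the same approach as the paper: both establish the pointwise two-sided comparability $2^{-1}(1+|x-\xi|)\le 1+|x-\xi_j^t|\le 2(1+|x-\xi|)$ via the triangle inequality with $|t|<1$ and the fact that both quantities are at least $1$, raise to the power $\mu>0$, and take the supremum against $|g|$. (The only cosmetic difference is that the paper justifies the lower bound through the clean chain $1+|x-\xi|\le 2+|x-\xi_j^t|\le 2\,(1+|x-\xi_j^t|)$, which is tighter than your parenthetical about ``subtracting $1$''.)
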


\begin{proof}
	We observe that,  for every $x$, $\xi\in\mathbb{R}^n$ and $|t|<1$,
	\begin{equation*}
		(1+|x-\xi-te_j|)^\mu\leqslant (2+|x-\xi|)^\mu\leqslant 2^\mu(1+|x-\xi|)^\mu
	\end{equation*}
	and 
	\begin{equation*}
		(1+|x-\xi-te_j|)^\mu\geqslant 2^{-\mu}(2+|x-\xi-te_j|)^\mu\geqslant 2^{-\mu}(1+|x-\xi|)^\mu.
	\end{equation*}
	{F}rom the above two formulas, it follows that 
	\begin{equation*}
		2^{-\mu}	\sup\limits_{x\in\mathbb{R}^n}(1+|x-\xi|)^\mu|g(x)|\leqslant 	\sup\limits_{x\in\mathbb{R}^n}(1+|x-\xi-te_j|)^\mu|g(x)|\leqslant 2^\mu	\sup\limits_{x\in\mathbb{R}^n}(1+|x-\xi|)^\mu|g(x)|,
	\end{equation*}
which yields the desired result. 
\end{proof}

\begin{Lemma}\label{vfdsvds}
	Let $\mathcal{I}_\xi $ be defined in~\eqref{dfdsvsd}. Then, the map $\xi\mapsto\mathcal{I}_\xi$ on $\Omega_\epsilon$ is continuously differentiable. 
	
	Moreover, there exists a positive constant $C$ such that 
	\begin{equation}\label{dsvsdcx}
		\left\lVert\frac{\partial \mathcal{I}_\xi[g]}{\partial\xi}\right\rVert_{\infty,\xi}\leqslant C\left(\|g\|_{\infty,\xi}+\left\lVert\frac{\partial g}{\partial\xi}\right\rVert_{\infty,\xi}\right).
	\end{equation}
\end{Lemma}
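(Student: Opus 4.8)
The plan is to differentiate problem~\eqref{sdfsde} with respect to $\xi$ by difference quotients, reducing everything to the a priori estimate~\eqref{sdfsdv} of Theorem~\ref{th unique about g}. By linearity of $\mathcal{I}_\xi$ and the chain rule (using~\eqref{sdfsdv} once more), it suffices to treat a $\xi$-independent $g$, to prove that $\xi\mapsto\mathcal{I}_\xi$ is continuously differentiable into the bounded operators on $Y_\infty$, and to establish $\|(\partial_{\xi_j}\mathcal{I}_\xi)[g]\|_{\infty,\xi}\le C\|g\|_{\infty,\xi}$ for each $j$; the term $\|\partial g/\partial\xi\|_{\infty,\xi}$ in~\eqref{dsvsdcx} then simply accounts for a $\xi$-dependent datum, via the term $t^{-1}(g_{\xi^t}-g)$ below and Lemma~\ref{equvilant norm}.

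Write $\mathcal{L}_\xi:=-\Delta+(-\Delta)^s+1-pw_\xi^{p-1}$ and $Z_i^\zeta:=\partial_{x_i}w_\zeta$, fix $j$, and set $\xi^t:=\xi+te_j$, $\psi:=\mathcal{I}_\xi[g]$, $\psi^t:=\mathcal{I}_{\xi^t}[g]$, $\eta_t:=t^{-1}(\psi^t-\psi)$. Subtracting the two instances of~\eqref{sdfsde} (and using $\mathcal{L}_\xi\psi^t=\mathcal{L}_{\xi^t}\psi^t+p(w_{\xi^t}^{p-1}-w_\xi^{p-1})\psi^t$) shows that $\eta_t=0$ outside $\Omega_\epsilon$ and, in $\Omega_\epsilon$,
\[
\mathcal{L}_\xi\eta_t=p\,\frac{w_{\xi^t}^{p-1}-w_\xi^{p-1}}{t}\,\psi^t+\sum_{i=1}^n c_i^t\,\frac{Z_i^{\xi^t}-Z_i^\xi}{t}+\sum_{i=1}^n\frac{c_i^t-c_i}{t}\,Z_i^\xi,
\]
where $c_i^t$, $c_i$ are the coefficients attached to $\mathcal{I}_{\xi^t}[g]$, $\mathcal{I}_\xi[g]$. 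Because $\Psi$ involves the $\xi$-dependent $Z_i^\xi$, the increment $\eta_t$ is not orthogonal to them; instead $\int_{\Omega_\epsilon}\eta_t Z_i^\xi\,dx=-\int_{\Omega_\epsilon}\psi^t\,t^{-1}(Z_i^{\xi^t}-Z_i^\xi)\,dx$. Writing the difference quotients as averages of $\partial_\xi$ and using $w_\xi^{-1}|\partial_i w_\xi|\le c$ from~\eqref{dsvdv}, the decay~\eqref{bvsvs} and Lemma~\ref{lemma gradient decay}, one gets, uniformly for $|t|<1$,
\[
\Big|\tfrac{w_{\xi^t}^{p-1}-w_\xi^{p-1}}{t}\Big|\le\frac{C}{(1+|x-\xi|)^{(p-1)(n+2s)}},\qquad \Big|\tfrac{Z_i^{\xi^t}-Z_i^\xi}{t}\Big|\le\frac{C}{(1+|x-\xi|)^{n+2s}}.
\]
Since $\mu>n/2$, the last bound makes $\int_{\Omega_\epsilon}\eta_t Z_i^\xi\,dx=O(\|\psi^t\|_{\infty,\xi})$. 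With $\tilde{Z}_i,\tilde\alpha$ as in Lemma~\ref{lemma Zi}(iv) (taking $\tau_0\in C_0^\infty$, as in the proof of Lemma~\ref{lemma auxiliary result}) and $\lambda_i^t:=\tilde\alpha^{-1}\int_{\Omega_\epsilon}\eta_t Z_i^\xi\,dx$, the corrected function $\tilde\eta_t:=\eta_t-\sum_i\lambda_i^t\tilde{Z}_i$ lies in $\Psi$ and solves $\mathcal{L}_\xi\tilde\eta_t=h_t+\sum_i t^{-1}(c_i^t-c_i)Z_i^\xi$, where $h_t$ is the sum of the first two terms of the display above minus $\sum_i\lambda_i^t\mathcal{L}_\xi\tilde{Z}_i$; by the uniqueness in Theorem~\ref{th unique about g}, $\tilde\eta_t=\mathcal{I}_\xi[h_t]$, hence $\|\tilde\eta_t\|_{\infty,\xi}\le C\|h_t\|_{\infty,\xi}$ by~\eqref{sdfsdv}.

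It remains to bound $h_t$ and to pass to the limit. From~\eqref{sdfsdv} and the translation-invariance of the weighted norm (Lemma~\ref{equvilant norm}), $\|\psi^t\|_{\infty,\xi}\le C\|g\|_{\infty,\xi}$; from~\eqref{ndfgfd}, \eqref{sddsf} and~\eqref{dgerf}, $\sum_i|c_i^t|\le C\|g\|_{\infty,\xi}$; and $|\lambda_i^t|\le C\|\psi^t\|_{\infty,\xi}\le C\|g\|_{\infty,\xi}$. Combining these with the two decay estimates above and the fact that $\mu<n+2s$, and noting that $\mathcal{L}_\xi\tilde{Z}_i$ is bounded and decays like $(1+|x-\xi|)^{-(n+2s)}$ (the fractional Laplacian of a compactly supported smooth function), so that $\|\mathcal{L}_\xi\tilde{Z}_i\|_{\infty,\xi}\le C$, one obtains $\|h_t\|_{\infty,\xi}\le C\|g\|_{\infty,\xi}$, hence $\|\eta_t\|_{\infty,\xi}\le C\|g\|_{\infty,\xi}$ uniformly in small $t$. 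For the convergence one first checks, by running the same scheme without dividing by $t$, that $\xi\mapsto\mathcal{I}_\xi$ and $\xi\mapsto c_i(\xi)$ are continuous — the increments being $O(|\xi-\xi'|)$, the limiting equation having a unique solution by Theorem~\ref{th unique about g}, and convergence in $\|\cdot\|_{\infty,\xi}$ (not merely locally uniform) being obtained from the uniform decay together with a barrier/tail argument as in the proof of Theorem~\ref{th unique about g}. Granting this, $\psi^t\to\psi$, $c_i^t\to c_i$, $\lambda_i^t\to\tilde\alpha^{-1}\int_{\Omega_\epsilon}\psi\,\partial^2_{ij}w_\xi\,dx$, and the difference quotients of $w^{p-1}$ and of $Z_i$ converge in $\|\cdot\|_{\infty,\xi}$ to the corresponding $\xi$-derivatives (locally uniform convergence plus uniformly decaying tails, using $\mu<n+2s$); hence $h_t\to h_0$ in $\|\cdot\|_{\infty,\xi}$. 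Applying~\eqref{sdfsdv} to $\tilde\eta_t-\tilde\eta_{t'}=\mathcal{I}_\xi[h_t-h_{t'}]$ shows that $\{\tilde\eta_t\}$ is Cauchy; therefore $\eta_t$ converges in $\|\cdot\|_{\infty,\xi}$, $\partial_{\xi_j}\mathcal{I}_\xi[g]$ exists, the uniform bound passes to the limit, and a final application of the same argument yields continuity of $\xi\mapsto\partial_{\xi_j}\mathcal{I}_\xi$.

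The principal obstacle is that not only the operator $\mathcal{L}_\xi$ but also the constraint space $\Psi=\Psi(\xi)$ moves with $\xi$, so the natural increment $\eta_t$ fails the orthogonality conditions; one must quantify this defect through the decay of $\nabla Z_i$ (Lemma~\ref{lemma gradient decay}), re-project onto $\Psi$ via the dual system $\{\tilde{Z}_i\}$ of Lemma~\ref{lemma Zi}, and keep every correction controlled in the weighted norm $\|\cdot\|_{\infty,\xi}$ — which forces the exponent restriction $n/2<\mu<n+2s$ of~\eqref{fiuewfjke768578599} to be used repeatedly. A secondary, more routine point is upgrading the locally uniform convergence of the various difference quotients to convergence in $\|\cdot\|_{\infty,\xi}$, which again rests on the uniform decay/barrier estimates already developed for Theorem~\ref{th unique about g}.
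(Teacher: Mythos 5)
Your proof is correct and follows essentially the same strategy as the paper: difference quotients $\eta_t$, recognition that $\eta_t$ fails the orthogonality constraints, re-projection onto the constraint space via the dual system $\{\tilde Z_i\}$ of Lemma~\ref{lemma Zi}, reduction to the a priori estimate~\eqref{sdfsdv}, and control of all error terms through the decay estimates of Section~\ref{sec:Decay of the ground state} together with the norm-equivalence of Lemma~\ref{equvilant norm}. The only differences are organizational: you project onto $\Psi(\xi)$ and keep the operator $\mathcal L_\xi$ fixed (whereas the paper projects onto $\Psi(\xi_j^t)$ and uses $\mathcal L_{\xi_j^t}$, converting norms afterwards), you decouple the $\xi$-dependence of $g$ via the chain rule rather than carrying $g_{\xi_j^t}$ through the computation, and you obtain existence of the derivative by showing the difference quotients form a Cauchy family rather than invoking the Implicit Function Theorem as the paper does.
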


\begin{proof}
	First we assume that the map $\xi\mapsto\mathcal{I}_\xi$ on $\Omega_\epsilon$ is continuously differentiable and we prove~\eqref{dsvsdcx}. 
	
	Given $\xi\in\Omega_\epsilon$ and $|t|<1$ with $t\neq 0$, we let $\xi_j:=\xi+te_j$, $\psi:=\mathcal{I}_\xi[g]$ and $\psi_{j}^t:=\mathcal{I}_{\xi_j^t}[g_{\xi_j^t}]$.
	 
	 Recalling the definition of $\mathcal{I}_\xi$ in~\eqref{dfdsvsd}, we know that $\psi$ and $\psi_j^t$ solve the following equations
	\begin{equation*}
			-\Delta \psi+(-\Delta)^s\psi+\psi-pw_{\xi}^{p-1} \psi=g+\sum_{i=1}^{n}c_i Z_{i}
	\end{equation*}
	and 
	\begin{equation*}
		-\Delta \psi_j^t+(-\Delta)^s\psi_j^t+\psi_j^t-pw_{\xi_j^t}^{p-1} \psi_j^t=g_{\xi_j^t}+\sum_{i=1}^{n}c_i^{\xi_j^t} Z_{i}^{\xi_j^t}
	\end{equation*}
	where $Z_i=\partial w_\xi/\partial x_i$ and $Z_{i}^{\xi_j^t}=\partial w_{\xi_j^t}/\partial x_i $. 
	
	We define
		\begin{equation*}
		\varphi_j^t:=\frac{\psi_j^t-\psi}{t}\qquad \text{ for any } j=1,\cdots,n
	\end{equation*}
and we observe that $\varphi_j^t$ solves 
\begin{equation}\label{dsfdsfs}
	\begin{split}
		&-\Delta \varphi_j^t+(-\Delta)^s\varphi_j^t+\varphi_j^t-pw_{\xi_j^t}^{p-1} \varphi_j^t\\&\qquad =pD_j^k (w_\xi^{p-1})\psi+D_j^k g+\sum_{i=1}^{n}D_j^k (c_i)Z_{i}^{\xi_j^t}+ \sum_{i=1}^{n}c_iD_j^k(Z_i) \qquad
		\text{ in }\Omega_\epsilon,
	\end{split}
\end{equation}
where $$D_j^k f:=\frac{f(\xi_j^t)-f(\xi)}{t}.$$
Also, $\varphi_j^t\in H^1(\mathbb{R}^n)$ and $\varphi_j^t=0$ outside $\Omega_\epsilon$.

Now, for any $j\in\left\{1,\cdots,n\right\}$, we define 
\begin{equation}\label{dsvsfs}
	\lambda_i:=\tilde{\alpha}^{-1}\int_{\mathbb{R}^n}\varphi_j^tZ_i^{\xi_j^t}\, dx\qquad{\mbox{and}}\qquad
	\bar{\varphi}_j^t:=\varphi_j^t-\sum_{i=1}^{n}\lambda_i\tilde{Z}_i^{\xi_j^t},
\end{equation}
where~$ \tilde{\alpha}$ and~$ \tilde{Z}_i^{\xi_j^t}$ are given in Lemma~\ref{lemma Zi}.

We notice that $\bar{\varphi}_j^t=0 $ in $\mathbb{R}^n\setminus\Omega_\epsilon$ and 
\begin{equation*}
	\int_{\Omega_\epsilon} \bar{\varphi}_j^t {Z}_k^{\xi_j^t} \, dx=0\qquad \text{ for any } k=1,\cdots,n.
\end{equation*}
Moreover, plugging~\eqref{dsvsfs} into~\eqref{dsfdsfs}, one deduces that
\begin{equation*}
		-\Delta \bar{\varphi}_j^t+(-\Delta)^s\bar{\varphi}_j^t+\bar{\varphi}_j^t-pw_{\xi_j^t}^{p-1} \bar{\varphi}_j^t=\bar{f}_j^t+\sum_{i=1}^{n}D_j^k(c_i)Z_{i}^{\xi_j^t}\quad \text{in }\Omega_\epsilon,
\end{equation*}
where 
\begin{equation*}
	\bar{f}_j^t:=\sum_{i=1}^{n}\lambda_i\left(-\Delta +(-\Delta)^s+1-pw_{\xi_j^t}^{p-1} \right)\tilde{Z}_i^{\xi_j^t}+pD_j^k (w_\xi^{p-1})\psi+D_j^k g+ \sum_{i=1}^{n}c_iD_j^k (Z_i).
\end{equation*}

We now claim that 
\begin{equation}\label{sdfsdvdss}
	\|\bar{f}_j^t\|_{\infty,\xi_j^t}\leqslant C\left(\|g\|_{\infty,\xi}+\|D_j^t g\|_{\infty,\xi}\right)
\end{equation}
for some constant $C>0$ independent of $\xi_j^t$.

Indeed, we observe that, from~\eqref{dsvsfs} and the orthogonality condition, it follows that 
\begin{equation*}
\begin{split}&
		\lambda_i=\tilde{\alpha}^{-1}\int_{\mathbb{R}^n}{-\psi}D_j^k(Z_i)\, dx=-\tilde{\alpha}^{-1}t^{-1}\int_{\mathbb{R}^n}\psi\left(\frac{\partial w}{\partial x_i}(x-\xi-te_j)-\frac{\partial w}{\partial x_i}(x-\xi)\right)\, dx\\
		&\qquad=-\tilde{\alpha}^{-1}
		\int_{\mathbb{R}^n}\frac{\psi}{t}\left(
		\int_{0}^{t}\frac{d}{d\tau}\left(\frac{\partial w}{\partial x_i}(x-\xi-\tau e_j)\right)\,d\tau\right)\, dx\\
		&\qquad=-\tilde{\alpha}^{-1}\int_{\mathbb{R}^n}\frac{\psi}{t}
		\left(\int_{0}^{t}\nabla\left(\frac{\partial w}{\partial x_i}\right)(x-\xi-\tau e_j)\cdot e_j\, d\tau\right)\, dx.
\end{split}
\end{equation*}
Thus, owing to Lemma~\ref{lemma gradient decay},  
\begin{equation*}
\begin{split}
	&|\lambda_i|\leqslant \tilde{\alpha}^{-1}\|\psi\|_{\infty,\xi}\int_{\mathbb{R}^n}\frac{1}{t}\left(
	\int_{0}^{t}(1+|x-\xi-\tau e_j|)^{-(n+2s)}\, d\tau\right)\, dx\\
	&\qquad\leqslant 2^{n+2s}\tilde{\alpha}^{-1}\|\psi\|_{\infty,\xi}\int_{\mathbb{R}^n}(1+|x-\xi|)^{-(n+2s)}\, dx\leqslant C\|\psi\|_{\infty,\xi}.
\end{split}
\end{equation*}

Similarly, using Lemma~\ref{lemma decay of w}, we infer that
\begin{equation*}
	\frac{1}{t}\big|w_{\xi_j^t}^{p-1}-w_{\xi}^{p-1}\big|\leqslant \frac{p-1}{t}\int_{0}^{t}w^{p-2}(x-\xi-\tau e_j)|\nabla w(x-\xi-\tau e_j)|\, d\tau
	\leqslant C.
\end{equation*}
Moreover, by combining Lemmata~\ref{lemma decay of w},~\ref{lemma gradient decay} and~\ref{lemma second gradient decay},
and recalling also that~$ w_{\xi_j^t}$ is bounded,
\begin{equation*}
	\sup\limits_{x\in\mathbb{R}^n}(1+|x-\xi-te_j|)^\mu\big|-\Delta \tilde{Z}_i^{\xi_j^t} +(-\Delta)^s \tilde{Z}_i^{\xi_j^t}+\tilde{Z}_i^{\xi_j^t}-pw_{\xi_j^t}^{p-1}\tilde{Z}_i^{\xi_j^t} \big|\leqslant C.
\end{equation*}
Therefore, from the above formulas and Lemma~\ref{equvilant norm}, we conclude that 
\begin{equation*}
	\|\bar{f}_j^t\|_{\infty,\xi_j^t}\leqslant C\left(\|\psi\|_{\infty,\xi}+\|D_j^t g\|_{\infty,\xi_j^t}+\sum_{i=1}^{n}|c_i|\right)\leqslant C
	\left(\|\psi\|_{\infty,\xi}+\|D_j^t g\|_{\infty,\xi}+\sum_{i=1}^{n}|c_i|\right)
\end{equation*}
up to renaming $C>0$.  

Recalling the estimates in~\eqref{dgerf} and~\eqref{ndfgfd}
and utilizing Lemma~\ref{lemma cdscsd}, we thereby find that
\begin{equation*}
	\begin{split}
		&\|\bar{f}_j^t\|_{\infty,\xi_j^t}\leqslant C\left(\|\psi\|_{\infty,\xi}+\|D_j^t g\|_{\infty,\xi_j^t}+\|\psi\|_{L^2(\mathbb{R}^n)}+\|g\|_{\infty,\xi}\right)\\
		&\qquad\leqslant C\left(\|\psi\|_{\infty,\xi}+\|D_j^t g\|_{\infty,\xi}+\| g\|_{\infty,\xi}\right)\leqslant C\left(\|g\|_{\infty,\xi}+\|D_j^t g\|_{\infty,\xi}\right),
	\end{split}
\end{equation*}
which leads to~\eqref{sdfsdvdss}.

As a consequence of~\eqref{sdfsdvdss}, using again Lemma~\ref{lemma cdscsd}, \begin{equation}\label{fdgdf}
	\|\bar{\varphi}_j^t\|_{\infty,\xi_j^t}\leqslant C\|\bar{f}_j^t\|_{\infty,\xi_j^t}\leqslant C\left(\|g\|_{\infty,\xi}+\|D_j^t g\|_{\infty,\xi}\right),
\end{equation}
for some constant $C>0$ independent of $\xi_j^t$.

In virtue of Lemmata~\ref{lemma cdscsd} and~\ref{equvilant norm}, by combining~\eqref{dsvsfs} with~\eqref{fdgdf}, we obtain that 
\begin{equation*}
		\|{\varphi}_j^t\|_{\infty,\xi}\leqslant 2^\mu \|{\varphi}_j^t\|_{\infty,\xi_j^t}\leqslant 2^\mu \left(	\|\bar{\varphi}_j^t\|_{\infty,\xi_j^t}+\sum_{i=1}^{n}|\lambda_i|\|\tilde{Z}_i^{\xi_j^t}\|_{\infty,\xi_j^t}\right)\leqslant C\left(\|g\|_{\infty,\xi}+\|D_j^t g\|_{\infty,\xi}\right),
\end{equation*}
for some constant $C>0$ independent of $t$. 

Hence,  taking the limit as~$t\rightarrow 0$, we obtain that
\begin{equation*}
	\left\lVert\frac{\partial \psi}{\partial\xi}\right\rVert_{\infty,\xi}\leqslant C\left(\|g\|_{\infty,\xi}+\left\lVert\frac{\partial g}{\partial\xi}\right\rVert_{\infty,\xi}\right),
\end{equation*} 
which proves the desired result in~\eqref{dsvsdcx}.

By combining the previous computation and the Implicit Function Theorem, a standard argument shows that $\xi\rightarrow \mathcal{I}_\xi$ is continuously differentiable (see e.g. \cite{MR3121716} below formula~(4.20)).
\end{proof}

\begin{Lemma}\label{lemma hvisdvs}
Let $\psi\in\Psi$ be a solution to~\eqref{nonlinear projected problem} with $\|\psi\|_{\infty,\xi}<C_*\epsilon^{\gamma_1}$, where~$\gamma_1$ is
as in~\eqref{hewgamma068jkagamma}.

Then, there exists a positive constant $C>0$ such that 
\begin{equation*}
	\left\lVert\frac{\partial \psi}{\partial\xi}\right\rVert_{\infty,\xi}\leqslant C\epsilon^{\gamma},
\end{equation*}
where~$q:=\min\left\{1,p-1\right\}$ and
$$\gamma:=\min\big\{{n+2s-\mu(2-p)_+},q\gamma_1\big\}>0.$$
\end{Lemma}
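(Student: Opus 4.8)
The plan is to differentiate the equation~\eqref{nonlinear projected problem} with respect to~$\xi$ and apply the differentiable dependence established in Lemma~\ref{vfdsvds}. Recall that by definition $\psi=\Psi(\xi)$ is the unique fixed point $\psi=\mathcal{I}_\xi[E(\psi)]$. Since both $\xi\mapsto\mathcal{I}_\xi$ (Lemma~\ref{vfdsvds}) and $\psi\mapsto E(\psi)$ are differentiable, a routine chain-rule computation combined with the Implicit Function Theorem shows $\xi\mapsto\Psi(\xi)$ is continuously differentiable, and that $\frac{\partial\psi}{\partial\xi}$ solves a linear equation of the form
\begin{equation*}
\frac{\partial\psi}{\partial\xi}=\frac{\partial\mathcal{I}_\xi}{\partial\xi}\big[E(\psi)\big]+\mathcal{I}_\xi\Big[\partial_\psi E(\psi)\cdot\tfrac{\partial\psi}{\partial\xi}+\tfrac{\partial E(\psi)}{\partial\xi}\big|_{\psi\text{ fixed}}\Big].
\end{equation*}
First I would apply the estimate~\eqref{dsvsdcx} from Lemma~\ref{vfdsvds} with $g=E(\psi)$, which reduces matters to controlling $\|E(\psi)\|_{\infty,\xi}$ and $\big\|\frac{\partial}{\partial\xi}E(\psi)\big|_{\psi}\big\|_{\infty,\xi}$. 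The first is $\le C\epsilon^{\gamma_1}$ by Lemma~\ref{lemma:  E(psi)} and the bound $\|\psi\|_{\infty,\xi}<C_*\epsilon^{\gamma_1}$. So the crux is the $\xi$-derivative of $E(\psi)$.

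Next I would compute $\frac{\partial}{\partial\xi}E(\psi)$ explicitly using~\eqref{sdvds}, writing $E(\psi)=(\bar u_\xi+\psi)^p-w_\xi^p-pw_\xi^{p-1}\psi$ and noting $\bar u_\xi=w_\xi-v_\xi$, so that $\bar u_\xi+\psi=w_\xi+\psi-v_\xi$. Differentiating with respect to $\xi$ (holding $\psi$ fixed) produces terms involving $\partial_\xi w_\xi=-\nabla w_\xi$, which decays like $|x-\xi|^{-(n+2s+1)}$ by Lemma~\ref{lemma decay of w}, and $\partial_\xi v_\xi$, which is $O(\epsilon^{n+2s})$ by Lemma~\ref{lemma vsodv}. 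The dominant contributions are of the schematic type $w_\xi^{p-2}\nabla w_\xi\,\psi$, $w_\xi^{p-1}\psi\cdot(\text{something }O(\epsilon^{n+2s}))$, $|\psi-v_\xi|^{p-1}\cdot\partial_\xi v_\xi$, $(1+|x-\xi|)^{-(p-1)(n+2s)}\partial_\xi v_\xi$, and $(p-1)(1+|x-\xi|)^{-(p-2)(n+2s)}|\nabla w_\xi|\,v_\xi$. Weighting each by $\rho_\xi(x)=(1+|x-\xi|)^\mu$ and taking suprema, using~\eqref{bvsvs},~\eqref{nbfdv}, and the smallness $\|\psi\|_{\infty,\xi}<C_*\epsilon^{\gamma_1}$, each term is bounded by a power of $\epsilon$; the worst one among them gives the stated exponent $\gamma=\min\{n+2s-\mu(2-p)_+,\,q\gamma_1\}$ with $q=\min\{1,p-1\}$. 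The term $(2-p)_+$ in the exponent reflects that when $1<p<2$ the factor $w_\xi^{p-2}$ grows like $(1+|x-\xi|)^{(2-p)(n+2s)}$, which the weight $\rho_\xi$ only partially absorbs (this is why $\mu<p(n+2s)-\tfrac n2-2s$ enters), whereas for $p\ge2$ that term decays and only the $q\gamma_1=(p-1)\gamma_1$ (from the $p=2$-type and Hölder-type nonlinear terms) or $n+2s$ contributions survive.

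I expect the main obstacle to be the bookkeeping in the $1<p<2$ regime: there the mean-value/Taylor expansions of $(\cdot)^p$ and $(\cdot)^{p-1}$ are only Hölder, so one must use the inequalities on p.~122 of~\cite{MR2186962} rather than a clean Lipschitz bound, and one must carefully track which power of $(1+|x-\xi|)$ is produced by $w_\xi^{p-2}$ against the gain from $\rho_\xi$, ensuring the weighted norm stays finite — this is exactly where the upper bound on $\mu$ in~\eqref{fiuewfjke768578599} is used. Once these term-by-term weighted estimates are assembled and fed into~\eqref{dsvsdcx}, one obtains $\|\partial_\xi\psi\|_{\infty,\xi}\le C(\|E(\psi)\|_{\infty,\xi}+\|\partial_\xi E(\psi)\|_{\infty,\xi})\le C\epsilon^{\gamma}$, completing the proof.
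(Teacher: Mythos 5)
Your overall strategy matches the paper's: differentiate the fixed-point relation $\psi=\mathcal{I}_\xi[E(\psi)]$, feed $g=E(\psi)$ into estimate~\eqref{dsvsdcx} from Lemma~\ref{vfdsvds}, bound $\|E(\psi)\|_{\infty,\xi}$ via Lemma~\ref{lemma:  E(psi)}, and control the $\xi$-derivative of $E(\psi)$ by term-by-term weighted estimates. However, there is a genuine gap exactly at the step you call ``routine.'' The quantity $\lVert\partial_\xi g\rVert_{\infty,\xi}$ appearing in~\eqref{dsvsdcx} is the \emph{total} derivative of $g=E(\psi)$ with respect to $\xi$, and this total derivative contains the term $p\big[(\bar u_\xi+\psi)^{p-1}-w_\xi^{p-1}\big]\partial_\xi\psi$ (and its siblings), i.e.\ it depends on the unknown $\partial_\xi\psi$ itself. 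You correctly display this dependence in your chain-rule identity, but you then silently drop the $\mathcal{I}_\xi[\partial_\psi E(\psi)\cdot\partial_\xi\psi]$ contribution and assert that one only needs to bound $\partial_\xi E(\psi)\big|_{\psi\text{ fixed}}$. The chain of inequalities you end with, $\|\partial_\xi\psi\|_{\infty,\xi}\le C(\|E(\psi)\|_{\infty,\xi}+\|\partial_\xi E(\psi)\|_{\infty,\xi})\le C\epsilon^\gamma$, is therefore circular as written: the middle quantity depends on $\|\partial_\xi\psi\|_{\infty,\xi}$. The paper's proof resolves this by establishing $\|\partial_\xi E(\psi)\|_{\infty,\xi}\le C\big(\epsilon^{q\gamma_1}\|\partial_\xi\psi\|_{\infty,\xi}+\epsilon^\gamma\big)$ (their~\eqref{dfvdfb}) — with the crucial feature that the coefficient of $\|\partial_\xi\psi\|_{\infty,\xi}$ is a small positive power of $\epsilon$ — and then \emph{absorbing} that term into the left-hand side of~\eqref{vdsfs} for $\epsilon$ small. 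Without this absorption step your argument does not close.

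A smaller inaccuracy: you attribute the exponent $(2-p)_+$ to the unbounded factor $w_\xi^{p-2}$. In the paper that factor always appears multiplied by $\partial_\xi w_\xi$ and contributes $|w_\xi^{p-2}\partial_\xi w_\xi|\lesssim(1+|x-\xi|)^{-(p-1)(n+2s)}$, which is bounded and in fact yields the $\epsilon^{\gamma_1}$ piece, not $(2-p)_+$. The $(2-p)_+$ actually arises from the term $\big(|\bar u_\xi|^{p-1}+|\psi|^{p-1}\big)\big|\partial_\xi v_\xi\big|$: here only $\mu(p-1)$ of the weight $(1+|x-\xi|)^\mu$ is absorbed into $\|w_\xi\|_{\infty,\xi}^{p-1}$ and $\|\psi\|_{\infty,\xi}^{p-1}$, leaving a residual $(1+|x-\xi|)^{\mu(2-p)}\lesssim\epsilon^{-\mu(2-p)_+}$ on $\Omega_\epsilon$, which multiplies the $\epsilon^{n+2s}$ decay of $\partial_\xi v_\xi$ from Lemma~\ref{lemma vsodv}.
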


\begin{proof}
We notice that, in the light of Lemma~\ref{vfdsvds} (applied here with $g:=E(\psi)$)
and Lemma~\ref{lemma:  E(psi)},
\begin{equation}\label{vdsfs}
	\left\lVert\frac{\partial \psi}{\partial\xi}\right\rVert_{\infty,\xi}\leqslant C\left(\|E(\psi)\|_{\infty,\xi}+\left\lVert\frac{\partial E(\psi)}{\partial\xi}\right\rVert_{\infty,\xi}\right)\leqslant C\left(\epsilon^{\gamma_1}+\left\lVert\frac{\partial E(\psi)}{\partial\xi}\right\rVert_{\infty,\xi}\right),
\end{equation}
up to renaming $C>0$.
 
We now claim that
 \begin{equation}\label{dfvdfb}
 	\left\lVert\frac{\partial E(\psi)}{\partial\xi}\right\rVert_{\infty,\xi}\leqslant C\left(\epsilon^{q\gamma_1}\left\lVert\frac{\partial \psi}{\partial\xi}\right\rVert_{\infty,\xi}+\epsilon^\gamma\right).
 \end{equation}
Indeed, from~\eqref{sdvds} it follows that 
 \begin{equation*}
	\begin{split}
	\frac{\partial E(\psi)}{\partial\xi}&=p(\bar{u}_\xi+\psi)^{p-1}\left(\frac{\partial \bar{u}_\xi}{\partial \xi}+\frac{\partial \psi}{\partial \xi}\right)-pw_\xi^{p-1}\left(\frac{\partial w_\xi}{\partial \xi}+\frac{\partial \psi}{\partial \xi}\right) -p(p-1)w_\xi^{p-2}\frac{\partial w_\xi}{\partial \xi}\psi\\
	&=p(\bar{u}_\xi+\psi)^{p-1}\left(\frac{\partial \bar{u}_\xi}{\partial \xi}+\frac{\partial \psi}{\partial \xi}\right)-p(w_\xi+\psi)^{p-1}\left(\frac{\partial w_\xi}{\partial \xi}+\frac{\partial \psi}{\partial \xi}\right)\\
	&\qquad +p[(w_\xi+\psi)^{p-1}-w_\xi^{p-1}]\left(\frac{\partial w_\xi}{\partial \xi}+\frac{\partial \psi}{\partial \xi}\right) -p(p-1)w_\xi^{p-2}\frac{\partial w_\xi}{\partial \xi}\psi
	\\
	&=p\left(\frac{\partial \psi}{\partial \xi}+\frac{\partial w_\xi}{\partial \xi}\right)[(\bar{u}_\xi+\psi)^{p-1}-(w_\xi+\psi)^{p-1}]+p(\bar{u}_\xi+\psi)^{p-1}\left(\frac{\partial \bar{u}_\xi}{\partial \xi}-\frac{\partial w_\xi}{\partial \xi}\right)\\
	&\qquad +p[(w_\xi+\psi)^{p-1}-w_\xi^{p-1}]\left(\frac{\partial w_\xi}{\partial \xi}+\frac{\partial \psi}{\partial \xi}\right) -p(p-1)w_\xi^{p-2}\frac{\partial w_\xi}{\partial \xi}\psi.
	\end{split}
\end{equation*}
Thus, recalling the definition of $v_\xi$  in~\eqref{dbsfbs}, and
 referring to page 122 in \cite{MR2186962}, 
\begin{equation*}
	\begin{split}
	\left|\frac{\partial E(\psi)}{\partial\xi}\right|&\leqslant C_{p,\|w_\xi\|_{L^\infty(\mathbb{R}^n)}} [|v_\xi|^{p-1}+|v_\xi|]\left(\left|\frac{\partial w_\xi}{\partial \xi}\right|+\left|\frac{\partial \psi}{\partial \xi}\right|\right)+c_p\left(|\bar{u}_\xi|^{p-1}+|\psi|^{p-1}\right)\left|\frac{\partial v_\xi}{\partial \xi}\right|\\
	&\qquad +C_{p,\|w_\xi\|_{L^\infty(\mathbb{R}^n)}} [|\psi|^{p-1}+|\psi|]\left(\left|\frac{\partial w_\xi}{\partial \xi}\right|+\left|\frac{\partial \psi}{\partial \xi}\right|\right)+p(p-1)\left|w_\xi^{p-2}\frac{\partial w_\xi}{\partial \xi}\right|\, |\psi|.
	\end{split}
\end{equation*}
 {F}rom the decay of $w_\xi$ and Lemma~\ref{lemma decay of w}, we get
 \begin{equation*}
 	\left|w_\xi^{p-2}\frac{\partial w_\xi}{\partial \xi}\right|\leqslant C(1+|x-\xi|)^{-(p-1)(n+2s)}\leqslant C.
 \end{equation*}

Moreover, since $\bar{u}_\xi=0$ and $\psi=0$ outside $\Omega_\epsilon$,  by taking into account that~$\mu<n+2s$ and~$w_\xi>\bar{u}_\xi$ in~$\mathbb{R}^n$,  
and owing to Lemmata~\ref{lemma decay of w} and~\ref{lemma vsodv}, we see that
\begin{equation*}
	\begin{split}
		&\sup\limits_{x\in\mathbb{R}^n}(1+|x-\xi|)^\mu\left(|\bar{u}_\xi(x)|^{p-1}+|\psi(x)|^{p-1}\right)\left|\frac{\partial v_\xi}{\partial \xi}\right|\\
		&\qquad =
		\sup\limits_{x\in\Omega_\epsilon}(1+|x-\xi|)^{\mu(p-1)}\left(|\bar{u}_\xi(x)|^{p-1}+|\psi(x)|^{p-1}\right)(1+|x-\xi|)^{\mu(2-p)}\left|\frac{\partial v_\xi}{\partial \xi}\right|\\
		&\qquad \leqslant C\sup\limits_{x\in\Omega_\epsilon}(1+|x-\xi|)^{\mu(2-p)}\left(\|w_\xi\|^{p-1}_{\infty,\xi}+\|\psi\|^{p-1}_{\infty,\xi}\right)	\epsilon^{n+2s}\\
		&\qquad  \leqslant C\left(1+\epsilon^{(p-1)\gamma_1}\right)\epsilon^{n+2s-\mu(2-p)_+}
	\end{split}
\end{equation*}
up to renaming $C>0$, where $(2-p)_+:=\max\left\{0,2-p\right\}<1$.

Hence, from the above two formulas, 
 recalling~\eqref{nbfdv},  by combining Lemmata~\ref{lemma decay of w} and~\ref{lemma vsodv}, one concludes that
 \begin{equation*}
 	\begin{split}
 			\left\lVert\frac{\partial E(\psi)}{\partial\xi}\right\rVert_{\infty,\xi}&\leqslant C\left(\epsilon^{q\gamma_1}\left(\left\lVert\frac{\partial \psi}{\partial\xi}\right\rVert_{\infty,\xi}+1\right)+\epsilon^{n+2s-\mu(2-p)_+}+\epsilon^{\gamma_1}\right)\leqslant C\left(\epsilon^{q\gamma_1}\left\lVert\frac{\partial \psi}{\partial\xi}\right\rVert_{\infty,\xi}+\epsilon^\gamma\right).
 	\end{split}
 \end{equation*}
This yields the claim in~\eqref{dfvdfb}.

As a consequence of~\eqref{vdsfs} and~\eqref{dfvdfb}, we obtain that 
\begin{equation*}
	\left\lVert\frac{\partial \psi}{\partial\xi}\right\rVert_{\infty,\xi}\leqslant C\left(\epsilon^{\gamma}+\epsilon^{q\gamma_1}\left\lVert\frac{\partial \psi}{\partial\xi}\right\rVert_{\infty,\xi}\right).
\end{equation*}
Now, taking $\epsilon >0$ sufficiently small, by reabsorbing one term into the left-hand side, we obtain the desired result.
\end{proof}


With this preparatory work, we now complete the proof of Theorem~\ref{th: critical thoery}.

\begin{proof}[Proof of Theorem~\ref{th: critical thoery}]
Recall that, from Theorem~\ref{th:nonlinear problem }, 
$	\Psi(\xi)$  is the unique solution to~\eqref{nonlinear projected problem} with $\|\Psi(\xi)\|_{\infty,\xi}\leqslant C\epsilon^{\gamma_1}$ and $u_\xi=\bar{u}_\xi+\Psi(\xi)$. 

We denote by~$\xi:=(\xi_1,\cdots,\xi_n)$ and, for any $j\in\left\{1,\cdots,n\right\}$, we
 take the derivative of $u_\xi$ with respect to $\xi_j$, finding that
\begin{equation*}
	\frac{\partial u_\xi}{\partial \xi_j}=\frac{\partial \bar{u}_\xi}{\partial \xi_j}+\frac{\partial \Psi(\xi)}{\partial \xi_j}.
\end{equation*}
 {F}rom Lemma~\ref{lemma vsodv}, it follows that
 \begin{equation*}
 	\left|	\frac{\partial w_\xi}{\partial \xi_j}-	\frac{\partial \bar{u}_\xi}{\partial \xi_j}\right|=\left|	\frac{\partial v_\xi}{\partial \xi_j}\right|\leqslant C\epsilon^{n+2s}.
 \end{equation*}
Also, Lemma~\ref{lemma hvisdvs} entails that
 \begin{equation*}
 \left|	\frac{\partial \Psi(\xi)}{\partial \xi_j}\right|\leqslant C\epsilon^{\gamma}.
 \end{equation*}
Hence, putting the above three formulas together, we have that
 \begin{equation}\label{nk}
 		\frac{\partial u_\xi}{\partial \xi_j}=\frac{\partial w_\xi}{\partial \xi_j}+O(\epsilon^\gamma)=-\frac{\partial w_\xi}{\partial x_j}+O(\epsilon^\gamma)=-Z_j+O(\epsilon^\gamma).
 \end{equation} 

We now multiply the equation in~\eqref{dsfdsdv} by ${\partial u_\xi}/{\partial \xi_j} $, finding that
 \begin{equation*}
	\left(-\Delta u_\xi+(-\Delta)^s u_\xi+u_\xi-u_\xi^{p}\right)	\frac{\partial u_\xi}{\partial \xi_j}=\sum_{i=1}^{n}c_iZ_i \frac{\partial u_\xi}{\partial \xi_j}\qquad \text{in } \Omega_\epsilon.
\end{equation*}
Thus, owing to Lemma~\ref{lemma decay of w} and~\eqref{nk}, we deduce that
  \begin{equation*}
 	\left|\left(-\Delta u_\xi+(-\Delta)^s u_\xi+u_\xi-u_\xi^{p}\right)	\frac{\partial u_\xi}{\partial \xi_j}\right|\leqslant \sum_{i=1}^{n}|c_i|\, |Z_i|\,\left|\frac{\partial u_\xi}{\partial \xi_j}\right|\leqslant C
 \end{equation*}
for some constant $C>0$ independent of $\xi$. 

This gives that the functions $\big(-\Delta u_\xi+(-\Delta)^s u_\xi+u_\xi-u_\xi^{p} \big)	{\partial u_\xi}/{\partial \xi_j} $ are in $L^1(\Omega_\epsilon)$ uniformly with respect to $\xi$. Thus, we are in the position of computing the derivative of~$J_\epsilon$ with respect to~$\xi_j$, obtaining that
\begin{equation}\label{sdfe}
	\begin{split}
	\frac{\partial J_\epsilon}{\partial \xi_j}(\xi)&=\frac{\partial I_\epsilon(u_\xi)}{\partial \xi_j}\\&=\frac{\partial}{\partial \xi_j}\left(\int_{\Omega_\epsilon}\frac{1}{2}\left(u_\xi(-\Delta) u_\xi+u_\xi(-\Delta)^s u_\xi+u_\xi^2\right)-\frac{1}{p+1}u_\xi^{p+1}(x)\, dx\right)\\
&= \int_{\Omega_\epsilon}\frac{1}{2}\frac{\partial u_\xi}{\partial \xi_j}
\big((-\Delta) +(-\Delta)^s \big)u_\xi+\frac{1}{2}u_\xi\big((-\Delta) +(-\Delta)^s \big)
\frac{\partial u_\xi}{\partial \xi_j}
+u_\xi\frac{\partial u_\xi}{\partial \xi_j}
-u_\xi^{p}(x)\frac{\partial u_\xi}{\partial \xi_j}\, dx\\
	&=\int_{\Omega_\epsilon}\frac{\partial u_\xi}{\partial \xi_j}\left(-\Delta u_\xi +(-\Delta)^su_\xi+u_\xi-u_\xi^{p}(x) \right)\, dx\\
&
	=\sum_{i=1}^{n}c_iM_{ji}
	\end{split}
\end{equation}
where $$ M_{ji}:=\int_{\Omega_\epsilon}Z_i \frac{\partial u_\xi}{\partial \xi_j}\, dx.$$

Now, utilizing~\eqref{nk} and the decay of $Z_i$ in Lemma~\ref{lemma comparable for h}, one has that
\begin{equation*}
	M_{ji}=\int_{\Omega_\epsilon}-Z_iZ_j\, dx+O(\epsilon^\gamma)=-\alpha\delta_{ij}+O(\epsilon^\gamma)
\end{equation*}
where $\alpha>0$ was introduced in Lemma~\ref{lemma Zi}. 
Therefore, the matrix $-\alpha^{-1}M_{ji}$ is a perturbation of the identity and thus it is invertible for $\epsilon$ sufficiently small. Accordingly,
$M_{ji}$ is also invertible. 

As a consequence of this fact and~\eqref{sdfe}, we conclude that 
\begin{equation*}
	\frac{\partial J_\epsilon}{\partial \xi}(\xi)=\left(\frac{\partial J_\epsilon}{\partial \xi_1}(\xi), \cdots,\frac{\partial J_\epsilon}{\partial \xi_n}(\xi)\right)=0 \quad \text{ if and only if }\quad c:=\left(c_1,\cdots,c_n\right)=0,
\end{equation*} which is the claim of
of Theorem~\ref{th: critical thoery}.
\end{proof}

Based on Theorem~\ref{th: critical thoery}, we give an expansion for the function~$J_\epsilon$, as follows.

\begin{Theorem}\label{th expansion}
	We have the following expansion
	\begin{equation*}
		J_\epsilon(\xi)=I_\epsilon(\bar{u}_\xi)+o(\epsilon^{n+4s}).
	\end{equation*}
\end{Theorem}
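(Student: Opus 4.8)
The plan is to expand $J_\epsilon(\xi)=I_\epsilon(\bar u_\xi+\Psi(\xi))$ around $I_\epsilon(\bar u_\xi)$ using the smallness of $\psi:=\Psi(\xi)$ guaranteed by Theorem~\ref{th:nonlinear problem }, namely $\|\psi\|_{\infty,\xi}\leqslant C\epsilon^{\gamma_1}$ with $\gamma_1=\min\{n+2s,\,p(n+2s)-\mu\}$. First I would write the Taylor expansion of the functional: since $I_\epsilon$ is $C^2$ along the segment $\bar u_\xi+t\psi$,
\begin{equation*}
	J_\epsilon(\xi)=I_\epsilon(\bar u_\xi)+I_\epsilon'(\bar u_\xi)[\psi]+\tfrac12 I_\epsilon''(\bar u_\xi+\theta\psi)[\psi,\psi]
\end{equation*}
for some $\theta\in(0,1)$. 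The linear term is
\begin{equation*}
	I_\epsilon'(\bar u_\xi)[\psi]=\int_{\Omega_\epsilon}\big(\nabla\bar u_\xi\cdot\nabla\psi+(-\Delta)^{s/2}\bar u_\xi\,(-\Delta)^{s/2}\psi+\bar u_\xi\psi-\bar u_\xi^p\psi\big)\,dx,
\end{equation*}
and using that $\bar u_\xi$ solves \eqref{sdvbsd} this equals $\int_{\Omega_\epsilon}(w_\xi^p-\bar u_\xi^p)\psi\,dx$. So the first task is to show $\int_{\Omega_\epsilon}(w_\xi^p-\bar u_\xi^p)\psi\,dx=o(\epsilon^{n+4s})$.

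For the linear term, I would use $0\leqslant w_\xi-\bar u_\xi=v_\xi\leqslant C_1(1+|x-\xi|)^{-(n+2s)}$ and $v_\xi\leqslant C_1\epsilon^{n+2s}$ from \eqref{nbfdv}, together with the mean value inequality $|w_\xi^p-\bar u_\xi^p|\leqslant p\,w_\xi^{p-1}v_\xi$ and $|\psi(x)|\leqslant \|\psi\|_{\infty,\xi}(1+|x-\xi|)^{-\mu}\leqslant C\epsilon^{\gamma_1}(1+|x-\xi|)^{-\mu}$. Then
\begin{equation*}
	\left|\int_{\Omega_\epsilon}(w_\xi^p-\bar u_\xi^p)\psi\,dx\right|\leqslant C\epsilon^{\gamma_1}\int_{\R^n}w_\xi^{p-1}(x)\,v_\xi(x)\,(1+|x-\xi|)^{-\mu}\,dx,
\end{equation*}
and by splitting $v_\xi\leqslant C\epsilon^{n+2s}$ on $B_R(\xi)$ (where $w_\xi^{p-1}$ is integrable against $(1+|x-\xi|)^{-\mu}$ since $(p-1)(n+2s)+\mu>n$) and $v_\xi\leqslant C(1+|x-\xi|)^{-(n+2s)}$ outside, one gets a bound $C\epsilon^{\gamma_1}\epsilon^{n+2s}$ plus a tail $C\epsilon^{\gamma_1}$ times a convergent integral with decay rate $(p-1)(n+2s)+\mu+(n+2s)>n$; in all cases the exponent $\gamma_1$ (which strictly exceeds $\tfrac n2+2s$, and equals $n+2s$ if $p\geqslant2$) combined with the extra decay makes this $o(\epsilon^{n+4s})$. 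The quadratic term is controlled by
\begin{equation*}
	\big|I_\epsilon''(\bar u_\xi+\theta\psi)[\psi,\psi]\big|\leqslant \|\psi\|_{H^1_0}^2+p\int_{\Omega_\epsilon}(\bar u_\xi+\theta\psi)^{p-1}\psi^2\,dx,
\end{equation*}
and since $\bar u_\xi,\psi$ are bounded (with $\psi$ small), this is $\leqslant C\|\psi\|_{H^1_0}^2+C\|\psi\|_{L^2(\R^n)}^2$. Using \eqref{dgerf}, $\|\psi\|_{L^2(\R^n)}\leqslant C\|\psi\|_{\infty,\xi}\leqslant C\epsilon^{\gamma_1}$, and an energy estimate (multiply \eqref{nonlinear projected problem} by $\psi$, use the orthogonality to kill the $c_i Z_i$ terms) gives $\|\psi\|_{H^1_0}^2\leqslant C\|E(\psi)\|_{\infty,\xi}\|\psi\|_{\infty,\xi}\leqslant C\epsilon^{2\gamma_1}$ via Lemma~\ref{lemma:  E(psi)}. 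Hence the quadratic term is $O(\epsilon^{2\gamma_1})$, which is $o(\epsilon^{n+4s})$ because $2\gamma_1>n+4s$ (indeed $2\gamma_1>2(\tfrac n2+2s)=n+4s$).

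The main obstacle I anticipate is bookkeeping the various decay exponents to confirm that \emph{every} term beats $\epsilon^{n+4s}$ uniformly in $\xi$ with $\operatorname{dist}(\xi,\partial\Omega_\epsilon)\geqslant c/\epsilon$; this is where the precise range \eqref{fiuewfjke768578599} of $\mu$ (in particular $\mu<p(n+2s)-\tfrac n2-2s$ and $\mu<n+2s$) is essential, since it is exactly what forces $\gamma_1>\tfrac n2+2s$ and hence $2\gamma_1>n+4s$, and what makes the weighted integrals in the linear term converge. A secondary point requiring care is justifying differentiation under the integral / the Taylor expansion of $I_\epsilon$ along the segment — but this is routine given that $\bar u_\xi+t\psi\in H^1_0(\Omega_\epsilon)$ and all quantities are bounded, so the second derivative of $t\mapsto I_\epsilon(\bar u_\xi+t\psi)$ is continuous. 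Once these estimates are assembled, combining the linear bound $o(\epsilon^{n+4s})$ with the quadratic bound $O(\epsilon^{2\gamma_1})=o(\epsilon^{n+4s})$ yields $J_\epsilon(\xi)=I_\epsilon(\bar u_\xi)+o(\epsilon^{n+4s})$, as claimed.
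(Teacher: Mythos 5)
Your route differs from the paper's. The paper rewrites
\begin{equation*}
J_\epsilon(\xi)-I_\epsilon(\bar u_\xi)=\int_{\Omega_\epsilon}\big(-\Delta u_\xi+(-\Delta)^s u_\xi+u_\xi-u_\xi^p\big)\Psi(\xi)\,dx+\int_{\Omega_\epsilon}\big(u_\xi^p-\bar u_\xi^p-p\bar u_\xi^{p-1}\Psi(\xi)\big)\Psi(\xi)\,dx+O\big([\Psi(\xi)]^3\big),
\end{equation*}
and kills the first integral exactly via \eqref{dsfdsdv} and the orthogonality $\int\Psi(\xi)Z_i=0$, leaving only a genuinely quadratic piece that is $O(\epsilon^{2\gamma_1})$. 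You instead use a genuine Taylor expansion around $\bar u_\xi$ and estimate $I_\epsilon'(\bar u_\xi)[\psi]=\int_{\Omega_\epsilon}(w_\xi^p-\bar u_\xi^p)\psi\,dx$ directly. Your instinct is reasonable (in fact, a standard Taylor expansion carries a factor $\tfrac12$ on $I_\epsilon''$, so after the same rearrangement a residual term $\tfrac12\int(w_\xi^p-\bar u_\xi^p)\psi$ survives and does have to be estimated), but your estimate of this linear term does not close.

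The gap is in the tail of $\int_{\Omega_\epsilon}w_\xi^{p-1}v_\xi(1+|x-\xi|)^{-\mu}\,dx$. You assert $(p-1)(n+2s)+\mu>n$, but this is \emph{not} guaranteed by \eqref{fiuewfjke768578599}: one only knows $\mu>\tfrac n2$, and when $(p-1)(n+2s)<\tfrac n2$ (which occurs for $p$ close to $1$), the admissible range for $\mu$ may lie entirely below $n-(p-1)(n+2s)$, so the weighted integral over $\R^n$ actually diverges. More importantly, even accepting the decomposition, you claim the tail beyond a \emph{fixed} ball $B_R(\xi)$ is ``$C\epsilon^{\gamma_1}$ times a convergent integral.'' For fixed $R$ that convergent integral is just a constant, so the tail contribution is only $O(\epsilon^{\gamma_1})$; since $\gamma_1\leqslant n+2s<n+4s$, this is \emph{not} $o(\epsilon^{n+4s})$, and the argument fails at exactly the place you flag as requiring care. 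The repair is to exploit that the integration is effectively over $\Omega_\epsilon$ (or equivalently split at $R\sim1/\epsilon$) and to use \emph{both} pointwise bounds $v_\xi\leqslant C\epsilon^{n+2s}$ (from \eqref{fvfdv} together with \eqref{fdgd}) and $v_\xi\leqslant C(1+|x-\xi|)^{-(n+2s)}$: one then finds the integral is $O\big(\epsilon^{2s+\min\{n,\,(p-1)(n+2s)+\mu\}}\big)$, and a case analysis on whether $(p-1)(n+2s)+\mu\gtrless n$ and whether $\gamma_1=n+2s$ or $\gamma_1=p(n+2s)-\mu$ confirms $\epsilon^{\gamma_1}\cdot\epsilon^{2s+\min\{n,\,(p-1)(n+2s)+\mu\}}=o(\epsilon^{n+4s})$ precisely because $p>1$. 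Your treatment of the quadratic term (energy estimate for $\|\psi\|_{H^1_0}$ via orthogonality, plus $\|\psi\|_{L^2}\lesssim\|\psi\|_{\infty,\xi}$) is correct and matches the paper in spirit.
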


\begin{proof}	
	{F}rom the definition of $J_\epsilon$, we know that
	\begin{equation*}
			J_\epsilon(\xi)=I_\epsilon(\bar{u}_\xi+\Psi(\xi)).
	\end{equation*}
Hence, 
\begin{equation*}
	\begin{split}
			J_\epsilon(\xi)&=I_\epsilon(\bar{u}_\epsilon)+I'_\epsilon(\bar{u}_\epsilon)[\Psi(\xi)]+I''_\epsilon(\bar{u}_\epsilon)[\Psi(\xi),\Psi(\xi)]+O([\Psi(\xi)]^3)\\
			&=I_\epsilon(\bar{u}_\epsilon)+\int_{\Omega_\epsilon}\left((-\Delta )\bar{u}_\xi \Psi(\xi) +(-\Delta)^s\bar{u}_\xi \Psi(\xi)+\bar{u}_\xi\Psi(\xi)-\bar{u}_\xi^{p}\Psi(\xi) \right)\, dx\\
			&\;+\int_{\Omega_\epsilon}\left((-\Delta) \Psi(\xi) \Psi(\xi) +(-\Delta)^s\Psi(\xi)\Psi(\xi)+\Psi(\xi)^2-p\bar{u}_\xi^{p-1}\Psi(\xi)^2 \right)\, dx+O([\Psi(\xi)]^3).
	\end{split}
\end{equation*}
Using the fact that $u_\xi=\bar{u}_\xi+\Psi(\xi)$, one has that 
\begin{equation}\label{fdas}
		\begin{split}
		J_\epsilon(\xi)&=I_\epsilon(\bar{u}_\epsilon)+\int_{\Omega_\epsilon}\left(-\Delta u_\xi +(-\Delta)^su_\xi+u_\xi-u_\xi^{p} \right)\Psi(\xi)\, dx\\
		&\qquad \qquad +\int_{\Omega_\epsilon} \left(u_\xi^p-\bar{u}_\xi^p-p\bar{u}_\xi^{p-1}\Psi(\xi)\right)\Psi(\xi)\, dx +O([\Psi(\xi)]^3).
		\end{split}
\end{equation}
Moreover, by combining~\eqref{dsfdsdv} and the orthogonality condition, one has that
\begin{equation*}
	\int_{\Omega_\epsilon}\left(-\Delta u_\xi +(-\Delta)^su_\xi+u_\xi-u_\xi^{p} \right)\Psi(\xi)\, dx=0.
\end{equation*}
Thus, we can rewrite~\eqref{fdas} as
\begin{equation}\label{dsfdsc}
	J_\epsilon(\xi)	=I_\epsilon(\bar{u}_\epsilon)+\int_{\Omega_\epsilon} \left(u_\xi^p-\bar{u}_\xi^p-p\bar{u}_\xi^{p-1}\Psi(\xi)\right)\Psi(\xi)\, dx +O([\Psi(\xi)]^3).
\end{equation}

We recall that $\bar{u}_\xi<w_\xi$, and, referring to page~122 in~\cite{MR2186962}, we conclude that 
\[  \big|u_\xi^p-\bar{u}_\xi^p-p\bar{u}_\xi^{p-1}\Psi(\xi)\big|\leqslant c\left(|\Psi(\xi)|^2+|\Psi(\xi)|^p\right),\]
where the positive constant $c$ only depends on $p$ and $\|w_\xi\|_{L^\infty(\mathbb{R}^n)}$.

Accordingly, since $\mu>n/2$,
\begin{equation}\label{sdfa}
	\begin{split}
		&\left|\int_{\Omega_\epsilon} \left(u_\xi^p-\bar{u}_\xi^p-p\bar{u}_\xi^{p-1}\Psi(\xi)\right)\Psi(\xi)\, dx\right|\leqslant c \int_{\Omega_\epsilon}\left(|\Psi(\xi)|^3+|\Psi(\xi)|^{p+1}\right)\, dx\\
		&\qquad\leqslant c\left(\|\Psi(\xi)\|^3_{\infty,\xi}\int_{\Omega_\epsilon}(1+|x-\xi|)^{-3\mu}\, dx+ \|\Psi(\xi)\|^{p+1}_{\infty,\xi}\int_{\Omega_\epsilon}(1+|x-\xi|)^{-\mu(p+1)}\, dx\right)\\
		&\qquad \leqslant c\epsilon^{2\gamma_1}
	\end{split}
\end{equation}
up to renaming $c>0$.

Now, on the one hand we notice that, 
for any $p\geqslant 2$ and $\mu<n+2s$,
$$2\gamma_1=\min\left\{2n+4s,2p(n+2s)-2\mu\right\}>n+4s .$$ On the other hand, for each $1<p<2$, since~$2\mu<2p(n+2s)-{n}-4s$, 
$$ 2\gamma_1=\min\left\{2n+4s,2p(n+2s)-2\mu\right\}>n+4s.$$

As a consequence of this, by combining~\eqref{dsfdsc} and~\eqref{sdfa}, we obtain the desired claim in Theorem~\ref{th expansion}.
\end{proof}

\section{Proof of Theorem~\ref{th main theorem}}\label{sec:Proof of Theorem 1.1}
 In this section, we complete the proof of Theorem~\ref{th main theorem}. To this end, we observe that, from Theorems~\ref{th energy estimates} and~\ref{th expansion}, it follows that, for any  $\xi\in\Omega_\epsilon$  with $d:=$ dist$(\xi,\partial\Omega_\epsilon)\geqslant \delta/\epsilon>2 $ (for some  $\delta\in(0,1)$), 
 	\begin{equation}\label{dbfd}
 		J_\epsilon(\xi)=I(w)+\frac{1}{2}\mathcal{H}_\epsilon(\xi)+o(\epsilon^{n+4s})
 	\end{equation}
where $J_\epsilon$ and $I$ are as defined in \eqref{vsdf} and~\eqref{dsvsbs}, respectively, and $\mathcal{H}_\epsilon(\xi)$ is as introduced in~\eqref{bjbca}.
 
Moreover, we recall the definition of the set~$\Omega_{\epsilon,\delta}$ given in~\eqref{ngdjs}. Since $J_\epsilon$ is a continuous functional, it admits a minimum~$\xi_0\in\overline{\Omega}_{\epsilon,\delta}$.
 
We now show that 
\begin{equation}\label{weknow78gdshoe}\xi_0\in\Omega_{\epsilon,\delta}.
\end{equation}
Indeed, suppose by contradiction that $\xi_0\in\partial \Omega_{\epsilon,\delta}$. Then, by~\eqref{dbfd},
\begin{equation}\label{gdfgfd}
	J_\epsilon(\xi_0)=I(w)+\frac{1}{2}\mathcal{H}_\epsilon(\xi_0)+o(\epsilon^{n+4s})\geqslant I(w)+\frac{1}{2}\min\limits_{\partial\Omega_{\epsilon,\delta}}\mathcal{H}_\epsilon+o(\epsilon^{n+4s}).
\end{equation} 

Additionally, from Lemma~\ref{lemma compact set }, we know that~$\mathcal{H}_\epsilon$ attains an interior minimum in~$\Omega_{\epsilon,\delta}$. More precisely, there exists~$\bar{\xi}\in \Omega_{\epsilon,\delta}$ such that 
\begin{equation}\label{sfsdg}
\mathcal{H}_\epsilon(\bar{\xi})=	\min\limits_{\Omega_{\epsilon,\delta}}\mathcal{H}_\epsilon\leqslant c_1\epsilon^{n+4s}< c_2\left(\frac{\epsilon}{\delta}\right)^{n+4s}\leqslant \min\limits_{\partial\Omega_{\epsilon,\delta}}\mathcal{H}_\epsilon,
\end{equation}
for suitable constants $c_1$, $c_2>0$.

Furthermore, the minimality of $\xi_0$ gives that
\begin{equation}\label{gdfgf}
	J_\epsilon(\xi_0)\leqslant J_\epsilon(\bar\xi)= I(w)+\frac{1}{2}\mathcal{H}_\epsilon(\bar\xi)+o(\epsilon^{n+4s})
\end{equation} 
Thus, by combining~\eqref{sfsdg} with~\eqref{gdfgf}, one evinces that
\begin{equation*}
	J_\epsilon(\xi_0)-I(w)\leqslant \frac{1}{2}\mathcal{H}_\epsilon(\bar\xi)+o(\epsilon^{n+4s})\leqslant \frac{c_1}{2}\epsilon^{n+4s}+o(\epsilon^{n+4s}).
\end{equation*}

Moreover, utilizing~\eqref{gdfgfd} and~\eqref{sfsdg}, we see that
\begin{equation*}
	J_\epsilon(\xi_0)-I(w)\geqslant \frac{1}{2}\min\limits_{\partial\Omega_{\epsilon,\delta}}\mathcal{H}_\epsilon+o(\epsilon^{n+4s})\geqslant \frac{c_2}{2\delta^{n+4s}}\epsilon^{n+4s}+o(\epsilon^{n+4s}).
\end{equation*}
{F}rom the above two formulas, one concludes that
\begin{equation*}
	\frac{c_2}{2\delta^{n+4s}}\epsilon^{n+4s}+o(\epsilon^{n+4s})\leqslant \frac{c_1}{2}\epsilon^{n+4s}+o(\epsilon^{n+4s}).
\end{equation*}
Hence, after a division by $\epsilon^{n+4s}$, taking the limit as~$\epsilon\rightarrow 0$, we obtain that
\begin{equation*}
	\frac{c_2}{2\delta^{n+4s}}\leqslant \frac{c_1}{2},
\end{equation*}
which is a contradiction for $\delta$ sufficiently small. The claim in~\eqref{weknow78gdshoe} is thereby established.

Thanks to~\eqref{weknow78gdshoe}, we have that 
\begin{equation*}
	\frac{\partial J_\epsilon}{\partial \xi}(\xi_0)=0.
\end{equation*}

As a consequence of this and Theorem~\ref{th: critical thoery}, we obtain the existence of a solution to~\eqref{vfveffd}, which satisfies~\eqref{fvvsf} for $\epsilon$ sufficiently small. The proof of Theorem~\ref{th main theorem} is complete.

\begin{appendix}

\section{Uniform $L^\infty$ estimates}\label{sec:vfdvdfv}

\begin{Lemma}\label{lemma regularity boundedness}
	Let $g\in L^2(\mathbb{R}^n)\cap L^\infty(\mathbb{R}^n)$ and~$\psi\in H^1(\mathbb{R}^n)$ be a solution of~\eqref{vsdjvb}.
	
	Then, there exists a positive constant $C$,
	only depending on $n$ and~$s$, such that 
	\begin{equation}\label{dvdss}
		\|\psi\|_{L^\infty(\mathbb{R}^n)}\leqslant C\left(\|g\|_{L^\infty(\mathbb{R}^n)}+\|\psi\|_{L^2(\mathbb{R}^n)}\right).
	\end{equation}
\end{Lemma}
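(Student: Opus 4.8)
The plan is to prove~\eqref{dvdss} by a De Giorgi truncation (level-set) iteration performed directly on the zero-extension of $\psi$, regarded as an element of $H^1(\mathbb{R}^n)$. Working on all of $\mathbb{R}^n$ rather than on $\Omega_\epsilon$ is precisely what makes the estimate uniform in $\epsilon$: every inequality used below --- Sobolev, H\"older, the iteration lemma --- carries only constants attached to $\mathbb{R}^n$ and never to the domain $\Omega_\epsilon$. Moreover it is enough to prove the one-sided bound $\psi\leqslant C(\|g\|_{L^\infty}+\|\psi\|_{L^2})$; applying this to $-\psi$, which solves~\eqref{vsdjvb} with datum $-g$, gives the matching lower bound, hence~\eqref{dvdss}.

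For the energy inequality, fix a level $\ell>0$ to be chosen, and for $k\geqslant0$ set $\ell_k:=\ell(2-2^{-k})$, $A_k:=\{\psi>\ell_k\}$ and $v_k:=(\psi-\ell_k)_+$. Since $\psi\in H^1(\mathbb{R}^n)$ vanishes a.e.\ outside $\Omega_\epsilon$ and $\ell_k\geqslant0$, each $v_k$ belongs to $H^1(\mathbb{R}^n)$ and vanishes outside $\Omega_\epsilon$, so it is an admissible test function in the weak formulation of~\eqref{vsdjvb}. Testing against $v_k$, and using that the zeroth-order contribution $\int v_k^2+\ell_k\int v_k$ is nonnegative and that the nonlocal contribution is nonnegative --- which follows from the pointwise inequality $(v_k(x)-v_k(y))(\psi(x)-\psi(y))\geqslant(v_k(x)-v_k(y))^2\geqslant0$, valid because $t\mapsto(t-\ell_k)_+$ is nondecreasing and $1$-Lipschitz --- one arrives at the De Giorgi energy inequality
$$\int_{\mathbb{R}^n}|\nabla v_k|^2\,dx\leqslant\|g\|_{L^\infty(\mathbb{R}^n)}\int_{A_k}v_k\,dx.$$

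Then I would run the iteration. Writing $\kappa:=\|g\|_{L^\infty(\mathbb{R}^n)}$, the Sobolev inequality on $\mathbb{R}^n$ (with exponent $2^\ast=2n/(n-2)$ if $n\geqslant3$, and any fixed finite exponent if $n=2$), together with H\"older and the energy inequality, gives $\|v_k\|_{L^{2^\ast}}\leqslant C_n\kappa\,|A_k|^{1-1/2^\ast}$. On $A_{k+1}$ one has $v_k\geqslant\ell\,2^{-k-1}$, so by Chebyshev $|A_{k+1}|\leqslant(2^{k+1}/\ell)^2a_k$ with $a_k:=\|v_k\|_{L^2}^2$, and a further H\"older estimate yields
$$a_{k+1}\leqslant\|v_{k+1}\|_{L^{2^\ast}}^2\,|A_{k+1}|^{1-2/2^\ast}\leqslant C_n\kappa^2\,|A_{k+1}|^{1+4/n}\leqslant\frac{C_n\kappa^2}{\ell^{2+8/n}}\,4^{(1+4/n)k}\,a_k^{1+4/n}.$$
Since $1+4/n>1$, the classical fast-convergence lemma for recursions $a_{k+1}\leqslant C_0b^ka_k^{1+\beta}$ shows that $a_k\to0$ provided $a_0$ lies below an explicit threshold of the form $c_n\,\ell^{n/2+2}\kappa^{-n/2}$; since $a_0=\|(\psi-\ell)_+\|_{L^2}^2\leqslant\|\psi\|_{L^2}^2$ and the exponents of $\kappa$ and $\|\psi\|_{L^2}$ in that threshold add up to $1$, it suffices to take $\ell:=C_n'\,(\kappa+\|\psi\|_{L^2(\mathbb{R}^n)})$. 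Then $(\psi-2\ell)_+=0$ a.e., i.e.\ $\psi\leqslant2\ell$ a.e., which is the desired one-sided bound.

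The only genuinely delicate point, and the step I would be most careful with, is the treatment of the nonlocal term: verifying that $v_k$ is an admissible test function in the energy form and that the fractional bilinear form on the pair $(\psi,v_k)$ is nonnegative --- precisely where the monotonicity and Lipschitz character of the truncation enter (a Stroock--Varopoulos / C\'ordoba--C\'ordoba type inequality). Everything else (Sobolev on $\mathbb{R}^n$, H\"older, the iteration lemma) is routine, and the $\epsilon$-uniformity is automatic because nothing in the argument sees $\Omega_\epsilon$ beyond the support condition $v_k=0$ outside it. In dimension $n=2$ one simply replaces $2^\ast$ throughout by a fixed exponent such as $4$, landing on a recursion with superlinear exponent $2$, and the conclusion is unchanged.
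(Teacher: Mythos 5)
Your proof is correct and it is the same De Giorgi level--set iteration that the paper runs in Appendix~A: test with truncations of $\psi$, use the monotonicity/Lipschitz property of $t\mapsto(t-\ell)_+$ to control the nonlocal bilinear form from below (the Stroock--Varopoulos/C\'ordoba--C\'ordoba step), discard the remaining nonnegative pieces of the energy, then iterate. The only genuine divergence is which coercive term you keep for the Sobolev step: you keep $\int|\nabla v_k|^2$ and invoke the classical embedding $\dot H^1(\mathbb{R}^n)\hookrightarrow L^{2^\ast}$, whereas the paper discards the gradient, keeps the fractional seminorm $[w_{k+1}]_s^2$, and invokes a fractional Sobolev-type inequality (\cite[Lemma~8.1]{MR3259559}), arriving at the recursion $U_{k+1}\leq\tilde C^{k+1}U_k^{1+2s/(n+2s)}$ instead of your $a_{k+1}\leq C_0 b^k a_k^{1+4/n}$. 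Both close, and both are manifestly $\epsilon$-uniform for exactly the reason you state; your route has the small advantage of a more familiar Sobolev constant, while the paper's has the advantage of working verbatim for all $n\geq2$, since $\dot W^{s,2}(\mathbb{R}^n)\hookrightarrow L^{2n/(n-2s)}$ holds whenever $n>2s$. That last point is where your sketch is slightly off: in $n=2$ there is no homogeneous embedding $\dot H^1(\mathbb{R}^2)\hookrightarrow L^4$ (Ladyzhenskaya gives only $\|u\|_{L^4}^2\leq C\|u\|_{L^2}\|\nabla u\|_{L^2}$), so ``replace $2^\ast$ by $4$'' as written does not stand. The easy repair is either to retain the nonnegative term $\int v_k^2$ in the energy inequality, obtaining $\|v_k\|_{H^1}^2\leq\kappa\int_{A_k}v_k$ and then using the inhomogeneous embedding $H^1(\mathbb{R}^2)\hookrightarrow L^4$, or simply to switch to the paper's fractional Sobolev step for $n=2$. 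Apart from that one sentence, the argument is complete, and the normalization you choose (pinning $\ell\sim\kappa+\|\psi\|_{L^2}$ and truncating at $\ell(2-2^{-k})$) is equivalent to the paper's (rescaling $\psi,g$ by $\delta/(\|\psi\|_{L^2}+\|g\|_{L^\infty})$ and truncating at $1-2^{-k}$).
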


\begin{proof}
		Let $\delta>0$ to be conveniently chosen later on (see formula~\eqref{dscsdc}  below). We define, for every~$x\in \mathbb{R}^n$,
	\begin{equation}\label{dvscsd}
		\tilde{\psi}(x):=\frac{\delta \psi(x)}{\|\psi\|_{L^2(\mathbb{R}^n)}+\|g\|_{L^\infty(\mathbb{R}^n)}}\qquad \text{and}\qquad \tilde{g}(x):=\frac{\delta g(x)}{\|\psi\|_{L^2(\mathbb{R}^n)}+\|g\|_{L^\infty(\mathbb{R}^n)}} .
	\end{equation}
	In this way, we have that 
	\begin{equation}\label{v jds sd}
		\begin{cases}
			-\Delta \tilde\psi+ (-\Delta )^s\tilde\psi+\tilde\psi =\tilde{g}\qquad &\text{in } \Omega_\epsilon,\\
			\tilde\psi=0 &\text{in } \mathbb{R}^n\setminus\Omega_\epsilon.
		\end{cases}
	\end{equation}
	To prove~\eqref{dvdss}, it suffices to show that 
	\begin{equation}\label{vsvsdv}
		\|\tilde\psi\|_{L^\infty(\mathbb{R}^n)}\leqslant 1
	\end{equation}since this would give that 
	$$\|\psi\|_{L^\infty(\mathbb{R}^n)}\leqslant \frac{\|g\|_{L^\infty(\mathbb{R}^n)}+\|\psi\|_{L^2(\mathbb{R}^n)}}{\delta},$$
	which is the desired estimate.
	
Hence, we now focus on the proof of~\eqref{vsvsdv}. For any integer $k\in\mathbb{N}$, we set $$v_k:=\tilde\psi-(1-2^{-k}),\quad w_k:={(v_k)}_+:=\max\left\{v_k, 0\right\},\quad U_k:=\|w_k\|^2_{L^2(\mathbb{R}^n)}.$$
	It is immediate to check that 
	\begin{itemize}
		\item[(i)] $w_k\in H^1(\mathbb{R}^n)$, $w_k\geqslant 0$ in $\mathbb{R}^n$ and $w_k=0$ in $\mathbb{R}^n\setminus \Omega_\epsilon$.
		\item[(ii)] $w_k>\frac{1}{2^{k+1}}$ and $\tilde\psi<2^{k+1}w_k$ if $w_{k+1}>0$.
	\end{itemize} 
	Moreover, plugging $w_{k+1}$ as a test function in~\eqref{v jds sd}, one has that
	\begin{equation}\label{vfdvds}
		\begin{split}
			\int_{\mathbb{R}^n} \nabla \tilde{\psi}\cdot \nabla w_{k+1}+\int_{\mathbb{R}^n}\int_{\mathbb{R}^n}\frac{(\tilde{\psi}(x)-\tilde{\psi}(y))(w_{k+1}(x)-w_{k+1}(y))}{|x-y|^{n+2s}}+\int_{\mathbb{R}^n}\tilde{\psi}w_{k+1}=\int_{\mathbb{R}^n} \tilde{g}w_{k+1}.
		\end{split}
	\end{equation}
	
	We observe that, since $\nabla\tilde{\psi}(x)=\nabla w_{k+1}(x) $
	in~$\{w_{k+1}>0\}$,  
	\begin{equation}\label{vfdvd}
		\int_{\mathbb{R}^n} \nabla \tilde{\psi}\cdot \nabla w_{k+1}=\int_{\mathbb{R}^n}|\nabla w_{k+1}|^2>0.
	\end{equation}
	Furthermore,  by taking into account the fact that $$|w_k(x)-w_k(y)|^2\leqslant(w_k(x)-w_k(y))(v_k(x)-v_k(y)) ,$$ we obtain that 
	\begin{equation}\label{vsdvd}
		\int_{\mathbb{R}^n}\int_{\mathbb{R}^n}\frac{|w_{k+1}(x)-w_{k+1}(y)|^2}{|x-y|^{n+2s}}\, dx\,dy	\leqslant\int_{\mathbb{R}^n}\int_{\mathbb{R}^n}\frac{(\tilde{\psi}(x)-\tilde{\psi}(y))(w_{k+1}(x)-w_{k+1}(y))}{|x-y|^{n+2s}}\, dx\,dy.
	\end{equation}
	Thus, by combining~\eqref{vfdvds},~\eqref{vfdvd} and~\eqref{vsdvd}, we see that 
	\begin{equation}\label{fdgdfgdf}
		\begin{split}&
			[w_{k+1}]_s^2:=	\int_{\mathbb{R}^n}\int_{\mathbb{R}^n}\frac{|w_{k+1}(x)-w_{k+1}(y)|^2}{|x-y|^{n+2s}}\, dx\,dy\leqslant \int_{\mathbb{R}^n}(|\tilde{\psi}|+\tilde{g})w_{k+1}\, dx\\
			&\qquad \leqslant \int_{\left\{w_{k+1}>0\right\}}(2^{k+1}w_k+\delta)w_{k+1}\, dx\leqslant \int_{\left\{w_{k+1}>0\right\}}2^{k+1}w^2_{k}\, dx+\delta \int_{\left\{w_{k+1}>0\right\}}w_{k+1}\, dx\\
			&\qquad\leqslant 2^{k+1}U_k+\delta U_k^{\frac{1}{2}}\left|\left\{w_{k+1}>0\right\}\right|^{\frac{1}{2}}\leqslant 2^{k+1}U_k+\delta U_k^{\frac{1}{2}}\left|\left\{w_{k}>\frac{1}{2^{k+1}}\right\}\right|^{\frac{1}{2}}\\
			&\qquad
			\leqslant 2^{k+1}U_k(1+\delta).
		\end{split}
	\end{equation}
	
	Moreover, from \cite[Lemma~8.1]{MR3259559}, it follows that 
	\begin{equation*}
		U_k\leqslant C(n,s)[w_k]^2_{s} \left|\left\{w_k>0\right\}\right|^{\frac{2s}{n+2s}}.
	\end{equation*}
	As a consequence of this and~\eqref{fdgdfgdf}, we have that 
	\begin{equation*}
		\begin{split}
			U_{k+1}&\leqslant  C(n,s)2^{k+1}U_k(1+\delta)\left|\left\{w_{k+1}>0\right\}\right|^{\frac{2s}{n+2s}}\\
			&\leqslant  C(n,s)2^{k+1}U_k(1+\delta)(2^{2k+2}U_k)^{\frac{2s}{n+2s}}\\
			&\leqslant  \left(C(n,s)2^{1+\frac{4s}{n+2s}}(1+\delta)\right)^{k+1}U_k^{1+\frac{2s}{n+2s}}.
		\end{split}
	\end{equation*}
	
	We set $$\tilde{C}:=\max\left\{C(n,s)2^{1+\frac{4s}{n+2s}}(1+\delta),1\right\}$$ and we point out that 
	\begin{equation}\label{dscvadsc}
		U_k\leqslant\tilde{C}^{k+1}U_{k}^{1+\frac{2s}{n+2s}}.
	\end{equation}  
	Let us pick    \begin{equation}\label{dscsdc}
		\eta:=\tilde{C}^{-\frac{n+2s}{2s}}<1\quad \text{and}\quad \delta:=\tilde{C}^{-\frac{n^2+2ns}{8s^2}}.
	\end{equation}
	We claim that 
	\begin{equation}\label{dsvsd}
		U_k\leqslant\delta^2\eta^k\qquad \text{for any } k\in\mathbb{N}.
	\end{equation}
	
	We show~\eqref{dsvsd} by induction. Indeed, for $k=0$,  recalling~\eqref{dvscsd}, one has that
	\begin{equation*}
		U_0=\|(\tilde{\psi})_+\|^2_{L^2(\mathbb{R}^n)}\leqslant \delta^2.
	\end{equation*}
	Now,  we suppose that~\eqref{dsvsd} is true for $k$ and we prove it
	for $k+1$ by combining~\eqref{dscvadsc} with~\eqref{dscsdc}. For this, we observe that
	\begin{equation*}
		\begin{split}
			U_{k+1}\leqslant \tilde{C}^{k+1}U_k^{1+\frac{2s}{n+2s}}\leqslant \tilde{C}^{k+1}(\delta^2\eta^k)(\delta^2\eta^k)^{\frac{2s}{n+2s}}=\delta^2\eta^k(\tilde{C}\eta^{\frac{2s}{n+2s}})^k \tilde{C}\delta^{\frac{4s}{n+2s}}=\delta^2\eta^{k+1}.
		\end{split}
	\end{equation*}
	This proves the claim~\eqref{dsvsd}.
	
	As a result, employing~\eqref{dscsdc}, one has that 
	\begin{equation*}
		\lim\limits_{k\rightarrow\infty}\|w_k\|^2_{L^2(\mathbb{R}^n)}=	\lim\limits_{k\rightarrow\infty}U_k=0.
	\end{equation*}
	
	Moreover, we notice that 
	\begin{equation*}
		0\leqslant w_k\leqslant |\psi|\qquad \text{for any } k\in\mathbb{N}.
	\end{equation*}
	By the Dominated Convergence Theorem, we deduce that
	\begin{equation*}
		\lim_{k\to+\infty}w_k(x)= (\psi(x)-1)_+=0\qquad \text{a.e. } x\in\mathbb{R}^n,
	\end{equation*}
	thus, $\psi\leqslant 1$ a.e. in $\mathbb{R}^n$. 
	
	By replacing $\psi$ with $-\psi$, we obtain the desired bound in~\eqref{vsvsdv}, which concludes the proof.
\end{proof}

\end{appendix}

\bibliographystyle{is-abbrv}

\bibliography{manuscript}
\vfill
\end{document}